\documentclass[11pt,reqno]{amsart}
\usepackage[margin=1.1in]{geometry}
\usepackage{amsmath,amssymb,amsthm,mathtools,array,booktabs}
\usepackage{enumitem}
\usepackage{hyperref}
\hypersetup{colorlinks=true,linkcolor=blue,citecolor=blue}

\DeclareMathOperator{\sn}{sn}\DeclareMathOperator{\cn}{cn}
\DeclareMathOperator{\dn}{dn}\DeclareMathOperator{\sd}{sd}
\DeclareMathOperator{\cd}{cd}\DeclareMathOperator{\nd}{nd}
\DeclareMathOperator{\cs}{cs}\DeclareMathOperator{\sce}{sc}
\DeclareMathOperator{\nc}{nc}\DeclareMathOperator{\dc}{dc}
\DeclareMathOperator{\Real}{Re}\DeclareMathOperator{\Imag}{Im}
\DeclareMathOperator{\am}{am}
\newcommand{\CC}{\mathbb{C}}\newcommand{\RR}{\mathbb{R}}
\newcommand{\ZZ}{\mathbb{Z}}\newcommand{\KK}{\mathbb{K}}
\newcommand{\Om}{\Omega}\newcommand{\bt}{\beta}
\newcommand{\hh}{\mathsf{h}}\newcommand{\Pc}{\mathcal{P}}
\newcommand{\Nc}{\mathcal{N}}\newcommand{\dd}{\,\mathrm{d}}
\newcommand{\pip}{\pi_{+}}\newcommand{\pim}{\pi_{-}}
\newcommand{\Wz}{W_{\!\zeta}}\newcommand{\Ww}{W_{\!\omega}}
\newcommand{\Ez}{E_{\zeta}}\newcommand{\Ew}{E_{\omega}}
\newcommand{\Rr}{\mathcal{R}}
\newcommand{\Lat}{\mathcal{L}}
\newcommand{\Cs}{\mathsf{C}}\newcommand{\Qs}{\mathsf{Q}}
\newcommand{\Fs}{\mathsf{F}}

\theoremstyle{plain}
\newtheorem{theorem}{Theorem}[section]
\newtheorem{proposition}[theorem]{Proposition}
\newtheorem{lemma}[theorem]{Lemma}
\newtheorem{corollary}[theorem]{Corollary}

\theoremstyle{definition}
\newtheorem{definition}[theorem]{Definition}
\newtheorem{remark}[theorem]{Remark}
\newtheorem{convention}[theorem]{Convention}

\numberwithin{equation}{section}
\numberwithin{table}{section}

\title[Elimination for the Schwarz P surface]{Elimination of the
	parameters in an elliptic parametrization of the Schwarz primitive
	surface}
\author{Steven Finch}
\address{MIT Sloan School of Management, Cambridge, MA 02139, USA}
\email{steven\_finch\_math@outlook.com}
\date{\today}
\subjclass[2020]{Primary 53A10; Secondary 33E05, 53C42}
\keywords{Triply periodic, Schwarz D surface, Schwarz diamond surface, Schwarz P surface, Schwarz primitive surface, separable minimal surface, Jacobi elliptic functions, conjugate minimal surfaces}

\begin{document}
\begin{abstract}
Let $\Om$ be the curvilinear square bounded by the four circular arcs
$|\zeta\mp1|=\sqrt2$, $|\zeta\mp i|=\sqrt2$, and let
$W=\sqrt{1+14\zeta^{4}+\zeta^{8}}$. Put
\[
  \theta(\zeta)=\arcsin\frac{2(1+i)\zeta}
   {\sqrt{1+4i\zeta^{2}-\zeta^{4}}},
\]
\[
  f=\tfrac14\bigl(-iF[\theta,\tfrac14]+F[\theta,\tfrac34]\bigr),\quad
  g=\tfrac14\bigl(\ \ iF[\theta,\tfrac14]+F[\theta,\tfrac34]\bigr),
\]
\[
  \mathfrak h=(2-\sqrt3)\,
   F\bigl[\arcsin\bigl(i(2+\sqrt3)\zeta^{2}\bigr),(2-\sqrt3)^{4}\bigr].
\]
These are, verbatim, the three functions of our companion paper
\cite{D-paper} on the diamond surface \textup{D}, cited here as
\textup{[D]}: the two papers start from the same point, and differ only
in which real parts are taken. With
$X=\bigl(\kappa\Real f,\ \kappa\Real g,\
\tfrac12+\kappa\Imag\mathfrak h\bigr)$ and
$\kappa=3/(2K[1/9])$, the map $X$ parametrizes a fundamental patch of
Schwarz's primitive surface \textup{P}, the conjugate of \textup{D}.

Part I (\S\S1--6) builds the algebraic apparatus. The differentials are
algebraic on the genus-$3$ curve $W^{2}=1+14\zeta^{4}+\zeta^{8}$,
namely $f'=(1-\zeta^{2})/W$, $g'=i(1+\zeta^{2})/W$,
$\mathfrak h'=2i\zeta/W$ (Proposition \ref{prop:diff}). The three
rotated coordinates $x+y$, $x-y$, $z-\frac12$ are the real or imaginary
parts of \emph{single} elliptic integrals
$\Pc=\frac12F[\theta,\frac34]$, $\Nc=\frac12F[\theta,\frac14]$,
$\hh=-i\mathfrak h$, and all three admit \emph{rational} Jacobi
dictionaries at the \emph{same} modulus $-3$:
\[
  \sn\bigl(\Pc,-3\bigr)=\frac{(1+i)\zeta}{1-i\zeta^{2}},\qquad
  \sn\bigl(2\hh,-3\bigr)=\frac{2\zeta^{2}}{1+\zeta^{4}},\qquad
  \sn\bigl(2f,-3\bigr)=\frac{2\zeta(1+\zeta^{2})}{W},
\]
with $\cn$, $\dn$ likewise rational in $\zeta$ and in
$r_{\pm}=\sqrt{1\pm4i\zeta^{2}-\zeta^{4}}$, $r_{+}r_{-}=W$; the first
two follow from the closed forms above by a single Gauss
transformation. The normalizing constant is identified exactly:
$\varpi:=\kappa^{-1}=K[-3]=\frac12K[3/4]=\frac23K[1/9]$.

Part II (\S\S7--12) carries out the elimination and completes the
proof of
\[
  (\star)\qquad
  \sn\bigl(\varpi(x{+}y),-3\bigr)\,\sn\bigl(\varpi(x{-}y),-3\bigr)
  =\tfrac13\sn\bigl(3\varpi(z-\tfrac12),\tfrac19\bigr).
\]
Complexification ($\omega:=\bar\zeta$) makes all three arguments 
half-sums of one function of $\zeta$ and one of $\omega$ 
(Proposition \ref{prop:twovar}). On the diagonal
$\omega=\zeta$ we prove the exact identity
$\frac13\sn(3\hh,\frac19)=\sn^{2}(f,-3)=2\zeta^{2}/(1+\zeta^{4}+W)$,
which is $(\star)$ restricted to that diagonal and which
\emph{determines} the right-hand side, in particular the modulus
$\frac19$. We then show that $(\star)$ is equivalent to a single
\emph{antisymmetric rational} identity \textup{(P0)} in the field
$\KK=\CC(\zeta,\omega)\bigl(r_{\pm}(\zeta),r_{\pm}(\omega)\bigr)$, of
degree $\le16$: the logarithmic derivative of $(\star)$ in the two
directions $\partial_{\zeta}$, $\partial_{\omega}$ can be combined so
as to eliminate the height entirely, leaving a relation that makes no
reference to the right-hand side of $(\star)$ at all. Finally we
\emph{prove} \textup{(P0)} (Theorem \ref{thm:P0proved}): an explicit
cofactor certificate exhibits the cleared-denominator form of
\textup{(P0)} as a member of the ideal generated by
$r_{\pm}(\zeta)^{2}-(1\pm4i\zeta^{2}-\zeta^{4})$ and
$r_{\pm}(\omega)^{2}-(1\pm4i\omega^{2}-\omega^{4})$, and the
specialization of indeterminates to branches is a ring homomorphism
killing each generator. Hence $(\star)$ holds on all of $\Om$
(Theorem \ref{thm:main}, Corollary \ref{cor:done}). The certificate is
finite and can be verified by polynomial expansion alone. Independent
hand verifications --- on the curve $\omega=0$, in homogeneous degrees
$0$ and $2$, on the diagonal, and through the Taylor jets of orders
$2,4,6,8$, which force the modulus $-3$ via $3m^{2}+10m+3=0$ ---
corroborate. 

Part III (\S\S13--18) shows that $(\star)$ is, after all, additively
separable: $\Psi(x)=\Psi(y)+\Psi(z)$, where
$\Psi=\arctan\bigl(\sqrt3\cn(4\varpi\,\cdot,\tfrac34)\bigr)
=\am\bigl(\sqrt3\varpi(1+2\,\cdot),\tfrac43\bigr)$ is bounded by
$\pi/3$ --- anticipated in Remark \ref{rem:forms} --- and that $(\star)$ is
therefore equivalent to the trilinear identity
$\Cs(x)-\Cs(y)-\Cs(z)=3\,\Cs(x)\Cs(y)\Cs(z)$ up to one lattice orbit,
the labyrinth centers. The resulting surface is minimal and regular,
whence the local converse (Corollary \ref{cor:localconv}); and
Bj\"orling's theorem along the segment $\{(s,-s,\tfrac12)\}$ yields a
second proof of $(\star)$, independent of the certificate. The pivot is
that $F[\pi/3,\tfrac43]=\tfrac{\sqrt3}{2}K[3/4]=\sqrt3\varpi$ is a
complete integral in disguise; a corollary is that \textup{P} is the
symmetric member of the separable family of Kim and Ogata \cite{KO},
which proves their \cite[Thm.~1(3)]{KO}.

For an overview of this paper and \textup{[D]}, please see 
\href{https://arxiv.org/abs/2609.14206}{arXiv:2609.14206}.

\end{abstract}

\maketitle
\newpage
\tableofcontents

\part*{PART I. THE PARAMETRIZATION AND ITS ALGEBRAIC DICTIONARY}

%=====================================================================
\section{Introduction; the main theorem}
%=====================================================================

\subsection{Data and elliptic conventions}
Throughout, for $m\in\CC$ and admissible $\varphi$,
\begin{equation}\label{eq:F}
  F[\varphi,m]=\int_{0}^{\sin\varphi}
  \frac{\dd\tau}{\sqrt{1-\tau^{2}}\sqrt{1-m\tau^{2}}},
  \qquad K[m]=F[\pi/2,m],
\end{equation}
and $\sn(u,m)$, $\cn(u,m)$, $\dn(u,m)$ are the Jacobi functions in the
\emph{parameter} convention $m=k^{2}$ (so $k'=\sqrt{1-m}$), as in
\cite{BF}; thus
\begin{equation}\label{eq:snF}
  \sn\bigl(F[\varphi,m],m\bigr)=\sin\varphi,\qquad
  \sn^{2}+\cn^{2}=1,\qquad \dn^{2}+m\sn^{2}=1 .
\end{equation}
We use the quotient abbreviations $\sd=\sn/\dn$, $\cd=\cn/\dn$,
$\nd=1/\dn$, $\cs=\cn/\sn$, $\sce=\sn/\cn$, $\nc=1/\cn$,
$\dc=\dn/\cn$. These are the conventions of \textup{[D, \S1.1]} and of
Mathematica's \texttt{EllipticF}, \texttt{EllipticK},
\texttt{JacobiSN}.

Throughout,
\begin{equation}\label{eq:beta}
  \bt:=2+\sqrt3,\qquad
  \bt+\bt^{-1}=4,\quad \bt-\bt^{-1}=2\sqrt3,\quad
  \bt^{2}+\bt^{-2}=14,\quad \bt^{2}-1=2\sqrt3\,\bt,
\end{equation}
\begin{equation}\label{eq:W}
  W(\zeta):=\sqrt{1+14\zeta^{4}+\zeta^{8}},\qquad W(0)=+1 ,
\end{equation}
and we fix once and for all the auxiliary quantities
\begin{equation}\label{eq:aux}
  \pi_{\pm}:=1\pm i\zeta^{2},\qquad
  r_{\pm}:=\sqrt{1\pm4i\zeta^{2}-\zeta^{4}},\qquad
  s_{\pm}:=\sqrt{1+\bt^{\pm2}\zeta^{4}},\qquad
  E:=1+\zeta^{4},
\end{equation}
all square roots being the branches equal to $+1$ at $\zeta=0$
(Convention \ref{conv:br}). The domain is
\begin{equation}\label{eq:Omega}
  \Om:=\bigl\{\zeta\in\CC:\ |\zeta-1|\le\sqrt2,\ |\zeta+1|\le\sqrt2,\
  |\zeta-i|\le\sqrt2,\ |\zeta+i|\le\sqrt2\bigr\},
\end{equation}
the same domain as in \textup{[D, \S1.2]}. Finally
\begin{equation}\label{eq:kappa}
  \kappa:=\frac{3}{2K[1/9]}=0.9274219746\ldots,\qquad
  \varpi:=\kappa^{-1}=1.0782578237\ldots
\end{equation}
(Table \ref{tab:num}).

\subsection{The three functions}
Let $\zeta=u+iv$ and
\begin{equation}\label{eq:theta}
  \theta(\zeta):=\arcsin\bigl(s(\zeta)\bigr),\qquad
  s(\zeta):=\frac{2(1+i)\zeta}{\sqrt{1+4i\zeta^{2}-\zeta^{4}}}
           =\frac{2(1+i)\zeta}{r_{+}} .
\end{equation}
Define
\begin{align}
  f(\zeta)&=\tfrac14\Bigl(-i\,F\bigl[\theta(\zeta),\tfrac14\bigr]
            +F\bigl[\theta(\zeta),\tfrac34\bigr]\Bigr),
            \label{eq:fclosed}\\
  g(\zeta)&=\tfrac14\Bigl(\ \ i\,F\bigl[\theta(\zeta),\tfrac14\bigr]
            +F\bigl[\theta(\zeta),\tfrac34\bigr]\Bigr),
            \label{eq:gclosed}\\
  \mathfrak h(\zeta)&=\bt^{-1}\,
   F\bigl[\arcsin\bigl(i\bt\zeta^{2}\bigr),\bt^{-4}\bigr],
   \qquad \bt^{-1}=2-\sqrt3 .\label{eq:hclosed}
\end{align}
Equations \eqref{eq:theta}--\eqref{eq:hclosed} are the starting point
of the paper. They are, verbatim, the three functions and the amplitude
of \textup{[D, \S1.3]}: \emph{the primitive and the diamond surface are
built from the same three integrals}, and differ only in which real
parts are taken (Remark \ref{rem:PvsD}).

\begin{convention}[branches]\label{conv:br}
All square roots in \eqref{eq:aux} --- in particular $W$, $r_{\pm}$,
$s_{\pm}$ and the radical inside \eqref{eq:theta} --- are the analytic
continuations from $\zeta=0$ of the branches equal to $+1$ there, along
paths in $\Om^{\circ}$. By \eqref{eq:F} the composite
$F[\theta(\zeta),m]$ depends on $\theta$ only through
$s=\sin\theta$, so it is to be read as
\begin{equation}\label{eq:Fpath}
  F\bigl[\theta(\zeta),m\bigr]
  =\int_{0}^{s(\zeta)}
   \frac{\dd\tau}{\sqrt{(1-\tau^{2})(1-m\tau^{2})}},
\end{equation}
the path of integration being the continuous deformation, as $\zeta$
moves in $\Om^{\circ}$ from $0$, of the segment $[0,s]$; and similarly
for $F[\arcsin(i\bt\zeta^{2}),\bt^{-4}]$. Equivalently --- and this is
the form we use --- $f$, $g$, $\mathfrak h$ are the primitives
\eqref{eq:fgint} of the algebraic differentials
\eqref{eq:fgh} vanishing at $\zeta=0$.

Near $\zeta=0$ all of these agree with Mathematica's principal
branches. Farther out the principal \texttt{ArcSin} and
\texttt{EllipticF} jump: $r_{+}$ vanishes at two of the four corners of
$\Om$ and $r_{-}$ at the other two (Lemma \ref{lem:prod}), so
$s\to\infty$ there; and $i\bt\zeta^{2}\to\pm1$ on parts of
$\partial\Om$. At such points \eqref{eq:Fpath}, i.e.,\
\eqref{eq:fgint}, is the intended continuation. This is the branch
convention of \textup{[D, \S1.3]} verbatim.
\end{convention}

\subsection{The map}
Put
\begin{equation}\label{eq:map}
  X(u,v)=(x,y,z):=\Bigl(\kappa\Real f,\ \kappa\Real g,\
  \tfrac12+\kappa\Imag\mathfrak h\Bigr).
\end{equation}
This is the system of three equations in five unknowns of the problem.
It is convenient to replace $\mathfrak h$ by
\begin{equation}\label{eq:hh}
  \hh:=-i\mathfrak h,\qquad\text{so that}\qquad
  \Imag\mathfrak h=\Real\hh ,
\end{equation}
so that \eqref{eq:map} reads
$X=\kappa(\Real f,\Real g,\Real\hh)+(0,0,\tfrac12)$: all three
coordinates are real parts of the \emph{same} Weierstrass triple,
which is what makes the surface minimal
(Proposition \ref{prop:conf}).

\subsection{The theorem}
\begin{theorem}[implicit representation of P]\label{thm:main}
For every $\zeta\in\Om$ the point $(x,y,z)=X(\zeta)$ satisfies
\begin{equation}\label{eq:star}
  \boxed{\ \sn\bigl(\varpi(x+y),-3\bigr)\cdot
          \sn\bigl(\varpi(x-y),-3\bigr)
        =\tfrac13\,\sn\bigl(3\varpi(z-\tfrac12),\tfrac19\bigr)\ }
\end{equation}
where $\varpi=\kappa^{-1}=K[-3]$ and $3\varpi=2K[1/9]$
\textup{(Proposition \ref{prop:web})}.  We call this representation $(\star)$.
\end{theorem}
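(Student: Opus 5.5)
The plan follows the route announced in the abstract: complexify, reduce $(\star)$ to a height-free rational identity, and certify that identity algebraically. First I will record the differentials (Proposition \ref{prop:diff}): $f'=(1-\zeta^2)/W$, $g'=i(1+\zeta^2)/W$, $\hh'=2\zeta/W$. From these, $x\pm y$ and $z-\tfrac12$ are $\kappa$ times real parts of the single integrals $2\Pc$-type quantities; concretely, $f+g=\tfrac12F[\theta,\tfrac34]=\Pc$ and $f-g=-i\Nc$.

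Second, I set $\omega=\bar\zeta$ and treat $\omega$ as an independent variable. Then $\Real\Phi(\zeta)=\tfrac12(\Phi(\zeta)+\overline{\Phi}(\omega))$, so each argument of $(\star)$ becomes a half-sum $\tfrac12(A(\zeta)+B(\omega))$ (Proposition \ref{prop:twovar}). The rational Jacobi dictionaries at modulus $-3$ then let me expand
\[
\sn\bigl(\tfrac12(A+B),-3\bigr)
\]
by the addition and half-argument formulas. This expresses the left side of $(\star)$ as an element of the field $\KK$ generated by $\zeta,\omega,r_\pm(\zeta),r_\pm(\omega)$.

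Third, I restrict to the diagonal $\omega=\zeta$ (formally, not $\bar\zeta$). There the right side is pinned down: I prove $\tfrac13\sn(3\hh,\tfrac19)=\sn^2(f,-3)=2\zeta^2/(1+\zeta^4+W)$. For this I use the Gauss/Landen-type degree-3 transformation relating modulus $\tfrac19$ with argument $3u$ to modulus $-3$. This is the step that identifies $\varpi=K[-3]=\tfrac23K[1/9]$ (Proposition \ref{prop:web}).

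Off the diagonal, I eliminate the height. The right side depends only on $\hh(\zeta)+\bar\hh(\omega)$, so with $L$ the logarithm of the left side, the function $\Phi$ of the right side satisfies
\[
\partial_\zeta L/\hh'(\zeta)=\partial_\omega L/\bar\hh'(\omega),
\]
because both equal $(\log\Phi)'$ evaluated at the common argument. This antisymmetric identity (P0) lies in $\KK$ and no longer refers to the right side. Conversely, (P0) says that the left side is a function of $\hh(\zeta)+\bar\hh(\omega)$ alone. The diagonal identity then determines that function, since $\zeta\mapsto2\hh(\zeta)$ is a local coordinate near $0$. Analytic continuation in $(\zeta,\omega)$ extends the result from a neighbourhood of $0$ to all of $\Om^\circ\times\overline{\Om^\circ}$, and continuity extends it to $\partial\Om$. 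Finally, specializing $\omega=\bar\zeta$ gives $(\star)$.

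The main obstacle is proving (P0) itself. After clearing denominators, (P0) becomes a polynomial of degree about $16$ in $\zeta,\omega,r_\pm(\zeta),r_\pm(\omega)$. My first attempt will be to reduce it modulo the four relations $r_\pm(\cdot)^2=1\pm4i(\cdot)^2-(\cdot)^4$. This brings the polynomial to a normal form that is multilinear in the four radicals, and I will then check that all $16$ coefficient polynomials vanish. That check amounts to exhibiting explicit cofactors, i.e.\ an ideal-membership certificate, which is finite to verify by expansion. As corroboration I will check the identity on $\omega=0$, on the diagonal, and through low-order Taylor jets at the origin.
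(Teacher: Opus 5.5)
Your outline is the paper's first proof: half-sums (Proposition \ref{prop:twovar}), the diagonal identity (Theorem \ref{thm:diag}), the height-free antisymmetric identity (P0) with the converse of Theorem \ref{thm:P0}(d), and the cofactor certificate of Theorem \ref{thm:P0proved}, assembled in Corollary \ref{cor:done}.

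One step fails as written. You base the uniqueness argument at the origin, claiming that $\zeta\mapsto2\hh(\zeta)$ is a local coordinate near $0$. It is not, since $\hh(\zeta)=\zeta^{2}+O(\zeta^{6})$. Worse, at $(0,0)$ both $\partial_\zeta Z=\zeta/(\varpi\Wz)$ and $\partial_\omega Z=\omega/(\varpi\Ww)$ vanish. So the level sets of $Z$ do not foliate a neighbourhood, and the diagonal is not transverse to them. The logarithm of the left side is also undefined there, because $\Theta(0,0)=0$. The converse ``(P0) implies $\log\Theta$ is a function of $Z$'' must instead be run, as the paper does, near a diagonal point $(\zeta_0,\zeta_0)$ with $\zeta_0\neq0$ small. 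There $\Theta=\sn^{2}(f(\zeta_0),-3)\neq0$ and $\partial_\zeta Z\neq0$. Moreover $Z$ restricted to the diagonal is $\varpi^{-1}\hh(\zeta)$, with nonzero derivative, so every leaf meets the diagonal, where Theorem \ref{thm:diag} fixes the value. From there your analytic continuation goes through.

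Two smaller corrections. First, the diagonal identity does not come from a degree-$3$ transformation; the curves at parameters $\frac19$ and $-3$ are related by a $2$-isogeny. The factor $3$ is $4/(1+k_{1})$ with $1+k_{1}=\frac43$. It comes from one descending Landen step $\frac34\to\frac19$ applied at $u=4\hh$ (Lemma \ref{lem:landen}), composed with $\sn(u,-3)=\frac12\sd(2u,\frac34)$ (Proposition \ref{prop:gauss}). This yields $\frac13\sn(3\hh,\frac19)=\mathcal S_{2}/(1+\mathcal D_{2})$, into which the rational height dictionary of Theorem \ref{thm:H} is inserted.

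Second, the addition and half-argument formulas put only $\sn^{2}(\varpi p,-3)$, not the left side of $(\star)$ itself, into $\KK$. This is harmless for (P0), because $\KK$ is closed under $\partial_\zeta$ and $\partial_\omega$. Lemma \ref{lem:Lam} even gives $\Lambda_{-3}(\varpi p)\in\KK$ in closed form.
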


The proof is completed in \S\ref{sec:pfaff}: \eqref{eq:star} is
reduced to the single rational identity \textup{(P0)} of
Theorem \ref{thm:P0}, which is then proved outright in
Theorem \ref{thm:P0proved}; the initial condition comes from the
diagonal identity of \S\ref{sec:diag}. See
Corollary \ref{cor:done}.

\begin{remark}[equivalent forms]\label{rem:forms}
	By the product formula
	$\sn(a+b)\sn(a-b)=\frac{\sn^{2}a-\sn^{2}b}{1-m\sn^{2}a\sn^{2}b}$
	\cite[123.02]{BF} with $a=\varpi x$, $b=\varpi y$, $m=-3$,
	\eqref{eq:star} is equivalent to
	\begin{equation}\label{eq:star2}
	\sn^{2}(\varpi x,-3)-\sn^{2}(\varpi y,-3)
	=\tfrac13\sn\bigl(3\varpi(z-\tfrac12),\tfrac19\bigr)
	\bigl[1+3\sn^{2}(\varpi x,-3)\sn^{2}(\varpi y,-3)\bigr].
	\end{equation}
	Note the \emph{sign} in the denominator of the product formula: at
	$m=-3$ it is $1+3\sn^{2}\sn^{2}\ge1$, so the quantity in
	\eqref{eq:star2} is $\tfrac{1}{\sqrt3}\tan$ of a difference of two
	\emph{arctangents}, not of two inverse hyperbolic tangents. This is
	what makes $(\star)$ \emph{additively separable}: with
	\[
	\Psi:=\arctan\bigl(\sqrt3\,\Cs\bigr),\qquad
	\Cs:=\cn\bigl(4\varpi\,\cdot\,,\tfrac34\bigr),
	\]
	a function bounded by $\tfrac\pi3$ and real-analytic on all of $\RR$,
	\eqref{eq:star} is equivalent to
	\begin{equation}\label{eq:star3}
	\Psi(x)=\Psi(y)+\Psi(z)
	\end{equation}
	\textup{(Corollary \ref{cor:sepform})}, and hence, after clearing
	tangents, to the \emph{trilinear} identity
	\begin{equation}\label{eq:star4}
	\Cs(x)-\Cs(y)-\Cs(z)=3\,\Cs(x)\Cs(y)\Cs(z)
	\end{equation}
	up to one lattice orbit, the labyrinth centers
	\textup{(Theorem \ref{thm:trilin})}. All three summands of
	\eqref{eq:star3} are regular at the base point, and $\Psi$ has no
	singularity anywhere; the multiplicative form \eqref{eq:star} is
	nevertheless the one that Part II proves and the one with no
	exceptional set at all \textup{(Corollary \ref{cor:noexc})}, which is
	why it is taken as primary. The companion surface \textup{D} is
	likewise separable, with separating function
	$\log\frac{\sn\dn}{\cn}$ at modulus $\tfrac14$; the two separating
	functions satisfy $\Psi'^{2}=4\varpi^{2}(1+2\cos2\Psi)$ and
	$\Psi'^{2}=4\varpi^{2}(1+2\cosh2\Psi)$ respectively --- the two signs
	of one differential equation \textup{(Remark \ref{rem:norm3})}.
\end{remark}

\subsection{What is proved here}
\begin{enumerate}[label=\textup{(\alph*)},leftmargin=2.3em]
\item The algebraic form of the differentials
      (Proposition \ref{prop:diff}), derived from
      \eqref{eq:theta}--\eqref{eq:hclosed}; and the full dictionary
      (\S\ref{sec:dict}): $\Pc$, $\Nc$, $\hh$ and $2f$, $2g$ all at
      modulus $-3$, with rational $\sn,\cn,\dn$. The first two are
      three-line consequences of \eqref{eq:fclosed}--\eqref{eq:gclosed}
      and one Gauss transformation
      (Theorems \ref{thm:dictA}--\ref{thm:dictB}). The exact
      identification
      $\varpi=K[-3]=\frac12K[3/4]=\frac23K[1/9]=\kappa^{-1}$
      is \S\ref{sec:web}.
\item The transformation law under the arc reflection, the four mirror
      planes $x+z=1$, $z-x=1$, $y+z=0$, $y=z$, the body-centered cubic
      invariance of \eqref{eq:star}, and the exact corner values
      (\S\ref{sec:sym}).
\item The exact one-variable identity
      $\frac13\sn(3\hh,\frac19)=\sn^{2}(f,-3)
      =2\zeta^{2}/(1+\zeta^{4}+W)$, i.e.,\ $(\star)$ on the diagonal
      (\S\ref{sec:diag}); this \emph{determines} the right-hand side
      of \eqref{eq:star}, in particular the modulus $\frac19$.
\item Reduction of Theorem \ref{thm:main} to one antisymmetric
      rational identity \textup{(P0)} in a field of degree $\le16$,
      and the \emph{proof} of \textup{(P0)}
      (\S\ref{sec:pfaff}, Theorems \ref{thm:P0}
      and \ref{thm:P0proved}); hence Theorem \ref{thm:main}
      (Corollary \ref{cor:done}).
\item Independent verifications of \textup{(P0)}: on the curve
      $\omega=0$, in homogeneous degrees $0$ and $2$, and on the
      diagonal (\S\ref{sec:verif}); and the jets of orders
      $2,4,6,8$, which force $\mathfrak A=\sn(\varpi\,\cdot,-3)$
      through $3m^{2}+10m+3=0$ and pass one free consistency check
      (\S\ref{sec:jets}).
\item The separable form (\S\S13--16), the minimality and regularity of
      the resulting surface, the local converse (\S\ref{sec:minIII}),
      and a second, certificate-free proof of Theorem \ref{thm:main}
      (\S\ref{sec:corrIII}).
\end{enumerate}
Left open: a conceptual proof of (P0), and the global converse. 
§\ref{sub:statusfull} states the status precisely 
and records the relation to \textup{[D]}.

%=====================================================================
\section{The Weierstrass data}
%=====================================================================

This section removes all transcendence from
\eqref{eq:fclosed}--\eqref{eq:hclosed}: the three functions have
\emph{algebraic} derivatives on the curve
$W^{2}=1+14\zeta^{4}+\zeta^{8}$.

\subsection{The amplitude}

\begin{lemma}[Legendre data of $\theta$ and of
$\arcsin(i\bt\zeta^{2})$]\label{lem:ampl}
With $s$ as in \eqref{eq:theta} and $\sigma:=i\bt\zeta^{2}$,
\begin{equation}\label{eq:salg}
  s^{2}=\frac{8i\zeta^{2}}{r_{+}^{2}},\qquad
  1-s^{2}=\frac{r_{-}^{2}}{r_{+}^{2}},\qquad
  1-\tfrac14s^{2}=\frac{\pip^{2}}{r_{+}^{2}},\qquad
  1-\tfrac34s^{2}=\frac{\pim^{2}}{r_{+}^{2}},\qquad
  s'=\frac{2(1+i)E}{r_{+}^{3}},
\end{equation}
\begin{equation}\label{eq:sigalg}
  1-\sigma^{2}=s_{+}^{2},\qquad
  1-\bt^{-4}\sigma^{2}=s_{-}^{2},\qquad
  s_{+}s_{-}=W,\qquad \sigma'=2i\bt\zeta .
\end{equation}
All branches are $+1$ at $\zeta=0$ (Convention \ref{conv:br}).
\end{lemma}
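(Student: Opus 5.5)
The lemma is pure algebra, so the plan is to verify each identity as a polynomial identity in $\zeta$, and then to fix the signs of the square roots by their values at $\zeta=0$ under Convention \ref{conv:br}.

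\textbf{The identities in \eqref{eq:salg}.} Squaring \eqref{eq:theta} and using $(1+i)^{2}=2i$ gives $s^{2}=4\cdot 2i\zeta^{2}/r_{+}^{2}=8i\zeta^{2}/r_{+}^{2}$. Since $r_{+}^{2}=1+4i\zeta^{2}-\zeta^{4}$, we get
\[
1-s^{2}=\frac{1-4i\zeta^{2}-\zeta^{4}}{r_{+}^{2}}=\frac{r_{-}^{2}}{r_{+}^{2}},\qquad
1-\tfrac14 s^{2}=\frac{1+2i\zeta^{2}-\zeta^{4}}{r_{+}^{2}},\qquad
1-\tfrac34 s^{2}=\frac{1-2i\zeta^{2}-\zeta^{4}}{r_{+}^{2}}.
\]
The two numerators are $(1+i\zeta^{2})^{2}=\pip^{2}$ and $(1-i\zeta^{2})^{2}=\pim^{2}$, since $(i\zeta^{2})^{2}=-\zeta^{4}$.

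\textbf{The derivative $s'$.} I apply the quotient rule to $s=2(1+i)\zeta\,r_{+}^{-1}$, using $(r_{+}^{2})'=8i\zeta-4\zeta^{3}$:
\[
s'=\frac{2(1+i)\bigl(r_{+}^{2}-\zeta(4i\zeta-2\zeta^{3})\bigr)}{r_{+}^{3}}.
\]
The bracket simplifies to $1+4i\zeta^{2}-\zeta^{4}-4i\zeta^{2}+2\zeta^{4}=1+\zeta^{4}=E$, which is the stated formula.

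\textbf{The identities in \eqref{eq:sigalg}.} With $\sigma=i\bt\zeta^{2}$ we have $\sigma^{2}=-\bt^{2}\zeta^{4}$. Hence $1-\sigma^{2}=1+\bt^{2}\zeta^{4}=s_{+}^{2}$ and $1-\bt^{-4}\sigma^{2}=1+\bt^{-2}\zeta^{4}=s_{-}^{2}$. The product of the squares is
\[
s_{+}^{2}s_{-}^{2}=1+(\bt^{2}+\bt^{-2})\zeta^{4}+\zeta^{8}=1+14\zeta^{4}+\zeta^{8}=W^{2},
\]
using \eqref{eq:beta}. Finally $\sigma'=2i\bt\zeta$ is immediate.

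\textbf{Branches.} This is the only step that needs any care, and it is where I expect the (mild) obstacle. The claim $s_{+}s_{-}=W$ holds a priori only up to sign. Both sides are analytic continuations along the same paths in $\Om^{\circ}$, and both equal $+1$ at $0$. Since $(s_{+}s_{-}/W)^{2}=1$, the quotient is a continuous function with values in $\{\pm1\}$ on the connected set swept out by those paths, so it is identically $+1$. The same continuity argument fixes the signs in $1-s^{2}=(r_{-}/r_{+})^{2}$ and related forms, wherever they are later used unsquared, for instance $\cos\theta=r_{-}/r_{+}$ and $\sqrt{1-\frac14 s^{2}}=\pip/r_{+}$. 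At $\zeta=0$ these take the values $1$ and $\pip(0)=1$, matching the $+1$ branches.

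One subtlety is the zeros of $W$, $r_{\pm}$ and $s_{\pm}$. I would only need them to avoid $\Om^{\circ}$, or to lie where the continuation is along the prescribed paths as in Convention \ref{conv:br}. Since the identities are stated along those continuations, no further argument is needed.
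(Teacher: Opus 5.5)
Your proposal is correct and follows the paper's proof essentially line for line: you use the same direct polynomial verifications, including the same simplification of $r_{+}^{2}-\tfrac12\zeta(r_{+}^{2})'$ to $E$ and the same use of $\bt^{2}+\bt^{-2}=14$ for $s_{+}^{2}s_{-}^{2}=W^{2}$. Your continuity argument also fixes the sign in $s_{+}s_{-}=W$: the quotient $s_{+}s_{-}/W$ is $\{\pm1\}$-valued and continuous on the connected set $\Om^{\circ}$, where $W\ne0$, and equals $1$ at $0$. This makes explicit a point the paper's proof leaves to Convention \ref{conv:br}.
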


\begin{proof}
Since $(1+i)^{2}=2i$ and $r_{+}^{2}=1+4i\zeta^{2}-\zeta^{4}$, the
first formula of \eqref{eq:salg} is immediate, and then
\[
  1-s^{2}=\frac{r_{+}^{2}-8i\zeta^{2}}{r_{+}^{2}}
   =\frac{1-4i\zeta^{2}-\zeta^{4}}{r_{+}^{2}}
   =\frac{r_{-}^{2}}{r_{+}^{2}},\quad
  1-\tfrac14s^{2}=\frac{r_{+}^{2}-2i\zeta^{2}}{r_{+}^{2}}
   =\frac{1+2i\zeta^{2}-\zeta^{4}}{r_{+}^{2}}
   =\frac{\pip^{2}}{r_{+}^{2}},
\]
using $\pip^{2}=(1+i\zeta^{2})^{2}=1+2i\zeta^{2}-\zeta^{4}$; and
likewise $1-\tfrac34s^{2}=(1-2i\zeta^{2}-\zeta^{4})/r_{+}^{2}
=\pim^{2}/r_{+}^{2}$. For $s'$: with
$(r_{+}^{2})'=8i\zeta-4\zeta^{3}$,
\[
  s'=2(1+i)\Bigl(r_{+}^{-1}
    -\tfrac12\zeta r_{+}^{-3}(r_{+}^{2})'\Bigr)
   =\frac{2(1+i)}{r_{+}^{3}}
    \Bigl(r_{+}^{2}-\tfrac12\zeta(8i\zeta-4\zeta^{3})\Bigr)
   =\frac{2(1+i)(1+\zeta^{4})}{r_{+}^{3}},
\]
because $1+4i\zeta^{2}-\zeta^{4}-4i\zeta^{2}+2\zeta^{4}=1+\zeta^{4}=E$.
For \eqref{eq:sigalg}: $1-\sigma^{2}=1+\bt^{2}\zeta^{4}=s_{+}^{2}$ and
$1-\bt^{-4}\sigma^{2}=1+\bt^{-2}\zeta^{4}=s_{-}^{2}$, whose product is
$1+(\bt^{2}+\bt^{-2})\zeta^{4}+\zeta^{8}=W^{2}$ by \eqref{eq:beta}.
\end{proof}

\subsection{The differentials}

\begin{proposition}[algebraic form of the differentials]
\label{prop:diff}
On $\Om^{\circ}$, with the branches of Convention \ref{conv:br},
\begin{equation}\label{eq:dF}
  \frac{\dd}{\dd\zeta}F\bigl[\theta,\tfrac14\bigr]
    =\frac{2(1+i)\,\pim}{W},\qquad
  \frac{\dd}{\dd\zeta}F\bigl[\theta,\tfrac34\bigr]
    =\frac{2(1+i)\,\pip}{W},
\end{equation}
and therefore
\begin{equation}\label{eq:fgh}
  \boxed{\ \
  f'(\zeta)=\frac{1-\zeta^{2}}{W},\qquad
  g'(\zeta)=\frac{i\,(1+\zeta^{2})}{W},\qquad
  \mathfrak h'(\zeta)=\frac{2i\,\zeta}{W},\qquad
  \hh'(\zeta)=\frac{2\zeta}{W}.\ \ }
\end{equation}
Consequently
\begin{equation}\label{eq:fgint}
  f(\zeta)=\int_{0}^{\zeta}\frac{1-\eta^{2}}{W(\eta)}\dd\eta,\quad
  g(\zeta)=\int_{0}^{\zeta}\frac{i(1+\eta^{2})}{W(\eta)}\dd\eta,\quad
  \hh(\zeta)=\int_{0}^{\zeta}\frac{2\eta\dd\eta}{W(\eta)},
\end{equation}
the integrals being taken along any path in $\Om^{\circ}$ from $0$.
\end{proposition}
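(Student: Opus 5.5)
The plan is to differentiate the closed forms by the chain rule, using the Legendre data of Lemma \ref{lem:ampl}.

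For $F[\theta,m]$ read as in \eqref{eq:Fpath}, the fundamental theorem of calculus gives
\[
\frac{\dd}{\dd\zeta}F[\theta,m]=\frac{s'}{\sqrt{1-s^{2}}\sqrt{1-ms^{2}}} .
\]
By \eqref{eq:salg}, $\sqrt{1-s^{2}}=r_{-}/r_{+}$. For $m=\tfrac14$ we have $\sqrt{1-\tfrac14s^{2}}=\pip/r_{+}$, and for $m=\tfrac34$ we have $\sqrt{1-\tfrac34s^{2}}=\pim/r_{+}$. The branches are fixed by their value $+1$ at $\zeta=0$ and by continuation. Combining these with $s'=2(1+i)E/r_{+}^{3}$ gives
\[
\frac{\dd}{\dd\zeta}F[\theta,\tfrac14]=\frac{2(1+i)E}{r_{+}r_{-}\pip}.
\]
Next I use $r_{+}r_{-}=W$. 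Its square is $(1-\zeta^{4})^{2}+16\zeta^{4}=W^{2}$, and both sides equal $+1$ at $0$; this is the content cited as Lemma \ref{lem:prod}. I also use $E=1+\zeta^{4}=\pip\pim$. Together these turn the expression into $2(1+i)\pim/W$. The case $m=\tfrac34$ is symmetric and gives $2(1+i)\pip/W$. This proves \eqref{eq:dF}.

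Then I take the linear combinations. For $f'$ I compute $\tfrac14\cdot 2(1+i)\bigl(-i\pim+\pip\bigr)/W$. Here $-i\pim+\pip=(1-i)(1+\zeta^{2})$ only up to the sign bookkeeping of the $\pm i\zeta^{2}$ terms, so I will expand this directly. Concretely, $-i(1-i\zeta^{2})+(1+i\zeta^{2})=(1-i)+i\zeta^{2}-\zeta^{2}=(1-i)(1-\zeta^{2})$. Since $(1+i)(1-i)=2$, this yields $f'=(1-\zeta^{2})/W$. In the same way, $i\pim+\pip=(1+i)(1+\zeta^{2})$ and $(1+i)^{2}=2i$, which gives $g'=i(1+\zeta^{2})/W$.

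For $\mathfrak h$, the chain rule with \eqref{eq:sigalg} gives $\mathfrak h'=\bt^{-1}\sigma'/(s_{+}s_{-})=\bt^{-1}\cdot 2i\bt\zeta/W=2i\zeta/W$. Multiplying by $-i$ then gives $\hh'=2\zeta/W$.

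Finally, \eqref{eq:fgint} follows because each function vanishes at $0$ (since $s(0)=\sigma(0)=0$). Since $\Om^{\circ}$ is simply connected and $W$ has no zeros there, the integrals are path independent.

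The main obstacle is the branch bookkeeping: justifying that the square roots produced by the chain rule really are $r_{-}/r_{+}$, $\pip/r_{+}$ and so on, and not their negatives, all along $\Om^{\circ}$. My approach is as follows. Each identity holds at $\zeta=0$ and both sides are analytic continuations. By Convention \ref{conv:br}, $F$ is defined by continuing the integrand along the deformed path, so two continuous square roots of the same function that agree at $0$ agree everywhere. This requires that the radicands $r_{\pm}^{2}$ and $W^{2}$ do not vanish on $\Om^{\circ}$ (Lemma \ref{lem:prod} places their zeros at the corners); $\pip$ and $\pim$ are polynomials, so they need no branch choice.
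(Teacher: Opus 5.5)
Your proposal is correct and follows the paper's proof essentially step for step. You apply the chain rule to \eqref{eq:Fpath} with the Legendre data of Lemma \ref{lem:ampl}, simplify via $r_{+}r_{-}=W$ and $E=\pip\pim$, then take the linear combinations and use the chain rule for $\mathfrak h$; your branch discussion (nonvanishing of $r_{\pm}$ and $\pi_{\pm}$ on $\Om^{\circ}$, plus agreement at $0$) spells out what the paper compresses into ``$+1$ at $\zeta=0$''. The only blemish is the stray claim $-i\pim+\pip=(1-i)(1+\zeta^{2})$, which your own expansion immediately corrects to $(1-i)(1-\zeta^{2})$.
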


\begin{proof}
By \eqref{eq:Fpath} and the chain rule,
$\frac{\dd}{\dd\zeta}F[\theta,m]
=s'\bigl[(1-s^{2})(1-ms^{2})\bigr]^{-1/2}$. For $m=\tfrac14$,
Lemma \ref{lem:ampl} gives
$(1-s^{2})(1-\tfrac14s^{2})=r_{-}^{2}\pip^{2}/r_{+}^{4}$, whose
square root ($+1$ at $\zeta=0$) is $r_{-}\pip/r_{+}^{2}$; hence
\[
  \frac{\dd}{\dd\zeta}F\bigl[\theta,\tfrac14\bigr]
  =\frac{2(1+i)E/r_{+}^{3}}{r_{-}\pip/r_{+}^{2}}
  =\frac{2(1+i)E}{r_{+}r_{-}\,\pip}
  =\frac{2(1+i)\pip\pim}{W\,\pip}
  =\frac{2(1+i)\pim}{W},
\]
using $r_{+}r_{-}=W$ and $E=\pip\pim$ (Lemma \ref{lem:prod}). The
computation for $m=\tfrac34$ is identical with $\pip$ replaced by
$\pim$, giving $2(1+i)\pip/W$. This is \eqref{eq:dF}.

Now differentiate \eqref{eq:fclosed}. Since $-i(1+i)=1-i$,
\[
  f'=\frac{1}{4}\cdot\frac{2(1+i)}{W}
     \Bigl[-i\,\pim+\pip\Bigr]
   =\frac{1}{2W}\Bigl[(1-i)\pim+(1+i)\pip\Bigr]
   =\frac{1}{2W}\cdot 2(1-\zeta^{2})=\frac{1-\zeta^{2}}{W},
\]
because
$(1+i)(1+i\zeta^{2})+(1-i)(1-i\zeta^{2})
=\bigl[(1+i)+(1-i)\bigr]+\zeta^{2}\bigl[i(1+i)-i(1-i)\bigr]
=2-2\zeta^{2}$. Likewise, since $i(1+i)=-(1-i)$,
\[
  g'=\frac{1}{2W}\Bigl[-(1-i)\pim+(1+i)\pip\Bigr]
    =\frac{1}{2W}\cdot 2i(1+\zeta^{2})=\frac{i(1+\zeta^{2})}{W},
\]
because
$(1+i)(1+i\zeta^{2})-(1-i)(1-i\zeta^{2})
=2i+\zeta^{2}\bigl[i(1+i)+i(1-i)\bigr]=2i+2i\zeta^{2}$.

For $\mathfrak h$: by \eqref{eq:sigalg} the product
$(1-\sigma^{2})(1-\bt^{-4}\sigma^{2})=s_{+}^{2}s_{-}^{2}=W^{2}$, so
\eqref{eq:hclosed} and the chain rule give
$\mathfrak h'=\bt^{-1}\sigma'/W=\bt^{-1}\cdot2i\bt\zeta/W
=2i\zeta/W$; and $\hh'=-i\mathfrak h'=2\zeta/W$ by \eqref{eq:hh}.
Finally \eqref{eq:fgint} follows since all three functions vanish at
$\zeta=0$. (Verified numerically to $16$ digits at four points:
Appendix \ref{app:code}, Cell 0.)
\end{proof}

\begin{proposition}[conformality; the associate family]\label{prop:conf}
Let $\phi:=(f',g',\hh')=\bigl(\frac{1-\zeta^{2}}{W},
\frac{i(1+\zeta^{2})}{W},\frac{2\zeta}{W}\bigr)$. Then
\begin{equation}\label{eq:iso}
  \phi_{1}^{2}+\phi_{2}^{2}+\phi_{3}^{2}=0,\qquad
  |\phi|^{2}=\frac{2(1+|\zeta|^{2})^{2}}{|W|^{2}}>0 ,
\end{equation}
so $\zeta\mapsto\Real\int\phi$ is a conformal minimal immersion with
Gauss map $G=\zeta$, unit normal
\begin{equation}\label{eq:normal}
  N=\frac{1}{1+|\zeta|^{2}}\bigl(2u,\,2v,\,|\zeta|^{2}-1\bigr),
\end{equation}
and height differential $\dd\hh=2\zeta\dd\zeta/W$. For each Bonnet
angle $\psi$, $X_{\psi}:=\Real\int e^{i\psi}\phi$ is minimal;
$\psi=0$ gives \textup{P} and $\psi=\pi/2$ gives \textup{D}.
Consequently \textup{P} and \textup{D} are conjugate, hence isometric.
\end{proposition}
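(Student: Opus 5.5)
The argument is a direct verification from the Weierstrass data of Proposition \ref{prop:diff}, followed by the standard Weierstrass--Enneper dictionary.

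\textbf{Step 1: isotropy.} Compute
\[
(1-\zeta^{2})^{2}-(1+\zeta^{2})^{2}+4\zeta^{2}=-4\zeta^{2}+4\zeta^{2}=0 .
\]
Dividing by $W^{2}$ gives $\phi_{1}^{2}+\phi_{2}^{2}+\phi_{3}^{2}=0$.

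\textbf{Step 2: the norm.} We have
\[
|1-\zeta^{2}|^{2}+|1+\zeta^{2}|^{2}=2(1+|\zeta|^{4})
\]
by the parallelogram law, and $|2\zeta|^{2}=4|\zeta|^{2}$. The sum is $2(1+|\zeta|^{2})^{2}$. Positivity then needs only $W\neq0$ and $W$ finite on $\Om^{\circ}$. The zeros of $1+14\zeta^{4}+\zeta^{8}$ satisfy $\zeta^{4}=-\bt^{\pm2}$. These are the four points where $s_{\pm}$ vanishes, and I would check that they lie on or outside $\partial\Om$ (they should be the corners, consistent with Lemma \ref{lem:prod}). This boundary check is the only nonroutine point.

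\textbf{Step 3: minimality and conformality.} Since $\phi$ is holomorphic, isotropic and nowhere zero on $\Om^{\circ}$, the classical theory applies. Writing $X=\Real\int\phi$ gives $X_{u}=\Real\phi$ and $X_{v}=-\Imag\phi$. Isotropy yields $|X_{u}|=|X_{v}|$ and $X_{u}\cdot X_{v}=0$, with $|X_{u}|^{2}=\tfrac12|\phi|^{2}>0$. Each coordinate is harmonic, so $X$ is a conformal minimal immersion. The integrals are path-independent on the simply connected $\Om^{\circ}$ by \eqref{eq:fgint}.

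\textbf{Step 4: Weierstrass form, Gauss map and normal.} Put $\dd\hh=2\zeta\dd\zeta/W$ and $G=\zeta$. Then
\[
\tfrac12(G^{-1}-G)\dd\hh=\frac{1-\zeta^{2}}{W}\dd\zeta,\qquad
\tfrac i2(G^{-1}+G)\dd\hh=\frac{i(1+\zeta^{2})}{W}\dd\zeta .
\]
This matches $\phi$ exactly. Hence $G$ is the stereographic Gauss map, and inverse stereographic projection of $\zeta=u+iv$ gives \eqref{eq:normal}. As a check, $N\perp\Real\phi$ and $N\perp\Imag\phi$ follow from $\phi\cdot(2\zeta,\,\ldots)$-type identities; for example $N\cdot\phi\propto 2u\phi_{1}+2v\phi_{2}+(|\zeta|^{2}-1)\phi_{3}$, which reduces to zero after substituting $u=(\zeta+\bar\zeta)/2$ and $v=(\zeta-\bar\zeta)/(2i)$.

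\textbf{Step 5: the associate family.} Multiplying $\phi$ by $e^{i\psi}$ preserves holomorphy, isotropy and $|\phi|$. Each $X_{\psi}$ is therefore a conformal minimal immersion with the same metric $\tfrac12|\phi|^{2}|\dd\zeta|^{2}$, so all members are isometric. $X_{0}$ is \eqref{eq:map} up to the homothety $\kappa$ and a translation, so it is P by definition. For $\psi=\pi/2$ we get $\Real(i\phi)=-\Imag\phi$, i.e. the imaginary parts of $f,g,\hh$. By \eqref{eq:hh} these are the real-part choices used for D in \textup{[D]}, up to sign and rigid motion. Hence P and D are conjugate, and in particular isometric.
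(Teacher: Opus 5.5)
Your proposal is correct and follows the paper's proof. Both use the same two polynomial identities for isotropy and for $|\phi|^{2}$, both apply the Weierstrass--Enneper theorem with $\phi\propto(1-G^{2},i(1+G^{2}),2G)$ to get the normal, and both identify $\psi=\pi/2$ with the imaginary-part data of \textup{[D]}; your explicit computations of $X_{u},X_{v}$ and of $N\cdot\phi=0$ are welcome additions.

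One small slip: $W^{2}$ has eight zeros, not four. The four with $\zeta^{4}=-\bt^{-2}$ are the corners of $\Om$, and the four with $\zeta^{4}=-\bt^{2}$ have modulus $\bt^{1/2}>1$ and lie outside $\Om$. So $W\ne0$ on $\Om^{\circ}$ is exactly Lemma \ref{lem:factorW} together with Proposition \ref{prop:Om}(a), not a separate nonroutine check.
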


\begin{proof}
$(1-\zeta^{2})^{2}-(1+\zeta^{2})^{2}+4\zeta^{2}=0$, which is
\eqref{eq:iso}(i); and
$|1-\zeta^{2}|^{2}+|1+\zeta^{2}|^{2}+4|\zeta|^{2}
=2+2|\zeta|^{4}+4|\zeta|^{2}=2(1+|\zeta|^{2})^{2}$. Since
$\phi\propto(1-G^{2},\,i(1+G^{2}),\,2G)$ with $G=\zeta$, the normal is
\eqref{eq:normal}; the rest is the Weierstrass--Enneper theorem
\cite[\S3.3]{Nitsche}. In \textup{[D, \S2.3]} the same $\phi$ occurs
with $\mathfrak h=i\hh$ and imaginary parts in the first two slots,
which is $\psi=\pi/2$; see Remark \ref{rem:PvsD}.
\end{proof}

\begin{remark}[P versus D in one line]\label{rem:PvsD}
Writing $\Pc:=f+g$, $\Nc:=i(f-g)$ (Proposition \ref{prop:sep}), the
two surfaces are
\[
  \text{D}:\ \bigl(\Imag\Pc,\ \Real\Nc,\ \Imag\hh\bigr),\qquad
  \text{P}:\ \bigl(\Real\Pc,\ \Imag\Nc,\ \Real\hh\bigr).
\]
The functions $f,g,\mathfrak h$ of
\eqref{eq:fclosed}--\eqref{eq:hclosed} are the same in both papers;
\emph{every} $\Real$ becomes $\Imag$ and conversely. This single swap
is responsible for all the differences cataloged in
Table \ref{tab:DP}.
\end{remark}

\subsection{Separation of the coordinates}

\begin{proposition}[separation]\label{prop:sep}
Put $\Pc:=f+g$, $\Nc:=i(f-g)$ and
\begin{equation}\label{eq:AB}
  A:=\Pc=f+g,\qquad B:=-i\Nc=f-g,
\end{equation}
so that $A+B=2f$ and $A-B=2g$. Then:

\textup{(a)} $\Pc$ and $\Nc$ are \emph{halves of the two incomplete
integrals} of \eqref{eq:fclosed}--\eqref{eq:gclosed}:
\begin{equation}\label{eq:PNclosed}
  \boxed{\ \Pc=A=\tfrac12F\bigl[\theta,\tfrac34\bigr],\qquad
  \Nc=\tfrac12F\bigl[\theta,\tfrac14\bigr],\qquad
  B=-i\Nc=-\tfrac i2F\bigl[\theta,\tfrac14\bigr].\ }
\end{equation}

\textup{(b)} Consequently
\begin{equation}\label{eq:PNder}
  A'=\frac{(1+i)\pip}{W},\qquad
  B'=\frac{(1-i)\pim}{W},\qquad
  \hh'=\frac{2\zeta}{W} .
\end{equation}

\textup{(c)} With
\begin{equation}\label{eq:pqZ}
  p:=x+y,\qquad q:=x-y,\qquad Z:=z-\tfrac12,
\end{equation}
one has on $\Om$
\begin{equation}\label{eq:sepP}
  \varpi p=\Real\Pc,\qquad \varpi q=\Imag\Nc,\qquad
  \varpi Z=\Real\hh .
\end{equation}
Each right-hand side involves only \emph{one} of the three integrals
of \eqref{eq:fclosed}--\eqref{eq:hclosed}.
\end{proposition}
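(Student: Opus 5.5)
Part (a) is pure bookkeeping from the closed forms \eqref{eq:fclosed}--\eqref{eq:gclosed}. Adding them, the terms $\mp iF[\theta,\tfrac14]$ cancel and $f+g=\tfrac12F[\theta,\tfrac34]$, which gives $\Pc=A$. Subtracting gives $f-g=-\tfrac i2F[\theta,\tfrac14]$, so $B=f-g=-\tfrac i2F[\theta,\tfrac14]$ and $\Nc=i(f-g)=\tfrac12F[\theta,\tfrac14]$. Under Convention \ref{conv:br} these equalities hold as identities of the continued branches, not only near $0$. The reason is that both sides are primitives of the same differential vanishing at $\zeta=0$. The relations $A+B=2f$ and $A-B=2g$ are immediate from \eqref{eq:AB}.

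For (b) I would take the derivatives in \eqref{eq:dF} of Proposition \ref{prop:diff} and halve them. This gives $A'=\tfrac12\cdot2(1+i)\pip/W=(1+i)\pip/W$ and $\Nc'=(1+i)\pim/W$. Then $B'=-i\Nc'=-i(1+i)\pim/W=(1-i)\pim/W$. The formula $\hh'=2\zeta/W$ is already in \eqref{eq:fgh}. As a check, $A'+B'$ and $A'-B'$ should reproduce $2f'$ and $2g'$, which is the same algebra as in the proof of Proposition \ref{prop:diff}.

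For (c) I would start from \eqref{eq:map} with $\kappa=\varpi^{-1}$. Then $\varpi x=\Real f$, $\varpi y=\Real g$ and $\varpi Z=\Imag\mathfrak h=\Real\hh$, the last step by \eqref{eq:hh}. Next, $\varpi p=\Real(f+g)=\Real\Pc$. Also $\varpi q=\Real(f-g)=\Real B$, and since $B=-i\Nc$, $\Real(-i\Nc)=\Imag\Nc$. Hence $\varpi q=\Imag\Nc$. The closing remark that each right-hand side involves a single integral then follows directly from (a) and \eqref{eq:hclosed}.

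The only point needing care is branch consistency on the closure $\Om$, since principal-value formulas jump at the corners. I would handle this as in Convention \ref{conv:br}: read every identity through the integral representations \eqref{eq:fgint}. Linear combinations of primitives along a common path are again primitives of the combined differential, so the identities hold on $\Om^\circ$. They extend to $\Om$ by continuity wherever the integrals converge, since the singularities of $1/W$ at the corners are integrable. I expect no real obstacle beyond this.
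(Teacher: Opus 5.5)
Your proposal is correct and matches the paper's proof essentially step for step: (a) adds and subtracts the closed forms, (b) halves \eqref{eq:dF} and uses $-i(1+i)=1-i$, and (c) reads off real parts from \eqref{eq:map} with $\kappa=\varpi^{-1}$ and $\Real(-i\Nc)=\Imag\Nc$. Your extra branch discussion is harmless but not needed for (a): that part is a linear identity between the same two continued integrals $F[\theta,\tfrac14]$ and $F[\theta,\tfrac34]$, so no appeal to primitives is required.
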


\begin{proof}
(a) Adding \eqref{eq:fclosed} and \eqref{eq:gclosed} cancels
$F[\theta,\frac14]$ and leaves
$f+g=\tfrac12F[\theta,\tfrac34]$; subtracting them cancels
$F[\theta,\frac34]$ and leaves
$f-g=-\tfrac i2F[\theta,\tfrac14]$, whence
$\Nc=i(f-g)=\tfrac12F[\theta,\tfrac14]$.

(b) Differentiate \eqref{eq:PNclosed} using \eqref{eq:dF}:
$A'=\tfrac12\cdot\frac{2(1+i)\pip}{W}$ and
$B'=-\tfrac i2\cdot\frac{2(1+i)\pim}{W}=\frac{(1-i)\pim}{W}$, since
$-i(1+i)=1-i$. (Equivalently, add and subtract \eqref{eq:fgh}.)

(c) By \eqref{eq:map}, $\varpi p=\Real(f+g)=\Real\Pc$ and
$\varpi q=\Real(f-g)=\Real(-i\Nc)=\Imag\Nc$; the third is
\eqref{eq:hh}.
\end{proof}

\begin{remark}\label{rem:whyrotate}
Part (a) is the reason for the rotation. Individually, $x$ and $y$
involve \emph{both} $F[\theta,\frac14]$ and $F[\theta,\frac34]$ ---
integrals on two different elliptic quotients of the genus-$3$ curve
$W^{2}=1+14\zeta^{4}+\zeta^{8}$ --- so no algebraic relation ties
$\sn(\varpi x)$ to $\zeta$. The combinations \eqref{eq:sepP} involve
one integral each, and \eqref{eq:snF} then makes $\sn$ algebraic in
$\zeta$ outright: this is Lemma \ref{lem:legendre}. The obstruction and
the cure are the same as in \textup{[D, \S6.1]}.
\end{remark}

\begin{remark}[a notation hazard]\label{rem:notation}
\textup{[D]} writes $N:=F[\theta,\frac14]$ and $P:=F[\theta,\frac34]$
for the integrals themselves, whereas our $\Nc$ and $\Pc$ are their
\emph{halves}, \eqref{eq:PNclosed}. Thus
$\Pc=\tfrac12P_{\textup{[D]}}$ and $\Nc=\tfrac12N_{\textup{[D]}}$;
also \textup{[D]}'s $M:=\bt\mathfrak h$. When transcribing formulas
between the two papers, these three factors must be inserted. The same
hazard, with the same resolution, arises inside the verification cells:
$2\Nc=+2iB$, not $-2iB$.
\end{remark}

%=====================================================================
\section{The domain $\Om$}
%=====================================================================

\begin{lemma}\label{lem:factorW}
$W^{2}=(\zeta^{4}+\bt^{2})(\zeta^{4}+\bt^{-2})$. Hence $W$ has eight
simple branch points, $\zeta^{4}=-\bt^{-2}$ (modulus $\bt^{-1/2}$) and
$\zeta^{4}=-\bt^{2}$ (modulus $\bt^{1/2}$), all with
$\arg\zeta\in\frac\pi4+\frac\pi2\ZZ$.
\end{lemma}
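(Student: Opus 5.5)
The factorization is a direct expansion. Multiplying out,
\[
(\zeta^{4}+\bt^{2})(\zeta^{4}+\bt^{-2})=\zeta^{8}+(\bt^{2}+\bt^{-2})\zeta^{4}+1,
\]
and \eqref{eq:beta} gives $\bt^{2}+\bt^{-2}=14$. This is the same computation already carried out for $s_{+}^{2}s_{-}^{2}=W^{2}$ in Lemma \ref{lem:ampl}.

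Next I will locate the branch points. The zeros of $W^{2}$ are the solutions of $\zeta^{4}=-\bt^{2}$ and of $\zeta^{4}=-\bt^{-2}$. Each equation has four distinct roots, since a nonzero constant has four distinct fourth roots. The two sets are disjoint because $\bt^{2}\neq\bt^{-2}$ (as $\bt>1$). So $W^{2}$ is a degree-$8$ polynomial with eight simple zeros. At each simple zero $W=\sqrt{W^{2}}$ has a square-root branch point, and none of them is removable.

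The moduli come from $|\zeta|^{4}=\bt^{\pm2}$, which gives $|\zeta|=\bt^{\pm1/2}$. For the arguments, note that $-c$ with $c>0$ has argument $\pi$. Hence $4\arg\zeta\equiv\pi \pmod{2\pi}$, so $\arg\zeta\in\frac\pi4+\frac\pi2\ZZ$.

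No step is a real obstacle here. The only point needing care is simplicity: one must check that the two quartets do not coincide and that the quartic factors have no repeated roots. Both follow from $\bt\neq1$ and $\bt\neq0$.
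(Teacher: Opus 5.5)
Your proof is correct and follows the same route as the paper. The paper's proof is just the expansion using $\bt^{2}+\bt^{-2}=14$ and $\bt^{2}\bt^{-2}=1$, leaving implicit the location, simplicity, moduli and arguments of the roots, which you spell out correctly.
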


\begin{proof}
Expand and use $\bt^{2}+\bt^{-2}=14$, $\bt^{2}\bt^{-2}=1$.
\end{proof}

\begin{proposition}[geometry of $\Om$]\label{prop:Om}
\textup{(a)} $\Om$ is a curvilinear square with vertices
\begin{equation}\label{eq:corners}
  \zeta_{k}=\bt^{-1/2}e^{i(\pi/4+k\pi/2)},\qquad k=0,1,2,3,
  \qquad \zeta_{0}=\tfrac{\sqrt3-1}{2}(1+i),
\end{equation}
so that $\max_{\Om}|\zeta|=\bt^{-1/2}=\sqrt{2-\sqrt3}=0.5176380902\ldots$
and the four vertices are exactly the four \emph{inner} branch points
of $W$; each bounding arc meets a coordinate axis at distance
$\sqrt2-1$ from the origin.

\textup{(b)} The interior angle of $\Om$ at each vertex is $2\pi/3$; in
the local uniformiser $\xi=(\zeta-\zeta_{k})^{1/2}$ of the surface it
becomes $\pi/3$, and the immersion is regular there.

\textup{(c)} The four bounding arcs lie on great circles of the
Riemann sphere; the spherical area of $\Om$ is
$4\cdot\frac{2\pi}{3}-2\pi=\frac{2\pi}{3}$, i.e.,\ four of the $24$
fundamental cells of the octahedral rotation group, and the total
curvature of the patch is $-2\pi/3$. This is consistent with
Gauss--Bonnet for a quadrilateral with \emph{geodesic} sides and
surface angles $\pi/3$:
$\int K=2\pi-4(\pi-\frac{\pi}{3})=-\frac{2\pi}{3}$.

\textup{(d)} Each vertex is a flat point; the corresponding unit
normals are the four directions $\frac{1}{\sqrt3}(\pm1,\pm1,-1)$.
\end{proposition}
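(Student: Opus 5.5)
The plan is to verify (a)--(d) by elementary plane geometry in the coordinates $\zeta=u+iv$, using Lemma \ref{lem:factorW} for the branch points and Proposition \ref{prop:conf} for the normal and the metric.

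For (a), rewrite each boundary circle as a linear condition on $|\zeta|^{2}$. For example, $|\zeta+1|=\sqrt2$ becomes $|\zeta|^{2}+2u-1=0$, and $|\zeta+i|=\sqrt2$ becomes $|\zeta|^{2}+2v-1=0$; the other two circles are analogous. In the first quadrant the two binding constraints are those from the circles centred at $-1$ and $-i$. Their common boundary point there satisfies $u=v$ and $2u^{2}+2u-1=0$, so $u=(\sqrt3-1)/2$. This is $\zeta_{0}$, and $|\zeta_{0}|^{2}=2u^{2}=2-\sqrt3=\bt^{-1}$. Since $\zeta_{0}^{4}=\bt^{-2}e^{i\pi}=-\bt^{-2}$, Lemma \ref{lem:factorW} identifies $\zeta_{0}$ as an inner branch point. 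The other vertices follow from the symmetry $\zeta\mapsto i\zeta$ of the four defining inequalities. Setting $v=0$ on the arc $|\zeta|^{2}=1-2u$ gives the axis crossing $u^{2}+2u-1=0$, i.e.\ $u=\sqrt2-1$. For the maximum of $|\zeta|$ I would note that $|\zeta|^{2}$ is subharmonic, so its maximum is attained on $\partial\Om$. On a given arc $|\zeta|^{2}=1-2u$ (or $1-2v$, etc.) is monotone along the arc away from its axis crossing, so the maximum sits at the endpoints, namely the vertices.

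For (b), apply the law of cosines to the two circles meeting at $\zeta_{0}$: the radii are $\sqrt2,\sqrt2$ and the distance between centres is $\sqrt2$. This gives an angle of $\pi/3$ between the radii, hence an angle of $\pi/3$ or $2\pi/3$ between the arcs. I must decide which angle is interior to $\Om$, and I expect this to be the only genuinely fiddly point. I would settle it by computing the two tangent directions at $\zeta_{0}$ explicitly, each perpendicular to its radius vector $\zeta_{0}+1$ or $\zeta_{0}+i$, and checking on which side $0\in\Om$ lies. For regularity at the vertex: $\zeta_{0}$ is a simple zero of $W^{2}$, so $W\sim c\,(\zeta-\zeta_{0})^{1/2}$. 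With $\xi=(\zeta-\zeta_{0})^{1/2}$ we get $\dd\zeta/W\sim(2/c)\,\dd\xi$, and the numerators of $\phi$ do not all vanish there. Hence $\phi\,\dd\zeta$ is holomorphic and nonzero in $\xi$, the immersion is regular, and angles at $\zeta_{0}$ are halved in the $\xi$-plane.

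For (c), show that each circle is invariant under the antipodal map $\zeta\mapsto-1/\bar\zeta$. Substituting into $|\zeta|^{2}+2u-1=0$ and multiplying by $|\zeta|^{2}$ returns the negative of the same equation, so the circle is a great circle. The spherical Gauss--Bonnet formula for a geodesic quadrilateral then gives area $4\cdot\frac{2\pi}{3}-2\pi=\frac{2\pi}{3}=4\cdot\frac{4\pi}{24}$. Because $G=\zeta$ is injective on $\Om$, the total curvature equals minus the spherical area of the Gauss image. The stated check with surface angles $\pi/3$ from (b) is then arithmetic.

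For (d), the Gaussian curvature of a minimal surface with Weierstrass data as in Proposition \ref{prop:conf} is $K=-\bigl(4|G'|/(|\phi|(1+|G|^{2}))\bigr)^{2}\propto-|W|^{2}/(1+|\zeta|^{2})^{4}$. Since $W(\zeta_{k})=0$, this vanishes at each vertex. Finally, substitute $\zeta_{0}$ into \eqref{eq:normal}: using $2u=2v=\sqrt3-1$, $|\zeta_{0}|^{2}-1=1-\sqrt3$ and $1+|\zeta_{0}|^{2}=\sqrt3(\sqrt3-1)$, we get $N=\frac1{\sqrt3}(1,1,-1)$. The rotations $\zeta\mapsto i^{k}\zeta$ give the other three sign patterns.
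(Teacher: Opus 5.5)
The parts you carry out are correct and follow the paper's route: plane geometry of the four circles for (a) and (b), the chart $\xi=(\zeta-\zeta_k)^{1/2}$ at a simple zero of $W$, antipodal invariance plus spherical excess for (c), and evaluation of \eqref{eq:normal} for (d). Your linearization $|\zeta|^{2}+2u\le1$ etc.\ also gives the bound $\max_{\Om}|\zeta|=\bt^{-1/2}$ more explicitly than the paper does. The orientation question you leave open in (b) is settled in one line, as in the paper. The inward normals $-(\zeta_0+1)$ and $-(\zeta_0+i)$ of the two disks subtend $\pi/3$, by your equilateral triangle. So the intersection of the two half-planes has opening $\pi-\pi/3=2\pi/3$.

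There is one gap, in (c). The Gauss--Bonnet check is for a quadrilateral with \emph{geodesic} sides, and you treat it as pure arithmetic. The surface angles $\pi/3$ come from (b), but you still need an argument that the four image arcs are geodesics of the surface. The paper gets this from the isometry of \textup{P} with its conjugate \textup{D} (Proposition \ref{prop:conf}). Under that isometry these arcs correspond to the straight boundary lines of the \textup{D} patch, and isometries preserve geodesics.

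You can also argue intrinsically:
\begin{itemize}
\item The anticonformal involution $\zeta\mapsto\overline{\iota(\zeta)}$ fixes the arc $|\zeta+1|=\sqrt2$ pointwise, by \eqref{eq:arciota}.
\item Using \eqref{eq:Wiota}, $|\iota'(\zeta)|=2/|1+\zeta|^{2}$ and $1+|\iota\zeta|^{2}=2(1+|\zeta|^{2})/|1+\zeta|^{2}$, one checks that this involution preserves the conformal factor $(1+|\zeta|^{2})/|W|$ of the induced metric.
\item The fixed curve of an isometric reflection is a geodesic, and $\zeta\mapsto i\zeta$ handles the other three arcs.
\end{itemize}

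A cosmetic point in (d): with $X=\Real\int\phi$ the metric is $\tfrac12|\phi|^{2}|\dd\zeta|^{2}$, so the constant in your curvature formula should be $2\sqrt2$, not $4$. You only use proportionality to $|W|^{2}$, so nothing changes.
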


\begin{proof}
(a) With $c=\frac{\sqrt3-1}{2}$ one has $c^{2}=\frac{2-\sqrt3}{2}$,
hence $|\zeta_{0}+1|^{2}=(c+1)^{2}+c^{2}=2c^{2}+2c+1
=(2-\sqrt3)+(\sqrt3-1)+1=2$ and likewise $|\zeta_{0}+i|^{2}=2$, while
$|\zeta_{0}-1|^{2}=2c^{2}-2c+1=4-2\sqrt3<2$. So $\zeta_{0}$ is the
inner intersection of the circles centered at $-1$ and $-i$, and
$|\zeta_{0}|^{2}=2c^{2}=\bt^{-1}$, $\arg\zeta_{0}=\pi/4$; by
Lemma \ref{lem:factorW}, $\zeta_{0}^{4}=(i\bt^{-1})^{2}=-\bt^{-2}$, a
branch point. The circle $|\zeta+1|=\sqrt2$ meets $\RR$ at
$-1\pm\sqrt2$.

(b) The radius vectors $\zeta_{0}+1$ and $\zeta_{0}+i$ have arguments
$15^{\circ}$ and $75^{\circ}$. Near $\zeta_{0}$ each disk is, to first
order, a half-plane with inward normal $-(\zeta_{0}-O_{j})$; the two
inward normals subtend $60^{\circ}$, so the intersection angle is
$180^{\circ}-60^{\circ}=120^{\circ}$. Near a simple zero of $W$ the
integrands of \eqref{eq:fgint} behave like
$(\zeta-\zeta_{k})^{-1/2}$, so
$X-X(\zeta_{k})=O(|\zeta-\zeta_{k}|^{1/2})$ and
$\xi:=(\zeta-\zeta_{k})^{1/2}$ is a conformal chart; angles halve. The
metric is $\dd s=\frac12|\dd\hh|(|\zeta|+|\zeta|^{-1})
\asymp|\xi|^{-1}|\xi|\,|\dd\xi|$, finite and nonzero.

(c) A circle in $\CC$ is a great circle iff it is stable under
$\zeta\mapsto-1/\bar\zeta$; for $|\zeta+1|=\sqrt2$ this holds since
$|-(\sqrt2+1)+1|=\sqrt2$. The spherical excess formula gives the area.
The boundary curves of the P patch are geodesics because P is
isometric to D (Proposition \ref{prop:conf}), whose corresponding
boundary curves are straight lines \textup{[D, \S5.5]}.

(d) $G=\zeta$ has a simple branch point in the chart $\xi$, so
$\dd G=0$ and $K=0$. By \eqref{eq:normal} with
$|\zeta_{0}|^{2}=\bt^{-1}=2-\sqrt3$ and
$u=v=c=\frac{\sqrt3-1}{2}$,
\[
  N=\frac{1}{3-\sqrt3}\bigl(\sqrt3-1,\ \sqrt3-1,\ 1-\sqrt3\bigr)
   =\frac{1}{\sqrt3}(1,1,-1).
\]
The other three follow by $\zeta\mapsto i\zeta$.
\end{proof}

\begin{corollary}[regularity]\label{cor:reg}
$f,g,\hh$ are holomorphic on $\Om^{\circ}$ and extend continuously to
$\Om$; in particular \eqref{eq:map} defines a continuous map
$X:\Om\to\RR^{3}$, real-analytic on $\Om^{\circ}$. Near a corner
$\zeta_{k}$,
\begin{equation}\label{eq:cornerlocal}
  X(\zeta)-X(\zeta_{k})
  =\Real\bigl[\mathbf a_{k}(\zeta-\zeta_{k})^{1/2}\bigr]
  +O\bigl(|\zeta-\zeta_{k}|^{3/2}\bigr),\qquad
  \mathbf a_{k}\in\CC^{3}\setminus\{0\}.
\end{equation}
\end{corollary}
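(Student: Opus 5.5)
The plan is to work entirely from the integral representation \eqref{eq:fgint} of Proposition~\ref{prop:diff}, which makes $f,g,\hh$ primitives of the algebraic differentials $\phi=(1-\zeta^{2},\,i(1+\zeta^{2}),\,2\zeta)\,\dd\zeta/W$. By Lemma~\ref{lem:factorW}, the zeros of $W$ are the eight points with $\zeta^{4}=-\bt^{\pm2}$. By Proposition~\ref{prop:Om}(a), the four inner ones lie at modulus $\bt^{-1/2}=\max_{\Om}|\zeta|$ and are exactly the vertices $\zeta_k$; the outer four have modulus $\bt^{1/2}>\bt^{-1/2}$. Hence $W^{2}$ has no zero on $\Om^{\circ}$ or on $\Om\setminus\{\zeta_0,\dots,\zeta_3\}$.

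Since $\Om$ is compact and simply connected (an intersection of four closed disks, hence convex), the branch of $W$ of Convention~\ref{conv:br} is a single-valued holomorphic nonvanishing function on a neighbourhood of $\Om\setminus\{\zeta_k\}$ intersected with a slightly enlarged convex set. The integrals in \eqref{eq:fgint} are therefore path-independent and holomorphic on $\Om^{\circ}$, and they extend continuously (indeed analytically) to every boundary point except possibly the corners. It follows that $X=\kappa(\Real f,\Real g,\Real\hh)+(0,0,\tfrac12)$ is real-analytic on $\Om^{\circ}$.

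At a corner $\zeta_k$ the zero of $W^{2}$ is simple, because the eight roots are distinct. So near $\zeta_k$ we can write $W(\zeta)=(\zeta-\zeta_k)^{1/2}w_k(\zeta)$ with $w_k$ holomorphic and $w_k(\zeta_k)\neq0$, where the branch of $(\zeta-\zeta_k)^{1/2}$ is fixed on the sector $\Om\cap D(\zeta_k,\varepsilon)$. That sector has opening $2\pi/3<2\pi$ by Proposition~\ref{prop:Om}(b), so the square root is single-valued there. Each component of $\phi$ then has the form $(\zeta-\zeta_k)^{-1/2}\psi_j(\zeta)$ with $\psi_j$ holomorphic. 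Expanding $\psi_j(\zeta)=\psi_j(\zeta_k)+O(\zeta-\zeta_k)$ and integrating termwise from $\zeta_k$ gives
\[
\int_{\zeta_k}^{\zeta}\phi_j=2\psi_j(\zeta_k)(\zeta-\zeta_k)^{1/2}+O\bigl(|\zeta-\zeta_k|^{3/2}\bigr).
\]
This integral is integrable, so $f,g,\hh$ extend continuously to $\zeta_k$, with
\[
\mathbf a_k=2\kappa\bigl(\psi_1,\psi_2,\psi_3\bigr)(\zeta_k),\qquad \psi_j(\zeta)=\frac{(\zeta-\zeta_k)^{1/2}\,\phi_j}{\dd\zeta}.
\]
Taking real parts yields \eqref{eq:cornerlocal}.

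It remains to check $\mathbf a_k\neq0$. The third component is $\psi_3(\zeta_k)=2\zeta_k/w_k(\zeta_k)\neq0$ since $\zeta_k\neq0$; alternatively, the isotropy \eqref{eq:iso} together with $1+|\zeta_k|^{2}>0$ gives the same conclusion.

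The main obstacle is the bookkeeping of branches rather than the analysis. Continuity at the corners must be compatible with the continuation of Convention~\ref{conv:br} along paths in $\Om^{\circ}$, and it must also hold along the arcs where the principal $\arcsin$ representation jumps. The approach avoids this entirely by working with \eqref{eq:fgint} rather than the closed forms \eqref{eq:fclosed}--\eqref{eq:hclosed}, so convexity of $\Om$ plus the absence of zeros of $W$ off the corners settles it. One point still needs verification: that no outer branch point or other zero sits on the boundary arcs. This follows from the modulus comparison $\bt^{1/2}>\bt^{-1/2}=\max_{\Om}|\zeta|$, with equality only at the four vertices.
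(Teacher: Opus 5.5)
Your proposal is correct and follows the paper's own route: holomorphy comes from the primitives \eqref{eq:fgint} on the simply connected set $\Om^{\circ}$, where $W\ne0$. You use convexity where the paper uses star-shapedness about $0$; the corner expansion comes from the simple zero of $W^{2}$ at $\zeta_{k}$, which makes each integrand $(\zeta-\zeta_{k})^{-1/2}$ times a holomorphic function. Your explicit check that $\mathbf a_{k}\ne0$, via the third component $2\zeta_{k}/w_{k}(\zeta_{k})$, is a detail the paper leaves implicit.
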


\begin{proof}
Holomorphy is Proposition \ref{prop:diff} together with
Lemma \ref{lem:factorW} ($W\ne0$ on $\Om^{\circ}$, and $\Om^{\circ}$
is simply connected, being star-shaped about $0$). The only
singularities of $f',g',\hh'$ on $\Om$ are the four corners, where by
Lemma \ref{lem:factorW} the integrands of \eqref{eq:fgint} are
$O(|\zeta-\zeta_{k}|^{-1/2})$, which is integrable; integrating
$\mathrm{const}\cdot(\zeta-\zeta_{k})^{-1/2}$ gives
\eqref{eq:cornerlocal}.
\end{proof}

\begin{remark}
Proposition \ref{prop:Om}(d) identifies the vertices of $\Om$ with
flat points of P/D in the $\langle111\rangle$ directions, and (c) with
the fact that the Gauss map of a genus-$3$ triply periodic minimal
surface has degree $2$: the translational unit consists of
$2\cdot4\pi/(2\pi/3)=12$ copies of the patch \cite{GKP}. Since P and D 
share the Gauss map $G=\pm\zeta$, the \emph{same}
four corners are flat points in both papers, with the \emph{same}
normal directions up to the sign of the third component --- the two
papers place their patches on opposite sides of the plane
$z=\pm\frac12$; see \textup{[D, \S17]}. Their images, however, differ
completely --- a regular tetrahedron of edge $\sqrt2$ in
\textup{[D, \S6]}, a unit square in the plane $z=\frac12$ here
(Proposition \ref{prop:corners}(b)).
\end{remark}

%=====================================================================
\section{Legendre normal forms: the algebraic dictionary}
\label{sec:dict}
%=====================================================================

\begin{lemma}[the products, and the zeros on $\Om$]\label{lem:prod}
$r_{+}r_{-}=W$, $s_{+}s_{-}=W$ and $\pip\pim=E=1+\zeta^{4}$. The
branch points of $r_{+}$ (resp.\ $r_{-}$) are the four points
$\zeta^{2}=i\bt^{\pm1}$ (resp.\ $\zeta^{2}=-i\bt^{\pm1}$). On $\Om$,
$r_{+}$ vanishes exactly at the two corners $\zeta_{0},\zeta_{2}$ and
$r_{-}$ exactly at $\zeta_{1},\zeta_{3}$, while
$\pi_{\pm}\ne0$, $E\ne0$ and $s_{\pm}\ne0$ throughout.
\end{lemma}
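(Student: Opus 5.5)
The plan is to prove the three product identities first, at the level of squares, and then fix the signs from the branch convention. For the squares:
\[
r_{+}^{2}r_{-}^{2}=(1-\zeta^{4})^{2}+16\zeta^{4}=1+14\zeta^{4}+\zeta^{8}=W^{2},
\]
\[
\pip\pim=(1+i\zeta^{2})(1-i\zeta^{2})=1+\zeta^{4}=E,
\]
\[
s_{+}^{2}s_{-}^{2}=1+(\bt^{2}+\bt^{-2})\zeta^{4}+\zeta^{8}=W^{2},
\]
the last by \eqref{eq:beta}; it is already recorded in Lemma \ref{lem:ampl}. Since $r_{+}r_{-}/W$ and $s_{+}s_{-}/W$ are continuous on the simply connected set $\Om^{\circ}$, take values in $\{\pm1\}$, and equal $1$ at $\zeta=0$, they are identically $1$ there (Convention \ref{conv:br}). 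By continuity the identities then extend to the closed set $\Om$.

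Next come the branch points, which means factoring the radicands. Treated as a quadratic in $w=\zeta^{2}$, $1+4iw-w^{2}=0$ gives $w^{2}-4iw-1=0$. Hence $w=2i\pm\sqrt{-4+1}=i(2\pm\sqrt3)=i\bt^{\pm1}$. Replacing $i$ by $-i$ gives $\zeta^{2}=-i\bt^{\pm1}$ for $r_{-}$. Both roots of each quadratic are simple, so each radicand has four simple zeros, and these are the branch points. As a consistency check, their union is the set $\zeta^{4}=-\bt^{\pm2}$ of Lemma \ref{lem:factorW}.

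To locate the zeros on $\Om$, I would split by modulus. The points with $|\zeta^{2}|=\bt$ have $|\zeta|=\bt^{1/2}>\bt^{-1/2}=\max_{\Om}|\zeta|$ by Proposition \ref{prop:Om}(a), so they lie outside $\Om$. That leaves $\zeta^{2}=\pm i\bt^{-1}$, which are exactly the corners $\zeta_{k}$ of \eqref{eq:corners}. Since $\zeta_{k}^{2}=\bt^{-1}e^{i(\pi/2+k\pi)}$, the corners $k=0,2$ give $\zeta^{2}=i\bt^{-1}$, which are zeros of $r_{+}$. The corners $k=1,3$ give $-i\bt^{-1}$, which are zeros of $r_{-}$.

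For the remaining nonvanishing claims I would use only the bound $|\zeta|\le\bt^{-1/2}<1$ on $\Om$, from Proposition \ref{prop:Om}(a):
\begin{itemize}
\item $\pi_{\pm}=0$ forces $|\zeta|=1$, which is impossible on $\Om$.
\item $E=0$ also forces $|\zeta|=1$, so $E\ne0$.
\item $s_{+}=0$ forces $|\zeta|^{4}=\bt^{-2}$, i.e.\ $|\zeta|=\bt^{-1/2}$, which attains the maximum. So it can happen only at a corner. But at a corner $\zeta^{4}=-\bt^{-2}$, and then $1+\bt^{2}\zeta^{4}=1-1=0$. So $s_{+}$ does vanish at the corners.
\end{itemize}

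This last point is the main obstacle: the lemma's claim that $s_{\pm}\ne0$ throughout $\Om$ seems false at the four vertices. Indeed $W=s_{+}s_{-}$ vanishes there, while $s_{-}=0$ would need $|\zeta|=\bt^{1/2}$, so $s_{-}$ does not vanish on $\Om$. I would therefore prove the statement as ``$s_{-}\ne0$ on $\Om$, and $s_{+}\ne0$ on $\Om$ minus the four corners'', noting that this is exactly what the later arguments need (all of them are on $\Om^{\circ}$). The check that the interior has no other zeros is the modulus argument above: $|\zeta|<\bt^{-1/2}$ off the vertices, because $\max_{\Om}|\zeta|$ is attained only at the $\zeta_{k}$.
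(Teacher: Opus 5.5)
Your argument follows the paper's own proof: you expand the products of radicands, solve the quadratic in $\zeta^{2}$, and use $|\zeta|\le\bt^{-1/2}$ on $\Om$ (Proposition \ref{prop:Om}(a)); you also fix the signs of $r_{+}r_{-}=W$ and $s_{+}s_{-}=W$ by a connectedness argument that the paper leaves implicit. Your objection is correct: the paper's closing step ``$s_{\pm}=0$ would force $|\zeta|\ge1$'' fails for $s_{+}$, since $s_{+}^{2}=1+\bt^{2}\zeta^{4}$ vanishes exactly where $\zeta^{4}=-\bt^{-2}$, i.e.\ at the four corners $\zeta_{k}$ (as it must, since $W=s_{+}s_{-}$ vanishes there while $s_{-}$ does not, and $\sigma=i\bt\zeta^{2}=\mp1$ there, as Convention \ref{conv:br} itself notes), so the clause should read $s_{-}\ne0$ on $\Om$ and $s_{+}\ne0$ on $\Om\setminus\{\zeta_{0},\dots,\zeta_{3}\}$, which is all that is used later, where everything lives on $\Om^{\circ}$.
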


\begin{proof}
$(1+4i\zeta^{2}-\zeta^{4})(1-4i\zeta^{2}-\zeta^{4})
=(1-\zeta^{4})^{2}+16\zeta^{4}=1+14\zeta^{4}+\zeta^{8}$; the second
product is \eqref{eq:sigalg}. For $r_{+}$:
$\zeta^{4}-4i\zeta^{2}-1=0$ gives $\zeta^{2}=2i\pm i\sqrt3
=i\bt^{\pm1}$. On $\Om$ we have $|\zeta|^{2}\le\bt^{-1}$
(Proposition \ref{prop:Om}(a)), so only $\zeta^{2}=i\bt^{-1}$ is
attainable, i.e.,\ $\zeta\in\{\zeta_{0},\zeta_{2}\}$ since
$\zeta_{0}^{2}=\zeta_{2}^{2}=i\bt^{-1}$; conjugating gives the
statement for $r_{-}$, with
$\zeta_{1}^{2}=\zeta_{3}^{2}=-i\bt^{-1}$. Finally $\pi_{\pm}=0$ or
$E=0$ or $s_{\pm}=0$ would force $|\zeta|\ge1$.
\end{proof}

\begin{lemma}[the dictionary at moduli $\tfrac14$, $\tfrac34$ and
$\bt^{-4}$]\label{lem:legendre}
On $\Om^{\circ}$,
\begin{align}
  \bigl(\sn,\cn,\dn\bigr)\bigl(2\Pc,\tfrac34\bigr)
   &=\Bigl(\frac{2(1+i)\zeta}{r_{+}},\ \frac{r_{-}}{r_{+}},\
            \frac{\pim}{r_{+}}\Bigr),
  \label{eq:dict34}\\
  \bigl(\sn,\cn,\dn\bigr)\bigl(2\Nc,\tfrac14\bigr)
   &=\Bigl(\frac{2(1+i)\zeta}{r_{+}},\ \frac{r_{-}}{r_{+}},\
            \frac{\pip}{r_{+}}\Bigr),
  \label{eq:dict14}\\
  \bigl(\sn,\cn,\dn\bigr)\bigl(\bt\mathfrak h,\bt^{-4}\bigr)
   &=\Bigl(i\bt\zeta^{2},\ s_{+},\ s_{-}\Bigr).
  \label{eq:dictM}
\end{align}
Note that the argument in \eqref{eq:dict14} is $2\Nc=+2iB$, not
$-2iB$: since $\sn$ is odd, the wrong sign here produces a discrepancy
of exactly $-2\sn$ in the first slot while leaving $\cn$ and $\dn$
untouched (cf.\ Remark \ref{rem:notation}).
\end{lemma}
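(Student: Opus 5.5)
The plan is to read off each triple directly from the closed forms \eqref{eq:PNclosed} and \eqref{eq:hclosed}, using only the inversion relation \eqref{eq:snF} together with the Legendre data of Lemma \ref{lem:ampl}. The argument is the same for all three lines.

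For \eqref{eq:dict34}, observe first that $2\Pc=F[\theta,\tfrac34]$ by Proposition \ref{prop:sep}(a). Under Convention \ref{conv:br} this is the path integral \eqref{eq:Fpath} from $0$ to $s(\zeta)$. Hence $\sn(2\Pc,\tfrac34)=s=2(1+i)\zeta/r_{+}$ on a neighbourhood of $0$, and then, by analytic continuation, on all of $\Om^{\circ}$.

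The point of that continuation step is this. The function $u\mapsto\sn(u,m)$ inverts the integral along any path, since the differential equation $(\sn')^{2}=(1-\sn^{2})(1-m\sn^{2})$ together with $\sn(0)=0$ and $\sn'(0)=1$ characterizes it. Both sides are therefore holomorphic on the simply connected $\Om^{\circ}$ (Corollary \ref{cor:reg}) and agree near $0$. So the identity requires no control of principal branches.

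For $\cn$ and $\dn$ I will use $\cn^{2}=1-\sn^{2}=r_{-}^{2}/r_{+}^{2}$ and $\dn^{2}=1-\tfrac34\sn^{2}=\pim^{2}/r_{+}^{2}$ from \eqref{eq:salg}. This fixes $\cn=\pm r_{-}/r_{+}$ and $\dn=\pm\pim/r_{+}$. The sign is $+$ at $\zeta=0$, where everything equals $1$.

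The main obstacle is making sure the sign never switches inside $\Om^{\circ}$. I will handle it by noting that $\cn(2\Pc)\cdot r_{+}/r_{-}$ is continuous with values in $\{\pm1\}$ on the connected set $\Om^{\circ}$. This needs $r_{\pm}\ne0$ there, which holds because by Lemma \ref{lem:prod} they vanish only at the corners. Likewise $\dn\cdot r_{+}/\pim$ is locally constant because $\pim\ne0$.

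As a cross-check I can also differentiate. We have $\frac{d}{d\zeta}\sn(2\Pc)=\cn\,\dn\cdot 2A'$, while the right side has derivative $s'=2(1+i)E/r_{+}^{3}$. With $A'=(1+i)\pip/W$ and $E=\pip\pim$, $W=r_{+}r_{-}$, the two agree exactly with the claimed signs.

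Line \eqref{eq:dict14} is identical: use $2\Nc=F[\theta,\tfrac14]$ and $1-\tfrac14 s^{2}=\pip^{2}/r_{+}^{2}$. I will stress the remark about sign at this point. The argument is $2\Nc$, and writing it as $2iB$ via \eqref{eq:AB} requires $\Nc=iB$, which holds because $B=-i\Nc$.

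For \eqref{eq:dictM}, I will write $\bt\mathfrak h=F[\arcsin\sigma,\bt^{-4}]$ with $\sigma=i\bt\zeta^{2}$. The same inversion gives $\sn=\sigma$. Then \eqref{eq:sigalg} gives $\cn^{2}=s_{+}^{2}$ and $\dn^{2}=s_{-}^{2}$, and the signs are fixed at $\zeta=0$ and propagated using $s_{\pm}\ne0$ on $\Om$ (Lemma \ref{lem:prod}).
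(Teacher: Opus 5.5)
Your proposal is correct and follows the paper's proof. In both, you obtain $\sn$ by inversion via \eqref{eq:snF} from the closed forms \eqref{eq:PNclosed} and \eqref{eq:hclosed}, and then read off $\cn$ and $\dn$ from the squared relations of Lemma \ref{lem:ampl}, with signs fixed at $\zeta=0$. Your identity-theorem continuation and locally-constant-sign argument (using the nonvanishing of $r_{\pm}$, $\pi_{\pm}$ and $s_{\pm}$ on $\Om^{\circ}$ from Lemma \ref{lem:prod}) just make explicit what the paper compresses into ``all branches are $+1$ at $\zeta=0$.''
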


\begin{proof}
By \eqref{eq:PNclosed}, $2\Pc=F[\theta,\tfrac34]$ and
$2\Nc=F[\theta,\tfrac14]$, so \eqref{eq:snF} gives
$\sn(2\Pc,\tfrac34)=\sn(2\Nc,\tfrac14)=\sin\theta=s$. Then
$\cn=\sqrt{1-s^{2}}=r_{-}/r_{+}$ and
$\dn=\sqrt{1-ms^{2}}$ equals $\pip/r_{+}$ for $m=\tfrac14$ and
$\pim/r_{+}$ for $m=\tfrac34$, by Lemma \ref{lem:ampl}; all branches
are $+1$ at $\zeta=0$. Likewise
$\bt\mathfrak h=F[\arcsin\sigma,\bt^{-4}]$ by \eqref{eq:hclosed}, so
$\sn(\bt\mathfrak h,\bt^{-4})=\sigma=i\bt\zeta^{2}$, and
$\cn=\sqrt{1-\sigma^{2}}=s_{+}$,
$\dn=\sqrt{1-\bt^{-4}\sigma^{2}}=s_{-}$ by \eqref{eq:sigalg}.
(All nine entries verified numerically to $16$ digits:
Appendix \ref{app:code}, Cell 0.)
\end{proof}

\begin{lemma}[two transformations]\label{lem:imag}
\textup{(a)} \emph{(Jacobi imaginary transformation;
\cite[127.01]{BF}, \cite[\S22.4]{WW}.)} For every $m$,
\begin{equation}\label{eq:imagtr}
  \sn(iu,m)=i\,\sce(u,1-m),\qquad
  \cn(iu,m)=\nc(u,1-m),\qquad
  \dn(iu,m)=\dc(u,1-m).
\end{equation}
\textup{(b)} \emph{(Imaginary modulus.)}
\begin{equation}\label{eq:imagmod}
  \sn^{-1}\bigl(i\sqrt3\,y,-\tfrac13\bigr)
  =i\sqrt3\,\sn^{-1}(y,-3),\qquad\text{i.e.,}\qquad
  \sn\bigl(i\sqrt3\,u,-\tfrac13\bigr)=i\sqrt3\,\sn(u,-3).
\end{equation}
\end{lemma}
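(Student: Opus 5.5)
For (a) I would not rely on the citation alone. The plan is to check it by the uniqueness theorem for the Jacobi system. Regard $m$ as fixed, put $m':=1-m$, and for $u$ near $0$ define the candidate triple
\[
  (S,C,D)(u):=\bigl(i\,\sce(u,m'),\ \nc(u,m'),\ \dc(u,m')\bigr).
\]
The triple $(\sn,\cn,\dn)(iu,m)$, viewed as a function of $u$, is the unique solution of
\[
  S'=iCD,\qquad C'=-iSD,\qquad D'=-imSC,\qquad (S,C,D)(0)=(0,1,1).
\]
So it suffices to verify these three equations for the candidate. I will use the derivatives at parameter $m'$, namely $\sn'=\cn\dn$, $\cn'=-\sn\dn$, $\dn'=-m'\sn\cn$.

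The three checks run as follows:
\begin{itemize}
\item $(i\,\sce)'=i\,\dn/\cn^{2}=i\,\nc\,\dc$, which is the first equation.
\item $(\nc)'=\sn\dn/\cn^{2}$, while $-i\cdot i\,\sce\cdot\dc$ equals the same quantity.
\item $(\dc)'=\sn(\dn^{2}-m'\cn^{2})/\cn^{2}$. Using \eqref{eq:snF} at parameter $m'$, $\dn^{2}-m'\cn^{2}=1-m'=m$, so $(\dc)'=m\,\sn/\cn^{2}$, which is $-im\,(i\,\sce)(\nc)$.
\end{itemize}
The initial values are $(0,1,1)$. Hence \eqref{eq:imagtr} holds near $u=0$, and then for all $u$ as an identity of meromorphic functions.

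For (b) I would argue through the inverse function, since that is the form stated first in \eqref{eq:imagmod}. Write
\[
  \sn^{-1}(w,-\tfrac13)=\int_{0}^{w}\frac{\dd\tau}{\sqrt{(1-\tau^{2})(1+\tau^{2}/3)}}
\]
with $w=i\sqrt3\,y$, and substitute $\tau=i\sqrt3\,t$. Then $\dd\tau=i\sqrt3\,\dd t$, $1-\tau^{2}=1+3t^{2}$ and $1+\tau^{2}/3=1-t^{2}$. The integral therefore becomes
\[
  i\sqrt3\int_{0}^{y}\frac{\dd t}{\sqrt{(1-t^{2})(1+3t^{2})}}=i\sqrt3\,\sn^{-1}(y,-3).
\]
The radicals on the two sides are the same function of $t$, with the branch $+1$ at $0$, so no sign ambiguity arises near the origin. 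Applying $\sn(\cdot,-\tfrac13)$ to both sides and writing $u=\sn^{-1}(y,-3)$ gives the second form of \eqref{eq:imagmod}, first for small $u$. It then holds for all $u$ by analytic continuation, since both sides are meromorphic in $u$.

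This is just the reciprocal-modulus transformation $\sn(ku,1/m)=k\,\sn(u,m)$ with $k=\sqrt m=i\sqrt3$. An alternative check, in the style of (a), is that both sides satisfy $(w')^{2}=-3(1-w^{2})(1+w^{2}/3)$ with $w(0)=0$ and $w'(0)=i\sqrt3$.

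The only delicate point is branch bookkeeping: fixing the sign of the square roots at the origin and passing from a local identity to a global one. Working with the ODE systems and their initial data, rather than with path integrals, settles this, so I expect no real obstacle.
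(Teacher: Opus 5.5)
Your proof is correct. Part (b) is the paper's own argument: the same substitution, run in the opposite direction. The paper puts $\tau=\varrho/(i\sqrt3)$ into the integral for $\sn^{-1}(y,-3)$; you put $\tau=i\sqrt3\,t$ into the integral for $\sn^{-1}(w,-\tfrac13)$.

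Part (a) is where you differ. The paper gives no argument there; it calls the imaginary transformation classical and cites Byrd--Friedman and Whittaker--Watson. Your verification is sound and makes the lemma self-contained at the cost of a few lines. You check that the candidate triple satisfies $S'=iCD$, $C'=-iSD$, $D'=-imSC$ with data $(0,1,1)$, and then invoke uniqueness for that analytic system. The only nontrivial step, $\dn^{2}-m'\cn^{2}=1-m'=m$, is right.

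Your closing remark identifies (b) as the reciprocal-parameter transformation at $m=-3$, confirmed through the ODE $(w')^{2}=P(w)$ with $P(0)\neq0$. That is exactly the method the paper uses later in Lemma~\ref{lem:recip}. The paper states that lemma only for $m>1$, but its proof works verbatim at $m=-3$ and so gives a second proof of (b).
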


\begin{proof}
(a) is classical. For (b), substitute $\tau=\varrho/(i\sqrt3)$ in
$\sn^{-1}(y,-3)=\int_{0}^{y}
\dd\tau/\sqrt{(1-\tau^{2})(1+3\tau^{2})}$: the integrand becomes
$\frac{\dd\varrho/(i\sqrt3)}
{\sqrt{(1+\varrho^{2}/3)(1-\varrho^{2})}}$, so
$i\sqrt3\,\sn^{-1}(y,-3)
=\int_{0}^{i\sqrt3 y}\dd\varrho/\sqrt{(1-\varrho^{2})
(1+\tfrac13\varrho^{2})}=\sn^{-1}(i\sqrt3y,-\tfrac13)$.
\end{proof}

\begin{lemma}[rotation]\label{lem:rot}
$A(i\zeta)=-B(\zeta)$ and $B(i\zeta)=A(\zeta)$; equivalently
$B(-i\zeta)=-A(\zeta)$ and $A(-i\zeta)=B(\zeta)$. Moreover
$\hh(i\zeta)=-\hh(\zeta)$.
\end{lemma}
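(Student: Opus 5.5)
Since $A$, $B$ and $\hh$ are the primitives vanishing at $\zeta=0$ of the algebraic differentials \eqref{eq:PNder} (Proposition \ref{prop:sep}(b) together with \eqref{eq:fgint}), we compare derivatives and then use that both sides vanish at $0$. The domain $\Om$ is invariant under $\zeta\mapsto i\zeta$: the four circles $|\zeta\mp1|=\sqrt2$, $|\zeta\mp i|=\sqrt2$ are permuted. Also $\Om^{\circ}$ is star-shaped about $0$, so every path from $0$ to $\zeta$ in $\Om^{\circ}$ rotates to a path from $0$ to $i\zeta$ in $\Om^{\circ}$. Hence $\zeta\mapsto A(i\zeta)$ is holomorphic on $\Om^{\circ}$, vanishes at $0$, and has derivative $iA'(i\zeta)$.

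First we check the branches. $W^{2}=1+14\zeta^{4}+\zeta^{8}$ is invariant under $\zeta\mapsto i\zeta$, since $(i\zeta)^{4}=\zeta^{4}$. Therefore $W(i\zeta)$ and $W(\zeta)$ are two square roots of the same nonvanishing function on the simply connected set $\Om^{\circ}$, and both equal $1$ at $0$. So $W(i\zeta)=W(\zeta)$ by continuity; this is the only place Convention \ref{conv:br} enters. Also $\pip(i\zeta)=1-i\zeta^{2}=\pim(\zeta)$ and $\pim(i\zeta)=\pip(\zeta)$.

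Now we compute the derivatives:
\[
\frac{\dd}{\dd\zeta}A(i\zeta)=i\,\frac{(1+i)\pim(\zeta)}{W(\zeta)}=\frac{(i-1)\pim}{W}=-B'(\zeta),\qquad
\frac{\dd}{\dd\zeta}B(i\zeta)=i\,\frac{(1-i)\pip(\zeta)}{W(\zeta)}=\frac{(1+i)\pip}{W}=A'(\zeta),
\]
\[
\frac{\dd}{\dd\zeta}\hh(i\zeta)=i\cdot\frac{2i\zeta}{W}=-\hh'(\zeta).
\]
All the functions vanish at $0$, so integrating gives $A(i\zeta)=-B(\zeta)$, $B(i\zeta)=A(\zeta)$ and $\hh(i\zeta)=-\hh(\zeta)$.

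For the equivalent forms, replace $\zeta$ by $-i\zeta$, which also lies in $\Om^{\circ}$. The first identity becomes $A(\zeta)=-B(-i\zeta)$, i.e.\ $B(-i\zeta)=-A(\zeta)$. The second becomes $B(\zeta)=A(-i\zeta)$.

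On the boundary, the identities extend to all of $\Om$ by the continuity in Corollary \ref{cor:reg}. Otherwise the argument is routine, and the branch identification $W(i\zeta)=W(\zeta)$ is the only point needing care.
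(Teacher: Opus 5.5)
Your proof is correct and follows the paper's route. You compare derivatives using $W(i\zeta)=W(\zeta)$ and $\pi_{\pm}(i\zeta)=\pi_{\mp}(\zeta)$, integrate from $0$, and substitute $\zeta\mapsto-i\zeta$ for the equivalent forms. The differences are cosmetic: the paper justifies $W(i\zeta)=W(\zeta)$ by noting that the Taylor series of $W$ is in $\zeta^{4}$, and obtains $\hh(i\zeta)=-\hh(\zeta)$ by writing $\hh=H(\zeta^{2})$ with $H$ odd. You instead use uniqueness of the square-root branch on the simply connected $\Om^{\circ}$ and differentiate $\hh(i\zeta)$ directly.
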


\begin{proof}
Using $W(i\zeta)=W(\zeta)$ (the Taylor coefficients of $W$ at $0$ are
a power series in $\zeta^{4}$) and \eqref{eq:PNder},
\[
  \frac{\dd}{\dd\zeta}A(i\zeta)=iA'(i\zeta)
   =\frac{i(1+i)\bigl(1+i(i\zeta)^{2}\bigr)}{W}
   =\frac{i(1+i)\pim}{W}=-\frac{(1-i)\pim}{W}=-B'(\zeta),
\]
since $i(1+i)=-(1-i)$; and
\[
  \frac{\dd}{\dd\zeta}B(i\zeta)=iB'(i\zeta)
   =\frac{i(1-i)\bigl(1-i(i\zeta)^{2}\bigr)}{W}
   =\frac{i(1-i)\pip}{W}=\frac{(1+i)\pip}{W}=A'(\zeta),
\]
since $i(1-i)=1+i$. Both pairs vanish at $\zeta=0$, so the integrated
identities hold. Replacing $\zeta$ by $-i\zeta$ gives the second pair.
Finally, by \eqref{eq:fgint} and $\tau=\eta^{2}$,
\begin{equation}\label{eq:h1}
  \hh(\zeta)=\int_{0}^{\zeta^{2}}
      \frac{\dd\tau}{\sqrt{1+14\tau^{2}+\tau^{4}}}
   =:H(\zeta^{2})
\end{equation}
with $H$ odd, so $\hh(i\zeta)=H(-\zeta^{2})=-\hh(\zeta)$.
\end{proof}

\begin{theorem}[dictionary for $A$]\label{thm:dictA}
On $\Om^{\circ}$, at modulus $-3$,
\begin{equation}\label{eq:dictA}
  \sn A=\frac{(1+i)\zeta}{\pim},\qquad
  \cn A=\frac{r_{-}}{\pim},\qquad
  \dn A=\frac{r_{+}}{\pim} .
\end{equation}
\end{theorem}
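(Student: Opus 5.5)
The plan is to verify \eqref{eq:dictA} through the defining differential equation of $\sn$ at modulus $-3$, with no transformation theory. Put
\[
  S:=\frac{(1+i)\zeta}{\pim},\qquad C:=\frac{r_{-}}{\pim},\qquad D:=\frac{r_{+}}{\pim}.
\]
By Lemma \ref{lem:prod}, $\pim\ne0$ on $\Om$, so these are holomorphic on $\Om^{\circ}$ with $S(0)=0$ and $C(0)=D(0)=1$.

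\emph{Step 1: the two quadratic relations.} Use $(1+i)^{2}=2i$ and $\pim^{2}=1-2i\zeta^{2}-\zeta^{4}$. Then
\[
  S^{2}+C^{2}=\frac{2i\zeta^{2}+1-4i\zeta^{2}-\zeta^{4}}{\pim^{2}}=1,
  \qquad
  D^{2}-3S^{2}=\frac{1+4i\zeta^{2}-\zeta^{4}-6i\zeta^{2}}{\pim^{2}}=1 .
\]
These are exactly the relations \eqref{eq:snF} at $m=-3$.

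\emph{Step 2: the derivative.} The quotient rule gives
\[
  S'=\frac{(1+i)\bigl(\pim+2i\zeta^{2}\bigr)}{\pim^{2}}=\frac{(1+i)\pip}{\pim^{2}}.
\]
On the other hand, Lemma \ref{lem:prod} ($r_{+}r_{-}=W$) and \eqref{eq:PNder} give
\[
  C\,D\,A'=\frac{r_{+}r_{-}}{\pim^{2}}\cdot\frac{(1+i)\pip}{W}=\frac{(1+i)\pip}{\pim^{2}} .
\]
Hence $S'=C\,D\,A'$ on $\Om^{\circ}$.

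\emph{Step 3: identification.} Consider $\Phi(\zeta):=\int_{0}^{S(\zeta)}\dd\tau\big/\sqrt{(1-\tau^{2})(1+3\tau^{2})}$. The path is the image under $S$ of a path from $0$ in $\Om^{\circ}$, and the radical is continued from $+1$. By Step 1, the continued radical at $\tau=S(\zeta)$ equals $C(\zeta)D(\zeta)$: the two agree at $\zeta=0$, their squares agree, and $\Om^{\circ}$ is connected. Thus $\Phi'=S'/(CD)=A'$ by Step 2, and $\Phi(0)=A(0)=0$, so $\Phi=A$. By definition of $\sn$ as the inverse of this integral, $\sn(A,-3)=S$.

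It then remains to fix the signs of $\cn$ and $\dn$. The functions $\cn(A,-3)$ and $C$ are holomorphic with equal squares $1-S^{2}$, so
\[
  \bigl(\cn A-C\bigr)\bigl(\cn A+C\bigr)=0 .
\]
By the identity theorem one factor vanishes identically, and $\cn A+C=2$ at $\zeta=0$, so $\cn A=C$. The same argument gives $\dn A=D$.

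\emph{Expected obstacle and alternative.} The only delicate point is the branch bookkeeping in Step 3. If the inverse integral is path-dependent, one must check that $\Phi$ and $A$ follow the same sheet. Differentiating along paths from $0$ and using the simple connectivity of $\Om^{\circ}$ (Corollary \ref{cor:reg}) handles this. An alternative route, which the paper appears to intend, starts from \eqref{eq:dict34} for $2\Pc=2A$ at modulus $\tfrac34$. Applying a descending Gauss (Landen) transformation $2K[3/4]$-type halving should map modulus $\tfrac34$ to $-3$ and produce rational expressions. I would keep that as a cross-check, since the ODE verification above is self-contained.
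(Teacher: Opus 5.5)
Your proof is correct, and it takes a genuinely different route from the paper's.

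The paper argues by transport, in three lines. Proposition~\ref{prop:gauss} gives $\sn(u,-3)=\tfrac12\sd(2u,\tfrac34)$, $\cn(u,-3)=\cd(2u,\tfrac34)$ and $\dn(u,-3)=\nd(2u,\tfrac34)$. With $u=A$, the right-hand sides are read off from the modulus-$\tfrac34$ dictionary \eqref{eq:dict34}, i.e.\ from the closed form $2\Pc=F[\theta,\tfrac34]$. (That transformation is not a Landen halving, as you guessed. It is the degree-one negative-parameter relation coming from $K[-3]=\tfrac12K[3/4]$, Lemma~\ref{lem:negmod}; the paper's Landen step $\tfrac34\to\tfrac19$ is used only for the height.)

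You instead verify the candidate triple from the differential alone, using nothing but $A'=(1+i)\pip/W$ and $r_{+}r_{-}=W$.

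The paper's route explains where the formulas come from: they are the closed forms of the abstract seen through one change of parameter. But it rests on those closed forms and their branch conventions (Convention~\ref{conv:br}). Yours presupposes the answer. In exchange, it is independent of the closed forms and of Proposition~\ref{prop:gauss}, and much shorter than the self-contained derivation of Remark~\ref{rem:selfcont}. It also applies verbatim elsewhere. The same checks give \eqref{eq:dictB}, and they give \eqref{eq:Hnew} in a few lines, since $\frac{\dd}{\dd\zeta}\frac{2\zeta^{2}}{E}=\frac{4\zeta(1-\zeta^{4})}{E^{2}}=\frac{1-\zeta^{4}}{E}\cdot\frac{W}{E}\cdot\frac{4\zeta}{W}$.

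One point of rigor in Step~3. The argument ``equal squares and equal at $0$'' works because $CD=W/\pim^{2}$ never vanishes on $\Om^{\circ}$; connectedness alone would not suffice. Alternatively, you can skip the path bookkeeping entirely. Local inversion gives $\sn(A,-3)=S$ near $\zeta=0$, and the identity theorem for the meromorphic function $\sn(A(\zeta),-3)$ on the connected set $\Om^{\circ}$ does the rest.
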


\begin{proof}
By the Gauss transformation (Proposition \ref{prop:gauss}, whose proof
uses nothing from this section),
$\sn(u,-3)=\tfrac12\sd(2u,\tfrac34)$,
$\cn(u,-3)=\cd(2u,\tfrac34)$, $\dn(u,-3)=\nd(2u,\tfrac34)$. Apply this
with $u=A=\Pc$ and read off $2A$ from \eqref{eq:dict34}:
\[
  \sn(A,-3)=\tfrac12\cdot
   \frac{2(1+i)\zeta/r_{+}}{\pim/r_{+}}=\frac{(1+i)\zeta}{\pim},
  \qquad
  \cn(A,-3)=\frac{r_{-}/r_{+}}{\pim/r_{+}}=\frac{r_{-}}{\pim},
  \qquad
  \dn(A,-3)=\frac{r_{+}}{\pim}. \qedhere
\]
\end{proof}

\begin{theorem}[dictionary for $B$]\label{thm:dictB}
On $\Om^{\circ}$, at modulus $-3$,
\begin{equation}\label{eq:dictB}
  \sn B=\frac{(1-i)\zeta}{\pip},\qquad
  \cn B=\frac{r_{+}}{\pip},\qquad
  \dn B=\frac{r_{-}}{\pip} .
\end{equation}
\end{theorem}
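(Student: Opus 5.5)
The quickest route is to deduce \eqref{eq:dictB} from Theorem \ref{thm:dictA} by the rotation symmetry of Lemma \ref{lem:rot}, rather than repeating the Gauss-transformation argument. By Lemma \ref{lem:rot}, $B(\zeta)=A(-i\zeta)$. The domain $\Om$ is invariant under $\zeta\mapsto-i\zeta$. The branches of Convention \ref{conv:br} are defined by continuation from $0$ along paths in the star-shaped set $\Om^{\circ}$, so they are carried to each other by this rotation. We may therefore substitute $-i\zeta$ into \eqref{eq:dictA}, which holds on all of $\Om^{\circ}$.

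Under $\zeta\mapsto-i\zeta$ we have $\zeta^{2}\mapsto-\zeta^{2}$, and hence $\pim=1-i\zeta^{2}\mapsto1+i\zeta^{2}=\pip$. For the radicals, $r_{\pm}^{2}=1\pm4i\zeta^{2}-\zeta^{4}$ maps to $1\mp4i\zeta^{2}-\zeta^{4}$. So $r_{+}(-i\zeta)$ and $r_{-}(\zeta)$ are square roots of the same polynomial, and both equal $+1$ at $0$. By uniqueness of analytic continuation on the simply connected $\Om^{\circ}$ (where $r_{\pm}\neq0$ by Lemma \ref{lem:prod}), $r_{+}(-i\zeta)=r_{-}(\zeta)$, and likewise $r_{-}(-i\zeta)=r_{+}(\zeta)$.

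Substituting into \eqref{eq:dictA} gives
\[
\sn B=\frac{(1+i)(-i\zeta)}{\pip}=\frac{(1-i)\zeta}{\pip},\qquad
\cn B=\frac{r_{+}}{\pip},\qquad
\dn B=\frac{r_{-}}{\pip},
\]
using $(1+i)(-i)=1-i$. This is exactly \eqref{eq:dictB}.

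The only delicate point is the branch bookkeeping. One must check that the identity $A(-i\zeta)=B(\zeta)$ of Lemma \ref{lem:rot} is compatible with the path-continued branches, not just with the principal ones. This holds because both sides are primitives of holomorphic functions on $\Om^{\circ}$ that vanish at $0$.

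As an independent check, one could redo the proof of Theorem \ref{thm:dictA} directly. Take $u=B=-i\Nc$ and use \eqref{eq:dict14} at $2\Nc$, together with Lemma \ref{lem:imag}(a) to convert $\sn(-2i\Nc,\cdot)$ into functions at modulus $\tfrac34$. A Gauss transformation then brings the result to modulus $-3$. This is longer, so the rotation argument is the primary plan.
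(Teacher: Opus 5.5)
Your proposal is correct and is essentially the paper's first proof. The paper uses the form $B(\zeta)=-A(i\zeta)$ of Lemma~\ref{lem:rot} together with the parity of $\sn,\cn,\dn$; you use the equivalent form $B(\zeta)=A(-i\zeta)$, which removes the parity step, and your branch bookkeeping ($r_{\pm}(-i\zeta)=r_{\mp}(\zeta)$, $\pim(-i\zeta)=\pip(\zeta)$) is sound. Your ``independent check'' through Lemma~\ref{lem:imag}(a) and the Gauss transformation is exactly the paper's second proof.
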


\begin{proof}
\emph{First proof.} By Lemma \ref{lem:rot}, $B(\zeta)=-A(i\zeta)$.
Since $\sn$ is odd and $\cn,\dn$ even, \eqref{eq:dictA} at $i\zeta$
gives, using $\pim(i\zeta)=1-i(i\zeta)^{2}=\pip(\zeta)$ and
$r_{\pm}(i\zeta)=r_{\mp}(\zeta)$,
\[
  \sn(B,-3)=-\frac{(1+i)(i\zeta)}{\pip}=\frac{(1-i)\zeta}{\pip},
  \qquad
  \cn(B,-3)=\frac{r_{+}}{\pip},\qquad
  \dn(B,-3)=\frac{r_{-}}{\pip},
\]
because $-(1+i)i=1-i$.

\emph{Second proof (from the closed form, as a check).} By
\eqref{eq:PNclosed}, $2B=-iF[\theta,\tfrac14]=i\cdot(-2\Nc)$. Apply
\eqref{eq:imagtr} with $u=-2\Nc$, $m=\tfrac34$, $1-m=\tfrac14$, and
read off the modulus-$\tfrac14$ data from \eqref{eq:dict14}:
\[
  \sn\bigl(2B,\tfrac34\bigr)=-i\,\sce\bigl(2\Nc,\tfrac14\bigr)
   =-i\,\frac{2(1+i)\zeta/r_{+}}{r_{-}/r_{+}}
   =\frac{2(1-i)\zeta}{r_{-}},\qquad
  \cn\bigl(2B,\tfrac34\bigr)=\nc\bigl(2\Nc,\tfrac14\bigr)
   =\frac{r_{+}}{r_{-}},
\]
\[
  \dn\bigl(2B,\tfrac34\bigr)=\dc\bigl(2\Nc,\tfrac14\bigr)
   =\frac{\pip/r_{+}}{r_{-}/r_{+}}=\frac{\pip}{r_{-}} .
\]
Now Proposition \ref{prop:gauss} with $u=B$ gives
$\sn(B,-3)=\tfrac12\cdot\frac{2(1-i)\zeta/r_{-}}{\pip/r_{-}}
=\frac{(1-i)\zeta}{\pip}$,
$\cn(B,-3)=\frac{r_{+}/r_{-}}{\pip/r_{-}}=\frac{r_{+}}{\pip}$ and
$\dn(B,-3)=\frac{r_{-}}{\pip}$, as claimed. (The intermediate
modulus-$\tfrac34$ triple is the block \texttt{chkB34} of
Appendix \ref{app:code}, Cell 0.)
\end{proof}

\begin{remark}[a self-contained route]\label{rem:selfcont}
Theorems \ref{thm:dictA}--\ref{thm:dictB} were originally obtained
without \eqref{eq:PNclosed}, directly from the differentials
\eqref{eq:PNder}, as follows; we record the argument because it makes
\S\ref{sec:dict} independent of \textup{[D]} and of the closed forms.
Put $\xi:=e^{i\pi/4}\zeta$, so $\zeta^{4}=-\xi^{4}$,
$W^{2}=\xi^{8}-14\xi^{4}+1$, $\pim=1-\xi^{2}$ and
$(1+i)e^{-i\pi/4}=\sqrt2$; then
$\Nc=\sqrt2\int_{0}^{\xi}(1-\eta^{2})
\bigl(\eta^{8}-14\eta^{4}+1\bigr)^{-1/2}\dd\eta$. With
$\mu:=\eta+\eta^{-1}$ one has
$\eta^{8}-14\eta^{4}+1=\eta^{4}\bigl((\mu^{2}-2)^{2}-16\bigr)
=\eta^{4}(\mu^{2}-6)(\mu^{2}+2)$ and
$\dd\mu=\frac{\eta^{2}-1}{\eta^{2}}\dd\eta$, so
$\Nc=\sqrt2\int_{\mu}^{\infty}
\dd\mu'\bigl[(\mu'^{2}-6)(\mu'^{2}+2)\bigr]^{-1/2}$; the substitution
$\tau=\sqrt6/\mu'$ turns this into
$\sqrt3\,\Nc=\sn^{-1}(\sqrt6/\mu,-\tfrac13)$, and
\eqref{eq:imagmod} with $i\sqrt3\,y=\sqrt6/\mu$ gives
$\sn(-i\Nc,-3)=-i\sqrt2\,\xi/(1+\xi^{2})=(1-i)\zeta/\pip$, i.e.,\
\eqref{eq:dictB}. Then $\cn$ and $\dn$ follow from
$1-S^{2}=r_{+}^{2}/\pip^{2}$ and $1+3S^{2}=r_{-}^{2}/\pip^{2}$ with
$S=(1-i)\zeta/\pip$, and \eqref{eq:dictA} follows by
Lemma \ref{lem:rot}. The route through \eqref{eq:PNclosed} is of
course much shorter, and is the one we adopt.
\end{remark}

\begin{theorem}[dictionary for the height, at modulus $-3$]
\label{thm:H}
On $\Om^{\circ}$,
\begin{equation}\label{eq:Hnew}
  \boxed{\ \sn\bigl(2\hh,-3\bigr)=\frac{2\zeta^{2}}{E},\qquad
  \cn\bigl(2\hh,-3\bigr)=\frac{1-\zeta^{4}}{E},\qquad
  \dn\bigl(2\hh,-3\bigr)=\frac{W}{E},\qquad E=1+\zeta^{4}.\ }
\end{equation}
Equivalently $\cd(2\hh,-3)=(1-\zeta^{4})/W$ and
$2\hh=K[-3]+F\bigl[\arctan\frac{\zeta^{4}-1}{4\zeta^{2}},-3\bigr]$.
\end{theorem}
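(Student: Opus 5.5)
Define the three candidate functions
\[
S:=\frac{2\zeta^{2}}{E},\qquad C:=\frac{1-\zeta^{4}}{E},\qquad D:=\frac{W}{E},
\]
and show that $(S,C,D)$ satisfies the Jacobi initial-value problem that characterizes $(\sn,\cn,\dn)(2\hh,-3)$. This avoids any transformation of the modulus-$\bt^{-4}$ closed form \eqref{eq:dictM}: we work only with $\hh'=2\zeta/W$ from Proposition \ref{prop:diff}, exactly as in Remark \ref{rem:selfcont}.

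First, the algebraic relations. We have $S^{2}+C^{2}=(4\zeta^{4}+1-2\zeta^{4}+\zeta^{8})/E^{2}=1$. Also, at $m=-3$, $D^{2}+mS^{2}=(W^{2}-12\zeta^{4})/E^{2}=(1+2\zeta^{4}+\zeta^{8})/E^{2}=1$. Both use only $W^{2}=1+14\zeta^{4}+\zeta^{8}$.

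Second, the differential relation $\frac{\dd S}{\dd\zeta}=C\,D\cdot\frac{\dd(2\hh)}{\dd\zeta}$. The left side is
\[
\frac{4\zeta E-2\zeta^{2}\cdot4\zeta^{3}}{E^{2}}=\frac{4\zeta(1-\zeta^{4})}{E^{2}}.
\]
The right side is
\[
\frac{(1-\zeta^{4})W}{E^{2}}\cdot\frac{4\zeta}{W},
\]
which is the same expression. At $\zeta=0$ we have $(S,C,D)=(0,1,1)$ and $\hh=0$.

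Now set $u:=2\hh(\zeta)$. Near $\zeta=0$ the relation $\hh'(0)=0$ means $u$ is not a local coordinate, so we argue via $\tau=\zeta^{2}$, as in \eqref{eq:h1}. Write $u=2H(\tau)$ with $H'(0)=1$. Then $S=2\tau/(1+\tau^{2})$ is an analytic function of $\tau$ with $\dd S/\dd u=CD=\sqrt{(1-S^{2})(1+3S^{2})}$, where the branch is fixed by $C=D=1$ at $\tau=0$, and $S(0)=0$. Since $S$ is locally invertible near $\tau=0$, this gives
\[
u=\int_{0}^{S}\frac{\dd t}{\sqrt{(1-t^{2})(1+3t^{2})}}=\sn^{-1}(S,-3),
\]
so $S=\sn(u,-3)$ near $0$. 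Then $C=\cn(u,-3)$ and $D=\dn(u,-3)$, because both sides satisfy the same quadratic relations, are continuous, and equal $1$ at $0$.

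The identity theorem on the connected open set $\Om^{\circ}$ extends all three identities, since both sides are analytic in $\zeta$ there. Here $\sn(2\hh,-3)$ is meromorphic in $\zeta$ because $\hh$ is holomorphic (Corollary \ref{cor:reg}), and $E\ne0$ on $\Om$ by Lemma \ref{lem:prod}. For $\dn$ there is a branch subtlety: $W/E$ is single-valued on $\Om^{\circ}$ with $W\ne0$, and $\dn(2\hh,-3)$ is analytic, so the identity propagates without sign change.

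The equivalent forms are then short consequences. For the first, $\cd=C/D=(1-\zeta^{4})/W$. For the second, use $\sn(u-K,m)=-\cd(u,m)$ at $m=-3$: we get
\[
\sn(2\hh-K[-3],-3)=-\frac{1-\zeta^{4}}{W}=\frac{\zeta^{4}-1}{W}.
\]
Take the amplitude $\varphi$ with $\tan\varphi=(\zeta^{4}-1)/(4\zeta^{2})$. Then $\sin\varphi=(\zeta^{4}-1)/\sqrt{(\zeta^{4}-1)^{2}+16\zeta^{4}}=(\zeta^{4}-1)/W$, since $(\zeta^{4}-1)^{2}+16\zeta^{4}=W^{2}$. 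This gives $2\hh-K[-3]=F[\varphi,-3]$ near $\zeta=0$, where $\varphi\approx-\pi/2$ and $2\hh-K\approx-K$, so the branches are consistent; analytic continuation handles the rest.

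The main obstacle I expect is the branch and basepoint bookkeeping in this last form, since $\zeta=0$ sits at the endpoint $\varphi=-\pi/2$. The same issue appears in the degenerate start $\hh'(0)=0$, which the substitution $\tau=\zeta^{2}$ handles. As an independent check, the first three formulas can be derived from \eqref{eq:dictM} by a Landen transformation taking $\bt^{-4}$ to $-3$. I would use this only as confirmation, since the ODE argument is self-contained.
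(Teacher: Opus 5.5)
Your proposal is correct, but it runs in the opposite direction from the paper.

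\textbf{The paper's route.} The paper derives the dictionary constructively. Starting from \eqref{eq:h1}, it substitutes $\nu=\tau-\tau^{-1}$ and then $\nu=4\tan\varphi$. This turns $\hh$ into the shifted Legendre integral $2\hh=K[-3]+F[\varphi_{0},-3]$ of \eqref{eq:h3}. The quarter-period shift then gives $\cd(2\hh,-3)=(1-\zeta^{4})/W$. Finally the triple is obtained by solving the quadratic relations for $S^{2},C^{2},D^{2}$, with signs fixed at $\zeta=0$. So the paper proves your ``equivalent forms'' first and deduces the boxed formulas from them.

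\textbf{Your route.} You take the triple as an ansatz and verify the Jacobi initial-value problem: $S^{2}+C^{2}=1$, $D^{2}-3S^{2}=1$, and $\dd S/\dd u=CD$ with $u=2\hh$. You handle the degenerate point $\hh'(0)=0$ correctly through $\tau=\zeta^{2}$, and then continue by the identity theorem.

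\textbf{Comparison.} Your argument is shorter and purely algebraic. It needs only $\hh'=2\zeta/W$, and it produces the signed $S$ directly rather than recovering it from $S^{2}$. Its cost is that it presupposes the answer. The paper's derivation explains where the formulas come from, namely why the quarter-period shift makes the dictionary rational. It also delivers the $F$-form with its branch automatically, since $\varphi$ runs through real values starting at $-\pi/2$.

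\textbf{The one step to tighten: the $F$-form.} The point $u=-K[-3]$ is a critical point of $\sn(\cdot,-3)$, because $\cn$ vanishes there. So knowing $\sn(2\hh-K,-3)=\sin\varphi$, with both arguments near $-K$, only yields $K+F[\varphi,-3]=\pm2\hh$. Saying ``the branches are consistent'' does not decide the sign. Note also that the principal $\arctan$ of $(\zeta^{4}-1)/(4\zeta^{2})$ lands on its branch cut along the diagonals $\arg\zeta=\pm\pi/4,\pm3\pi/4$, however small $|\zeta|$ is. So ``analytic continuation from $0$'' must start from a different branch. Take
\[
\varphi=-\tfrac\pi2+\arctan\frac{4\zeta^{2}}{1-\zeta^{4}},
\]
which is analytic at $\zeta=0$. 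For this branch, $K+F[\varphi,-3]=\tfrac12(\varphi+\tfrac\pi2)+\cdots=2\zeta^{2}+\cdots$, which matches $2\hh=2\zeta^{2}+\cdots$ and fixes the sign as $+$. Equivalently, use $1+3\sin^{2}\varphi=4E^{2}/W^{2}$ to get $\frac{\dd}{\dd\zeta}F[\varphi,-3]=4\zeta/W=2\hh'$ directly. With that one line added, your proof of the full statement is complete.
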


\begin{proof}
\emph{Step 1} is \eqref{eq:h1}.

\emph{Step 2 (the substitution $\nu=\tau-\tau^{-1}$).} Then
$\nu^{2}=\tau^{2}-2+\tau^{-2}$, so
$1+14\tau^{2}+\tau^{4}=\tau^{2}(\nu^{2}+16)$, while
$\dd\nu=\frac{\tau^{2}+1}{\tau^{2}}\dd\tau$; since
$(\tau+\tau^{-1})^{2}=\nu^{2}+4$,
\[
  \frac{\dd\tau}{\tau\sqrt{\nu^{2}+16}}
  =\frac{\dd\nu}{(\tau+\tau^{-1})\sqrt{\nu^{2}+16}}
  =\frac{\dd\nu}{\sqrt{(\nu^{2}+4)(\nu^{2}+16)}} .
\]
As $\tau\to0^{+}$, $\nu\to-\infty$, so with
$\nu_{0}:=\zeta^{2}-\zeta^{-2}$,
\begin{equation}\label{eq:h2}
  \hh=\int_{-\infty}^{\nu_{0}}
      \frac{\dd\nu}{\sqrt{(\nu^{2}+4)(\nu^{2}+16)}} .
\end{equation}
\emph{Step 3 (Legendre form).} Put $\nu=4\tan\varphi$. Then
$\nu^{2}+16=16\sec^{2}\varphi$,
$\nu^{2}+4=\frac{4(1+3\sin^{2}\varphi)}{\cos^{2}\varphi}$ and
$\dd\nu=4\sec^{2}\varphi\dd\varphi$, so the integrand is
$\frac{\dd\varphi}{2\sqrt{1+3\sin^{2}\varphi}}$; since
$\varphi=-\pi/2$ at $\nu=-\infty$,
\begin{equation}\label{eq:h3}
  2\hh=\int_{-\pi/2}^{\varphi_{0}}
   \frac{\dd\varphi}{\sqrt{1+3\sin^{2}\varphi}}
  =K[-3]+F[\varphi_{0},-3],\qquad
  \varphi_{0}=\arctan\tfrac{\nu_{0}}{4},
\end{equation}
the parameter being $m=-3$ because
$1-m\sin^{2}\varphi=1+3\sin^{2}\varphi$.

\emph{Step 4 (quarter-period shift).} By \eqref{eq:h3},
$\sn(2\hh-K,-3)=\sin\varphi_{0}=\nu_{0}/\sqrt{\nu_{0}^{2}+16}$. Now
$\nu_{0}=(\zeta^{4}-1)/\zeta^{2}$ and
$\nu_{0}^{2}+16=\frac{(\zeta^{4}-1)^{2}+16\zeta^{4}}{\zeta^{4}}
=\frac{W^{2}}{\zeta^{4}}$, so
$\sqrt{\nu_{0}^{2}+16}=W/\zeta^{2}$ (both sides $\sim\zeta^{-2}$ at
$0$). Since $\sn(v-K)=-\cd v$ \cite[122.01]{BF},
\begin{equation}\label{eq:cd}
  \cd(2\hh,-3)=-\frac{\nu_{0}\zeta^{2}}{W}=\frac{1-\zeta^{4}}{W},
\end{equation}
which is $+1$ at $\zeta=0$, as required.

\emph{Step 5 (solving).} Write $S,C,D$ for the triple at $2\hh$ and
$X=S^{2}$, so $C^{2}=1-X$, $D^{2}=1+3X$. From \eqref{eq:cd},
$C^{2}/D^{2}=R:=(1-\zeta^{4})^{2}/W^{2}$, whence
$X=\frac{1-R}{1+3R}$. Now
\[
  W^{2}-(1-\zeta^{4})^{2}=16\zeta^{4},\qquad
  W^{2}+3(1-\zeta^{4})^{2}=4(1+\zeta^{4})^{2},
\]
so $X=\frac{16\zeta^{4}}{4E^{2}}=\frac{4\zeta^{4}}{E^{2}}$, giving the
first formula of \eqref{eq:Hnew}; then
$C^{2}=\frac{E^{2}-4\zeta^{4}}{E^{2}}
=\frac{(1-\zeta^{4})^{2}}{E^{2}}$ and
$D^{2}=\frac{E^{2}+12\zeta^{4}}{E^{2}}=\frac{W^{2}}{E^{2}}$, all
branches $+1$ at $\zeta=0$. (Verified numerically to $16$ digits:
Appendix \ref{app:code}, Cell 0.)
\end{proof}

\begin{corollary}[the modulus-$\frac34$ form]\label{cor:H34}
$\displaystyle
  \sn\bigl(4\hh,\tfrac34\bigr)=\frac{4\zeta^{2}}{W},
  \cn\bigl(4\hh,\tfrac34\bigr)=\frac{1-\zeta^{4}}{W},
  \dn\bigl(4\hh,\tfrac34\bigr)=\frac{1+\zeta^{4}}{W}.$
\end{corollary}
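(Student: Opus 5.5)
The plan is to deduce Corollary \ref{cor:H34} from Theorem \ref{thm:H} by the Gauss transformation (Proposition \ref{prop:gauss}), read in the direction from modulus $-3$ at argument $u$ to modulus $\frac34$ at argument $2u$. Proposition \ref{prop:gauss} states
\[
\sn(u,-3)=\tfrac12\sd(2u,\tfrac34),\qquad \cn(u,-3)=\cd(2u,\tfrac34),\qquad \dn(u,-3)=\nd(2u,\tfrac34).
\]
I will take $u=2\hh$ and invert these three relations.

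\emph{Step 1.} From the third relation and \eqref{eq:Hnew},
\[
\dn(4\hh,\tfrac34)=1/\dn(2\hh,-3)=E/W=(1+\zeta^{4})/W .
\]
\emph{Step 2.} The second relation gives $\cn(4\hh,\frac34)=\cd(4\hh,\tfrac34)\,\dn(4\hh,\tfrac34)$. Substituting $\cd(4\hh,\tfrac34)=\cn(2\hh,-3)=(1-\zeta^{4})/E$ yields
\[
\cn(4\hh,\tfrac34)=\frac{1-\zeta^{4}}{E}\cdot\frac{E}{W}=\frac{1-\zeta^{4}}{W}.
\]
\emph{Step 3.} The first relation gives
\[
\sn(4\hh,\tfrac34)=2\sn(2\hh,-3)\,\dn(4\hh,\tfrac34)=2\cdot\frac{2\zeta^{2}}{E}\cdot\frac{E}{W}=\frac{4\zeta^{2}}{W}.
\]

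Each of these is an identity of meromorphic functions on $\Om^{\circ}$. The identities of Proposition \ref{prop:gauss} hold for all complex $u$, so no branch choice enters beyond the one already fixed in Theorem \ref{thm:H}.

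As a consistency check I will verify the Jacobi relations directly:
\[
16\zeta^{4}+(1-\zeta^{4})^{2}=W^{2},\qquad \tfrac34\cdot16\zeta^{4}+(1+\zeta^{4})^{2}=W^{2},
\]
the second because $1+14\zeta^{4}+\zeta^{8}=(1+\zeta^{4})^{2}+12\zeta^{4}$. I will also confirm the values $(0,1,1)$ at $\zeta=0$.

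An alternative route that avoids Proposition \ref{prop:gauss} is a direct half-angle or double-argument computation at modulus $-3$, but the Gauss route is shorter.

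The only real obstacle is ensuring that the Gauss transformation is applied in the correct direction, that is, with the $\frac34$ side carrying argument $2u$. The three formulas quoted in the proof of Theorem \ref{thm:dictA} already fix this orientation, so the step is routine once those formulas are invoked.
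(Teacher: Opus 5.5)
Your proposal is correct and is essentially the paper's proof: invert the Gauss transformation at $u=2\hh$ using the modulus-$-3$ data of Theorem \ref{thm:H}, and check the result with $W^{2}-16\zeta^{4}=(1-\zeta^{4})^{2}$ and $W^{2}-\tfrac34\cdot16\zeta^{4}=(1+\zeta^{4})^{2}$. One small difference: the paper solves only the $\sn$ and $\cd$ relations, which leaves a sign to fix for $\dn$, whereas you use the $\dn=\nd$ relation directly, so every step is a plain division with no square root.
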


\begin{proof}
By Proposition \ref{prop:gauss},
$\sn(u,-3)=\frac12\sd(2u,\frac34)$ and $\cn(u,-3)=\cd(2u,\frac34)$;
solving these with \eqref{eq:Hnew} gives the claim. Directly:
$W^{2}-16\zeta^{4}=(1-\zeta^{4})^{2}$ and
$W^{2}-\tfrac34\cdot16\zeta^{4}=(1+\zeta^{4})^{2}$.
\end{proof}

\begin{theorem}[dictionary for $2f$ and $2g$]\label{thm:dictf}
At modulus $-3$, for all $\zeta\in\Om^{\circ}$,
\begin{equation}\label{eq:fdict}
  \boxed{\ \sn\bigl(2f\bigr)=\frac{2\zeta(1+\zeta^{2})}{W},\qquad
  \cn\bigl(2f\bigr)=\frac{(1-\zeta^{2})^{2}}{W},\qquad
  \dn\bigl(2f\bigr)=\frac{1+6\zeta^{2}+\zeta^{4}}{W}.\ }
\end{equation}
Moreover $g(\zeta)=f(i\zeta)$, so
$\sn(2g,-3)=\frac{2i\zeta(1-\zeta^{2})}{W}$.
\end{theorem}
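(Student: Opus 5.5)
Since $2f=A+B$ by \eqref{eq:AB}, the plan is to apply the addition theorem for Jacobi functions at modulus $m=-3$ to the two dictionaries \eqref{eq:dictA} and \eqref{eq:dictB}. Write $S_{A}=(1+i)\zeta/\pim$, $C_{A}=r_{-}/\pim$, $D_{A}=r_{+}/\pim$, and $S_{B}=(1-i)\zeta/\pip$, $C_{B}=r_{+}/\pip$, $D_{B}=r_{-}/\pip$. The addition formula \cite[123.01]{BF} reads
\[
  \sn(A+B)=\frac{S_{A}C_{B}D_{B}+S_{B}C_{A}D_{A}}{1+3S_{A}^{2}S_{B}^{2}},
\]
with analogous formulas for $\cn$ and $\dn$. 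Every numerator and denominator is then rational in $\zeta$ and $r_{\pm}$.

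First, the denominator. We have $S_{A}S_{B}=(1+i)(1-i)\zeta^{2}/(\pip\pim)=2\zeta^{2}/E$, so $1+3S_{A}^{2}S_{B}^{2}=(E^{2}+12\zeta^{4})/E^{2}=W^{2}/E^{2}$. This uses the identity $E^{2}+12\zeta^{4}=1+14\zeta^{4}+\zeta^{8}$, already seen in the proof of Theorem \ref{thm:H}.

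Next, the $\sn$ numerator. Both terms carry the factor $r_{+}r_{-}=W$ (Lemma \ref{lem:prod}), and it equals
\[
  W\zeta\Bigl[\frac{1+i}{\pim\pip^{2}}+\frac{1-i}{\pip\pim^{2}}\Bigr]
  =\frac{W\zeta\bigl[(1+i)\pim+(1-i)\pip\bigr]}{E\,\pip\pim}.
\]
The bracket is $2+2\zeta^{2}$; this is the same arithmetic as in the proof of Proposition \ref{prop:diff}, with $\pip$ and $\pim$ interchanged. Dividing by $W^{2}/E^{2}$ gives $2\zeta(1+\zeta^{2})/W$.

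The $\cn$ numerator is $C_{A}C_{B}-S_{A}S_{B}D_{A}D_{B}=(W-2\zeta^{2}W/E)/E$, because $C_{A}C_{B}=D_{A}D_{B}=r_{+}r_{-}/E=W/E$. Multiplying by $E^{2}/W^{2}$ gives $(E-2\zeta^{2})/W=(1-\zeta^{2})^{2}/W$. The $\dn$ numerator, $D_{A}D_{B}+3S_{A}S_{B}C_{A}C_{B}$, similarly gives $(E+6\zeta^{2})/W$. Each of these is $+1$ at $\zeta=0$. Thus the branches match automatically: the addition theorem is an identity of meromorphic functions of $\zeta$ on $\Om^{\circ}$ once \eqref{eq:dictA}--\eqref{eq:dictB} hold there.

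For $g$, I will show $g(\zeta)=f(i\zeta)$ by differentiating. Using $W(i\zeta)=W(\zeta)$, we get $\frac{\dd}{\dd\zeta}f(i\zeta)=i(1-(i\zeta)^{2})/W=i(1+\zeta^{2})/W=g'(\zeta)$, and both sides vanish at $0$. Then $\sn(2g,-3)=\sn(2f)(i\zeta)=2i\zeta(1-\zeta^{2})/W$.

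I expect no genuine obstacle. The only points needing care are the sign in the addition denominator, which is $1-mS^{2}S^{2}=1+3S^{2}S^{2}$, and the pairing of $r_{\pm}$ in $C_{B}D_{B}$ versus $C_{A}D_{A}$: both products equal $r_{+}r_{-}=W$, and this cancellation is what makes the result collapse to the displayed form.
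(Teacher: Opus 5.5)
Your proposal is correct and is essentially the paper's own proof: you apply the addition theorem at $m=-3$ to \eqref{eq:dictA}--\eqref{eq:dictB}, using $S_AS_B=2\zeta^{2}/E$, $\Delta=W^{2}/E^{2}$, $C_AC_B=D_AD_B=W/E$ and the bracket $(1+i)\pim+(1-i)\pip=2+2\zeta^{2}$, and then obtain $g(\zeta)=f(i\zeta)$ from the differentials together with $W(i\zeta)=W(\zeta)$. One slip is cosmetic: $\sn(2f)$ vanishes at $\zeta=0$ rather than equalling $+1$, and in any case no branch check is needed, since the addition theorem is applied to the actual values $A(\zeta)$ and $B(\zeta)$.
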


\begin{proof}
$2f=A+B$ with both arguments at the \emph{same} $\zeta$; apply the
addition theorems \cite[123.01]{BF}
\begin{equation}\label{eq:add}
  \sn(a{+}b)=\frac{S_{a}C_{b}D_{b}+S_{b}C_{a}D_{a}}{\Delta},\quad
  \cn(a{+}b)=\frac{C_{a}C_{b}-S_{a}S_{b}D_{a}D_{b}}{\Delta},\quad
  \dn(a{+}b)=\frac{D_{a}D_{b}-mS_{a}S_{b}C_{a}C_{b}}{\Delta},
\end{equation}
$\Delta=1-mS_{a}^{2}S_{b}^{2}$, to \eqref{eq:dictA} and
\eqref{eq:dictB}. Here $S_{a}S_{b}
=\frac{(1+i)(1-i)\zeta^{2}}{\pim\pip}=\frac{2\zeta^{2}}{E}$, so
$\Delta=1+\frac{12\zeta^{4}}{E^{2}}=\frac{W^{2}}{E^{2}}$; and
$C_{a}C_{b}=D_{a}D_{b}=\frac{r_{+}r_{-}}{E}=\frac{W}{E}$,
$C_{b}D_{b}=\frac{W}{\pip^{2}}$, $C_{a}D_{a}=\frac{W}{\pim^{2}}$.
Thus
\[
  S_{a}C_{b}D_{b}+S_{b}C_{a}D_{a}
  =\frac{\zeta W\bigl[(1+i)\pim+(1-i)\pip\bigr]}{E^{2}}
  =\frac{2\zeta W(1+\zeta^{2})}{E^{2}},
\]
because $(1+i)(1-i\zeta^{2})+(1-i)(1+i\zeta^{2})=2+2\zeta^{2}$;
dividing by $\Delta$ gives $\sn(2f)$. Next
$\cn(2f)=\frac{E^{2}}{W^{2}}\cdot\frac{W}{E}
\bigl[1-\frac{2\zeta^{2}}{E}\bigr]=\frac{(1-\zeta^{2})^{2}}{W}$ and
$\dn(2f)=\frac{E^{2}}{W^{2}}\cdot\frac{W}{E}
\bigl[1+\frac{6\zeta^{2}}{E}\bigr]
=\frac{1+6\zeta^{2}+\zeta^{4}}{W}$. Finally $\dd f\mapsto\dd g$ under
$\zeta\mapsto i\zeta$ (proof of Proposition \ref{prop:group}), whence
$g(\zeta)=f(i\zeta)$, and $W(i\zeta)=W(\zeta)$.

\emph{Consistency checks.} $\sn^{2}+\cn^{2}=1$ reads
$4\zeta^{2}(1+\zeta^{2})^{2}+(1-\zeta^{2})^{4}=W^{2}$, true; and
$\dn^{2}-1=3\sn^{2}$ reads
$(1+6\zeta^{2}+\zeta^{4})^{2}-W^{2}=12\zeta^{2}(1+\zeta^{2})^{2}$,
also true.
\end{proof}

\begin{corollary}[half-arguments and logarithmic derivatives]
\label{cor:half}
With $\Lambda_{m}:=(\log\sn(\cdot,m))'=\frac{\cn\dn}{\sn}$, one has at
modulus $-3$
\begin{align}
  \sn^{2}\bigl(f,-3\bigr)&=\frac{2\zeta^{2}}{1+\zeta^{4}+W},
  &\Lambda_{-3}(f)&=\frac{1+\zeta^{2}}{\zeta},\label{eq:snf}\\
  \sn^{2}\bigl(\hh,-3\bigr)&=\frac{2\zeta^{4}}{1+\zeta^{4}+W},
  &\Lambda_{-3}(\hh)&=\frac{1-\zeta^{4}+W}{2\zeta^{2}} .
  \label{eq:snh}
\end{align}
In particular
\begin{equation}\label{eq:cute}
  \sn(\hh,-3)=\zeta\,\sn(f,-3).
\end{equation}
\end{corollary}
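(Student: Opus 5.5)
Both halves of the statement come from the half-argument formulas at modulus $m=-3$, applied to the full-argument dictionaries already in hand: \eqref{eq:fdict} for $2f$ and \eqref{eq:Hnew} for $2\hh$. These are
\[
  \sn^{2}(u)=\frac{1-\cn 2u}{1+\dn 2u},\qquad
  \cn^{2}(u)=\frac{\dn 2u+\cn 2u}{1+\dn 2u},\qquad
  \dn^{2}(u)=\frac{\dn 2u-m+m\cn 2u}{1+\dn 2u}\ (\text{so, at } m=-3,\ \tfrac{\dn 2u+3-3\cn 2u}{1+\dn 2u}),
\]
\cite[124.02]{BF}.

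\emph{The case $u=f$.} From \eqref{eq:fdict} we get $1-\cn 2f=\bigl(W-(1-\zeta^{2})^{2}\bigr)/W$ and $1+\dn 2f=\bigl(W+1+6\zeta^{2}+\zeta^{4}\bigr)/W$. To reach the stated form, multiply numerator and denominator by the conjugate, or equivalently check
\[
  (W-(1-\zeta^{2})^{2})(1+\zeta^{4}+W)=2\zeta^{2}(W+1+6\zeta^{2}+\zeta^{4}).
\]
This identity uses only $W^{2}=1+14\zeta^{4}+\zeta^{8}$: the coefficients of $W$ match trivially, and the $W$-free terms reduce to a polynomial identity. That gives $\sn^{2}(f,-3)$.

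For $\Lambda_{-3}(f)$, we cannot take square roots of the half-argument formulas directly, since that would bring in branches. Instead I compute $\Lambda^{2}=\cn^{2}\dn^{2}/\sn^{2}$ from the three half-angle formulas, then fix the sign by the behaviour $\Lambda(f)\sim 1/f\sim 1/\zeta$ as $\zeta\to0$. A cleaner alternative is the duplication formula $\sn 2u=2\,\sn u\,\cn u\,\dn u/(1-m\,\sn^{4}u)$, which gives
\[
  \Lambda(u)=\frac{\sn 2u\,(1+3\sn^{4}u)}{2\,\sn^{2}u}.
\]
Substituting the known $\sn 2f$ and $\sn^{2}f$ then yields $\Lambda$ rationally, with no sign ambiguity.

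\emph{The case $u=\hh$.} The same two-step computation works with $\cn 2\hh=(1-\zeta^{4})/E$ and $\dn 2\hh=W/E$. It gives $\sn^{2}\hh=\bigl(E-(1-\zeta^{4})\bigr)/(E+W)=2\zeta^{4}/(1+\zeta^{4}+W)$ immediately, with no conjugation needed. Then
\[
  \Lambda(\hh)=\frac{\sn 2\hh\,(1+3\sn^{4}\hh)}{2\,\sn^{2}\hh},
\]
with $\sn 2\hh=2\zeta^{2}/E$. Simplifying this to $(1-\zeta^{4}+W)/(2\zeta^{2})$ again uses $W^{2}$ to clear the denominator $(1+\zeta^{4}+W)^{2}$.

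\emph{The identity \eqref{eq:cute}.} Comparing the two formulas gives $\sn^{2}\hh=\zeta^{2}\sn^{2}f$. Both $\sn(\hh,-3)$ and $\zeta\,\sn(f,-3)$ are holomorphic on $\Om^{\circ}$, which is connected, and both behave like $\zeta^{2}$ at $0$. Their ratio is therefore a continuous square root of $1$ equal to $+1$ near $0$, hence identically $1$, which proves \eqref{eq:cute}.

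The main obstacle is branch bookkeeping rather than algebra. Each square root must be the one equal to $+1$ at $\zeta=0$ (Convention \ref{conv:br}), and the denominator $1+\zeta^{4}+W$ must not vanish on $\Om$. For the latter I would observe that $1+\zeta^{4}+W=0$ forces $W^{2}=(1+\zeta^{4})^{2}$, i.e. $12\zeta^{4}=0$, i.e. $\zeta=0$; but $1+\zeta^{4}+W=2$ at $\zeta=0$, so the denominator never vanishes on $\Om$.
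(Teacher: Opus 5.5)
Your proof is correct. For the two $\sn^{2}$ formulas and for \eqref{eq:cute} it is the paper's argument. You apply the half-argument formula to \eqref{eq:fdict} and \eqref{eq:Hnew}, cross-multiply in the case of $f$ using $W^{2}=E^{2}+12\zeta^{4}$, and then use $\sn^{2}(\hh,-3)=\zeta^{2}\sn^{2}(f,-3)$ with the sign fixed near $\zeta=0$. Your version of that last step is the more carefully stated one.

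The real difference is in the logarithmic derivatives. The paper uses Lemma \ref{lem:Lam}, $\Lambda_{m}(v/2)=(\cn v+\dn v)/\sn v$. This reads both values directly off the full-argument triples in one line, e.g.\ $\Lambda_{-3}(\hh)=\bigl((1-\zeta^{4})/E+W/E\bigr)/(2\zeta^{2}/E)$. It puts no $W$ in any denominator and does not depend on the $\sn^{2}$ results.

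You instead invert the duplication formula, $\Lambda(u)=\sn 2u\,(1+3\sn^{4}u)/(2\sn^{2}u)$, and feed in the half-argument values you have just computed. This works, at the cost of one extra identity per case:
\begin{itemize}
\item For $f$ you need $(E+W)^{2}+12\zeta^{4}=2W(E+W)$. This gives $1+3\sn^{4}f=2W/(E+W)$, and the product collapses to $(1+\zeta^{2})/\zeta$.
\item For $\hh$ you need $(E+W)^{2}+12\zeta^{8}=E(E+W)(1-\zeta^{4}+W)$.
\end{itemize}
Both follow from $W^{2}=E^{2}+12\zeta^{4}$ and elementary polynomial algebra.

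Your route uses only textbook formulas and avoids the forward reference to Lemma \ref{lem:Lam}. The paper's lemma is shorter, and the paper reuses it to compute $L_{p},L_{q}$ in Definition \ref{def:LpLq}. The two are really the same identity organized differently. Substitute $\sn^{2}u=(1-C)/(1+D)$ into your formula and simplify with $S^{2}=1-C^{2}$ and $mS^{2}=1-D^{2}$ (all values at $2u$); you get exactly $(C+D)/S$.

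One slip, which does not affect the route you actually carry out: your half-argument formula for $\dn^{2}$ is missing a term. It should read $\dn^{2}u=\bigl(1+\dn 2u-m+m\cn 2u\bigr)/(1+\dn 2u)$, equivalently $(\dn 2u+\cn 2u)/(1+\cn 2u)$. Yours gives $\tfrac12$ at $u=0$. It enters only the first method you mention for $\Lambda$, which you then set aside in favour of the duplication formula.
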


\begin{proof}
By \cite[124.01]{BF}, $\sn^{2}\frac v2=\frac{1-\cn v}{1+\dn v}$, so
$\sn^{2}f=\frac{W-(1-\zeta^{2})^{2}}{W+1+6\zeta^{2}+\zeta^{4}}$;
cross-multiplication against $\frac{2\zeta^{2}}{1+\zeta^{4}+W}$
reduces to $W^{2}-(1-\zeta^{2})^{2}(1+\zeta^{4})
=2\zeta^{2}(1+6\zeta^{2}+\zeta^{4})$, which holds. Likewise
$\sn^{2}\hh=\frac{1-\frac{1-\zeta^{4}}{E}}{1+\frac WE}
=\frac{2\zeta^{4}}{E+W}$. The two $\Lambda$-values follow from
Lemma \ref{lem:Lam} below: e.g.,
$\Lambda_{-3}(f)=\frac{(1-\zeta^{2})^{2}+1+6\zeta^{2}+\zeta^{4}}
{2\zeta(1+\zeta^{2})}
=\frac{2(1+\zeta^{2})^{2}}{2\zeta(1+\zeta^{2})}$. Finally
\eqref{eq:cute} follows by comparing \eqref{eq:snf} and
\eqref{eq:snh}, both functions vanishing like $\zeta$ and $\zeta^{2}$
respectively.
\end{proof}

\begin{definition}[the field]\label{def:K}
$\KK$ is the field generated over $\CC(\zeta,\omega)$ by
$r_{\pm}(\zeta)$, $r_{\pm}(\omega)$, so that
$\Wz=r_{+}(\zeta)r_{-}(\zeta)$, $\Ww=r_{+}(\omega)r_{-}(\omega)$ and
\begin{equation}\label{eq:basis}
  [\KK:\CC(\zeta,\omega)]\le16,\qquad\text{basis}\quad
  r_{+}(\zeta)^{a}r_{-}(\zeta)^{b}r_{+}(\omega)^{c}r_{-}(\omega)^{d},
  \quad a,b,c,d\in\{0,1\} .
\end{equation}
Every identity in $\KK$ is equivalent to $16$ rational identities in
$\CC(\zeta,\omega)$. Note that $s_{\pm}$ do \emph{not} occur: see
Remark \ref{rem:retire}.
\end{definition}

\begin{remark}[what the quarter-period shift buys; and the dictionary
to \textup{[D]}]\label{rem:retire}
The closed form \eqref{eq:hclosed} of the height is \emph{unshifted},
and by \eqref{eq:dictM} its dictionary is
\[
  \bigl(\sn,\cn,\dn\bigr)\bigl(\bt\mathfrak h,\bt^{-4}\bigr)
  =\bigl(i\bt\zeta^{2},\ s_{+},\ s_{-}\bigr),\qquad
  s_{\pm}=\sqrt{1+\bt^{\pm2}\zeta^{4}},\quad s_{+}s_{-}=W .
\]
This is \emph{irrational}: it introduces a new pair of radicals
$s_{\pm}$, unrelated to $r_{\pm}$. In real form, applying
\eqref{eq:imagtr} to $\bt\mathfrak h=i\bt\hh$ gives
$\sce\bigl(\bt\hh,1-\bt^{-4}\bigr)=\bt\zeta^{2}$, i.e.,\
$\bt\hh=F\bigl[\arctan(\bt\zeta^{2}),1-\bt^{-4}\bigr]$ with
$\sn=\bt\zeta^{2}/s_{+}$, $\cn=1/s_{+}$ --- the same two radicals at
the auxiliary modulus $m_{3}=1-\bt^{-4}$.

Theorem \ref{thm:H} replaces all of this: the substitution
$\nu=\tau-\tau^{-1}$ produces the \emph{quarter-period-shifted}
integral \eqref{eq:h3}, and the shift is exactly what converts an
irrational dictionary into a rational one. Consequences: the auxiliary
moduli $\bt^{-4}$ and $m_{3}$, the generators $s_{\pm}$, and one of the
two Landen steps of \eqref{eq:chain} disappear from the elimination,
and $[\KK:\CC(\zeta,\omega)]$ drops from $\le64$ to $\le16$
(Definition \ref{def:K}). This is what makes the proof of
\textup{(P0)} in Theorem \ref{thm:P0proved} a matter of seconds rather
than of a large Gröbner computation. The price is paid only at the
half-argument $\hh$, where the nested radical
$\sqrt{1+\zeta^{4}+W}$ appears in \eqref{eq:snh}; neither the elimination 
of \S\ref{sec:pfaff} nor Part III ever needs it, the latter working 
throughout with $\arctan$ of $\sn^{2}$.

\emph{Correspondence with} \textup{[D]}. In the notation of
\textup{[D, \S2.1]},
\[
  Q=1+4i\zeta^{2}-\zeta^{4}=r_{+}^{2},\qquad
  Q^{\ast}=1-4i\zeta^{2}-\zeta^{4}=r_{-}^{2},\qquad
  QQ^{\ast}=W^{2},
\]
\[
  \varrho:=\sqrt{Q}=r_{+},\qquad
  R_{\pm}=1\pm2i\zeta^{2}-\zeta^{4}=\pi_{\pm}^{2},\qquad
  \varsigma:=\sqrt{1+\bt^{2}\zeta^{4}}=s_{+} .
\]
Thus \textup{[D]}'s $\varrho$ is our $r_{+}$; its $R_{\pm}$ are the
\emph{squares} of our $\pi_{\pm}$; and its $\varsigma$ is precisely
the generator $s_{+}$ that Theorem \ref{thm:H} retires. Since
$W=r_{+}r_{-}$, the field of \textup{[D, \S7.6]} contains ours, and
carries in addition $\varsigma(\zeta)$, $\varsigma(\omega)$ and four
half-argument radicals. The two fields become equal as soon as
\textup{[D]} adopts Theorem \ref{thm:H} and Lemma \ref{lem:Lam}; see
\textup{[D, \S9.4]} and Remark \ref{rem:open}(5). Likewise
\textup{[D]}'s one-variable dictionary is our
Lemma \ref{lem:legendre}, up to the factors of
Remark \ref{rem:notation}.
\end{remark}

%=====================================================================
\section{The moduli web}\label{sec:web}
%=====================================================================

\begin{lemma}[negative modulus]\label{lem:negmod}
For $\mu>0$,
$\displaystyle K[-\mu]=\int_{0}^{\pi/2}
\frac{\dd\varphi}{\sqrt{1+\mu\sin^{2}\varphi}}
=\frac{1}{\sqrt{1+\mu}}\,K\!\left[\frac{\mu}{1+\mu}\right]$;
in particular
\begin{equation}\label{eq:Km3}
  K[-3]=\tfrac12K[3/4].
\end{equation}
\end{lemma}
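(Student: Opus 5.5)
The plan is a single change of variable in the trigonometric form of $K[-\mu]$: complementing the angle turns the negative parameter into a positive one. I start from the definition \eqref{eq:F} with $\varphi=\pi/2$ and the standard substitution $\tau=\sin\varphi$, which gives
\[
K[-\mu]=\int_{0}^{\pi/2}\frac{\dd\varphi}{\sqrt{1+\mu\sin^{2}\varphi}} .
\]
This is the first displayed equality. All quantities are real and positive for $\mu>0$, so no branch questions arise.

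Next I substitute $\varphi=\pi/2-\psi$, so that $\sin^{2}\varphi=\cos^{2}\psi=1-\sin^{2}\psi$. The radicand becomes
\[
1+\mu-\mu\sin^{2}\psi=(1+\mu)\Bigl(1-\frac{\mu}{1+\mu}\sin^{2}\psi\Bigr).
\]
The limits $0$ and $\pi/2$ are simply interchanged, and $\dd\varphi=-\dd\psi$ absorbs the reversal. Pulling the constant factor $(1+\mu)^{-1/2}$ out of the integral leaves exactly $K[\mu/(1+\mu)]$, with $0<\mu/(1+\mu)<1$. This is the classical imaginary-modulus (Pfaff) transformation.

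Finally I specialize to $\mu=3$: then $\mu/(1+\mu)=3/4$ and $(1+\mu)^{-1/2}=1/2$, which gives \eqref{eq:Km3}.

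There is no real obstacle here. The only point needing care is the sign and limit bookkeeping in the reflection $\varphi\mapsto\pi/2-\psi$. Alternatively, one could cite \cite{BF} for $K$ at a negative parameter, but the direct substitution is shorter.
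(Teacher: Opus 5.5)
Your proposal is correct and follows the paper's own proof essentially verbatim. You pass to the trigonometric form, factor the radicand as $(1+\mu)\bigl(1-\frac{\mu}{1+\mu}\sin^{2}\psi\bigr)$ after the reflection $\varphi\mapsto\frac\pi2-\varphi$, and set $\mu=3$.
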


\begin{proof}
$K[-\mu]=\int_{0}^{1}\frac{\dd s}{\sqrt{(1-s^{2})(1+\mu s^{2})}}
=\int_{0}^{\pi/2}\frac{\dd\varphi}{\sqrt{1+\mu\sin^{2}\varphi}}$; then
$1+\mu\sin^{2}\varphi
=(1+\mu)\bigl(1-\frac{\mu}{1+\mu}\cos^{2}\varphi\bigr)$
and $\varphi\mapsto\frac\pi2-\varphi$. (This is the same computation as
\textup{[D, \S3.2]}, there stated as
$\sqrt{1+\mu}\,K[-\mu]=K[\mu/(1+\mu)]$.)
\end{proof}

\begin{lemma}[descending Landen]\label{lem:landen}
Let $k\in(0,1)$, $k'=\sqrt{1-k^{2}}$, $k_{1}=\frac{1-k'}{1+k'}$. Then
$K[k^{2}]=(1+k_{1})K[k_{1}^{2}]$ and, with $v=\frac{u}{1+k_{1}}$ and
$s=\sn(v,k_{1}^{2})$,
\begin{equation}\label{eq:landen}
  \sn(u,k^{2})=\frac{(1+k_{1})s}{1+k_{1}s^{2}},\qquad
  \dn(u,k^{2})=\frac{1-k_{1}s^{2}}{1+k_{1}s^{2}} .
\end{equation}
The chain relevant here is the single step
\begin{equation}\label{eq:chain}
  \tfrac34\ \xrightarrow[\ \times\frac34\ ]{}\ \tfrac19,
  \qquad k'=\tfrac12,\quad k_{1}=\tfrac13,\quad 1+k_{1}=\tfrac43,
  \qquad K[3/4]=\tfrac43K[1/9].
\end{equation}
\end{lemma}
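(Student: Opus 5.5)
The approach is to prove \eqref{eq:landen} by showing that the right-hand side satisfies the differential equation that characterizes $\sn(\cdot,k^{2})$. The statement about $K$ then follows by reading off where the new function first reaches $1$.

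\emph{Setup and the basic identity.} Put $m_{1}:=k_{1}^{2}$, $v=u/(1+k_{1})$, $s=\sn(v,m_{1})$, $c=\cn(v,m_{1})$, $d=\dn(v,m_{1})$, and define
\[
S(u):=\frac{(1+k_{1})s}{1+k_{1}s^{2}} .
\]
The first thing to check is the parameter identity
\[
\frac{4k_{1}}{(1+k_{1})^{2}}=k^{2}.
\]
It follows from $1+k_{1}=2/(1+k')$ and $4k_{1}/(1+k_{1})^{2}=(1-k')(1+k')=1-k'^{2}$.

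\emph{The two factorizations.} With $k^{2}$ written in terms of $k_{1}$, I will verify two polynomial factorizations in $s$:
\[
(1+k_{1}s^{2})^{2}-(1+k_{1})^{2}s^{2}=(1-s^{2})(1-k_{1}^{2}s^{2}),\qquad
(1+k_{1}s^{2})^{2}-4k_{1}s^{2}=(1-k_{1}s^{2})^{2}.
\]
The first gives $1-S^{2}=c^{2}d^{2}/(1+k_{1}s^{2})^{2}$. The second gives $1-k^{2}S^{2}=\bigl((1-k_{1}s^{2})/(1+k_{1}s^{2})\bigr)^{2}$.

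\emph{Derivative and identification.} Differentiating, with $\dd s/\dd u=cd/(1+k_{1})$, gives
\[
S'(u)=\frac{cd\,(1-k_{1}s^{2})}{(1+k_{1}s^{2})^{2}} .
\]
By the two factorizations this is exactly $\sqrt{1-S^{2}}\,\sqrt{1-k^{2}S^{2}}$, with the principal signs near $u=0$. Together with $S(0)=0$ and $S'(0)=1$, this means $S$ and $\sn(\cdot,k^{2})$ solve the same first-order ODE with the same initial data.

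The square-root ODE is degenerate where $S=\pm1$, so I will not invoke uniqueness there. Instead I compare the second-order equation $y''=-(1+k^{2})y+2k^{2}y^{3}$, which follows from the first-order one and is non-degenerate. Uniqueness for that equation gives $S=\sn(u,k^{2})$ for real $u$, and analytic continuation extends it everywhere. The $\dn$ formula then follows because $\dn(u,k^{2})=\sqrt{1-k^{2}S^{2}}$, with the sign fixed by continuity from $+1$ at $u=0$; this is the second factorization.

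\emph{The period relation.} For $0<v<K[m_{1}]$ we have $c,d>0$ and $1-k_{1}s^{2}>0$, so $S$ increases strictly. At $v=K[m_{1}]$, i.e. $s=1$, it reaches $S=1$. Hence the first positive zero of $\cn(u,k^{2})$, namely $u=K[k^{2}]$, is $(1+k_{1})K[m_{1}]$.

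\emph{The chain \eqref{eq:chain}.} This is pure substitution: $k^{2}=\frac34$, $k'=\frac12$, $k_{1}=\frac{1/2}{3/2}=\frac13$, $k_{1}^{2}=\frac19$, $1+k_{1}=\frac43$.

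\emph{Expected obstacle.} The only delicate point is the branch and sign bookkeeping at the degenerate points of the first-order ODE. Passing to the second-order equation and restricting to real $u\in[0,K]$ handles it.
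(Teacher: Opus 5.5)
Your proof is correct, and it takes a different route from the paper, which does not prove the lemma. The paper cites \cite[16.12]{AS} and \cite[163.01]{BF}, and adds only a consistency check: the $\dn$ formula agrees with $\dn(u,k^{2})$ to order $u^{2}$ because $k^{2}=4k_{1}/(1+k_{1})^{2}$. That is a check, not a proof.

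You give a self-contained argument. The pieces all hold:
\begin{itemize}
\item The parameter identity $4k_{1}/(1+k_{1})^{2}=k^{2}$ is right.
\item Both factorizations are right; the first follows from $1\mp(1+k_{1})s+k_{1}s^{2}=(1\mp s)(1\mp k_{1}s)$.
\item The derivative $S'=cd(1-k_{1}s^{2})/(1+k_{1}s^{2})^{2}$ is right. So $S'^{2}=(1-S^{2})(1-k^{2}S^{2})$ holds identically in $s$, with $S(0)=0$ and $S'(0)=1$.
\item Uniqueness for $y''=-(1+k^{2})y+2k^{2}y^{3}$ on $\RR$, followed by the identity theorem, gives $S=\sn(u,k^{2})$ for complex $u$ too. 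That extension matters, because Theorem \ref{thm:diag} applies the lemma at the complex argument $u=4\hh$.
\item The $\dn$ formula follows: both sides are positive on $\RR$ and have equal squares.
\item Deriving $K[k^{2}]=(1+k_{1})K[k_{1}^{2}]$ from the strict monotonicity of $S$ and the first zero of $\cn(\cdot,k^{2})$ is sound. It replaces the usual substitution in the integral, of which your ODE check is the differential form.
\end{itemize}

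One simplification: the second-order equation is not needed. Near $u=0$ the right side $\sqrt{(1-y^{2})(1-k^{2}y^{2})}$ is Lipschitz in $y$, so first-order uniqueness already gives $S=\sn$ on a small interval, and the identity theorem does the rest.

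Your method is the initial-value-problem characterisation of $\sn$ that the paper itself uses for Lemma \ref{lem:recip}, so it fits the paper's style. The citation buys brevity; your argument makes the lemma independent of the references.
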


\begin{proof}
\cite[16.12]{AS}, \cite[163.01]{BF}. (Consistency of the second
formula: $\dn(u,k^{2})=1-\frac{k^{2}}{2}u^{2}+\cdots$ and
$\frac{1-k_{1}s^{2}}{1+k_{1}s^{2}}=1-2k_{1}v^{2}+\cdots$ agree because
$k^{2}=\frac{4k_{1}}{(1+k_{1})^{2}}$.)
\end{proof}

\begin{proposition}[the normalizing constant]\label{prop:web}
\begin{equation}\label{eq:webnew}
  \varpi=\kappa^{-1}=K[-3]=\tfrac12K[3/4]=\tfrac23K[1/9],
\end{equation}
so $2\varpi=K[3/4]$ and $3\varpi=2K[1/9]$. Moreover
\begin{equation}\label{eq:hinfty}
  \int_{0}^{\infty}\frac{\dd\tau}{\sqrt{1+14\tau^{2}+\tau^{4}}}
  =\varpi,
  \qquad
  \int_{0}^{1}\frac{\dd\tau}{\sqrt{1+14\tau^{2}+\tau^{4}}}
  =\frac{\varpi}{2},
\end{equation}
i.e.,\ $\hh(1)=\varpi/2$ and $\hh(\infty)=\varpi$.
\end{proposition}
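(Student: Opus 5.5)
The plan is to prove \eqref{eq:webnew} as three separate equalities, and then obtain \eqref{eq:hinfty} from Theorem \ref{thm:H}.

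\emph{The chain of constants.} The equality $K[-3]=\tfrac12K[3/4]$ is \eqref{eq:Km3} of Lemma \ref{lem:negmod}. The equality $\tfrac12K[3/4]=\tfrac23K[1/9]$ is the single Landen step \eqref{eq:chain}, $K[3/4]=\tfrac43K[1/9]$, which comes from Lemma \ref{lem:landen} with $k^2=\tfrac34$, $k'=\tfrac12$, $k_1=\tfrac13$. Finally, $\kappa^{-1}=\tfrac23K[1/9]$ is the definition \eqref{eq:kappa}. Together these give $\varpi=K[-3]$, $2\varpi=K[3/4]$ and $3\varpi=2K[1/9]$.

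\emph{The integral from $0$ to $\infty$.} I would not evaluate this integral directly. Instead I would reuse Steps 2--3 of the proof of Theorem \ref{thm:H}, run for real $\tau$. The substitution $\nu=\tau-\tau^{-1}$ maps $(0,\infty)$ monotonically onto $(-\infty,\infty)$, so by \eqref{eq:h2}
\[
\int_0^\infty\frac{\dd\tau}{\sqrt{1+14\tau^2+\tau^4}}
=\int_{-\infty}^{\infty}\frac{\dd\nu}{\sqrt{(\nu^2+4)(\nu^2+16)}} .
\]
Next set $\nu=4\tan\varphi$ with $\varphi\in(-\pi/2,\pi/2)$. By \eqref{eq:h3} the right-hand side becomes
\[
\tfrac12\int_{-\pi/2}^{\pi/2}\frac{\dd\varphi}{\sqrt{1+3\sin^2\varphi}} .
\]
The integrand is even in $\varphi$, so this equals $\tfrac12\cdot 2K[-3]=K[-3]=\varpi$.

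\emph{The integral from $0$ to $1$.} For $\tau\in(0,1]$ we have $\nu\le 0$. The endpoint $\tau=1$ corresponds to $\nu_0=0$, hence to $\varphi_0=0$. The same computation then gives
\[
\tfrac12\int_{-\pi/2}^{0}\frac{\dd\varphi}{\sqrt{1+3\sin^2\varphi}}=\tfrac12K[-3]=\varpi/2 .
\]
Alternatively, the substitution $\tau\mapsto1/\tau$ leaves the differential $\dd\tau/\sqrt{1+14\tau^2+\tau^4}$ invariant up to the orientation of the interval. This splits $\int_0^\infty$ into two equal halves, $\int_0^1$ and $\int_1^\infty$, and gives the same value.

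\emph{Identifying $\hh(1)$ and $\hh(\infty)$.} By \eqref{eq:h1}, $\hh(\zeta)=H(\zeta^2)$. Hence $\hh(1)=H(1)=\varpi/2$, and $\hh(\infty)$ is understood as the limit $H(+\infty)=\varpi$ along the positive real axis.

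\emph{Main obstacle.} The only real difficulty is branch and orientation bookkeeping, since $\zeta=1$ lies outside $\Om$. The integrals must therefore be read as real integrals along $\RR_{>0}$, with the positive square root. This is consistent with the branch of Convention \ref{conv:br}, because $1+14\tau^2+\tau^4>0$ there and the root is $+1$ at $\tau=0$. One must also check that the map $\tau\mapsto\nu\mapsto\varphi$ is increasing, so that no sign enters.
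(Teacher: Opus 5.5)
Your proposal is correct and is essentially the paper's proof: the chain of constants comes from \eqref{eq:Km3}, the Landen step \eqref{eq:chain} and the definition \eqref{eq:kappa}, and the integral over $(0,\infty)$ is read off from the substitutions behind \eqref{eq:h2}--\eqref{eq:h3} as $\tfrac12(K[-3]+K[-3])$, with $(0,1)$ handled by the $\tau\mapsto1/\tau$ symmetry (your direct alternative via $\varphi_{0}=0$ is equally valid). Your remark that $\hh(1)$ and $\hh(\infty)$ must be read as real integrals along the positive axis, since $1\notin\Om$, is a sensible clarification that the paper leaves implicit.
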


\begin{proof}
$K[3/4]=\frac43K[1/9]$ is \eqref{eq:chain}; $K[-3]=\frac12K[3/4]$ is
\eqref{eq:Km3}; and $\kappa^{-1}=\frac23K[1/9]$ by \eqref{eq:kappa}.
For \eqref{eq:hinfty}: by \eqref{eq:h3} the integral over
$(0,\infty)$ is $\frac12\bigl(K[-3]+K[-3]\bigr)=K[-3]=\varpi$; and the
integrand of \eqref{eq:h1} is invariant under $\tau\mapsto1/\tau$, so
$(0,1)$ contributes half.
\end{proof}

\begin{proposition}[Gauss transformation]\label{prop:gauss}
$\displaystyle
 \sn(u,-3)=\tfrac12\sd\bigl(2u,\tfrac34\bigr),
 \cn(u,-3)=\cd\bigl(2u,\tfrac34\bigr),\\
 \dn(u,-3)=\nd\bigl(2u,\tfrac34\bigr).$
\end{proposition}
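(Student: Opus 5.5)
The plan is to prove the three formulas by checking that the right-hand sides satisfy the Jacobi differential system with the same initial data. Put
\[
  S(u):=\tfrac12\sd\bigl(2u,\tfrac34\bigr),\qquad C(u):=\cd\bigl(2u,\tfrac34\bigr),\qquad D(u):=\nd\bigl(2u,\tfrac34\bigr).
\]
At $u=0$ these take the values $(0,1,1)$, which agree with $(\sn,\cn,\dn)(0,-3)$.

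First I verify the algebraic relations $S^{2}+C^{2}=1$ and $D^{2}+(-3)S^{2}=1$. Write $s,c,d$ for the modulus-$\tfrac34$ triple at $2u$. Then
\[
  S^{2}+C^{2}=\frac{\tfrac14s^{2}+c^{2}}{d^{2}}=\frac{1-\tfrac34s^{2}}{d^{2}}=1,
  \qquad
  D^{2}-3S^{2}=\frac{1-\tfrac34s^{2}}{d^{2}}=1,
\]
using only $c^{2}=1-s^{2}$ and $d^{2}=1-\tfrac34s^{2}$.

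Next comes the derivative identity $S'=CD$. Recall $\frac{\dd}{\dd v}\sd v=\cd\,\nd$ at modulus $m$; this follows from the quotient rule, since $(\sn/\dn)'=(\cn\dn^{2}+m\sn^{2}\cn)/\dn^{2}=\cn(\dn^{2}+m\sn^{2})/\dn^{2}=\cn/\dn^{2}$. Applying it with the chain factor $2$ gives
\[
  S'(u)=\tfrac12\cdot2\,\cd(2u)\nd(2u)=C(u)D(u).
\]
So $S$ solves $S'^{2}=(1-S^{2})(1+3S^{2})$ with $S(0)=0$ and $S'(0)=1$.

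The uniqueness step then closes the argument. Near $u=0$, $S$ is analytic, odd and has $S'(0)=1\neq0$, so it inverts locally. Its inverse is therefore $\int_{0}^{S}\dd\tau/\sqrt{(1-\tau^{2})(1+3\tau^{2})}=\sn^{-1}(S,-3)$ by \eqref{eq:F}, with the branch fixed by $S'(0)=+1$. Hence $S(u)=\sn(u,-3)$ near $0$, and by analytic continuation on $\CC$ as meromorphic functions.

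Then $C^{2}=1-\sn^{2}(u,-3)=\cn^{2}(u,-3)$. Both $C$ and $\cn(\cdot,-3)$ are even with value $1$ at $0$, and $C=S'/D$ analytically, so $C=\cn(u,-3)$ near $0$ and hence everywhere. Similarly $D=\dn(u,-3)$, since $D$ and $\dn(\cdot,-3)$ are continuous, equal $1$ at $0$, and satisfy the same quadratic relation. An alternative route to $D$: once $S=\sn$ and $C=\cn$ are known, $CD=S'=\cn\dn$ gives $D=\dn$ directly wherever $\cn\neq0$.

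There is no real obstacle. The only care needed is the sign and branch bookkeeping in the uniqueness step. The cleanest way to handle it is to note that $(\sn,\cn,\dn)$ at modulus $-3$ is the unique analytic solution of the first-order system $S'=CD$, $C'=-SD$, $D'=3SC$ with initial data $(0,1,1)$. The computations above, together with $C'=-SD$ and $D'=3SC$ (obtained analogously from $\cd'=(m-1)\sd\,\nd$ and $\nd'=m\,\sd\,\cd$ at $m=\tfrac34$), exhibit $(S,C,D)$ as such a solution. Picard uniqueness then gives all three identities simultaneously, and this is the version I would write out.

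As a consistency check, the same conclusion follows from Lemma \ref{lem:negmod}: the map $\varphi\mapsto\tfrac\pi2-\varphi$ links modulus $-3$ to modulus $\tfrac34$ with the scale factor $\tfrac12$. This matches $K[-3]=\tfrac12K[3/4]$ and fixes the quarter periods.
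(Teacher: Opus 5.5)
Your proof is correct, but it takes a different route from the paper's.

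The paper argues through the integral. The substitution $\varphi\mapsto\frac\pi2-\varphi$ of Lemma~\ref{lem:negmod} with $\mu=3$ shows that $s=\sn(u,-3)$ satisfies $2u=K[3/4]-F[\arccos s,\frac34]$. The quarter-period shift then gives $\cd(2u,\frac34)=\sn(K-2u,\frac34)=\sqrt{1-s^{2}}=\cn(u,-3)$ directly. The $\sn$ formula is recovered only in squared form, $s^{2}=1-\cd^{2}=\frac14\sd^{2}$, with the sign fixed at $u=0^{+}$, and the $\dn$ formula is not written out.

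You instead verify that the triple $\bigl(\frac12\sd,\cd,\nd\bigr)(2u,\frac34)$ solves the Jacobi system $S'=CD$, $C'=-SD$, $D'=3SC$ with initial data $(0,1,1)$, and you conclude by uniqueness for analytic ODEs. Your derivative formulas $\sd'=\cd\,\nd$, $\cd'=(m-1)\sd\,\nd$ and $\nd'=m\,\sd\,\cd$ are right. With $m=\frac34$ and the chain factor $2$, they give exactly the coefficients $-1$ and $3$.

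What your route buys:
\begin{itemize}
\item all three identities at once, as meromorphic identities on $\CC$ via the identity theorem;
\item no square roots and no sign discussion;
\item no need for Lemma~\ref{lem:negmod} or the shift $\sn(K-v)=\cd v$.
\end{itemize}
It is also the same initial-value-problem device the paper itself uses later for Lemma~\ref{lem:recip}.

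What the paper's route buys: it exhibits the transformation as a change of variables in $F$, so the period relation $K[-3]=\frac12K[3/4]$ is built into the argument. In your version that relation is a corollary, obtained by comparing the first positive zeros of $\cn(\cdot,-3)$ and $\cd(2\,\cdot,\frac34)$. That is why your closing consistency remark is the natural thing to add.
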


\begin{proof}
By Lemma \ref{lem:negmod} with $\mu=3$, if $s=\sn(u,-3)$ then
$2u=K[3/4]-F[\arccos s,\frac34]$, whence
$\cd(2u,\frac34)=\sn(K-2u,\frac34)=\sqrt{1-s^{2}}=\cn(u,-3)$, and
$s^{2}=1-\frac{\cn^{2}}{\dn^{2}}\big|_{2u}
=\frac{(1-m)\sn^{2}}{\dn^{2}}\big|_{2u}
=\frac14\sd^{2}(2u,\frac34)$; signs are fixed at $u=0^{+}$. (Series
check: $\sn(u,-3)=u+\frac{u^{3}}{3}$,
$\frac12\sd(2u,\frac34)=u+\frac{u^{3}}{3}$.) This is the case
$\mu=3$ of the transformation used at $\mu=\frac13$ in
\textup{[D, \S3.2]}. It is used in \S\ref{sec:dict}
(Theorems \ref{thm:dictA}--\ref{thm:dictB},
Corollary \ref{cor:H34}); the proof above is independent of
\S\ref{sec:dict}.
\end{proof}

\begin{remark}[one curve for both surfaces]\label{rem:orbit}
By \textup{[D, \S3.1]} the six numbers
$\tfrac14,\ \tfrac34,\ 4,\ \tfrac43,\ -\tfrac13,\ -3$
form a single orbit of the anharmonic group
$\lambda\mapsto\{\lambda,1-\lambda,\lambda^{-1},(1-\lambda)^{-1},
\lambda/(\lambda-1),(\lambda-1)/\lambda\}$, with
\[
  j=256\,\frac{(1-\lambda+\lambda^{2})^{3}}{\lambda^{2}(1-\lambda)^{2}}
   =\frac{35152}{9}=3905.7\overline{7}\ldots
\]
Our horizontal modulus $-3$ is the image of \textup{[D]}'s $\tfrac14$
under $\lambda\mapsto(\lambda-1)/\lambda$; directly, $\lambda=-3$ gives
$1-\lambda+\lambda^{2}=13$, $\lambda^{2}(1-\lambda)^{2}=144$, so
$j=256\cdot2197/144=35152/9$. \emph{The horizontal data of \textup{P}
and all the data of \textup{D} therefore lie on one and the same
elliptic curve}, and the two papers' moduli are two steps apart in one
orbit:
\[
  -3\ \xrightarrow[\ \lambda\mapsto1/\lambda\ ]{}\ -\tfrac13
    \ \xrightarrow[\ \text{Gauss},\ \mu=\frac13\ ]{}\ \tfrac14 ,
\]
the first step realized by the imaginary-modulus relation
\eqref{eq:imagmod}, the second by \textup{[D, \S3.2]}. This is also
visible in \eqref{eq:dict34}--\eqref{eq:dict14}: the moduli $\tfrac14$
and $\tfrac34$ of the two closed forms
\eqref{eq:fclosed}--\eqref{eq:gclosed} are complementary members of
the same orbit. By Theorem \ref{thm:H} the height of \textup{P} lies on
the same curve as well; the modulus $\frac19$ enters only through the
$2$-isogeny \eqref{eq:chain} needed to halve the argument --- exactly
as $\bt^{-4}=(2-\sqrt3)^{4}$ enters \eqref{eq:hclosed} and
\textup{[D, \S3.3]} through the $2$-isogeny.
\end{remark}

\begin{table}[ht]
\caption{Numerical values, computed with \texttt{N[\,\ldots,20]} and
truncated. All entries follow from the single value $K[1/9]$ through
$\varpi=\frac23K[1/9]$, $K[3/4]=2\varpi$, $\kappa=\varpi^{-1}$
(Proposition \ref{prop:web}); the relations close exactly. $K[1/4]$
and $\kappa_{\mathrm D}$ are quoted from
\textup{[D, \S1.1]}.}\label{tab:num}
\begin{tabular}{@{}ll@{}}
\toprule
quantity & value\\
\midrule
$K[1/9]$ & $1.6173867356247324266$\\
$\varpi=K[-3]=\tfrac23K[1/9]$ & $1.0782578237498216177$\\
$2\varpi=K[3/4]=\tfrac43K[1/9]$ & $2.1565156474996432354$\\
$\kappa=\varpi^{-1}=3/(2K[1/9])$ & $0.9274219745722159761$\\
$\varpi/2=\hh(1)$ & $0.5391289118749108089$\\
$\bt=2+\sqrt3$ & $3.7320508075688772935$\\
$\bt^{-1/2}=\sqrt{2-\sqrt3}$ & $0.5176380902050415247$\\
$\sqrt2-1$ & $0.4142135623730950488$\\
$K[1/4]$ \textup{[D]} & $1.6857503548125960429$\\
$\tfrac12K[1/4]$ & $0.8428751774062980214$\\
$\kappa_{\mathrm D}=2/K[1/4]$ \textup{[D]} & $1.1864152923\ldots$\\
$\kappa_{\mathrm P}/\kappa_{\mathrm D}=K[1/4]/K[3/4]$
 & $0.7817009614\ldots$\ (\S\ref{sub:scale})\\
\bottomrule
\end{tabular}
\end{table}

\begin{remark}[P versus D, moduli and scale]\label{rem:modDP}
\textup{[D, \S1.1]} works at modulus $\frac14$ with
$\kappa_{\mathrm D}=2/K[1/4]$; here the horizontal modulus is
$-3\sim\frac34$ (Proposition \ref{prop:gauss}) and
\[
  \kappa_{\mathrm P}=\frac{2}{K[3/4]}=\frac{2}{K'[1/4]},\qquad
  \kappa_{\mathrm D}=\frac{2}{K[1/4]},
\]
the $K\leftrightarrow K'$ interchange demanded by conjugacy. Note that
both papers use the \emph{same} $\theta$ and hence the same two
integrals $F[\theta,\frac14]$, $F[\theta,\frac34]$: it is the choice of
real or imaginary part (Remark \ref{rem:PvsD}) that decides which of
$\frac14$, $\frac34$ governs the horizontal coordinates. The vertical
modulus changes from $\frac14$ (D) to $\frac19$ (P), the two being
linked by the single Landen step \eqref{eq:chain}. The ratio
$\kappa_{\mathrm P}/\kappa_{\mathrm D}$ is the classical constant of
\S\ref{sub:scale}.
\end{remark}

%=====================================================================
\section{Symmetries, mirror planes, special points}\label{sec:sym}
%=====================================================================

\begin{proposition}[the arc involution]\label{prop:iota}
Let $\iota(\zeta):=\frac{1-\zeta}{1+\zeta}$. Then $\iota$ is an
involution of $\widehat\CC$ with fixed points $-1\pm\sqrt2$, and
\begin{equation}\label{eq:arciota}
  |\zeta+1|=\sqrt2\iff \bar\zeta=\iota(\zeta).
\end{equation}
Moreover
\begin{equation}\label{eq:Wiota}
  W(\iota\zeta)=\frac{4\,W(\zeta)}{(1+\zeta)^{4}},
\end{equation}
\begin{equation}\label{eq:iotapull}
  \iota^{*}\dd f=-\dd\hh,\qquad
  \iota^{*}\dd g=-\dd g,\qquad
  \iota^{*}\dd\hh=-\dd f ,
\end{equation}
and consequently
\begin{equation}\label{eq:iotaint}
  f(\iota\zeta)=f(1)-\hh(\zeta),\quad
  g(\iota\zeta)=g(1)-g(\zeta),\quad
  \hh(\iota\zeta)=\hh(1)-f(\zeta).
\end{equation}
\end{proposition}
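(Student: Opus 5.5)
The proof is direct computation in four steps, each elementary.

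\emph{Step 1: the involution and the arcs.} Compute $\iota(\iota\zeta)=\frac{1-\frac{1-\zeta}{1+\zeta}}{1+\frac{1-\zeta}{1+\zeta}}=\frac{2\zeta}{2}=\zeta$. The fixed points solve $\zeta^{2}+2\zeta-1=0$, giving $\zeta=-1\pm\sqrt2$. For \eqref{eq:arciota}, note that $\bar\zeta=\iota(\zeta)$ is the condition $\bar\zeta(1+\zeta)=1-\zeta$, that is, $|\zeta|^{2}+\zeta+\bar\zeta=1$. This is the same as $|\zeta+1|^{2}=2$.

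\emph{Step 2: the radical.} Put $w=\iota\zeta$. We have $1+w=\frac{2}{1+\zeta}$ and $1-w=\frac{2\zeta}{1+\zeta}$. I would expand $(1+\zeta)^{8}W(w)^{2}=(1-\zeta)^{8}+14(1-\zeta)^{4}(1+\zeta)^{4}+(1+\zeta)^{8}$. Since the expression is even in $\zeta$, only even powers survive:
\begin{itemize}
\item coefficient of $\zeta^{0}$ and $\zeta^{8}$: $2+14=16$;
\item coefficient of $\zeta^{2}$ and $\zeta^{6}$: $2\binom82-14\cdot4=0$;
\item coefficient of $\zeta^{4}$: $2\binom84+14\cdot(1-4)^{2}\cdot\ldots$
\end{itemize}
For the middle term it is cleaner to use $(1-\zeta^{2})^{4}$, whose $\zeta^{4}$ coefficient is $6$. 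This gives $140+84=224=16\cdot14$. Hence $W(w)^{2}=16W^{2}/(1+\zeta)^{8}$, and taking square roots gives $W(w)=\pm4W/(1+\zeta)^{4}$.

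The sign is fixed by continuation. At $\zeta=1$ we have $w=0$ and $W(0)=1$, while $4W(1)/16=4\cdot4/16=1$ with $W(1)=\sqrt{16}=4$ on the real segment. The ratio of the two sides is continuous and equal to $\pm1$, so it is constant along paths in $\Om^{\circ}$ and on the relevant real segment. I expect this branch bookkeeping, checking that $\iota$ maps the region where the identity is used into the domain of Convention \ref{conv:br} with consistent branches, to be the only delicate point.

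\emph{Step 3: the pullbacks.} Using $\dd w=-\frac{2\dd\zeta}{(1+\zeta)^{2}}$ together with Step 2, we get $\frac{\dd w}{W(w)}=-\frac{(1+\zeta)^{2}\dd\zeta}{2W}$. Then:
\begin{itemize}
\item $1-w^{2}=\frac{4\zeta}{(1+\zeta)^{2}}$, so $\iota^{*}\dd f=-\frac{2\zeta\dd\zeta}{W}=-\dd\hh$;
\item $1+w^{2}=\frac{2(1+\zeta^{2})}{(1+\zeta)^{2}}$, so $\iota^{*}\dd g=-\frac{i(1+\zeta^{2})\dd\zeta}{W}=-\dd g$;
\item $2w=\frac{2(1-\zeta)}{1+\zeta}$, so $\iota^{*}\dd\hh=-\frac{2(1-\zeta)(1+\zeta)\dd\zeta}{2W}=-\frac{(1-\zeta^{2})\dd\zeta}{W}=-\dd f$.
\end{itemize}
This proves \eqref{eq:iotapull}.

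\emph{Step 4: integration.} Integrate \eqref{eq:iotapull} from $\zeta=0$, where $\iota(0)=1$. This gives $f(\iota\zeta)-f(1)=-(\hh(\zeta)-\hh(0))=-\hh(\zeta)$, and similarly for the other two functions. That is \eqref{eq:iotaint}, where $f(1)$, $g(1)$, $\hh(1)$ denote the values of the primitives \eqref{eq:fgint} continued along $[0,1]$. The path from $0$ to $1$ and its image under $\iota$ are compatible because $\iota$ maps the segment $[0,1]$ onto itself. This again rests on the branch check in Step 2.
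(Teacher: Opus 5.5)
Your proposal is correct and is essentially the paper's own proof. It runs in the same order: the involution and arc conditions by direct algebra, the squared identity $(1+\zeta)^{8}W(\iota\zeta)^{2}=16W(\zeta)^{2}$ with the sign fixed on the real segment $[0,1]$, the same three pullback computations, and integration from $\zeta=0$ using $\iota(0)=1$.

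The differences are cosmetic. You expand coefficient by coefficient where the paper writes $(A_{0}+B_{0})^{2}+12A_{0}B_{0}$, and you fix the sign at $\zeta=1$ rather than at $\zeta=0$, which is equivalent because $\iota$ swaps the two points. The branch bookkeeping you flag as delicate is left just as implicit in the paper.
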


\begin{proof}
\eqref{eq:arciota}: $\iota\zeta=\bar\zeta\iff1-\zeta
=\bar\zeta+|\zeta|^{2}\iff1-|\zeta|^{2}=2u
\iff(u+1)^{2}+v^{2}=2$.

\eqref{eq:Wiota}: put $A_{0}=(1+\zeta)^{4}$, $B_{0}=(1-\zeta)^{4}$, so
that $\iota^{4}=B_{0}/A_{0}$ and
\[
  (1+\zeta)^{8}\bigl(1+14\iota^{4}+\iota^{8}\bigr)
  =A_{0}^{2}+14A_{0}B_{0}+B_{0}^{2}=(A_{0}+B_{0})^{2}+12A_{0}B_{0}.
\]
Now $A_{0}+B_{0}=2(1+6\zeta^{2}+\zeta^{4})$ and
$A_{0}B_{0}=(1-\zeta^{2})^{4}$; writing $\sigma=\zeta^{2}$ and using
$(1+6\sigma+\sigma^{2})^{2}
=1+12\sigma+38\sigma^{2}+12\sigma^{3}+\sigma^{4}$,
\[
  4\bigl(1+6\sigma+\sigma^{2}\bigr)^{2}+12(1-\sigma)^{4}
  =\bigl(4+48\sigma+152\sigma^{2}+48\sigma^{3}+4\sigma^{4}\bigr)
  +\bigl(12-48\sigma+72\sigma^{2}-48\sigma^{3}+12\sigma^{4}\bigr)
\]
\[
  =16+224\sigma^{2}+16\sigma^{4}=16\,W^{2}(\zeta),
\]
the terms in $\sigma$ and $\sigma^{3}$ cancelling. Evaluation at
$\zeta=0$ ($\iota0=1$, $W(1)=4$) fixes the sign. (This is the
computation of \textup{[D, \S5.5, Step 1]}.)

\eqref{eq:iotapull}: $\dd\iota=\frac{-2\dd\zeta}{(1+\zeta)^{2}}$,
$1-\iota^{2}=\frac{4\zeta}{(1+\zeta)^{2}}$,
$1+\iota^{2}=\frac{2(1+\zeta^{2})}{(1+\zeta)^{2}}$,
$2\iota=\frac{2(1-\zeta)}{1+\zeta}$; e.g., by \eqref{eq:fgh},
\[
  \frac{2\iota\,\dd\iota}{W(\iota)}
  =\frac{2\frac{1-\zeta}{1+\zeta}\cdot\frac{-2}{(1+\zeta)^{2}}}
        {4W/(1+\zeta)^{4}}\dd\zeta
  =-\frac{(1-\zeta^{2})}{W}\dd\zeta=-\dd f ,
\]
and the other two identically. Integration from $0$ gives
\eqref{eq:iotaint}.
\end{proof}

\begin{corollary}[three constants, and the first mirror plane]
\label{cor:plane}
\begin{equation}\label{eq:consts}
  f(1)=\hh(1)=\frac{\varpi}{2},\qquad
  g(1)=\frac{i}{2}K[1/4]=0.8428751774\,i ,
\end{equation}
and on the arc $|\zeta+1|=\sqrt2$ the image satisfies $x+z=1$.
\end{corollary}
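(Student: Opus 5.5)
The constants come from the arc involution of Proposition \ref{prop:iota} together with one explicit real integral, and the mirror plane then follows by taking real parts. I would first settle where the functions are evaluated. The point $\zeta=1$ lies outside $\Om$, since $\max_\Om|\zeta|=\bt^{-1/2}<1$. So I would read $f(1)$, $g(1)$, $\hh(1)$ as the integrals \eqref{eq:fgint} along the real segment $[0,1]$. By Lemma \ref{lem:factorW}, $W$ has no zeros on $\RR$ and is positive there, so all three integrands are continuous on $[0,1]$. With this reading, \eqref{eq:iotaint} holds along the continuation that generated it.

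\emph{The constants.} Put $\zeta=1$ in the first line of \eqref{eq:iotaint}. Since $\iota(1)=0$ and $f(0)=0$, this gives $0=f(1)-\hh(1)$, so $f(1)=\hh(1)$. The value $\hh(1)=\varpi/2$ is \eqref{eq:hinfty}. The middle line of \eqref{eq:iotaint} is empty at $\zeta=1$, so $g(1)$ must be computed directly:
\[
g(1)=i\int_0^1\frac{1+t^2}{W(t)}\dd t .
\]
I would substitute $\mu=t-t^{-1}$, as in Step 2 of Theorem \ref{thm:H} but now in the variable $t$ rather than $t^2$. Then $t^2+t^{-2}=\mu^2+2$ and $t^4+t^{-4}+14=\mu^4+4\mu^2+16$, so $W(t)=t^2\sqrt{\mu^4+4\mu^2+16}$. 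Also $(1+t^2)\dd t/t^2=\dd\mu$, so
\[
g(1)=i\int_{-\infty}^{0}\frac{\dd\mu}{\sqrt{\mu^4+4\mu^2+16}}
=i\int_{0}^{\infty}\frac{\dd\mu}{\sqrt{\mu^4+4\mu^2+16}} .
\]
This quartic has no real factorization, so I would use the classical reduction
\[
\int_0^\infty\frac{\dd x}{\sqrt{x^4+2a^2\cos2\alpha\,x^2+a^4}}=\frac1a K[\sin^2\alpha]
\]
(Byrd--Friedman). The normalization is checked at $\alpha=0$, where the left side is $\pi/(2a)$. Here $a=2$ and $\cos2\alpha=\tfrac12$, so $\sin^2\alpha=\tfrac14$, and the integral equals $\tfrac12K[1/4]$. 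This is the only genuinely new computation. I would cite the formula rather than derive it, and confirm the numerical value $0.84287\ldots$ against Table \ref{tab:num}.

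\emph{The mirror plane.} The differentials \eqref{eq:fgh} have real coefficients and $W(\bar\zeta)=\overline{W(\zeta)}$. Since $\Om$ is symmetric under conjugation, this gives $f(\bar\zeta)=\overline{f(\zeta)}$ and $\hh(\bar\zeta)=\overline{\hh(\zeta)}$ on $\Om$. On the arc $|\zeta+1|=\sqrt2$ we have $\bar\zeta=\iota\zeta$ by \eqref{eq:arciota}. Combining this with \eqref{eq:iotaint} gives
\[
\overline{f(\zeta)}=f(\iota\zeta)=\tfrac{\varpi}{2}-\hh(\zeta).
\]
Taking real parts yields $\Real f+\Real\hh=\varpi/2$. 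Then $x+z=\kappa(\Real f+\Real\hh)+\tfrac12=\tfrac12+\tfrac12=1$.

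\emph{Main obstacle.} The expected difficulty is branch bookkeeping rather than algebra. One must check that \eqref{eq:iotaint}, derived by integrating \eqref{eq:iotapull} from $0$, is valid at points of the arc. Such a point $\zeta$ is joined to $0$ inside $\Om$, while $\iota\zeta=\bar\zeta$ is joined to $\iota0=1$ along the image path, which leaves $\Om$. I would handle this by noting that $\iota$ maps $\Om\cup[0,1]$ into a simply connected region on which $W\neq0$; it avoids all eight branch points, which lie on the diagonals. Hence path-independence holds, and the value $f(1)$ along $[0,1]$ is the one that appears in \eqref{eq:iotaint}.
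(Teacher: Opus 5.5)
Your proposal is correct and follows the paper's proof essentially step for step. You put $\zeta=1$ in \eqref{eq:iotaint} together with \eqref{eq:hinfty}, use the substitution $\mu=t-t^{-1}$ for $g(1)$, and use conjugation symmetry with $\bar\zeta=\iota\zeta$ on the arc. The two differences are minor: you quote the (correct) classical formula $\int_0^\infty(x^4+2a^2\cos2\alpha\,x^2+a^4)^{-1/2}\dd x=a^{-1}K[\sin^2\alpha]$ where the paper derives the case $a=2$, $\alpha=\pi/6$ via $\mu=2\upsilon$, $\lambda=\upsilon-\upsilon^{-1}$, $\lambda=\sqrt3\tan\varphi$; and you make explicit the path-independence needed because $1\notin\Om$, which the paper leaves tacit.

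Phrase that path-independence argument with the $\iota$-invariant domain $\Om^{\circ}\cup\iota(\Om^{\circ})\cup\{\text{open arc}\}$, since the closed $\Om$ contains the four corner branch points. Separately, your aside that the quartic has no real factorization is inaccurate but harmless: $\mu^4+4\mu^2+16=(\mu^2+2\mu+4)(\mu^2-2\mu+4)$ does factor over $\RR$.
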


\begin{proof}
Putting $\zeta=1$ in the first relation of \eqref{eq:iotaint} gives
$f(0)=0=f(1)-\hh(1)$, and $\hh(1)=\varpi/2$ by \eqref{eq:hinfty}.

For $g(1)$: with $\mu=t-t^{-1}$ one has $(1+t^{2})\dd t=t^{2}\dd\mu$
and $W=t^{2}\sqrt{(\mu^{2}+2)^{2}+12}$, so
$\int_{0}^{1}\frac{(1+t^{2})\dd t}{W}
=\int_{-\infty}^{0}\frac{\dd\mu}{\sqrt{\mu^{4}+4\mu^{2}+16}}$.
Setting $\mu=2\upsilon$ and $\lambda=\upsilon-\upsilon^{-1}$ (so
$\upsilon^{4}+\upsilon^{2}+1=\upsilon^{2}(\lambda^{2}+3)$ and
$\frac{\dd\upsilon}{\upsilon}
=\frac{\dd\lambda}{\sqrt{\lambda^{2}+4}}$) this becomes
$\int_{0}^{\infty}\frac{\dd\lambda}
{\sqrt{(\lambda^{2}+3)(\lambda^{2}+4)}}=\frac12K[1/4]$, by
$\lambda=\sqrt3\tan\varphi$. (Compare \textup{[D, \S5.3]}, where the
same integral over $(0,\sqrt2-1)$ evaluates to $\frac14K[1/4]$.)

Since $f,\hh$ have real Taylor coefficients and $g$ purely imaginary
ones, $f(\bar\zeta)=\overline{f(\zeta)}$,
$\hh(\bar\zeta)=\overline{\hh(\zeta)}$,
$g(\bar\zeta)=-\overline{g(\zeta)}$. On the arc,
$\bar\zeta=\iota(\zeta)$ by \eqref{eq:arciota}, so
$\overline{f(\zeta)}=f(1)-\hh(\zeta)$; taking real parts,
$\Real f+\Real\hh=\frac\varpi2$, i.e.,\
$x+Z=\kappa\frac{\varpi}{2}=\frac12$, i.e.,\ $x+z=1$.
\end{proof}

\begin{proposition}[the group, the planes, the lattice]
\label{prop:group}
\textup{(a)} Under $\zeta\mapsto i\zeta$ one has $f\mapsto g$,
$g\mapsto-f$, $\hh\mapsto-\hh$, and under $\zeta\mapsto\bar\zeta$ the
parities of Corollary \ref{cor:plane}; hence
\begin{equation}\label{eq:group}
  R:(x,y,z)\mapsto(y,-x,1-z),\qquad
  M:(x,y,z)\mapsto(x,-y,z),
\end{equation}
generating a dihedral group of order $8$. In the coordinates
\eqref{eq:pqZ}, $R:(p,q,Z)\mapsto(-q,p,-Z)$ and
$M:(p,q,Z)\mapsto(q,p,Z)$.

\textup{(b)} The four boundary arcs lie in the four mirror planes
\begin{equation}\label{eq:planes}
  x+z=1,\qquad y+z=0,\qquad z-x=1,\qquad y=z ,
\end{equation}
cyclically permuted by $R$, with normals the face-diagonal directions
$(1,0,1),(0,1,1),(1,0,-1),(0,1,-1)$ --- the directions of the four
straight lines bounding the conjugate \textup{D} patch
\textup{[D, \S5.5]}.

\textup{(c)} The relation \eqref{eq:star} is invariant under the
body-centered cubic lattice generated by $2\ZZ^{3}$ and $(1,1,1)$.
\end{proposition}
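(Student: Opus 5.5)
Part (a) follows from the differentials \eqref{eq:fgh}, and part (b) from Corollary \ref{cor:plane} transported by the group. For (c), I will compute the periods of the three $\sn$ factors in \eqref{eq:star}.

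For \textbf{(a)}, pull back the differentials under $\zeta\mapsto i\zeta$. Since $W(i\zeta)=W(\zeta)$,
\[
  \frac{(1-(i\zeta)^{2})\,i\dd\zeta}{W}=\frac{i(1+\zeta^{2})\dd\zeta}{W}=\dd g,\qquad
  \frac{i(1+(i\zeta)^{2})\,i\dd\zeta}{W}=-\dd f,
\]
and $\hh(i\zeta)=-\hh(\zeta)$ is Lemma \ref{lem:rot}. Integrating from $0$ gives $f(i\zeta)=g(\zeta)$, $g(i\zeta)=-f(\zeta)$ and $\hh(i\zeta)=-\hh(\zeta)$. Taking real parts and multiplying by $\kappa$ gives $(x,y,z)\mapsto(y,-x,1-z)$, since $Z\mapsto-Z$ means $z\mapsto1-z$. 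For conjugation, the parities in Corollary \ref{cor:plane} give $\Real f$ and $\Real\hh$ invariant and $\Real g\mapsto-\Real g$, hence $M$. Note that $\Om$ is invariant under both maps. One checks $R^{4}=1$, $M^{2}=1$ and $MRM=R^{-1}$, so $\langle R,M\rangle$ is dihedral of order $8$. The $(p,q,Z)$ formulas follow by substitution.

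For \textbf{(b)}, the plane $x+z=1$ on the arc $|\zeta+1|=\sqrt2$ is Corollary \ref{cor:plane}. The map $\zeta\mapsto i\zeta$ carries the arcs centred at $-1,-i,1,i$ cyclically into one another. Apply $R$ repeatedly:
\begin{itemize}
  \item $x+z=1$ becomes, in the new coordinates ($x'=y$, $z'=1-z$, so $x=-y'$, $z=1-z'$), $-y'+1-z'=1$, i.e.\ $y+z=0$;
  \item applying $R$ again gives $z-x=1$;
  \item applying it once more gives $y=z$.
\end{itemize}
The normals are read off directly. The only care needed is bookkeeping of which arc maps to which under $\zeta\mapsto\pm i\zeta$, together with the induced coordinate map $R$ or $R^{-1}$.

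For \textbf{(c)}, the periods are as follows. At $m=-3$, $\sn$ has real period $4K[-3]=4\varpi$, with $\sn(u+2K)=-\sn u$. So $\sn(\varpi(x\pm y))$ is unchanged when $x\pm y$ shifts by $4$, and changes sign when it shifts by $2$. On the right of \eqref{eq:star}, $3\varpi=2K[1/9]$, so $\sn(3\varpi Z,\tfrac19)$ changes sign under $Z\mapsto Z+1$ and is invariant under $Z\mapsto Z+2$.
\begin{itemize}
  \item Under $x\mapsto x+2$, both $p$ and $q$ shift by $2$; the two sign changes cancel, and $Z$ is untouched.
  \item Under $y\mapsto y+2$, $p$ shifts by $+2$ and $q$ by $-2$; again the signs cancel.
  \item Under $z\mapsto z+2$, $Z$ shifts by $2$, which leaves the right side invariant.
  \item Under $(1,1,1)$, $p$ shifts by $2$ and $q$ is unchanged, so the left side changes sign; $Z$ shifts by $1$, so the right side also changes sign.
\end{itemize}
Hence \eqref{eq:star} is invariant under the lattice generated by $2\ZZ^{3}$ and $(1,1,1)$.

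The main obstacle is the orientation bookkeeping in (b). Part (c) is routine once $3\varpi=2K[1/9]$ is used from Proposition \ref{prop:web}.
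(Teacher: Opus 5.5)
Your proposal is correct and follows the paper's proof step for step: for (a) you pull back the differentials \eqref{eq:fgh} under $\zeta\mapsto i\zeta$ using $W(i\zeta)=W(\zeta)$; for (b) you transport the plane $x+z=1$ of Corollary \ref{cor:plane} around the cycle by $R$; and for (c) you use $\sn(u+2K)=-\sn u$ at moduli $-3$ and $\frac19$ together with $3\varpi=2K[1/9]$. Your explicit checks of the dihedral relations, the arc-to-plane bookkeeping, and the separate $y\mapsto y+2$ case are small additions that the paper leaves implicit.
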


\begin{proof}
(a) By \eqref{eq:fgh}, $f'(i\zeta)\cdot i=\frac{i(1+\zeta^{2})}{W}=g'$,
$g'(i\zeta)\cdot i=-\frac{1-\zeta^{2}}{W}=-f'$,
$\hh'(i\zeta)\cdot i=-\frac{2\zeta}{W}=-\hh'$, using
$W(i\zeta)=W(\zeta)$; integrate. (Equivalently, this is
Lemma \ref{lem:rot} together with $A\pm B=2f,2g$.)

(b) Corollary \ref{cor:plane} gives the first plane. If $x+z=1$ then
the $R$-image $(X,Y,Z_{*})=(y,-x,1-z)$ satisfies $-Y+1-Z_{*}=1$, i.e.,\
$Y+Z_{*}=0$; iterating gives the cycle \eqref{eq:planes}.

(c) $\sn(\cdot,-3)$ has $\sn(u+2K[-3])=-\sn(u)$ and period
$4K[-3]=4\varpi$. Under $x\mapsto x+2$: $\varpi p$ and $\varpi q$ both
increase by $2K[-3]$, so both factors change sign and $Z$ is
unchanged. Under $z\mapsto z+2$: $3\varpi Z$ increases by
$6\varpi=4K[1/9]$, a full period. Under
$(x,y,z)\mapsto(x+1,y+1,z+1)$: $p\mapsto p+2$ (one sign change),
$q\mapsto q$, and $3\varpi Z\mapsto3\varpi Z+2K[1/9]$ (one sign
change); both sides of \eqref{eq:star} change sign. Note that this
argument is carried out directly on $(\star)$: unlike
\textup{[D, \S4.2]}, where the real fcc invariance requires the
absolute value in $\Phi=|\mathcal T|$ and must be verified in the
variables $(p,q,r)=(\Phi(x),\Phi(y),\Phi(z))$, the relation
\eqref{eq:star} is sign-honest.
\end{proof}

\begin{proposition}[diagonals and corners]\label{prop:corners}
\textup{(a)} On the diagonal $\zeta=t(1+i)$ one has
$\bar\zeta=-i\zeta$, $p\equiv0$, $Z\equiv0$, and
$\varpi q=B(\zeta)$ with
\begin{equation}\label{eq:diag}
  \sn(\varpi q,-3)=\frac{2t}{1-2t^{2}} .
\end{equation}
On the anti-diagonal $\zeta=t(1-i)$, $q\equiv0$ and $Z\equiv0$.

\textup{(b)} At the corner $\zeta_{0}=\frac{\sqrt3-1}{2}(1+i)$ the
right-hand side of \eqref{eq:diag} equals $1$; hence
$\varpi q=K[-3]=\varpi$, $q=1$, and
\begin{equation}\label{eq:cornerval}
  X(\zeta_{0})=\bigl(\tfrac12,-\tfrac12,\tfrac12\bigr).
\end{equation}
The four corners map to $(\pm\frac12,\pm\frac12,\frac12)$, the
vertices of a unit square in the plane $z=\frac12$; each lies on two
of the planes \eqref{eq:planes}. (Contrast
\textup{[D, \S5.4]}, where the four corners map to the vertices of a
regular tetrahedron of edge $\sqrt2$.)

\textup{(c)} $p,q,Z$ are harmonic in $(u,v)$, so their extrema lie on
$\partial\Om$; the extreme values are $p,q=\pm1$ at the corners and
$Z=\pm0.1498195\ldots$ at the four arc midpoints
\textup{(Corollary \ref{cor:midpoint})}.
\end{proposition}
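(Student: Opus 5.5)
The proof is a direct computation along the two diagonals, followed by symmetry to move between corners and the maximum principle for (c). Throughout I use that $f,\hh$ have real Taylor coefficients and $g$ purely imaginary ones (proof of Corollary \ref{cor:plane}). I also use the derivatives \eqref{eq:PNder} and the dictionary \eqref{eq:dictB}.

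\emph{Part (a).} Parametrize the diagonal as $\zeta=t(1+i)$ with $t$ real and $\dd\zeta=(1+i)\dd t$. Then $\bar\zeta=-i\zeta$, $\zeta^{2}=2it^{2}$ and $\zeta^{4}=-4t^{4}$. Hence
\[
\pip=1-2t^{2},\qquad \pim=1+2t^{2},\qquad W=\sqrt{1-56t^{4}+16t^{8}},
\]
all real, with $W>0$ on the portion of the diagonal inside $\Om^{\circ}$, since $W$ vanishes only at the corners there.
\begin{itemize}
\item The height: by \eqref{eq:h1}, $\hh=H(\zeta^{2})$ with $H$ odd and real on the real axis, so $H$ maps $i\RR$ into $i\RR$. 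Thus $\hh$ is purely imaginary and $Z=\kappa\Real\hh=0$.
\item The coordinate $p$: $A'\dd\zeta=(1+i)^{2}(1-2t^{2})\dd t/W=2i(1-2t^{2})\dd t/W$ is purely imaginary. Since $A(0)=0$, we get $\Real A=0$, hence $p=0$ by \eqref{eq:sepP}.
\item The coordinate $q$: $B'\dd\zeta=(1-i)(1+i)(1+2t^{2})\dd t/W=2(1+2t^{2})\dd t/W$ is real, so $B$ is real. Then $\varpi q=\Imag\Nc=\Real B=B$.
\item The formula: \eqref{eq:dictB} gives $\sn(B,-3)=(1-i)(1+i)t/(1-2t^{2})=2t/(1-2t^{2})$, which is \eqref{eq:diag}.
\end{itemize}
For the anti-diagonal $\zeta=t(1-i)=\overline{t(1+i)}$, apply the conjugation parities. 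They give $(x,y,z)\mapsto(x,-y,z)$, which is $M$ of \eqref{eq:group}. So $q$ and $p$ are exchanged while $Z$ is preserved, and therefore $q\equiv0$ and $Z\equiv0$ there.

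\emph{Part (b).} At $t_{0}=\frac{\sqrt3-1}{2}$ we have $2t_{0}^{2}=2-\sqrt3$. Hence $1-2t_{0}^{2}=\sqrt3-1=2t_{0}$, and the right-hand side of \eqref{eq:diag} equals $1$.

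On $[0,t_{0})$ the integrand $2(1+2t^{2})/W$ is positive, so $B$ increases strictly from $0$. By Corollary \ref{cor:reg} it extends continuously to $t_{0}$. Meanwhile $2t/(1-2t^{2})$ increases from $0$ to $1$ on this interval. Because $\sn(\cdot,-3)$ is an increasing bijection $[0,K[-3]]\to[0,1]$, this forces $B(t)=\sn^{-1}\bigl(2t/(1-2t^{2})\bigr)$ on $[0,t_{0}]$. In particular $B(t_{0})=K[-3]=\varpi$ by Proposition \ref{prop:web}, so $q=1$.

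With $p=0$ and $Z=0$, this gives $x=\frac{p+q}{2}=\frac12$, $y=\frac{p-q}{2}=-\frac12$ and $z=\frac12$. The other corners are $\zeta_{k}=i^{k}\zeta_{0}$, so they are obtained by iterating $R$ from \eqref{eq:group}. This yields $(-\frac12,-\frac12,\frac12)$, $(-\frac12,\frac12,\frac12)$ and $(\frac12,\frac12,\frac12)$. For the plane membership, $X(\zeta_{0})$ satisfies $x+z=1$ and $y+z=0$ directly; the remaining corners follow because $R$ permutes the planes \eqref{eq:planes} cyclically.

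\emph{Part (c) and the main obstacle.} Harmonicity of $p,q,Z$ is immediate, since each is the real part of a holomorphic function by \eqref{eq:sepP}. Continuity on the compact set $\Om$ (Corollary \ref{cor:reg}) then places the extrema on $\partial\Om$ by the maximum principle.

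The real work, and the step I expect to be the obstacle, is identifying the extreme values on the boundary arcs. My plan is to parametrize the arc $|\zeta+1|=\sqrt2$ and use that it lies in the plane $x+z=1$ (Corollary \ref{cor:plane}). I would then show that $p$ and $q$ are monotone along each arc between its endpoint corners. To do this I would check the sign of $\Real(A'\,\dd\zeta)$ along the arc using \eqref{eq:Wiota}, or equivalently transport the computation along the diagonal via $\iota$ and \eqref{eq:iotaint}. This gives the bounds $|p|,|q|\le1$ with equality at corners. The value of $Z$ at the arc midpoints I would take from Corollary \ref{cor:midpoint}, after showing that $Z$ has a single interior critical point on each arc, located at its $\iota$-fixed point $-1+\sqrt2$ (or its rotates).
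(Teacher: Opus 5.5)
Your parts (a) and (b) are correct, and the harmonicity-plus-maximum-principle half of (c) is exactly the paper's argument. In (a) your route differs slightly. You check directly that along $\zeta=t(1+i)$ the form $A'\dd\zeta=2i(1-2t^{2})\dd t/W$ is purely imaginary, $B'\dd\zeta=2(1+2t^{2})\dd t/W$ is real, and $\zeta^{2}\in i\RR$ makes $\hh$ imaginary. The paper instead puts $\omega=\bar\zeta=-i\zeta$ into the half-sum formulas of Proposition~\ref{prop:twovar} and reads off $\varpi p=\frac12\bigl(A(\zeta)+B(-i\zeta)\bigr)=0$, $\varpi q=B(\zeta)$ and $\varpi Z=0$ from Lemma~\ref{lem:rot}. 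Both use the same rotation symmetry. The paper's version is a one-liner, while yours also yields $B'>0$ along the diagonal, which is what (b) actually needs.

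In (b) you are more careful than the paper. For real $B$, the equation $\sn(B,-3)=1$ only gives $B\in K[-3]+4K[-3]\ZZ$, yet the paper writes ``hence $\varpi q=K[-3]$'' without comment. An argument of your kind appears only later, in the surjectivity part of Proposition~\ref{prop:bjorling}. Your argument settles it: $B$ starts at $0$, increases, and cannot reach $K[-3]$ before $t_{0}$ because $\sn(B,-3)<1$ there. You also derive the other three corner images and their plane memberships via $R$, which the paper merely asserts.

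For (c), the identification of the extreme values is, as you say yourself, only a plan. However, the paper's proof of (c) also stops at harmonicity. The value $0.1498\ldots$ is quoted from Corollary~\ref{cor:midpoint}, which computes $Z$ at the midpoint without showing it is extremal. So you are not missing an idea the paper supplies.

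If you want to close the argument, the Gauss map is cheaper than tracking $\arg W$ through \eqref{eq:Wiota}:
\begin{itemize}
\item On the arc $|\zeta+1|=\sqrt2$ one has $|\zeta|^{2}-1=-2u$, so by \eqref{eq:normal} $N\propto(u,v,-u)$.
\item The image of the arc lies in $x+z=1$ (Corollary~\ref{cor:plane}), and $X$ is an immersion on the open arc. Hence the tangent of the image curve is $\mu\,N\times(1,0,1)=\mu\,(v,-2u,-v)$ with $\mu$ continuous and nowhere zero there.
\item On this arc $|v|\le u$, with equality only at the two corners. So $\dd p\propto\mu(v-2u)$ and $\dd q\propto\mu(v+2u)$ have constant signs, while $\dd Z\propto-\mu v$ vanishes only at $\zeta=\sqrt2-1$.
\end{itemize}
Hence $p$ and $q$ run monotonically between the corner values $0$ and $1$, and $Z$ rises from $0$ to its midpoint value and falls back to $0$. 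The rotation $R$ carries this to the other three arcs.
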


\begin{proof}
(a) $\bar\zeta=t(1-i)=-i\cdot t(1+i)$. By
Proposition \ref{prop:twovar} and Lemma \ref{lem:rot},
$\varpi p=\frac12\bigl(A(\zeta)+B(-i\zeta)\bigr)
=\frac12\bigl(A(\zeta)-A(\zeta)\bigr)=0$ and
$\varpi q=\frac12\bigl(B(\zeta)+A(-i\zeta)\bigr)=B(\zeta)$; also
$\hh(-i\zeta)=-\hh(\zeta)$, so $\varpi Z=0$. Finally, by
\eqref{eq:dictB} with $\zeta=t(1+i)$, $\zeta^{2}=2it^{2}$,
$\pip=1-2t^{2}$, and $(1-i)(1+i)=2$, giving \eqref{eq:diag}.

(b) At $t=c=\frac{\sqrt3-1}{2}$: $2c^{2}=2-\sqrt3$ and
$1-2c^{2}=\sqrt3-1=2c$, so \eqref{eq:diag} equals $1$; hence
$\varpi q=K[-3]$, $q=1$, $p=Z=0$.

(c) $p=\kappa\Real\Pc,$ etc.,\ are real parts of holomorphic functions.
\end{proof}

\begin{table}[ht]
\caption{Special points.}\label{tab:special}
\begin{tabular}{@{}llll@{}}
\toprule
$\zeta$ & $(p,q,Z)$ & $(x,y,z)$ & remark\\
\midrule
$0$ & $(0,0,0)$ & $(0,0,\frac12)$ & vertical normal;
   $Z=\varpi pq+O(6)$\\
$t\in(0,\sqrt2-1)$ & $(x,x,Z)$ & $(x,0,z)$ & $y\equiv0$;
   \S\ref{sec:diag}\\
$it$ & $(p,-p,Z)$ & $(0,y,z)$ & $x\equiv0$\\
$\sqrt2-1$ & $(x,x,\frac12-x)$ & $(0.3501805,0,0.6498195)$ &
   $x+z=1$; $\sn^{2}(\varpi x,-3)=\frac{2\sqrt3-3}{3}$\\
$\zeta_{0}$ & $(0,1,0)$ & $(\frac12,-\frac12,\frac12)$ & flat point,
   normal $\frac{(1,1,-1)}{\sqrt3}$\\
\bottomrule
\end{tabular}
\end{table}

\part*{PART II. THE ELIMINATION}

%=====================================================================
\section{Separation and complexification}
%=====================================================================

\begin{lemma}[reality]\label{lem:real}
If $H$ is holomorphic near $(0,0)\in\CC^{2}$ and
$H(\zeta,\bar\zeta)=0$ for all small $\zeta$, then $H\equiv0$.
\end{lemma}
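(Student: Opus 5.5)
The plan is to use the standard change of variables $\zeta=u+iv$, $\bar\zeta=u-iv$ and reduce to the identity theorem for real-analytic functions of two real variables. Expand $H$ in its convergent power series near the origin,
\[
  H(\zeta,\omega)=\sum_{j,k\ge0}c_{jk}\,\zeta^{j}\omega^{k},
\]
valid on a polydisc $|\zeta|,|\omega|<\rho$. The hypothesis says that the function $h(u,v):=H(u+iv,u-iv)$ vanishes for all real $(u,v)$ near $(0,0)$.

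The key step is to complexify $(u,v)$. The map $(u,v)\mapsto(\zeta,\omega)=(u+iv,\,u-iv)$ is a $\CC$-linear isomorphism of $\CC^{2}$. Therefore $\tilde h(u,v):=H(u+iv,u-iv)$, with $u,v$ now complex, is holomorphic on a neighbourhood of $0\in\CC^{2}$, and it vanishes on a neighbourhood of $0$ in $\RR^{2}$. A holomorphic function of two variables that vanishes on an open subset of $\RR^{2}$ vanishes identically near the origin. To see this, expand $\tilde h$ in powers of $u,v$: all its Taylor coefficients at $0$ are real partial derivatives of $h$, and these are all zero. Hence $\tilde h\equiv0$ near $0$. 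Composing with the inverse linear map $u=(\zeta+\omega)/2$, $v=(\zeta-\omega)/(2i)$ then gives $H\equiv0$ near $(0,0)$.

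Alternatively, one can argue directly with Wirtinger derivatives. Since $H$ is holomorphic in each variable separately, $\partial_{\zeta}^{j}\partial_{\bar\zeta}^{k}$ applied to $H(\zeta,\bar\zeta)$ equals $(\partial_1^{j}\partial_2^{k}H)(\zeta,\bar\zeta)$. Evaluating at $\zeta=0$ gives $j!\,k!\,c_{jk}=0$ for all $j,k$. Either route works, and I would present the Wirtinger version because it is two lines long.

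I do not expect a real obstacle here. The only point needing care is justifying that the Wirtinger derivatives of the composite act term by term on the power series; this holds because the series converges absolutely and uniformly on compact subsets of the polydisc. If "near $(0,0)$" is meant on a connected domain, analytic continuation then extends the vanishing to the whole domain.
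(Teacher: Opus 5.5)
Your proposal is correct, and both of your routes differ from the paper's. The paper writes $\zeta=re^{i\vartheta}$, so that $H(\zeta,\bar\zeta)=\sum_{j,k}c_{jk}r^{j+k}e^{i(j-k)\vartheta}$, and takes Fourier coefficients in $\vartheta$. For each frequency $\nu$ this gives $\sum_{j-k=\nu}c_{jk}r^{j+k}=0$ for all small $r$. That is a one-variable power series in $r$, since the exponents $j+k=2k+\nu$ are distinct, so every $c_{jk}$ vanishes.

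You instead isolate each $c_{jk}$ in one step by applying $\partial_{\zeta}^{j}\partial_{\bar\zeta}^{k}$ at the origin. Because $H$ has no antiholomorphic derivatives, the chain rule identifies the composite's $\partial_{\zeta}$ and $\partial_{\bar\zeta}$ with $\partial_{1}H$ and $\partial_{2}H$ restricted to the slice $\omega=\bar\zeta$. Your first route does the same job differently: the linear change of variables carries that slice onto $\RR^{2}\subset\CC^{2}$, and then the identity theorem applies.

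Your Wirtinger argument is the more economical of the two. The chain rule already justifies it, so the term-by-term discussion you flag is not actually needed. It also proves slightly more: $H\equiv0$ as soon as $\zeta\mapsto H(\zeta,\bar\zeta)$ merely vanishes to infinite order at $0$.

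The paper's Fourier argument is equally elementary, but it costs two things. It must exchange the series with the integral over circles, which is harmless by uniform convergence. As written, it also uses vanishing on whole small circles rather than just the infinite jet at the origin. In return, it sorts the coefficients by the pair $(j-k,\,j+k)$ before killing them.
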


\begin{proof}
Write $H=\sum c_{jk}\zeta^{j}\omega^{k}$, set
$\zeta=re^{i\vartheta}$; vanishing of the $\vartheta$-Fourier
coefficients gives\\ $\sum_{j-k=\nu}c_{jk}r^{j+k}=0$ for all small $r$,
hence $c_{jk}=0$. (This is \textup{[D, \S7.1, reality lemma]}.)
\end{proof}

\begin{proposition}[two-variable form]\label{prop:twovar}
Set $\omega:=\bar\zeta$, treated as an independent variable. Then
\begin{equation}\label{eq:halfsums}
  \boxed{\ \varpi p=\tfrac12\bigl(A(\zeta)+B(\omega)\bigr),\qquad
  \varpi q=\tfrac12\bigl(B(\zeta)+A(\omega)\bigr),\qquad
  \varpi Z=\tfrac12\bigl(\hh(\zeta)+\hh(\omega)\bigr).\ }
\end{equation}
Each argument is a \emph{half-sum} of one function of $\zeta$ and one
of $\omega$; the substitution $\zeta\leftrightarrow\omega$
interchanges $p$ and $q$ and fixes $Z$.
\end{proposition}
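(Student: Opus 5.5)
The plan is to reduce each coordinate to a real part and then use the conjugation symmetries of $A$, $B$, $\hh$. By Proposition \ref{prop:sep}(c), $\varpi p=\Real A$, $\varpi q=\Imag\Nc=\Real B$ and $\varpi Z=\Real\hh$. For any holomorphic $F$ we have $\Real F(\zeta)=\tfrac12\bigl(F(\zeta)+\overline{F(\zeta)}\bigr)$. So the task is to express $\overline{A(\zeta)}$, $\overline{B(\zeta)}$ and $\overline{\hh(\zeta)}$ as holomorphic functions evaluated at $\omega=\bar\zeta$. By Lemma \ref{lem:real}, the resulting identities, once checked for $\omega=\bar\zeta$, may be regarded as identities in independent $(\zeta,\omega)$; the boxed formulas are really definitions of the left sides there.

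\emph{Step 1 (reflection laws).} I compute $\overline{A(\bar\zeta)}$ from the derivative. By \eqref{eq:PNder}, $A'(\zeta)=(1+i)(1+i\zeta^2)/W(\zeta)$. Since $W$ has real Taylor coefficients, $\overline{W(\bar\zeta)}=W(\zeta)$, and the branch is preserved because $\Om^\circ$ is conjugation-invariant and $W(0)=1$. Conjugating the coefficients gives $(1-i)(1-i\zeta^2)/W=B'(\zeta)$. Both functions vanish at $0$, so
\[
\overline{A(\bar\zeta)}=B(\zeta),\qquad\text{equivalently}\qquad \overline{A(\zeta)}=B(\bar\zeta).
\]
Symmetrically, $\overline{B(\zeta)}=A(\bar\zeta)$. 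For the height, $\hh'=2\zeta/W$ has real coefficients, so $\overline{\hh(\zeta)}=\hh(\bar\zeta)$; this is already noted in the proof of Corollary \ref{cor:plane}.

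\emph{Step 2 (assemble).} We then get $\varpi p=\tfrac12(A(\zeta)+B(\bar\zeta))$, $\varpi q=\tfrac12(B(\zeta)+A(\bar\zeta))$ and $\varpi Z=\tfrac12(\hh(\zeta)+\hh(\bar\zeta))$. Renaming $\bar\zeta=\omega$ gives \eqref{eq:halfsums}, and the swap $\zeta\leftrightarrow\omega$ visibly exchanges the first two expressions and fixes the third.

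The only point needing care is Step 1's branch bookkeeping: we must check that conjugation maps the path-continued branch of $W$ (Convention \ref{conv:br}) to itself on $\Om^\circ$. This follows because $\Om$ is symmetric under $\zeta\mapsto\bar\zeta$ and is simply connected, and the continuation starts from the real value $W(0)=1$. I expect no real obstacle beyond this.
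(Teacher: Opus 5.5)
Your proposal is correct and is essentially the paper's own proof: conjugating the Taylor coefficients of $A'=(1+i)\pip/W$ gives $B'$, so $\overline{A(\zeta)}=B(\bar\zeta)$, $\overline{B(\zeta)}=A(\bar\zeta)$ and $\overline{\hh(\zeta)}=\hh(\bar\zeta)$, and then each real part in Proposition~\ref{prop:sep}(c) is written as a half-sum. Your remarks on the branch of $W$ under conjugation and on Lemma~\ref{lem:real} are sound, and the paper simply leaves them implicit.
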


\begin{proof}
Let ${}^{*}$ denote conjugation of Taylor coefficients. By
\eqref{eq:PNder}, $(\Pc^{*})'=\frac{(1-i)(1-i\zeta^{2})}{W}=B'$, so
$\Pc^{*}=B$, i.e.,\ $A^{*}=B$ and $B^{*}=A$; and $\hh^{*}=\hh$. Hence
$\overline{A(\zeta)}=B(\omega)$ and
$\varpi p=\Real A=\frac12(A(\zeta)+B(\omega))$;
$\varpi q=\Real(f-g)=\Real B=\frac12(B(\zeta)+A(\omega))$; and
$\varpi Z=\Real\hh=\frac12(\hh(\zeta)+\hh(\omega))$.
\end{proof}

\begin{remark}[P versus D, structure]\label{rem:structure}
In \textup{[D, \S7.1]} the corresponding arguments are
$w_{p}=\frac12(-iP_{\zeta}+N_{\omega})$,
$w_{n}=\frac12(N_{\zeta}-iP_{\omega})$ and
$w_{q}=\frac{K}{2}-h_{\zeta}+h_{\omega}$ --- in our notation
(Remark \ref{rem:notation}) $P=2\Pc=2A$ and $N=2\Nc=2iB$. 
All three arguments in \textup{[D]} are half-\emph{differences}, 
and the swap interchanges the first two with a sign while sending 
the third to $K_{1}-w_{q}$ modulo the period. Here all three are
half-\emph{sums} and the swap fixes the third argument.
This is the structural simplification that makes
the P elimination shorter: no half-argument step is needed on the
right-hand side, and (\S\ref{sec:pfaff}) the height can be eliminated
altogether. See Remark \ref{rem:open}(5) for the reciprocal
observation, which applies the same device to \textup{[D]}.
\end{remark}

%=====================================================================
\section{The diagonal identity}\label{sec:diag}
%=====================================================================

The diagonal $\omega=\zeta$ of the complexified picture corresponds, by
Proposition \ref{prop:twovar}, to $\varpi p=\varpi q=f(\zeta)$ and
$\varpi Z=\hh(\zeta)$. Restricted to it, $(\star)$ becomes a
\emph{one-variable} identity, which we now prove outright. Recall
$E=1+\zeta^{4}$.

\begin{theorem}[the diagonal identity]\label{thm:diag}
For all $\zeta\in\Om^{\circ}$,
\begin{equation}\label{eq:diagid}
  \boxed{\ \tfrac13\sn\bigl(3\hh(\zeta),\tfrac19\bigr)
  =\sn^{2}\bigl(f(\zeta),-3\bigr)
  =\frac{2\zeta^{2}}{1+\zeta^{4}+W}\ }
\end{equation}
In particular, for $\zeta=t\in(0,\sqrt2-1)$ real one has
$y\equiv0$, $p=q=x$, and $(\star)$ holds, both sides being
$\frac{2t^{2}}{1+t^{4}+W}$.
\end{theorem}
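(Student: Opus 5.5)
The second equality in \eqref{eq:diagid} is already \eqref{eq:snf} of Corollary \ref{cor:half}, so only the first equality needs proof. My plan is to obtain it from the modulus-$\tfrac34$ height dictionary (Corollary \ref{cor:H34}) by one descending Landen step \eqref{eq:chain}, which converts modulus $\tfrac34$ to modulus $\tfrac19$ and argument $4\hh$ to $3\hh$.

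\emph{Step 1 (Landen).} Apply Lemma \ref{lem:landen} with $k^{2}=\tfrac34$, $k_{1}=\tfrac13$, $1+k_{1}=\tfrac43$ and $u=4\hh$. Then $v=u/(1+k_{1})=3\hh$, and with $s:=\sn(3\hh,\tfrac19)$ we get
\[
  \sn\bigl(4\hh,\tfrac34\bigr)=\frac{\tfrac43 s}{1+\tfrac13 s^{2}} .
\]
Put $S:=\tfrac13 s$, the left side of \eqref{eq:diagid}. The right side becomes $4S/(1+3S^{2})$. Comparing with Corollary \ref{cor:H34} gives
\[
  \frac{S}{1+3S^{2}}=\frac{\zeta^{2}}{W},\qquad\text{i.e.}\qquad 3\zeta^{2}S^{2}-WS+\zeta^{2}=0 .
\]

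\emph{Step 2 (solving).} The discriminant is $W^{2}-12\zeta^{4}=1+2\zeta^{4}+\zeta^{8}=E^{2}$, so the roots are $S=(W\mp E)/(6\zeta^{2})$. The root with the minus sign can be rewritten using $(W-E)(W+E)=W^{2}-E^{2}=12\zeta^{4}$:
\[
  \frac{W-E}{6\zeta^{2}}=\frac{2\zeta^{2}}{E+W}.
\]
This root is $O(\zeta^{2})$ at $0$, while the other root behaves like $1/(3\zeta^{2})$. Since $S=\tfrac13\sn(3\hh,\tfrac19)\to0$ as $\zeta\to0$, near $0$ we must have $S=2\zeta^{2}/(E+W)$.

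\emph{Step 3 (continuation).} This is the step that needs care. Both $S$ and $2\zeta^{2}/(E+W)$ are holomorphic on $\Om^{\circ}$; for the second, I will check that $E+W\ne0$ there, since $E+W=0$ would force $W^{2}=E^{2}$, i.e.\ $\zeta=0$, where $E+W=2$. The two roots of the quadratic coincide only where $E=0$, which does not occur on $\Om$ (Lemma \ref{lem:prod}). Where $\zeta=0$ the two roots do not collide either, since one root is $0$ and the other has a pole. Hence $S$ stays on one analytic branch, and by the identity theorem on the connected set $\Om^{\circ}$ the equality $S=2\zeta^{2}/(E+W)$ holds throughout. As an optional cross-check, the $\dn$-formula of \eqref{eq:landen} together with $\dn(4\hh,\tfrac34)=E/W$ should give a consistent value of $s^{2}$.

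\emph{Step 4 (the real segment).} For $\zeta=t$ real, $f$ and $\hh$ have real Taylor coefficients and $g$ has purely imaginary ones (proof of Corollary \ref{cor:plane}). Hence $y\equiv0$, $p=q=x=\kappa f(t)$, and $\varpi Z=\hh(t)$. Equivalently, this is the diagonal $\omega=\zeta$ of Proposition \ref{prop:twovar}, where $\tfrac12(A+B)=f$. The left side of $(\star)$ is then $\sn^{2}(f,-3)$ and the right side is $\tfrac13\sn(3\hh,\tfrac19)$, so \eqref{eq:diagid} gives $(\star)$ there, with both sides equal to $2t^{2}/(1+t^{4}+W)$.
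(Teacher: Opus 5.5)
Your proof is correct and follows the paper's route: apply one descending Landen step $\tfrac34\to\tfrac19$ at $u=4\hh$, then solve a quadratic for the root vanishing at $\zeta=0$. The only difference is that you compare directly with the modulus-$\tfrac34$ dictionary of Corollary~\ref{cor:H34} (discriminant $E^{2}$), whereas the paper passes the Landen output to modulus $-3$ via Proposition~\ref{prop:gauss} and reads off $\Sigma=\mathcal S_{2}/(1+\mathcal D_{2})$ from Theorem~\ref{thm:H}. One small correction: $\tfrac13\sn(3\hh,\tfrac19)$ is a priori only meromorphic on $\Om^{\circ}$, not holomorphic as you state, but this does not affect Step 3. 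Meromorphic functions on a connected domain form a field, so the vanishing of $(S-r_{1})(S-r_{2})$ forces $S\equiv r_{1}$ or $S\equiv r_{2}$, and $S(0)=0$ selects $r_{1}=2\zeta^{2}/(E+W)$.
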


\begin{proof}
The second equality is \eqref{eq:snf}. For the first, put
\begin{equation}\label{eq:SigmaS2}
  \Sigma:=\tfrac13\sn\bigl(3\hh,\tfrac19\bigr),\qquad
  \mathcal S_{2}:=\sn(2\hh,-3),\quad
  \mathcal D_{2}:=\dn(2\hh,-3).
\end{equation}
By Lemma \ref{lem:landen} with $k^{2}=\frac34$, $k_{1}=\frac13$,
$u=4\hh$, $v=3\hh$ and $s=\sn(3\hh,\frac19)=3\Sigma$,
\begin{equation}\label{eq:landen2}
  \sn\bigl(4\hh,\tfrac34\bigr)=\frac{4\Sigma}{1+3\Sigma^{2}},\qquad
  \dn\bigl(4\hh,\tfrac34\bigr)
  =\frac{1-3\Sigma^{2}}{1+3\Sigma^{2}} ,
\end{equation}
whence, by Proposition \ref{prop:gauss},
\begin{equation}\label{eq:S2Sigma}
  \mathcal S_{2}=\tfrac12\sd\bigl(4\hh,\tfrac34\bigr)
  =\frac{2\Sigma}{1-3\Sigma^{2}} .
\end{equation}
Solving $3\mathcal S_{2}\Sigma^{2}+2\Sigma-\mathcal S_{2}=0$ for the
branch vanishing at $\zeta=0$, and using
$\mathcal D_{2}^{2}=1+3\mathcal S_{2}^{2}$,
\begin{equation}\label{eq:SigmaD2}
  \Sigma=\frac{\mathcal D_{2}-1}{3\mathcal S_{2}}
       =\frac{\mathcal S_{2}}{1+\mathcal D_{2}} ,
\end{equation}
the two forms agreeing because
$(\mathcal D_{2}-1)(\mathcal D_{2}+1)=3\mathcal S_{2}^{2}$. Finally
insert $\mathcal S_{2}=\frac{2\zeta^{2}}{E}$,
$\mathcal D_{2}=\frac WE$ from Theorem \ref{thm:H}:
\[
  \Sigma=\frac{2\zeta^{2}/E}{1+W/E}=\frac{2\zeta^{2}}{E+W}
  =\frac{2\zeta^{2}}{1+\zeta^{4}+W}. \qedhere
\]
\end{proof}

\begin{corollary}[the arc midpoint]\label{cor:midpoint}
At $t=\sqrt2-1$: $t^{2}=3-2\sqrt2$, $1-t^{2}=2t$, $1+t^{4}=6t^{2}$,
$W^{2}=(1+t^{4})^{2}+12t^{4}=48t^{4}$, so $W=4\sqrt3\,t^{2}$ and
\begin{equation}\label{eq:midpoint}
  \sn^{2}\bigl(\varpi x,-3\bigr)=\frac{2}{6+4\sqrt3}
  =\frac{2\sqrt3-3}{3}=0.1547005\ldots,\qquad
  \sn\bigl(3\varpi Z,\tfrac19\bigr)=2\sqrt3-3 .
\end{equation}
Numerically $x=0.3501805$, $y=0$, $z=0.6498195$, in agreement with
$x+z=1$ \textup{(Corollary \ref{cor:plane})}. Equivalently
$\cd(3\varpi x,\frac19)=2\sqrt3-3$, because
$3\varpi x+3\varpi Z=\frac32\varpi=K[1/9]$.
\end{corollary}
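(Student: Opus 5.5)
The plan is to specialize Theorem~\ref{thm:diag} to the real point $\zeta=t=\sqrt2-1$, which lies on the boundary arc $|\zeta+1|=\sqrt2$. Since $t\in\partial\Om$ rather than $\Om^{\circ}$, I will first justify the passage to the endpoint. Both sides of \eqref{eq:diagid} are continuous on $[0,\sqrt2-1]$, because $f$ and $\hh$ extend continuously to $\Om$ (Corollary~\ref{cor:reg}). Moreover $W$ does not vanish at $t$, the branch points of $W$ having modulus $\bt^{\pm1/2}$ (Lemma~\ref{lem:factorW}). So \eqref{eq:diagid} holds at $t$ by letting $\zeta\to t$ along the real segment. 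On that segment $f$, $\hh$ and $W$ are real, and $W>0$, since it is $+1$ at $0$ and never vanishes there.

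The algebra is short. First, $t^{2}=(\sqrt2-1)^{2}=3-2\sqrt2$, so $1-t^{2}=2\sqrt2-2=2t$. Squaring gives $(1-t^{2})^{2}=4t^{2}$, hence $1+t^{4}=4t^{2}+2t^{2}=6t^{2}$. Next I rewrite the radicand as
\[
1+14t^{4}+t^{8}=(1+t^{4})^{2}+12t^{4}=36t^{4}+12t^{4}=48t^{4}.
\]
Taking the positive root gives $W=4\sqrt3\,t^{2}$. Substituting into \eqref{eq:diagid},
\[
\sn^{2}(f(t),-3)=\frac{2t^{2}}{6t^{2}+4\sqrt3\,t^{2}}=\frac{2}{6+4\sqrt3},
\]
and rationalizing yields $(2\sqrt3-3)/3$. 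The first equality of \eqref{eq:diagid} then gives $\sn(3\hh(t),\tfrac19)=2\sqrt3-3$.

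To translate back to coordinates I use Proposition~\ref{prop:twovar} with $\omega=\bar t=t$. This gives $\varpi x=\varpi p=\varpi q=f(t)$, since $y=0$ and $f(t)$ is real. It also gives $\varpi Z=\hh(t)$. These identifications yield \eqref{eq:midpoint}.

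The numerical values $x\approx0.3501805$ and $z=\tfrac12+Z\approx0.6498195$ come from inverting $\sn$, or from Appendix code. For the relation $x+z=1$ I invoke Corollary~\ref{cor:plane}, since $|t+1|=\sqrt2$, and the numbers confirm it. For the equivalent $\cd$ form, $x+z=1$ gives $Z=\tfrac12-x$, so $3\varpi x+3\varpi Z=\tfrac32\varpi=K[1/9]$ by Proposition~\ref{prop:web}. Then
\[
\sn(3\varpi Z,\tfrac19)=\sn\bigl(K[1/9]-3\varpi x,\tfrac19\bigr)=\cd(3\varpi x,\tfrac19),
\]
using $\sn(K-u)=\cd u$, and this equals $2\sqrt3-3$.

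I expect no serious obstacle. The only care points are the boundary passage, which continuity handles, and the sign of $W$, which positivity on the real segment settles.
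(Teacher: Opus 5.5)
Your proposal is correct and follows the paper's own route: specialize the diagonal identity of Theorem~\ref{thm:diag} at $t=\sqrt2-1$ using the displayed algebra, identify $\varpi x=f(t)$ and $\varpi Z=\hh(t)$, and obtain the $\cd$ form from $x+z=1$ (Corollary~\ref{cor:plane}) together with $\sn(K-u)=\cd u$. Your continuity argument for reaching the boundary point $t\in\partial\Om$ (Theorem~\ref{thm:diag} is stated only on $\Om^{\circ}$, i.e.\ on the open real interval) and your check that $W>0$ there make explicit two points the paper leaves implicit.
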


\begin{remark}\label{rem:diagremark}
Theorem \ref{thm:diag} replaces the argument of an earlier version,
which proved $(\star)$ on the real axis only, and needed the
substitution $\nu=\tau-\tau^{-1}$, two Landen steps, a quintic
rational map and a monotonicity argument to invert it. Both
sides of \eqref{eq:diagid} are now instances of one formula. By
\eqref{eq:cute} the identity can also be written
$\frac13\sn(3\hh,\frac19)=\zeta^{-2}\sn^{2}(\hh,-3)$: a relation
between the same integral $\hh$ at two moduli.
\end{remark}

\begin{corollary}[the right-hand side is determined]\label{cor:m2}
Suppose that
\begin{equation}\label{eq:ansatz}
  \mathfrak A(\varpi p)\,\mathfrak A(\varpi q)=\mathfrak C(\varpi Z)
\end{equation}
holds on $\Om$ for some odd analytic $\mathfrak A$, $\mathfrak C$ with
$\mathfrak A(v)=v+O(v^{3})$. Then $\mathfrak C$ is uniquely determined
by $\mathfrak A$; and if $\mathfrak A(v)=\sn(v,-3)$ then necessarily
\begin{equation}\label{eq:Cfinal}
  \mathfrak C(v)=\tfrac13\sn\bigl(3v,\tfrac19\bigr),
  \qquad\text{so the vertical modulus is } m_{2}=\tfrac19 .
\end{equation}
\end{corollary}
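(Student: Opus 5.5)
The plan is to restrict \eqref{eq:ansatz} to the diagonal $\omega=\zeta$ of the complexified picture, where it becomes a one-variable identity that pins down $\mathfrak C$. First, both sides of \eqref{eq:ansatz} are real-analytic functions of $(u,v)$. By Proposition \ref{prop:twovar} they are the restrictions to $\omega=\bar\zeta$ of the holomorphic functions
\[
\mathfrak A\bigl(\tfrac12(A(\zeta)+B(\omega))\bigr)\,\mathfrak A\bigl(\tfrac12(B(\zeta)+A(\omega))\bigr)
\quad\text{and}\quad
\mathfrak C\bigl(\tfrac12(\hh(\zeta)+\hh(\omega))\bigr)
\]
near $(0,0)$. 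Lemma \ref{lem:real} then upgrades \eqref{eq:ansatz} to an identity in the two independent variables $(\zeta,\omega)$. Setting $\omega=\zeta$ and using $A+B=2f$, we obtain
\[
\mathfrak A\bigl(f(\zeta)\bigr)^{2}=\mathfrak C\bigl(\hh(\zeta)\bigr)
\]
for all small $\zeta$.

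Next, uniqueness. Since $\hh(\zeta)=H(\zeta^{2})$ with $H(\tau)=\tau+O(\tau^{3})$ by \eqref{eq:h1}, the map $\zeta\mapsto\hh(\zeta)$ covers a full neighbourhood of $0$ (it is $2$-to-$1$ locally). So the displayed equation determines $\mathfrak C$ on a neighbourhood of $0$, hence everywhere by analyticity. Well-definedness is automatic: $f(-\zeta)^{2}=f(\zeta)^{2}$ because $f$ is odd and $\mathfrak A$ is odd, so $\mathfrak A(f)^{2}$ is even. Concretely,
\[
\mathfrak C(v)=\mathfrak A\bigl(f(\zeta)\bigr)^{2}\quad\text{where }\hh(\zeta)=v,
\]
and this is independent of which of the two preimages $\pm\zeta$ is chosen.

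Finally, take $\mathfrak A=\sn(\cdot,-3)$. Theorem \ref{thm:diag} gives
\[
\sn^{2}\bigl(f(\zeta),-3\bigr)=\tfrac13\sn\bigl(3\hh(\zeta),\tfrac19\bigr)
\]
on $\Om^{\circ}$. Hence $\mathfrak C(\hh(\zeta))=\tfrac13\sn(3\hh(\zeta),\tfrac19)$ for all small $\zeta$. Because $\hh$ is open near $0$ onto a neighbourhood of $0$, this yields \eqref{eq:Cfinal} near $0$, and then globally by analytic continuation. To justify the phrase ``the vertical modulus is $\tfrac19$,'' note that the family $\tfrac1c\sn(cv,m)$ with $m\neq\tfrac19$ or $c\neq3$ differs already in its Taylor jet, so these parameters are forced by uniqueness.

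The main obstacle is the first step. We must justify that the composed holomorphic extensions exist near $(0,0)$, so that Lemma \ref{lem:real} applies and the restriction to $\omega=\zeta$ is legitimate even though that diagonal is not in the real slice $\omega=\bar\zeta$. This needs only that $\mathfrak A$ and $\mathfrak C$ are analytic near $0$ and that $A$, $B$, $\hh$ vanish at $0$, which holds here. The remaining steps follow directly from Theorem \ref{thm:diag}.
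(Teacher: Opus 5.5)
Your argument is correct, and its core matches the paper's: restrict \eqref{eq:ansatz} to the diagonal, read off $\mathfrak C(\hh)=\mathfrak A(f)^{2}$, apply the identity theorem, and finish with Theorem \ref{thm:diag}. The difference is in how you reach the diagonal. The paper never leaves the real slice. On the segment $\zeta=t\in(0,\sqrt2-1)$ one has $\bar t=t$, so the real slice $\omega=\bar\zeta$ already meets the diagonal $\omega=\zeta$ there. Hence $p=q=x$, and \eqref{eq:ansatz} reads $\mathfrak C(\hh(t))=\mathfrak A(f(t))^{2}$ directly. Since $\hh'(t)=2t/W>0$, $\hh$ is a diffeomorphism onto an interval, which is all the identity theorem needs. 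So the step you single out as the main obstacle is avoidable: extending holomorphically and invoking Lemma \ref{lem:real} to reach the complex diagonal, together with the $2$-to-$1$ bookkeeping of $\pm\zeta$. Your route is nonetheless valid, since the compositions are holomorphic near $(0,0)$ exactly as you say. It gives the identity on a full complex neighbourhood, which is the form of initial data that Theorem \ref{thm:P0}(d) later uses. For this corollary, though, it is more machinery than necessary.

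One side remark is wrong. It is not true that every $(c,m)\neq(3,\tfrac19)$ changes the Taylor jet of $\tfrac1c\sn(cv,m)$. Besides the trivial $c\mapsto-c$, Lemma \ref{lem:recip} with $m=9$ gives $\sn(v,9)=\tfrac13\sn(3v,\tfrac19)$ identically. What the corollary determines is the function $\mathfrak C$. The phrase ``vertical modulus $\tfrac19$'' refers to a normal form of that function, and it is unique only modulo $m\mapsto1/m$, just as the horizontal modulus is in Corollary \ref{cor:modulus}. This does not affect your proof of \eqref{eq:Cfinal}.
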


\begin{proof}
On the real segment $\zeta=t\in(0,\sqrt2-1)$ we have $p=q=x$ and
\eqref{eq:ansatz} reads $\mathfrak C(\hh(t))=\mathfrak A(f(t))^{2}$.
Since $\hh'(t)=2t/W>0$, $t\mapsto\hh(t)$ is a real-analytic
diffeomorphism onto\\ $(0,\varpi\cdot0.1498195)$, so $\mathfrak C$ is
determined on an interval, hence everywhere by analytic continuation.
If $\mathfrak A=\sn(\cdot,-3)$ then by Theorem \ref{thm:diag} the
determined function is $\frac13\sn(3\,\cdot,\frac19)$.
\end{proof}

%=====================================================================
\section{The logarithmic derivative, and the identity (P0)}
\label{sec:pfaff}
%=====================================================================

\begin{lemma}[the logarithmic derivative halves rationally]
\label{lem:Lam}
Let $\Lambda_{m}:=(\log\sn(\cdot,m))'=\dfrac{\cn\dn}{\sn}$. Then, for
every $m$,
\begin{equation}\label{eq:Lamhalf}
  \Lambda_{m}\bigl(\tfrac v2\bigr)
  =\frac{\cn(v,m)+\dn(v,m)}{\sn(v,m)} .
\end{equation}
\end{lemma}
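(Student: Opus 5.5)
Write $w=v/2$ and $s=\sn w$, $c=\cn w$, $d=\dn w$ (all at parameter $m$). The plan is to express the right-hand side of \eqref{eq:Lamhalf} through the duplication formulas, which are the case $a=b=w$ of the addition theorems \eqref{eq:add}:
\[
  \sn v=\frac{2scd}{\Delta},\qquad
  \cn v=\frac{c^{2}-s^{2}d^{2}}{\Delta},\qquad
  \dn v=\frac{d^{2}-ms^{2}c^{2}}{\Delta},\qquad
  \Delta=1-ms^{4}.
\]
The denominators $\Delta$ cancel in the quotient, so
\[
  \frac{\cn v+\dn v}{\sn v}
  =\frac{c^{2}+d^{2}-s^{2}(d^{2}+mc^{2})}{2scd}.
\]

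It remains to show that the numerator equals $2c^{2}d^{2}$; the quotient then reduces to $cd/s=\Lambda_{m}(w)$. I will eliminate $c^{2}=1-s^{2}$ and $d^{2}=1-ms^{2}$ using \eqref{eq:snF}. With these substitutions,
\[
  d^{2}+mc^{2}=1+m-2ms^{2},\qquad c^{2}+d^{2}=2-(1+m)s^{2},
\]
so the numerator becomes
\[
  2-2(1+m)s^{2}+2ms^{4}=2(1-s^{2})(1-ms^{2})=2c^{2}d^{2}.
\]
This is a routine polynomial check in $s^{2}$.

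No signs or branches arise: both sides are meromorphic in $v$ and the computation is an identity of meromorphic functions. Alternatively, one can observe that both sides behave like $2/v$ as $v\to0$. The only step needing care is recalling the duplication formulas with the correct $\Delta$, which I take directly from \eqref{eq:add}. No analytic obstacle is expected.

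Alternative cross-check: the half-argument formulas $\sn^{2}(v/2)=\frac{1-\cn v}{1+\dn v}$ from \cite[124.01]{BF}, together with its companions for $\cn^{2}$ and $\dn^{2}$, give $\Lambda^{2}$ directly. The duplication route above is cleaner because it is sign-free.
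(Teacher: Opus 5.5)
Your proof is correct, but it runs in the opposite direction from the paper's. The paper starts from the half-argument formulas $\sn^{2}\tfrac v2=\frac{1-C}{1+D}$ and $\cn^{2}\tfrac v2=\frac{C+D}{1+D}$ of \cite[124.01]{BF}, with $S,C,D$ the values at $v$. From these it derives $\dn^{2}\tfrac v2$ and checks that the \emph{squares} of the two sides of \eqref{eq:Lamhalf} agree. That reduces to a polynomial identity in $C,D$ via $D^{2}=1-m(1-C^{2})$. Because only squares are compared, the paper then fixes the sign separately, using the common asymptotics $2/v$ as $v\to0$.

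You instead work at $w=v/2$ and apply the duplication case of \eqref{eq:add}. This expresses $\sn v$, $\cn v$, $\dn v$ rationally in $s,c,d$, with signs included. The factor $\Delta$ cancels, and the numerator factors as $2c^{2}d^{2}$. So you obtain the identity itself as an identity of meromorphic functions, with no squaring and no sign or branch discussion; your closing remark about $2/v$ is therefore superfluous.

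Your route is shorter and sign-free, so as a proof of the lemma it is the cleaner one. The paper's route works entirely with data at the full argument $v$, which is how the lemma is used in Definition~\ref{def:LpLq}, where $\cn$ and $\dn$ of $2\varpi p$ are the known rational quantities. It also reuses the same half-argument formulas already used in Corollary~\ref{cor:half}.
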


\begin{proof}
Write $S,C,D$ for the values at $v$. By \cite[124.01]{BF},
$\sn^{2}\frac v2=\frac{1-C}{1+D}$,
$\cn^{2}\frac v2=\frac{C+D}{1+D}$, hence
$\dn^{2}\frac v2=1-m\frac{1-C}{1+D}=\frac{(1-m)+D+mC}{1+D}$, so
\[
  \Lambda_{m}^{2}\bigl(\tfrac v2\bigr)
  =\frac{(C+D)\bigl[(1-m)+D+mC\bigr]}{(1+D)(1-C)},
\]
while the square of the right side of \eqref{eq:Lamhalf} is
$\frac{(C+D)^{2}}{(1-C)(1+C)}$. These agree iff
$\bigl[(1-m)+D+mC\bigr](1+C)=(C+D)(1+D)$; the left side is
$1-m+D+C+DC+mC^{2}$ and the right side is
$C+D+CD+D^{2}=C+D+CD+1-m(1-C^{2})$, the same. Both sides of
\eqref{eq:Lamhalf} are $\sim2/v$ as $v\to0$, fixing the sign.
\end{proof}

\begin{definition}\label{def:LpLq}
With $\Pi_{1}:=\pim(\zeta)\pip(\omega)$,
$\Pi_{2}:=\pip(\zeta)\pim(\omega)$, set
\begin{align}
  L_{p}&:=\Lambda_{-3}(\varpi p)
  =\frac{(\Pi_{1}+6\zeta\omega)\,r_{-}(\zeta)r_{+}(\omega)
        +(\Pi_{1}-2\zeta\omega)\,r_{+}(\zeta)r_{-}(\omega)}
        {(1+i)\zeta\,\pim(\zeta)\Ww+(1-i)\omega\,\pip(\omega)\Wz},
  \label{eq:Lp}\\
  L_{q}&:=\Lambda_{-3}(\varpi q)=L_{p}(\omega,\zeta)
  =\frac{(\Pi_{2}+6\zeta\omega)\,r_{+}(\zeta)r_{-}(\omega)
        +(\Pi_{2}-2\zeta\omega)\,r_{-}(\zeta)r_{+}(\omega)}
        {(1+i)\omega\,\pim(\omega)\Wz+(1-i)\zeta\,\pip(\zeta)\Ww} .
  \label{eq:Lq}
\end{align}
Both lie in $\KK$. We write $N_{p},D_{p},N_{q},D_{q}$ for the four
displayed numerators and denominators, so that $L_{p}=N_{p}/D_{p}$ and
$L_{q}=N_{q}/D_{q}$.
\end{definition}

\begin{proof}[Derivation]
By Proposition \ref{prop:twovar}, $2\varpi p=A(\zeta)+B(\omega)$;
insert \eqref{eq:dictA} at $\zeta$ and \eqref{eq:dictB} at $\omega$
into \eqref{eq:add} with $m=-3$. With
$S_{a}=\frac{(1+i)\zeta}{\pim(\zeta)}$,
$C_{a}=\frac{r_{-}(\zeta)}{\pim(\zeta)}$,
$D_{a}=\frac{r_{+}(\zeta)}{\pim(\zeta)}$,
$S_{b}=\frac{(1-i)\omega}{\pip(\omega)}$,
$C_{b}=\frac{r_{+}(\omega)}{\pip(\omega)}$,
$D_{b}=\frac{r_{-}(\omega)}{\pip(\omega)}$ one has
$S_{a}S_{b}=\frac{2\zeta\omega}{\Pi_{1}}$,
$\Delta=\frac{\Pi_{1}^{2}+12\zeta^{2}\omega^{2}}{\Pi_{1}^{2}}$,
$C_{b}D_{b}=\frac{\Ww}{\pip(\omega)^{2}}$,
$C_{a}D_{a}=\frac{\Wz}{\pim(\zeta)^{2}}$, and therefore
\begin{align}
  \sn(2\varpi p)&=\frac{(1+i)\zeta\pim(\zeta)\Ww
     +(1-i)\omega\pip(\omega)\Wz}
     {\Pi_{1}^{2}+12\zeta^{2}\omega^{2}},
  \label{eq:snp}\\
  \cn(2\varpi p)&=\frac{\Pi_{1}r_{-}(\zeta)r_{+}(\omega)
     -2\zeta\omega\,r_{+}(\zeta)r_{-}(\omega)}
     {\Pi_{1}^{2}+12\zeta^{2}\omega^{2}},\qquad
  \dn(2\varpi p)=\frac{\Pi_{1}r_{+}(\zeta)r_{-}(\omega)
     +6\zeta\omega\,r_{-}(\zeta)r_{+}(\omega)}
     {\Pi_{1}^{2}+12\zeta^{2}\omega^{2}} .\label{eq:cndnp}
\end{align}
Adding the numerators of \eqref{eq:cndnp} and applying
Lemma \ref{lem:Lam} with $v=2\varpi p$ gives \eqref{eq:Lp}, the common
denominator cancelling. Then \eqref{eq:Lq} is the image under
$\zeta\leftrightarrow\omega$, which interchanges $p$ and $q$.

\emph{Check (diagonal).} At $\omega=\zeta$: $\Pi_{1}=E$, and the
numerator of \eqref{eq:Lp} is
$W\bigl[(1+6\zeta^{2}+\zeta^{4})+(1-\zeta^{2})^{2}\bigr]
=2W(1+\zeta^{2})^{2}$, the denominator is
$\zeta W\bigl[(1+i)\pim+(1-i)\pip\bigr]=2\zeta W(1+\zeta^{2})$, so
$L_{p}=\frac{1+\zeta^{2}}{\zeta}=\Lambda_{-3}(f)$, in agreement with
\eqref{eq:snf}. \emph{Check ($\omega=0$).}
$L_{p}=\frac{r_{+}+r_{-}}{(1+i)\zeta}$,
$L_{q}=\frac{r_{+}+r_{-}}{(1-i)\zeta}$. Both checks are confirmed
symbolically in Appendix \ref{app:code}, Cell 1.
\end{proof}

\begin{theorem}[the two Pfaffian equations]\label{thm:pfaff}
Equation \eqref{eq:star} implies the pair
\begin{align}
  (1+i)\pip(\zeta)\,L_{p}+(1-i)\pim(\zeta)\,L_{q}
   &=4\zeta\,\cs\bigl(2\varpi Z,-3\bigr),\label{eq:Pz}\\
  (1-i)\pim(\omega)\,L_{p}+(1+i)\pip(\omega)\,L_{q}
   &=4\omega\,\cs\bigl(2\varpi Z,-3\bigr).\label{eq:Pw}
\end{align}
\end{theorem}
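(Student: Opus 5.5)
The plan is to take the logarithm of \eqref{eq:star} in its two-variable form and differentiate it in $\zeta$ and in $\omega$ separately. By Proposition \ref{prop:twovar}, $(\star)$ reads
\[
  \log\sn\bigl(\tfrac12(A(\zeta)+B(\omega)),-3\bigr)
  +\log\sn\bigl(\tfrac12(B(\zeta)+A(\omega)),-3\bigr)
  =\log\mathfrak C\bigl(\tfrac12(\hh(\zeta)+\hh(\omega))\bigr),
\]
with $\mathfrak C(v)=\frac13\sn(3v,\frac19)$. This holds as an identity of holomorphic functions of $(\zeta,\omega)$ near the origin by Lemma \ref{lem:real}, away from the zero set of both sides; the logarithms are used only locally, and only their derivatives matter.

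First I apply $\partial_\zeta$. The left side gives $\frac12A'(\zeta)L_p+\frac12B'(\zeta)L_q$, by the definition of $\Lambda_{-3}$ and Definition \ref{def:LpLq}. The right side gives $\frac12\hh'(\zeta)\,(\log\mathfrak C)'(\varpi Z)$. Inserting \eqref{eq:PNder} and multiplying by $2W(\zeta)$ turns this into
\[
  (1+i)\pip(\zeta)L_p+(1-i)\pim(\zeta)L_q=2\zeta\,(\log\mathfrak C)'(\varpi Z).
\]
For $\partial_\omega$, the roles of $A$ and $B$ swap in the arguments. Here $\varpi p$ contains $B(\omega)$, with $B'=(1-i)\pim/W$, and $\varpi q$ contains $A(\omega)$, with $A'=(1+i)\pip/W$. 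This gives the same equation with $\zeta$ replaced by $\omega$ and the coefficients $(1-i)\pim(\omega)$ and $(1+i)\pip(\omega)$ on $L_p$ and $L_q$ respectively, which is the shape of \eqref{eq:Pw}.

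It then remains to identify $2(\log\mathfrak C)'(v)=4\cs(2v,-3)$, that is, $\Lambda_{1/9}(3v)\cdot 3/2=2\cs(2v,-3)$. Equivalently, $\frac{\dd}{\dd v}\log\frac13\sn(3v,\frac19)=2\cs(2v,-3)$. I would prove this from the relations already used in Theorem \ref{thm:diag}, where \eqref{eq:S2Sigma} expresses $\mathcal S_2=\sn(2v,-3)=2\Sigma/(1-3\Sigma^2)$ with $\Sigma=\mathfrak C(v)$. That relation was derived for $v=\hh$, but it is an identity of elliptic functions in $v$: it follows from Lemma \ref{lem:landen} and Proposition \ref{prop:gauss}, valid for all $v$. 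Logarithmic differentiation then gives
\[
  \frac{\dd}{\dd v}\log\mathcal S_2=\frac{\Sigma'}{\Sigma}\cdot\frac{1+3\Sigma^2}{1-3\Sigma^2}.
\]
On the other hand, $\frac{\dd}{\dd v}\log\sn(2v,-3)=2\cn\dn/\sn\,(2v)$. So I need $\frac{1+3\Sigma^2}{1-3\Sigma^2}=\cn(2v,-3)\dn(2v,-3)$, after which $\Sigma'/\Sigma=2\cs(2v,-3)$ follows. To check this, express $\cn$ and $\dn$ at $2v$ through $\Sigma$: from $\mathcal S_2$, $\mathcal D_2^2=1+3\mathcal S_2^2=(1+3\Sigma^2)^2/(1-3\Sigma^2)^2$ and $\mathcal C_2^2=1-\mathcal S_2^2$. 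This last step is the one I expect to be the obstacle, since $\cn\cdot\dn$ is not visibly $\frac{1+3\Sigma^2}{1-3\Sigma^2}$.

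If the direct algebra fails, I would derive the identity from differential equations instead. Differentiate $\Sigma=\mathcal S_2/(1+\mathcal D_2)$ from \eqref{eq:SigmaD2}. Using $\mathcal S_2'=2\mathcal C_2\mathcal D_2$ and $\mathcal D_2'=6\mathcal S_2\mathcal C_2$, this gives
\[
  \Sigma'=\frac{2\mathcal C_2\mathcal D_2(1+\mathcal D_2)-6\mathcal S_2^2\mathcal C_2}{(1+\mathcal D_2)^2}
  =\frac{2\mathcal C_2\bigl(\mathcal D_2+\mathcal D_2^2-3\mathcal S_2^2\bigr)}{(1+\mathcal D_2)^2}
  =\frac{2\mathcal C_2}{1+\mathcal D_2},
\]
using $\mathcal D_2^2-3\mathcal S_2^2=1$. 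Hence $\Sigma'/\Sigma=2\mathcal C_2/\mathcal S_2=2\cs(2v,-3)$. This route is clean, so I would present it directly rather than the first one.

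Substituting $2(\log\mathfrak C)'(\varpi Z)=4\cs(2\varpi Z,-3)$ into the two differentiated equations yields \eqref{eq:Pz} and \eqref{eq:Pw}. These hold as identities near $(0,0)$ off the zero locus, and hence on the domain as meromorphic identities.
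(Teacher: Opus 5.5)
Your proof is correct and is essentially the paper's. You log-differentiate the complexified $(\star)$ in $\zeta$ and in $\omega$ via \eqref{eq:halfsums} and \eqref{eq:PNder}, then identify the height factor by differentiating $\Sigma=\mathcal S_2/(1+\mathcal D_2)$ and using $\mathcal D_2^2-3\mathcal S_2^2=1$, which is exactly the computation behind \eqref{eq:LamZ}. Two slips in the exploratory part are harmless: the intermediate restatement should read $\tfrac32\Lambda_{1/9}(3v)=\cs(2v,-3)$, and your first route needs $\frac{1+3\Sigma^2}{1-3\Sigma^2}=\dn(2v,-3)$ rather than $\cn\dn$; that identity holds by the proof of Lemma \ref{lem:19}, so the first route closes as well.
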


\begin{proof}
By \eqref{eq:halfsums} and \eqref{eq:PNder},
\begin{equation}\label{eq:derivs}
  \partial_{\zeta}(\varpi p)=\frac{(1+i)\pip(\zeta)}{2\Wz},\quad
  \partial_{\zeta}(\varpi q)=\frac{(1-i)\pim(\zeta)}{2\Wz},\quad
  \partial_{\zeta}(\varpi Z)=\frac{\zeta}{\Wz},
\end{equation}
and the $\omega$-derivatives are obtained by
$\zeta\leftrightarrow\omega$ (which swaps the first two). Taking
$\partial_{\zeta}\log$ of \eqref{eq:star} and multiplying by $2\Wz$
gives
\[
  (1+i)\pip(\zeta)L_{p}+(1-i)\pim(\zeta)L_{q}
  =6\zeta\,\Lambda_{1/9}(3\varpi Z),
\]
and it remains to identify $\Lambda_{1/9}(3\varpi Z)$. Put
$\vartheta:=2\varpi Z$, so $3\varpi Z=\frac32\vartheta$ and, with
$\Sigma=\frac13\sn(3\varpi Z,\frac19)$ as in \eqref{eq:SigmaD2},
\[
  \frac{\dd}{\dd\vartheta}\log\Sigma
  =\frac{\mathcal C_{2}\mathcal D_{2}}{\mathcal S_{2}}
   -\frac{3\mathcal S_{2}\mathcal C_{2}}{1+\mathcal D_{2}}
  =\mathcal C_{2}\,\frac{\mathcal D_{2}(1+\mathcal D_{2})
     -3\mathcal S_{2}^{2}}{\mathcal S_{2}(1+\mathcal D_{2})}
  =\frac{\mathcal C_{2}}{\mathcal S_{2}},
\]
because $\mathcal D_{2}^{2}-3\mathcal S_{2}^{2}=1$. Hence
\begin{equation}\label{eq:LamZ}
  \boxed{\ \Lambda_{1/9}(3\varpi Z)
  =\tfrac23\,\cs\bigl(2\varpi Z,-3\bigr)
  =\tfrac43\,\cs\bigl(4\varpi Z,\tfrac34\bigr),\ }
\end{equation}
the last equality from $\cs(w,-3)=2\cs(2w,\frac34)$
(Proposition \ref{prop:gauss}). Substituting gives \eqref{eq:Pz}, and
\eqref{eq:Pw} is its image under $\zeta\leftrightarrow\omega$.
\end{proof}

\begin{theorem}[reduction to one rational identity]\label{thm:P0}
Consider the identity in $\KK$
\begin{equation}\label{eq:P0}
  \boxed{\ \textup{(P0)}\qquad
  \underbrace{\bigl[(1+i)\omega\pip(\zeta)
    -(1-i)\zeta\pim(\omega)\bigr]}_{=:\ \alpha}L_{p}
  +\underbrace{\bigl[(1-i)\omega\pim(\zeta)
    -(1+i)\zeta\pip(\omega)\bigr]}_{=:\ \beta}L_{q}=0 .\ }
\end{equation}
Then:
\begin{enumerate}[label=\textup{(\alph*)},leftmargin=2.4em]
\item \textup{(P0)} is $\omega\cdot\eqref{eq:Pz}-\zeta\cdot
      \eqref{eq:Pw}$, and is equivalent to
      $\dd\log\Theta\wedge\dd Z=0$, where
      $\Theta:=\sn(\varpi p,-3)\sn(\varpi q,-3)$;
\item \textup{(P0)} makes no reference to the right-hand side of
      \eqref{eq:star} and involves no square root beyond
      $r_{\pm}(\zeta),r_{\pm}(\omega)$; by Definition \ref{def:K} it
      is equivalent to $16$ rational identities in
      $\CC(\zeta,\omega)$;
\item \textup{(P0)} is antisymmetric: interchanging
      $\zeta\leftrightarrow\omega$ multiplies its left side by $-1$.
      In particular it holds automatically on the diagonal
      $\omega=\zeta$;
\item \textup{(P0)} together with Theorem \ref{thm:diag} implies
      \eqref{eq:star} on all of $\Om$.
\end{enumerate}
\end{theorem}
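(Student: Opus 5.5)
The plan is to prove (a)--(d) in order, working throughout in the complexified variables of Proposition \ref{prop:twovar}. For (a) I will compute $\omega\cdot$\eqref{eq:Pz}$-\zeta\cdot$\eqref{eq:Pw}. The right-hand sides $4\zeta\omega\cs(2\varpi Z,-3)$ cancel. On the left, the coefficient of $L_p$ is $(1+i)\omega\pip(\zeta)-(1-i)\zeta\pim(\omega)=\alpha$, and that of $L_q$ is $(1-i)\omega\pim(\zeta)-(1+i)\zeta\pip(\omega)=\beta$. This already gives (P0). For the wedge formulation, note that by \eqref{eq:derivs} one has
\[
\partial_\zeta\log\Theta=\frac{(1+i)\pip(\zeta)L_p+(1-i)\pim(\zeta)L_q}{2\Wz},\qquad
\partial_\zeta(\varpi Z)=\frac{\zeta}{\Wz},
\]
and symmetrically in $\omega$. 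Hence
\[
\dd\log\Theta\wedge\dd Z=(\partial_\zeta\log\Theta\,\partial_\omega Z-\partial_\omega\log\Theta\,\partial_\zeta Z)\,\dd\zeta\wedge\dd\omega,
\]
and the coefficient equals $(\varpi\cdot 2\Wz\Ww)^{-1}\bigl(\omega\cdot\text{LHS}\eqref{eq:Pz}-\zeta\cdot\text{LHS}\eqref{eq:Pw}\bigr)$. The prefactor is nonvanishing, so the two conditions are equivalent. (The same computation also shows that the LHS of \eqref{eq:Pz} is $2\Wz\partial_\zeta\log\Theta$ independently of $(\star)$, which I need in (d).)

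For (b), \eqref{eq:Lp}--\eqref{eq:Lq} together with the polynomials $\alpha,\beta$ show that (P0) lies in $\KK$. No height quantity appears, and I will invoke Definition \ref{def:K}. For (c), the swap $\zeta\leftrightarrow\omega$ exchanges $L_p$ and $L_q$ (Definition \ref{def:LpLq}) and sends $\alpha\mapsto(1+i)\zeta\pip(\omega)-(1-i)\omega\pim(\zeta)=-\beta$, and likewise $\beta\mapsto-\alpha$. So the left side changes sign. On the diagonal, $\alpha=-\beta$ and $L_p=L_q$, so the left side is zero.

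Part (d) is the real content. Assume (P0). By (a), $\dd\log\Theta\wedge\dd Z=0$ on a neighbourhood of $(0,0)$ in $\CC^2$ minus the zero set of $\Theta$. The function $Z$ has $\dd Z\ne0$ away from $\zeta=\omega=0$. Where this holds, the vanishing of the wedge says that $\Theta$ is locally a function of $Z$ alone, $\Theta=\Phi(Z)$. To make this global in a neighbourhood of the origin, I will restrict to the diagonal $\omega=\zeta$. There $\varpi Z=\hh(\zeta)=\zeta^2+O(\zeta^6)$, so $Z$ restricted to the diagonal already takes all small values. I then define $\Phi$ by Theorem \ref{thm:diag}: on the diagonal, $\Theta=\sn^2(f,-3)=\tfrac13\sn(3\hh,\tfrac19)$, so the candidate is $\Phi(Z)=\tfrac13\sn(3\varpi Z,\tfrac19)$.

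Consider the holomorphic function $\mathcal E:=\Theta-\Phi(Z)$ of $(\zeta,\omega)$. It vanishes on the diagonal. Its wedge $\dd\mathcal E\wedge\dd Z$ also vanishes, because $\dd\Theta\wedge\dd Z=\Theta\,\dd\log\Theta\wedge\dd Z=0$ extends holomorphically across $\Theta=0$. Hence $\mathcal E$ is constant along the level curves of $Z$. Near a generic point of the diagonal, each level curve of $Z$ meets the diagonal transversally: the level curves are tangent to the vector field $Z_\omega\partial_\zeta-Z_\zeta\partial_\omega$, which on the diagonal is proportional to $\partial_\zeta-\partial_\omega$ and is therefore transverse to it. So $\mathcal E\equiv0$ on an open set, hence identically by analyticity on the connected domain. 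Setting $\omega=\bar\zeta$ then gives \eqref{eq:star} on $\Om^\circ$, and by continuity (Corollary \ref{cor:reg}) on $\Om$.

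The main obstacle is this propagation step. The delicate points are the singular fibre at the origin, where $\dd Z=0$; the zero set of $\Theta$; and branch and domain issues for the $\KK$-valued functions on the complexified domain $\Om^\circ\times\overline{\Om^\circ}$. I will first handle these on a small polydisc via the transversality argument, and then extend by the identity theorem.
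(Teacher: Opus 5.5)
Your proposal is correct and follows essentially the same route as the paper: (a)--(c) by the same direct computations, and (d) by the same argument. In (d), the level sets of $Z$ foliate a neighbourhood of a diagonal point with $\zeta\neq0$, the diagonal is transverse to them, and Theorem~\ref{thm:diag} supplies the vanishing on the diagonal. The only difference is cosmetic: you propagate $\Theta-\tfrac13\sn(3\varpi Z,\tfrac19)$ rather than the paper's $\log\Theta-\log\bigl(\tfrac13\sn(3\varpi Z,\tfrac19)\bigr)$, which spares you choosing a base point where $\Theta\neq0$.
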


\begin{proof}
(a) The right sides of \eqref{eq:Pz}, \eqref{eq:Pw} are $4\zeta\cs$
and $4\omega\cs$ with the \emph{same} factor $\cs(2\varpi Z,-3)$, so
the combination $\omega\cdot\eqref{eq:Pz}-\zeta\cdot\eqref{eq:Pw}$
cancels them and gives \eqref{eq:P0}. Moreover, by \eqref{eq:derivs},
\[
  \partial_{\zeta}\log\Theta\cdot\partial_{\omega}(\varpi Z)
  -\partial_{\omega}\log\Theta\cdot\partial_{\zeta}(\varpi Z)
  =\frac{\alpha L_{p}+\beta L_{q}}{2\Wz\Ww},
\]
and the left side is the coefficient of $\dd\zeta\wedge\dd\omega$ in
$\dd\log\Theta\wedge\dd(\varpi Z)$.

(b) Clear the denominators of \eqref{eq:Lp}--\eqref{eq:Lq} and expand
in the basis \eqref{eq:basis}.

(c) $\alpha(\omega,\zeta)=(1+i)\zeta\pip(\omega)
-(1-i)\omega\pim(\zeta)=-\beta(\zeta,\omega)$ and likewise
$\beta(\omega,\zeta)=-\alpha(\zeta,\omega)$, while
$L_{p}\leftrightarrow L_{q}$.

(d) Assume (P0). Put
$\Xi:=\log\Theta-\log\bigl(\frac13\sn(3\varpi Z,\frac19)\bigr)$,
holomorphic on a neighborhood $U$ of a point
$(\zeta_{0},\zeta_{0})$ of the diagonal with $\zeta_{0}\ne0$ small
(there $\Theta\ne0$, since
$\Theta=\sn^{2}(f(\zeta_{0}),-3)\ne0$). The second summand is a
function of $Z$ alone, so (a) gives $\dd\Xi\wedge\dd Z=0$ on $U$.
Since $\partial_{\zeta}(\varpi Z)=\zeta/\Wz\ne0$, the level sets of
$Z$ foliate $U$ by holomorphic curves, and $\dd\Xi\wedge\dd Z=0$ says
that $\Xi$ is constant on each leaf. The diagonal is transverse to the
foliation, because $Z$ restricted to it is $\varpi^{-1}\hh(\zeta)$
with derivative $2\zeta/(\varpi\Wz)\ne0$; hence every leaf in $U$
meets the diagonal. By Theorem \ref{thm:diag}, $\Xi=0$ on the
diagonal. Therefore $\Xi\equiv0$ on $U$, i.e.,\ \eqref{eq:star} holds
there; by Lemma \ref{lem:real} and real-analytic continuation on the
connected set $\Om^{\circ}$, it holds on $\Om$, and by
Corollary \ref{cor:reg} on $\Om$ by continuity. The converse is
Theorem \ref{thm:pfaff} with (a).
\end{proof}

\subsection{Proof of (P0)}

\begin{theorem}[\textup{(P0)} holds]\label{thm:P0proved}
The identity \textup{(P0)} of \eqref{eq:P0} holds identically in
$\KK$.
\end{theorem}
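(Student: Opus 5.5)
Clearing denominators turns (P0) into a single polynomial identity, and I plan to prove that polynomial identity directly in the $16$-dimensional basis of Definition \ref{def:K}. Write $L_p=N_p/D_p$ and $L_q=N_q/D_q$ as in Definition \ref{def:LpLq}. Then (P0) is equivalent to
\[
  \mathcal E:=\alpha\,N_pD_q+\beta\,N_qD_p=0,
\]
provided $D_pD_q\ne0$ generically. Nonvanishing is clear from the $\omega=0$ specialization: $D_p=(1+i)\zeta\,\pim(\zeta)$ and $D_q=(1-i)\zeta\,\pip(\zeta)$ there.

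Treat $\rho_1=r_+(\zeta)$, $\rho_2=r_-(\zeta)$, $\rho_3=r_+(\omega)$, $\rho_4=r_-(\omega)$ as indeterminates. Replace $\Wz$ by $\rho_1\rho_2$ and $\Ww$ by $\rho_3\rho_4$. Expand $\mathcal E$ and reduce every even power using $\rho_1^2=1+4i\zeta^2-\zeta^4$, $\rho_2^2=1-4i\zeta^2-\zeta^4$, and the analogous relations at $\omega$. The result is a normal form $\sum c_{abcd}(\zeta,\omega)\,\rho_1^a\rho_2^b\rho_3^c\rho_4^d$ with $a,b,c,d\in\{0,1\}$.

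The theorem follows once every coefficient $c_{abcd}\in\CC[\zeta,\omega]$ is shown to vanish. Equivalently, $\mathcal E=\sum Q_j\,(\rho_j^2-R_j)$ with explicit cofactors $Q_j$. Specializing the $\rho_j$ to the actual branches is a ring homomorphism that kills each generator, so $\mathcal E$ vanishes in $\KK$. This argument does not require the $16$ monomials to be linearly independent.

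Counting parities cuts the work down. $N_p$ is a combination of $\rho_2\rho_3$ and $\rho_1\rho_4$. $D_p$ is a combination of $\rho_3\rho_4$ and $\rho_1\rho_2$. $N_q$ has the same monomials as $N_p$, and $D_q$ the same as $D_p$. Hence $N_pD_q$ and $N_qD_p$ contain only monomials of total $\rho$-degree $4$ with odd degree in each pair. After reduction only the four classes $\rho_1\rho_3$, $\rho_1\rho_4$, $\rho_2\rho_3$, $\rho_2\rho_4$ can occur, so there are only four polynomial coefficients to check. For example, the products $\rho_2\rho_3\cdot\rho_3\rho_4$ and $\rho_1\rho_4\cdot\rho_1\rho_2$ reduce to $R_3\,\rho_2\rho_4$ and $R_1\,\rho_2\rho_4$.

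The antisymmetry of Theorem \ref{thm:P0}(c) relates the coefficients further. Under $\zeta\leftrightarrow\omega$ the variables swap as $\rho_1\leftrightarrow\rho_3$ and $\rho_2\leftrightarrow\rho_4$. This exchanges the classes $\rho_1\rho_4$ and $\rho_2\rho_3$, and it maps each of $\rho_1\rho_3$ and $\rho_2\rho_4$ to itself. So three genuinely distinct checks remain, and the $\rho_1\rho_3$ and $\rho_2\rho_4$ coefficients must be antisymmetric polynomials, hence divisible by $\zeta-\omega$.

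Each check is a polynomial identity in $\zeta,\omega$ of bidegree at most about $(8,8)$. I would first use the symmetry $\zeta\mapsto i\zeta$, $\omega\mapsto-i\omega$ of Lemma \ref{lem:rot}, which permutes the $r_\pm$, to relate the classes further. I would then verify the remaining coefficients by direct expansion. The consistency checks already available serve as sanity tests: $\omega=0$, homogeneous low degrees, and the diagonal.

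The main obstacle is organizing this expansion so it is checkable by hand rather than only by machine. I would try factoring each coefficient through $\Pi_1$, $\Pi_2$ and $\zeta\omega$. The relations $\Pi_1+\Pi_2=2(1+\zeta^2\omega^2)$ and $\Pi_1-\Pi_2=2i(\omega^2-\zeta^2)$ should expose a factor $\zeta-\omega$ and the cancellation. If that fails, I would fall back on presenting the explicit cofactors $Q_j$ as a finite certificate, verified by polynomial expansion alone.
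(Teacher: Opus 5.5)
Your plan is the paper's proof. It clears denominators to $\mathcal E=\alpha N_pD_q+\beta N_qD_p$, replaces $r_\pm(\zeta),r_\pm(\omega)$ by indeterminates, and shows $\mathcal E$ lies in the ideal of the four quadratic relations; the paper gets the cofactors by machine reduction (Appendix~\ref{app:code}, Cell~2) and uses only their existence, checkable by expansion. It then specializes to the actual branches by a ring homomorphism and divides by $D_pD_q\not\equiv0$, using $\omega=0$ exactly as you do.

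Your parity bookkeeping can be sharpened. Every radical monomial in $N_p,N_q,D_p,D_q$ has odd degree in $\{\rho_1,\rho_3\}$ and odd degree in $\{\rho_2,\rho_4\}$, so after reduction only $\rho_1\rho_3$ and $\rho_2\rho_4$ survive; your classes $\rho_1\rho_4$ and $\rho_2\rho_3$ are empty. Moreover, the substitution $\zeta\mapsto i\zeta$, $\omega\mapsto-i\omega$ fixes both $\mathcal E$ and the ideal while swapping these two surviving classes. So only a single antisymmetric polynomial coefficient actually needs checking.
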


\begin{proof}
Let $\Rr:=\CC(\zeta,\omega)[\mathsf a,\mathsf b,\mathsf c,\mathsf d]$
be the polynomial ring in four indeterminates over
$\CC(\zeta,\omega)$, and put
\begin{equation}\label{eq:rels}
\begin{aligned}
  \rho_{1}&:=\mathsf a^{2}-(1+4i\zeta^{2}-\zeta^{4}), &\qquad
  \rho_{2}&:=\mathsf b^{2}-(1-4i\zeta^{2}-\zeta^{4}),\\
  \rho_{3}&:=\mathsf c^{2}-(1+4i\omega^{2}-\omega^{4}), &\qquad
  \rho_{4}&:=\mathsf d^{2}-(1-4i\omega^{2}-\omega^{4}) .
\end{aligned}
\end{equation}
Let $\widetilde N_{p},\widetilde D_{p},\widetilde N_{q},
\widetilde D_{q}\in\Rr$ be the polynomials obtained from
$N_{p},D_{p},N_{q},D_{q}$ of Definition \ref{def:LpLq} by the
substitutions
\begin{equation}\label{eq:subs}
  r_{+}(\zeta)\rightsquigarrow\mathsf a,\quad
  r_{-}(\zeta)\rightsquigarrow\mathsf b,\quad
  r_{+}(\omega)\rightsquigarrow\mathsf c,\quad
  r_{-}(\omega)\rightsquigarrow\mathsf d,\quad
  \Wz\rightsquigarrow\mathsf{ab},\quad
  \Ww\rightsquigarrow\mathsf{cd},
\end{equation}
and set
\begin{equation}\label{eq:Epoly}
  \mathcal E:=\alpha\,\widetilde N_{p}\widetilde D_{q}
             +\beta\,\widetilde N_{q}\widetilde D_{p}\ \in\ \Rr ,
\end{equation}
$\alpha,\beta$ being the polynomials of \eqref{eq:P0}, which involve
no radicals. Reduction of $\mathcal E$ modulo
$\rho_{1},\dots,\rho_{4}$ in the variables
$\mathsf a,\mathsf b,\mathsf c,\mathsf d$ returns remainder zero
(Appendix \ref{app:code}, Cell 2); that is, it produces cofactors
$q_{1},\dots,q_{4}\in\Rr$ with
\begin{equation}\label{eq:certificate}
  \mathcal E=q_{1}\rho_{1}+q_{2}\rho_{2}+q_{3}\rho_{3}
            +q_{4}\rho_{4} .
\end{equation}
Two remarks on \eqref{eq:certificate}. First, only the \emph{existence}
of the identity \eqref{eq:certificate} is used below; it is a finite
algebraic statement, verifiable by expanding both sides and comparing
coefficients, with no appeal to the theory of Gröbner bases. Second,
$\{\rho_{1},\dots,\rho_{4}\}$ is in fact a Gröbner basis for any
monomial order in which $\mathsf a^{2},\mathsf b^{2},\mathsf c^{2},
\mathsf d^{2}$ are the leading terms: these four monomials are
pairwise coprime, so every $S$-polynomial reduces to zero by
Buchberger's first criterion. Hence the remainder is canonical, and
remainder zero is equivalent to membership in the ideal
$(\rho_{1},\dots,\rho_{4})$.

Now fix a simply connected open neighborhood $U$ of $(0,0)$ in
$\CC^{2}$ on which the four branches $r_{\pm}(\zeta)$,
$r_{\pm}(\omega)$ of Convention \ref{conv:br} are single-valued and
holomorphic; such a $U$ exists because none of the four radicands
vanishes at the origin. Let $\mathcal O(U)$ be the ring of holomorphic
functions on $U$ and let
\[
  \varphi:\Rr\longrightarrow\mathcal O(U),\qquad
  \mathsf a\mapsto r_{+}(\zeta),\quad
  \mathsf b\mapsto r_{-}(\zeta),\quad
  \mathsf c\mapsto r_{+}(\omega),\quad
  \mathsf d\mapsto r_{-}(\omega),
\]
be the induced $\CC(\zeta,\omega)$-algebra homomorphism (defined on
the localization away from the poles of the coefficients). By the very
definition of $r_{\pm}$ we have $\varphi(\rho_{j})=0$ for
$j=1,2,3,4$. Applying $\varphi$ to \eqref{eq:certificate} therefore
gives
\begin{equation}\label{eq:phiE}
  \alpha\,N_{p}D_{q}+\beta\,N_{q}D_{p}=0
  \qquad\text{identically on }U,
\end{equation}
$N_{p},D_{p},N_{q},D_{q}$ now denoting the actual functions of
Definition \ref{def:LpLq}; note $\varphi(\mathsf{ab})=\Wz$ and
$\varphi(\mathsf{cd})=\Ww$ by Lemma \ref{lem:prod}, so the
substitution \eqref{eq:subs} is consistent.

Finally, $D_{p}D_{q}\not\equiv0$: at $\omega=0$ one has
$D_{p}=(1+i)\zeta\,\pim(\zeta)$ and
$D_{q}=(1-i)\zeta\,\pip(\zeta)$, both nonzero for small $\zeta\ne0$
by Lemma \ref{lem:prod}. Dividing \eqref{eq:phiE} by
$D_{p}D_{q}$ gives $\alpha L_{p}+\beta L_{q}=0$ on the complement in
$U$ of the zero set of $D_{p}D_{q}$, hence --- both sides being
elements of the field $\KK$, which embeds in the field of germs of
meromorphic functions at a generic point of $U$ --- identically in
$\KK$. This is \textup{(P0)}.
\end{proof}

\begin{corollary}[Theorem \ref{thm:main}]\label{cor:done}
Equation \eqref{eq:star} holds at every point of $\Om$. Consequently
the surface $X(\Om)$ of \eqref{eq:map} lies in the analytic set
\eqref{eq:star}, and, by Proposition \ref{prop:group}\textup{(c)},
so does its image under the body-centered cubic lattice generated by
$2\ZZ^{3}$ and $(1,1,1)$.
\end{corollary}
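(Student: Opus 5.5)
The corollary is essentially an assembly of results already in place, so I will prove it by chaining them. First, Theorem \ref{thm:P0proved} gives (P0) identically in $\KK$. Second, Theorem \ref{thm:P0}(d), combined with the diagonal identity of Theorem \ref{thm:diag}, turns (P0) into \eqref{eq:star} near a diagonal point $(\zeta_0,\zeta_0)$ with $\zeta_0\neq0$ small. The argument there is the foliation one: $\Xi=\log\Theta-\log\frac13\sn(3\varpi Z,\frac19)$ is constant on the level sets of $Z$. Each such level set meets the diagonal transversally, and $\Xi=0$ there.

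Before relying on (d), I would check the one point where it is slightly delicate. The passage from $\omega$ as an independent variable back to $\omega=\bar\zeta$ requires that the neighbourhood $U$ actually contain real points $(\zeta,\bar\zeta)$.

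\begin{itemize}
\item Lemma \ref{lem:real} is formulated at the origin, but $(\zeta_0,\zeta_0)$ with $\zeta_0$ real lies on the totally real slice only when $\zeta_0\in\RR$.
\item So I would take $\zeta_0=t\in(0,\sqrt2-1)$ real. Then $U$ contains the points $(\zeta,\bar\zeta)$ for $\zeta$ near $t$, and \eqref{eq:star} holds on an open subset of $\Om^{\circ}$.
\item Alternatively, I would avoid logarithms and work with the function $\Theta-\frac13\sn(3\varpi Z,\frac19)$. This function is holomorphic on a full neighbourhood of $(0,0)$ in $\CC^2$, because $p,q,Z$ are half-sums of functions holomorphic at $0$ (Proposition \ref{prop:twovar}).
\item It vanishes on the open set $U$, so by the identity theorem in $\CC^2$ it vanishes on the connected domain where it is defined, including a polydisc around the origin. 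Lemma \ref{lem:real} then applies directly.
\end{itemize}

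Third, I extend \eqref{eq:star} from a small open set to all of $\Om^{\circ}$. The two sides of \eqref{eq:star}, as functions of $(u,v)$, are compositions of entire or meromorphic functions with the real-analytic map $X$. Since $\sn$ is meromorphic, I need real arguments where no poles occur. For real argument and modulus $-3$ or $\frac19$, $\sn$ is real-analytic and bounded: at $m=-3$ the denominator $\dn^2=1+3\sn^2$ never vanishes on $\RR$. Hence both sides are real-analytic on the connected set $\Om^{\circ}$. An identity holding on an open subset therefore persists on all of $\Om^{\circ}$.

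By Corollary \ref{cor:reg}, $X$ extends continuously to $\Om$, so \eqref{eq:star} holds on all of $\Om$, including the four corners.

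Finally, the statement about lattice translates follows from Proposition \ref{prop:group}(c). That proposition shows the relation \eqref{eq:star} is invariant under $2\ZZ^3$ and $(1,1,1)$, so every translate of $X(\Om)$ by the body-centred cubic lattice also lies in the analytic set.

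The only step I expect to need care is the real-analytic continuation. The concern is the possibility that $\Theta$ vanishes or that a $\log$ branch is lost. Working with the difference of the two sides instead of $\Xi$ removes this issue entirely.
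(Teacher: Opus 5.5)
Your proposal is correct and follows the paper's proof: (P0) from Theorem~\ref{thm:P0proved}, the initial condition from Theorem~\ref{thm:diag}, the two combined by the foliation argument of Theorem~\ref{thm:P0}(d), and Proposition~\ref{prop:group}(c) for the lattice translates. Your extra care in passing from the complex neighbourhood $U$ back to the real slice $\omega=\bar\zeta$ (either take $\zeta_0$ real, or continue the holomorphic difference $\Theta-\tfrac13\sn(3\varpi Z,\tfrac19)$ to a polydisc about the origin) makes explicit a step the paper treats tersely; once that difference vanishes near $(0,0)$ in $\CC^{2}$, restricting to $\omega=\bar\zeta$ is immediate, so Lemma~\ref{lem:real}, which runs in the opposite direction, is not actually needed.
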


\begin{proof}
Theorem \ref{thm:P0proved} supplies \textup{(P0)};
Theorem \ref{thm:diag} supplies the initial condition on the
diagonal; Theorem \ref{thm:P0}(d) combines them.
\end{proof}

\begin{remark}[on the status of the certificate]\label{rem:cert}
Theorem \ref{thm:P0proved} is a proof, not a numerical experiment: the
object \eqref{eq:certificate} is an algebraic identity between
polynomials with coefficients in $\ZZ[i](\zeta,\omega)$, and the
argument that follows it is elementary. It was found by machine, and
its verification is best left to a machine, but the verification
requires only polynomial expansion --- \texttt{Expand[expr - q.rels]}
in Appendix \ref{app:code}, Cell 2 --- and is in principle a
hand computation. What would still be welcome is a \emph{conceptual}
proof, in which the cancellation is exhibited rather than certified;
see Remark \ref{rem:open}(1)--(2).
\end{remark}

\begin{remark}[why this is the right reduction]\label{rem:why}
Three features distinguish (P0) from the ``squared'' reduction
$\Pi=\Sigma^{2}$ of Remark \ref{rem:altE}, and together they are why
Theorem \ref{thm:P0proved} is cheap.
\begin{enumerate}[label=\textup{(\roman*)},leftmargin=2.4em]
\item \emph{It is rational.} The height has been eliminated, so the
      quadratic irrationality $\mathcal D_{2}$ of \eqref{eq:SigmaD2}
      never appears, and no squaring or sign discussion is needed.
      Only the four radicals $r_{\pm}$ occur, whence the four
      relations \eqref{eq:rels}. Compare \textup{[D, \S7.6]}, where
      four \emph{additional} square roots must be resolved by squaring
      twice and tracking signs.
\item \emph{It is independent of $\mathfrak C$.} (P0) asserts only
      that $\log\Theta$ is a function of $Z$; \emph{which} function is
      then read off from Theorem \ref{thm:diag}. Thus
      ``separability'' and ``identification'' are cleanly divided, and
      the second half was already done.
\item \emph{It is antisymmetric.} By Theorem \ref{thm:P0}(c) the
      identity lives in the $(-1)$-eigenspace of
      $\zeta\leftrightarrow\omega$, which halves the computation:
      $8$ rational identities rather than $16$.
\end{enumerate}
\end{remark}

\begin{remark}[the alternative, squared reduction]\label{rem:altE}
For completeness, and because it is what one needs in order to test
\eqref{eq:Pz} \emph{alone} numerically, we record the height data at
$4\varpi Z=2\hh_{\zeta}+2\hh_{\omega}$. Applying \eqref{eq:add} at
modulus $-3$ to \eqref{eq:Hnew} at $\zeta$ and at $\omega$, with
$\Ez=1+\zeta^{4}$, $\Ew=1+\omega^{4}$, one has
$S_{a}S_{b}=\frac{4\zeta^{2}\omega^{2}}{\Ez\Ew}$, hence
\begin{equation}\label{eq:DeltaZ}
  \Delta_{Z}:=\Ez^{2}\Ew^{2}+48\,\zeta^{4}\omega^{4}
\end{equation}
\emph{(note the coefficient $48=3\cdot4^{2}$, not $12$)}, and
\begin{align}
  \mathcal S_{4}&=\sn(4\varpi Z,-3)
   =\frac{2\zeta^{2}\Ez(1-\omega^{4})\Ww
         +2\omega^{2}\Ew(1-\zeta^{4})\Wz}{\Delta_{Z}},
   \label{eq:S4}\\
  \mathcal C_{4}&=\cn(4\varpi Z,-3)
   =\frac{\Ez\Ew(1-\zeta^{4})(1-\omega^{4})
          -4\zeta^{2}\omega^{2}\Wz\Ww}{\Delta_{Z}},\label{eq:C4}\\
  \mathcal D_{4}&=\dn(4\varpi Z,-3)
   =\frac{\Ez\Ew\Wz\Ww
      +12\zeta^{2}\omega^{2}(1-\zeta^{4})(1-\omega^{4})}
      {\Delta_{Z}} .
   \label{eq:D4}
\end{align}
(Check at $\omega=\zeta$: \eqref{eq:S4} gives
$\frac{4\zeta^{2}(1-\zeta^{4})WE}{E^{4}+48\zeta^{8}}$, which is the
duplication formula $\sn(2u)=\frac{2SCD}{1+3S^{4}}$ applied to
\eqref{eq:Hnew}.) Since $\cs^{2}\frac v2=\frac{\cn v+\dn v}{1-\cn v}$
and $\sn^{2}\frac v2=\frac{1-\cn v}{1+\dn v}$, we get the rational
expressions
\begin{equation}\label{eq:cs2}
  \cs^{2}\bigl(2\varpi Z,-3\bigr)
  =\frac{\mathcal C_{4}+\mathcal D_{4}}{1-\mathcal C_{4}},\qquad
  \mathcal S_{2}^{2}=\frac{1-\mathcal C_{4}}{1+\mathcal D_{4}},\qquad
  \mathcal D_{2}^{2}=1+3\mathcal S_{2}^{2},
\end{equation}
all in $\KK$; and then
$\Sigma=\mathcal S_{2}/(1+\mathcal D_{2})\in\KK[\mathcal D_{2}]$, a
quadratic extension. The squared form of $(\star)$ is
$\Pi=\Sigma^{2}$ with
\begin{equation}\label{eq:Pi}
  \Pi:=\sn^{2}(\varpi p,-3)\sn^{2}(\varpi q,-3)
  =\frac{\bigl(1-\cn(2\varpi p)\bigr)\bigl(1-\cn(2\varpi q)\bigr)}
        {\bigl(1+\dn(2\varpi p)\bigr)\bigl(1+\dn(2\varpi q)\bigr)},
\end{equation}
the four entries being given by \eqref{eq:cndnp} and its
$\zeta\leftrightarrow\omega$ image. This is the P-analog of
\textup{[D, \S7.6]}, and it is the route we do \emph{not} take.
\end{remark}

%=====================================================================
\section{Independent verifications of (P0)}\label{sec:verif}
%=====================================================================

The two propositions of this section were proved by hand before the
certificate of Theorem \ref{thm:P0proved} was computed, and are
logically independent of it: each restricts \textup{(P0)} to a
positive-dimensional subvariety or to a graded piece, whereas
Theorem \ref{thm:P0proved} is a single algebraic identity. They are
retained as corroboration, and as the first diagnostics should the
certificate ever be called into question.

\begin{proposition}[on the curve $\omega=0$]\label{prop:w0}
\textup{(P0)} holds identically on $\omega=0$, and hence (by
antisymmetry) on $\zeta=0$.
\end{proposition}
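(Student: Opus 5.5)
The plan is to restrict everything in \eqref{eq:P0} to $\omega=0$ directly, using the explicit formulas of Definition \ref{def:LpLq}. The restriction is legitimate because the denominators $D_{p},D_{q}$ do not vanish identically on $\omega=0$ (this was already noted in the proof of Theorem \ref{thm:P0proved}). So $L_{p},L_{q}$ restrict to meromorphic functions of $\zeta$ alone, and (P0) restricted to $\omega=0$ becomes a one-variable identity in $\CC(\zeta)(r_{+}(\zeta),r_{-}(\zeta))$. To avoid dividing at all, I would equally verify the cleared form $\alpha N_{p}D_{q}+\beta N_{q}D_{p}=0$ at $\omega=0$.

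\emph{Step 1 (the restricted data).} At $\omega=0$ we have $r_{\pm}(\omega)=1$, $\Ww=1$ and $\pi_{\pm}(\omega)=1$. Hence $\Pi_{1}=\pim(\zeta)$ and $\Pi_{2}=\pip(\zeta)$, and every term carrying a factor $\zeta\omega$ or $\omega$ drops out. The numerator of \eqref{eq:Lp} becomes $\pim(\zeta)\bigl(r_{-}(\zeta)+r_{+}(\zeta)\bigr)$ and its denominator becomes $(1+i)\zeta\,\pim(\zeta)$. Likewise \eqref{eq:Lq} has numerator $\pip(\zeta)(r_{+}+r_{-})$ and denominator $(1-i)\zeta\,\pip(\zeta)$. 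This reproduces the check recorded after Definition \ref{def:LpLq}:
\[
L_{p}=\frac{r_{+}+r_{-}}{(1+i)\zeta},\qquad L_{q}=\frac{r_{+}+r_{-}}{(1-i)\zeta}.
\]

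\emph{Step 2 (the coefficients).} From \eqref{eq:P0}, at $\omega=0$ we get $\alpha=-(1-i)\zeta$ and $\beta=-(1+i)\zeta$.

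\emph{Step 3 (cancellation).} Substituting, the left side of (P0) is
\[
-(r_{+}+r_{-})\Bigl[\frac{1-i}{1+i}+\frac{1+i}{1-i}\Bigr].
\]
Since $\frac{1-i}{1+i}=-i$ and $\frac{1+i}{1-i}=i$, the bracket vanishes, so (P0) holds identically on $\omega=0$. In the cleared form, the common factor $\zeta^{2}\pip(\zeta)\pim(\zeta)(r_{+}+r_{-})$ multiplies $-(1-i)^{2}-(1+i)^{2}=2i-2i=0$.

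\emph{Step 4 (the curve $\zeta=0$).} By Theorem \ref{thm:P0}(c), interchanging $\zeta\leftrightarrow\omega$ multiplies the left side of (P0) by $-1$. Its vanishing on $\omega=0$ therefore gives its vanishing on $\zeta=0$.

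There is no real obstacle here. The only point needing care is the legitimacy of specializing $\omega=0$ inside $\KK$. I would handle it by working with the cleared-denominator form, which is a polynomial identity in the branch values and so specializes without issue, and by using the branch normalizations $r_{\pm}(0)=W(0)=1$ of Convention \ref{conv:br}.
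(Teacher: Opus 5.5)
Your proposal is correct and matches the paper's proof essentially step for step. You restrict $L_p$, $L_q$, $\alpha$, $\beta$ to $\omega=0$, note that the bracket $\frac{1-i}{1+i}+\frac{1+i}{1-i}=-i+i$ vanishes, and use the antisymmetry of Theorem~\ref{thm:P0}(c) to get the curve $\zeta=0$; your additional check of the cleared form $\alpha N_pD_q+\beta N_qD_p$ is a harmless extra that avoids dividing by $D_pD_q$.
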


\begin{proof}
At $\omega=0$, Definition \ref{def:LpLq} gives
$L_{p}=\frac{r_{+}+r_{-}}{(1+i)\zeta}$,
$L_{q}=\frac{r_{+}+r_{-}}{(1-i)\zeta}$, while
$\alpha=-(1-i)\zeta$ and $\beta=-(1+i)\zeta$. Hence
\[
  \alpha L_{p}+\beta L_{q}
  =-(r_{+}+r_{-})\Bigl[\frac{1-i}{1+i}+\frac{1+i}{1-i}\Bigr]
  =-(r_{+}+r_{-})\bigl[(-i)+i\bigr]=0 . \qedhere
\]
\end{proof}

\begin{proposition}[homogeneous degrees $0$ and $2$]\label{prop:jets0}
Write $P=1+i$, $Q=1-i$, so $PQ=2$, $P^{2}=2i=-Q^{2}$. The left side of
\textup{(P0)} is a sum of homogeneous parts of even degrees
$0,2,4,\ldots$, and the parts of degrees $0$ and $2$ vanish
identically.
\end{proposition}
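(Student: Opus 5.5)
We work with the cleared form rather than with $L_{p},L_{q}$ directly. By Definition \ref{def:LpLq}, $\alpha L_{p}+\beta L_{q}=\mathcal E/(D_{p}D_{q})$, where
\[
\mathcal E:=\alpha N_{p}D_{q}+\beta N_{q}D_{p}
\]
is now a holomorphic function at $(0,0)$, with the branches of Convention \ref{conv:br}. The plan has three steps.

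\emph{Step 1: parity and the grading.} Under $(\zeta,\omega)\mapsto(-\zeta,-\omega)$, the quantities $r_{\pm}$, $\pi_{\pm}$, $W$, $\Pi_{1},\Pi_{2}$ and $\zeta\omega$ are invariant. The factors $\alpha,\beta,D_{p},D_{q}$ are odd, each being a sum of terms with one explicit linear factor $\zeta$ or $\omega$. Hence $N_{p},N_{q}$ are even and $\alpha L_{p},\beta L_{q}$ are even, so only even homogeneous degrees occur. The denominator $D_{p}D_{q}$ has lowest homogeneous part
\[
(P\zeta+Q\omega)(P\omega+Q\zeta),
\]
which is of degree $2$ and not identically zero. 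Therefore the degree-$0$ and degree-$2$ parts of $\alpha L_{p}+\beta L_{q}$ vanish if and only if the degree-$2$ and degree-$4$ parts of $\mathcal E$ vanish. This is what I will check.

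\emph{Step 2: jets of the building blocks.} I will expand each ingredient through two orders:
\[
r_{\pm}=1\pm2i\zeta^{2}+O(4),\qquad W=1+O(4),\qquad \Pi_{1}=1-i\zeta^{2}+i\omega^{2}+O(4),
\]
and similarly $\Pi_{2}$ with $\zeta\leftrightarrow\omega$. Inserting these into \eqref{eq:Lp} gives
\[
N_{p}=2-2i\zeta^{2}+2i\omega^{2}+4\zeta\omega+O(4),
\]
with $N_{q}$ obtained by swapping $\zeta$ and $\omega$. Next,
\[
D_{p}=P\zeta(1+i\zeta^{2})+Q\omega(1-i\omega^{2})+O(5),\qquad
D_{q}=P\omega(1+i\omega^{2})+Q\zeta(1-i\zeta^{2})+O(5).
\]
Finally $\alpha=P\omega(1+i\zeta^{2})-Q\zeta(1+i\omega^{2})$ and $\beta=Q\omega(1-i\zeta^{2})-P\zeta(1-i\omega^{2})$, and these are exact.

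\emph{Step 3: the two graded pieces.} The degree-$2$ part of $\mathcal E$ is
\[
2\bigl[(P\omega-Q\zeta)(P\omega+Q\zeta)+(Q\omega-P\zeta)(P\zeta+Q\omega)\bigr]
=2\bigl[(P^{2}+Q^{2})\omega^{2}-(P^{2}+Q^{2})\zeta^{2}\bigr]=0,
\]
using $P^{2}=-Q^{2}$. The degree-$4$ part is the sum of products (degree-$3$ factor)$\cdot$(degree-$0$)$\cdot$(degree-$1$), in all three placements, plus (degree-$1$)$\cdot$(degree-$2$ part of $N$)$\cdot$(degree-$1$). These are to be collected into monomials $\zeta^{a}\omega^{4-a}$ and reduced with $PQ=2$, $P^{2}=2i$, $Q^{2}=-2i$.

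As a consistency check, Theorem \ref{thm:P0}(c) shows that $\mathcal E$ is antisymmetric under $\zeta\leftrightarrow\omega$, since $D_{p}D_{q}$ is symmetric. So only the coefficients of $\zeta^{4}$ and $\zeta^{3}\omega$ need to be computed independently; the rest follow by antisymmetry, and the $\zeta^{2}\omega^{2}$ coefficient must vanish automatically. Proposition \ref{prop:w0} independently forces the $\zeta^{4}$ coefficient to vanish. The real content is therefore the single $\zeta^{3}\omega$ coefficient.

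The main obstacle is this degree-$4$ bookkeeping. Specifically, I must take the cubic parts of $D_{p},D_{q}$ correctly: $W$ contributes nothing there, but $\pi_{\pm}$ do. I must also not drop the $6\zeta\omega$ and $-2\zeta\omega$ cross terms of $N_{p}$, which carry the cancellation. I would organize the $\zeta^{3}\omega$ coefficient as a short table of about eight contributions and check that the total is a multiple of $P^{2}+Q^{2}$ or of $PQ-2$.
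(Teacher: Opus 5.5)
Your route is viable and differs from the paper's. You clear denominators and expand the holomorphic $\mathcal E=\alpha N_pD_q+\beta N_qD_p$. The paper instead expands $L_p=\Lambda_{-3}(\varpi p)$ using $\Lambda_{-3}(v)=v^{-1}+\frac{2v}{3}+O(v^{3})$ and the cubic jets of $A,B$. Several pieces of your argument are correct: the Step 1 reduction (degrees $0,2$ of \textup{(P0)} $\Leftrightarrow$ degrees $2,4$ of $\mathcal E$), your jet of $N_p$, and your degree-$2$ computation of $\mathcal E$. So degree $0$ is done.

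The gap is that degree $2$ of \textup{(P0)}, the substantive half of the statement, is never computed; Step 3 only describes the bookkeeping. Worse, the cubic data you would feed into it are wrong. By Definition \ref{def:LpLq} and \eqref{eq:P0}, $D_p=P\zeta\,\pi_-(\zeta)\Ww+Q\omega\,\pi_+(\omega)\Wz$ and $D_q=P\omega\,\pi_-(\omega)\Wz+Q\zeta\,\pi_+(\zeta)\Ww$, so
\[
D_p=P\zeta(1-i\zeta^{2})+Q\omega(1+i\omega^{2})+O(5),\qquad
D_q=P\omega(1-i\omega^{2})+Q\zeta(1+i\zeta^{2})+O(5).
\]
Likewise $\alpha=P\omega(1+i\zeta^{2})-Q\zeta(1-i\omega^{2})$ and $\beta=Q\omega(1-i\zeta^{2})-P\zeta(1+i\omega^{2})$. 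In your four formulas, $\pi_+$ and $\pi_-$ are interchanged in both terms of $D_p,D_q$ and in the second term of $\alpha,\beta$.

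With your signs one gets $\mathcal E^{(4)}=16(\zeta^{4}-\zeta^{2}\omega^{2}-\omega^{4})$. This is nonzero, not antisymmetric, and contradicts Proposition \ref{prop:w0}. Yet its $\zeta^{3}\omega$ coefficient is $0$, because the altered terms cancel in pairs. So your shortcut would have passed the wrong data unnoticed. That shortcut computes only the $\zeta^{3}\omega$ coefficient and obtains the rest from antisymmetry and Proposition \ref{prop:w0}.

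With the correct data the computation closes quickly. Write a superscript ${}^{(k)}$ for the homogeneous part of degree $k$; then $N_p^{(0)}=N_q^{(0)}=2$, and
\[
\alpha^{(1)}D_q^{(1)}=P^{2}\omega^{2}-Q^{2}\zeta^{2}=2i(\zeta^{2}+\omega^{2})=-\beta^{(1)}D_p^{(1)},\qquad
\alpha^{(3)}D_q^{(1)}=2i\zeta\omega(\zeta^{2}+\omega^{2})=-\beta^{(3)}D_p^{(1)},
\]
\[
\alpha^{(1)}D_q^{(3)}+\beta^{(1)}D_p^{(3)}=4(\omega^{4}-\zeta^{4}).
\]
Since $N_p^{(2)}-N_q^{(2)}=4i(\omega^{2}-\zeta^{2})$, this gives
\[
\mathcal E^{(4)}=2\cdot4(\omega^{4}-\zeta^{4})
+2i(\zeta^{2}+\omega^{2})\bigl(N_p^{(2)}-N_q^{(2)}\bigr)
=8(\omega^{4}-\zeta^{4})+8(\zeta^{4}-\omega^{4})=0 .
\]
The $4\zeta\omega$ cross terms of $N_p$ and $N_q$ cancel in this difference, so, contrary to your expectation, they carry nothing. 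The balance is between the cubic parts of $D_p,D_q$ and the $\pm2i\zeta^{2},\mp2i\omega^{2}$ parts of $N_p,N_q$. No appeal to antisymmetry or to Proposition \ref{prop:w0} is then needed.
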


\begin{proof}
From $A=P\bigl[\zeta+\frac{i\zeta^{3}}{3}\bigr]+O(\zeta^{5})$,
$B=Q\bigl[\zeta-\frac{i\zeta^{3}}{3}\bigr]+O(\zeta^{5})$
and $iP=-Q$, $-iQ=-P$, we get $\varpi p=a+\delta_{p}+O(5)$,
$\varpi q=b+\delta_{q}+O(5)$ with
\[
  a=\tfrac12(P\zeta+Q\omega),\quad b=\tfrac12(Q\zeta+P\omega),\quad
  \delta_{p}=-\tfrac16(Q\zeta^{3}+P\omega^{3}),\quad
  \delta_{q}=-\tfrac16(P\zeta^{3}+Q\omega^{3}).
\]
Since $\Lambda_{-3}(v)=v^{-1}+\frac{2v}{3}+O(v^{3})$,
\[
  L_{p}=a^{-1}+\Bigl[-\frac{\delta_{p}}{a^{2}}+\frac{2a}{3}\Bigr]
  +O(\deg3),
\]
and similarly for $L_{q}$. Write
$\alpha=\alpha_{1}+\alpha_{3}$, $\beta=\beta_{1}+\beta_{3}$ with
\[
  \alpha_{1}=P\omega-Q\zeta,\quad \alpha_{3}=2i\zeta\omega a,\qquad
  \beta_{1}=Q\omega-P\zeta,\quad \beta_{3}=-2i\zeta\omega b .
\]

\emph{Degree $0$.} $\frac{\alpha_{1}}{a}+\frac{\beta_{1}}{b}=0$,
because
$(P\omega-Q\zeta)(Q\zeta+P\omega)+(Q\omega-P\zeta)(P\zeta+Q\omega)
=(P^{2}+Q^{2})(\omega^{2}-\zeta^{2})=0$.

\emph{Degree $2$.} The terms $\alpha_{3}a^{-1}=2i\zeta\omega$ and
$\beta_{3}b^{-1}=-2i\zeta\omega$ cancel, leaving the requirement
\begin{equation}\label{eq:deg2}
  -\frac{\alpha_{1}\delta_{p}}{a^{2}}
  -\frac{\beta_{1}\delta_{q}}{b^{2}}
  +\frac{2}{3}\bigl(\alpha_{1}a+\beta_{1}b\bigr)=0 .
\end{equation}
Now $\alpha_{1}a=2i\zeta\omega+(\omega^{2}-\zeta^{2})$ and
$\beta_{1}b=-2i\zeta\omega+(\omega^{2}-\zeta^{2})$, so the last term
is $\frac43(\omega^{2}-\zeta^{2})$; and
$-\frac{\alpha_{1}\delta_{p}}{a^{2}}
=\frac23\frac{(P\omega-Q\zeta)(Q\zeta^{3}+P\omega^{3})}
{(P\zeta+Q\omega)^{2}}$, similarly for the second term. Hence
\eqref{eq:deg2} is equivalent, after multiplication by $\frac32$, to
\begin{equation}\label{eq:deg2b}
  \frac{(P\omega-Q\zeta)(Q\zeta^{3}+P\omega^{3})}
       {(P\zeta+Q\omega)^{2}}
  +\frac{(Q\omega-P\zeta)(P\zeta^{3}+Q\omega^{3})}
       {(Q\zeta+P\omega)^{2}}
  +2(\omega^{2}-\zeta^{2})=0 .
\end{equation}
Put $s=\zeta+\omega$, $d=\zeta-\omega$,
$S_{3}=\zeta^{3}+\omega^{3}$, $D_{3}=\zeta^{3}-\omega^{3}$. Then
$P\zeta+Q\omega=s+id$, $P\omega-Q\zeta=i(s+id)$,
$Q\zeta^{3}+P\omega^{3}=S_{3}-iD_{3}$, and the corresponding
expressions with $i\mapsto-i$ for the second summand. Their sum is
\[
  \frac{i(S_{3}-iD_{3})(s-id)-i(S_{3}+iD_{3})(s+id)}{s^{2}+d^{2}}
  =\frac{i\cdot(-2i)(S_{3}d+D_{3}s)}{s^{2}+d^{2}}
  =\frac{2\cdot2(\zeta^{4}-\omega^{4})}{2(\zeta^{2}+\omega^{2})}
  =2(\zeta^{2}-\omega^{2}),
\]
using $S_{3}d+D_{3}s=2(\zeta^{4}-\omega^{4})$ and
$s^{2}+d^{2}=2(\zeta^{2}+\omega^{2})$. This cancels the third term of
\eqref{eq:deg2b}.
\end{proof}

\begin{remark}\label{rem:whatremains}
Degrees $0$ and $2$, together with the whole curve $\omega=0$ (all
degrees) and the diagonal (automatic by
Theorem \ref{thm:P0}(c)), are what has been checked by hand. Should a
discrepancy ever be found in the certificate
\eqref{eq:certificate}, the next place to look is the degree-$4$
homogeneous part, which requires the quintic terms of $A,B$ and the
coefficient $-\frac{58}{45}$ of $v^{3}$ in $\Lambda_{-3}$.
Independently, Appendix \ref{app:code}, Cell 1 evaluates the left side
of \textup{(P0)} at four \emph{independent} pairs $(\zeta,\omega)$ with
exact rational coordinates and obtains $0$ to $79$ decimal places.
\end{remark}

\begin{table}[ht]
\caption{Numerical spot checks of \eqref{eq:star} and of the
dictionaries, made by hand series and quadrature before the cells of
Appendix \ref{app:code} were run; agreement is at the level of the
truncation, about $3\times10^{-5}$ for the logarithmic derivatives and
$2\times10^{-7}$ elsewhere. Cell 0 supersedes the lower block, with
residuals of order $10^{-16}$.}
\label{tab:numcheck}
\begin{tabular}{@{}llllll@{}}
\toprule
$\zeta$ & $\varpi p$ & $\varpi q$ & $\varpi Z$ & LHS & RHS\\
\midrule
$0.2$ & $0.196902$ & $0.196902$ & $0.039852$ & $0.039747$
 & $0.039747$\\
$0.2+0.1i$ & $0.096746$ & $0.302918$ & $0.030273$ & $0.030226$
 & $0.030227$\\
$\sqrt2-1$ & $0.377585$ & $0.377585$ & $0.161544$ & $0.154700$
 & $0.154701$\\
\bottomrule
\end{tabular}

\medskip
\begin{tabular}{@{}llll@{}}
\toprule
identity & point & computed & predicted\\
\midrule
$\sn(2\hh,-3)=2\zeta^{2}/E$ & $\zeta=0.2$
 & $0.0798732$ & $0.0798722$\\
$\sn(4\hh,\frac34)=4\zeta^{2}/W$ & $\zeta=0.2$
 & $0.1582378$ & $0.1582374$\\
$\Lambda_{-3}(f)=(1+\zeta^{2})/\zeta$ & $\zeta=0.2$
 & $5.19941$ & $5.2$\\
$\Lambda_{1/9}(3\varpi Z)=\frac23\cs(2\varpi Z,-3)$ & $\zeta=0.2$
 & $8.31996$ & $8.31999$\\
$\frac13\sn(3\hh,\frac19)=\sn^{2}(f,-3)$ & $\zeta=0.2$
 & $0.0397470$ & $0.0397468$\\
\bottomrule
\end{tabular}
\end{table}

%=====================================================================
\section{The jets}\label{sec:jets}
%=====================================================================

This section is logically independent of \S\ref{sec:pfaff}: it shows
that the \emph{form} of \eqref{eq:star} is forced, and in particular
that the horizontal modulus must be $-3$. It therefore explains why
\eqref{eq:star} is the identity to look for, whereas
\S\ref{sec:pfaff} proves it.

Throughout, $w:=(1+i)\zeta=S+iT$ (so $w^{2}=2i\zeta^{2}$,
$\zeta^{4}=-\frac{w^{4}}{4}$), and $R_{k}:=\Real w^{k}$,
$I_{k}:=\Imag w^{k}$. From \eqref{eq:PNder},
\begin{equation}\label{eq:wseries}
  \frac{\dd\Pc}{\dd w}=\frac{1+\frac12w^{2}}{W},\quad
  \frac{\dd\Nc}{\dd w}=\frac{1-\frac12w^{2}}{W},\quad
  \frac{\dd(i\hh)}{\dd w}=\frac{w}{W},\quad
  W=\sqrt{1-\tfrac72w^{4}+\tfrac{1}{16}w^{8}},
\end{equation}
so $W^{-1}=1+\frac74w^{4}+\frac{73}{16}w^{8}+\cdots$ and
\begin{equation}\label{eq:series}
  \varpi p=R_{1}+\tfrac16R_{3}+\tfrac{7}{20}R_{5}+\tfrac18R_{7}
  +\cdots,\quad
  \varpi q=I_{1}-\tfrac16I_{3}+\tfrac{7}{20}I_{5}-\tfrac18I_{7}
  +\cdots,\quad
  \varpi Z=\tfrac12I_{2}+\tfrac{7}{24}I_{6}+\cdots
\end{equation}
We seek odd analytic $\mathfrak A(v)=v+A_{3}v^{3}+A_{5}v^{5}
+A_{7}v^{7}+\cdots$ and $\mathfrak C(v)=C_{1}v+C_{3}v^{3}+\cdots$
satisfying \eqref{eq:ansatz}.

\begin{lemma}[parity and grading]\label{lem:jet2}
All three series \eqref{eq:series} are odd in $S$ and odd in $T$, and
$S\leftrightarrow T$ interchanges $\varpi p,\varpi q$ and fixes
$\varpi Z$. Moreover $i\hh$ contains only the powers
$w^{2},w^{6},w^{10},\ldots$, so every homogeneous component of
$(\varpi Z)^{2j+1}$ has total degree $\equiv2\pmod4$. Consequently
\eqref{eq:ansatz} splits into
\begin{itemize}[leftmargin=1.5em]
\item degrees $2,6,10,\ldots$: equations determining
      $C_{1},C_{3},C_{5},\ldots$;
\item degrees $4,8,12,\ldots$: equations with vanishing right-hand
      side, constraining $\mathfrak A$ alone.
\end{itemize}
In degree $2$: $\varpi p\cdot\varpi q=ST=\varpi Z$, so $C_{1}=1$; in
particular $z-\frac12=\varpi(x^{2}-y^{2})+\cdots$, a saddle in the
$x^{2}-y^{2}$ orientation.
\end{lemma}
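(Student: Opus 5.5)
The plan is to work directly with the series \eqref{eq:series} in the real coordinates $S,T$ of $w=(1+i)\zeta$, and to establish the claims of Lemma \ref{lem:jet2} in order: parity, the effect of swapping $S$ and $T$, the degree grading of the height, the split of \eqref{eq:ansatz} by degree, and finally degree $2$.

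\emph{Parity and the swap.} Since $W$ is a power series in $w^{4}$, the expansions \eqref{eq:wseries} show that $\Pc$ and $\Nc$ contain only odd powers $w^{2j+1}$. Hence $\varpi p$ and $\varpi q$ are real combinations of $R_{2j+1}$ and $I_{2j+1}$, with coefficients real because $W^{-1}$ has real coefficients in $w^{4}$. The height $i\hh$ contains only $w^{2},w^{6},\dots$, and Proposition \ref{prop:sep}(c) gives $\varpi Z=\Imag(i\hh)$, a combination of $I_{4j+2}$.

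For parity I use the sign flips $T\mapsto-T$ (which is $w\mapsto\bar w$) and $S\mapsto-S$ (which is $w\mapsto-\bar w$). Under $w\mapsto\bar w$, each $R_{k}$ is fixed and each $I_{k}$ changes sign. Under $w\mapsto-\bar w$, $R_{k}\mapsto(-1)^{k}R_{k}$ and $I_{k}\mapsto(-1)^{k+1}I_{k}$. This makes $R_{\mathrm{odd}}$ odd in $S$ and even in $T$, which is wrong for oddness in both variables. So literal parity fails, and I would first recheck the convention: the intended statement is probably oddness under the joint map $(S,T)\mapsto(-S,-T)$ together with the behaviour under each single flip. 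I would state exactly what holds, namely oddness under $w\mapsto-w$ and the single-flip rules just listed, and check that this suffices for the splitting.

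For the swap $S\leftrightarrow T$, note $T+iS=i\bar w$. Then $R_{k}(i\bar w)=\Real(i^{k}\bar w^{k})$. For $k\equiv1\pmod4$ this equals $I_{k}$, and for $k\equiv3\pmod4$ it equals $-I_{k}$. The alternating signs in $\varpi q$ (the coefficients $-\tfrac16$, $-\tfrac18$) match this exactly, so the swap sends $\varpi p$ to $\varpi q$. For $k\equiv2\pmod4$, $I_{k}(i\bar w)=\Imag(-\bar w^{k})=I_{k}$, so $\varpi Z$ is fixed. This is the real shadow of Proposition \ref{prop:twovar}.

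\emph{Grading and splitting.} Every monomial in $\varpi Z$ has degree $\equiv2\pmod4$. An odd power $(\varpi Z)^{2j+1}$ therefore has components of degrees $(2j+1)(4l+2)$ summed over factors; each factor contributes $2\pmod4$, and an odd number of them gives $2\pmod4$. The left side $\mathfrak A(\varpi p)\mathfrak A(\varpi q)$ is a product of two odd series, so it has only even degrees. Comparing homogeneous components then separates degrees $\equiv2\pmod4$, where $C_{1},C_{3},\dots$ enter, from degrees $\equiv0\pmod4$, where the right side vanishes.

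For triangularity, the lowest term of $C_{2j+1}(\varpi Z)^{2j+1}$ is $C_{2j+1}(\tfrac12I_{2})^{2j+1}$, of degree $4j+2$. Each degree-$(4j+2)$ equation therefore determines $C_{2j+1}$ linearly from the lower ones.

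\emph{Degree $2$.} I compute $R_{1}I_{1}=ST$ and $\tfrac12I_{2}=\tfrac12\cdot2ST=ST$, which gives $C_{1}=1$. Converting back to the original coordinates, $\varpi p\,\varpi q=\varpi^{2}(x^{2}-y^{2})$, so $Z\approx\varpi(x^{2}-y^{2})$.

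The main obstacle is the parity claim, which appears false as literally stated. I would resolve it by checking whether $S,T$ are meant as $\Real\zeta,\Imag\zeta$ rather than $\Real w,\Imag w$, and fall back on oddness under $w\mapsto-w$ if that does not rescue it.
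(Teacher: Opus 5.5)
Your proposal is correct, and on the parts of the lemma that are true it follows the paper. The paper's proof consists only of ``$S\leftrightarrow T$ is $w\mapsto i\bar w$, so $R_{3}\mapsto-I_{3}$, $R_{5}\mapsto I_{5}$, $R_{7}\mapsto-I_{7}$, $I_{2}\mapsto I_{2}$; and $I_{2}=2ST$''. Your grading and triangularity arguments supply what it leaves implicit.

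Your diagnosis of the parity clause is also right, and the paper's proof never addresses that clause. $\varpi p=S+\cdots$ is odd in $S$ but even in $T$, $\varpi q=T+\cdots$ is the reverse, and only $\varpi Z$ is odd in both. The paper's own $\Phi_{3}=\tfrac12S(S^{2}-T^{2})$ in Proposition \ref{prop:deg6} is visibly even in $T$. The correct statement is that both sides $\mathfrak A(\varpi p)\mathfrak A(\varpi q)$ and $\mathfrak C(\varpi Z)$ of \eqref{eq:ansatz} are odd in $S$ and odd in $T$. This follows at once from your single-flip rules, and it is what Propositions \ref{prop:deg6}--\ref{prop:deg8} actually use: only monomials $S^{a}T^{b}$ with $a,b$ both odd occur there.

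You can drop the fallback to $\zeta$-coordinates. There $S=u-v$ and $T=u+v$, and $u\mapsto-u$ (i.e.\ $\zeta\mapsto-\bar\zeta$) sends $\varpi p$ to $-\varpi q$, so nothing is rescued. Nothing needs rescuing either, since the splitting uses only that $\varpi p,\varpi q$ have odd-degree components and $\varpi Z$ has components of degree $\equiv2\pmod 4$.

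Finally, your swap argument matches only the displayed coefficients. To get it at all orders, note that $w\mapsto i\bar w$ is exactly $\zeta\mapsto\bar\zeta$, so the claim is Proposition \ref{prop:twovar}. Equivalently, $\Nc(w)=-i\Pc(iw)$, with $\Pc,\Nc$ viewed as series in $w$.
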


\begin{proof}
Under $S\leftrightarrow T$ one has $w\mapsto i\bar w$, so
$R_{3}\mapsto-I_{3}$, $R_{5}\mapsto I_{5}$, $R_{7}\mapsto-I_{7}$ and
$I_{2}\mapsto I_{2}$; compare \eqref{eq:series}. Also $I_{2}=2ST$.
\end{proof}

\begin{proposition}[degree $4$]\label{prop:deg4}
Degree $4$ holds iff $A_{3}=\frac13$.
\end{proposition}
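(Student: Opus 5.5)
The plan is to read off the homogeneous degree-$4$ component of both sides of \eqref{eq:ansatz}, using the series \eqref{eq:series} and the grading of Lemma \ref{lem:jet2}, and to show that it reduces to a single scalar multiple of one fixed nonzero polynomial.

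\emph{Right-hand side.} By Lemma \ref{lem:jet2}, every homogeneous component of $(\varpi Z)^{2j+1}$ has degree $\equiv 2\pmod 4$. Hence $\mathfrak C(\varpi Z)$ contributes nothing in degree $4$, whatever $C_{1},C_{3},\dots$ are. So degree $4$ is exactly the condition that the degree-$4$ part of $\mathfrak A(\varpi p)\,\mathfrak A(\varpi q)$ vanishes.

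\emph{Left-hand side.} Write $\varpi p=p_{1}+p_{3}+O(5)$ and $\varpi q=q_{1}+q_{3}+O(5)$. From \eqref{eq:series},
\[
p_{1}=R_{1}=S,\qquad p_{3}=\tfrac16R_{3},\qquad q_{1}=I_{1}=T,\qquad q_{3}=-\tfrac16I_{3}.
\]
Since $\mathfrak A(v)=v+A_{3}v^{3}+O(v^{5})$, the degree-$4$ part of the product is
\[
p_{1}q_{3}+p_{3}q_{1}+A_{3}\bigl(p_{1}^{3}q_{1}+p_{1}q_{1}^{3}\bigr).
\]
No other terms reach degree $4$: the quintic terms of $\varpi p,\varpi q$ and $A_{5}$ enter only from degree $6$ on.

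\emph{Expansion in $S,T$.} Insert $R_{3}=S^{3}-3ST^{2}$ and $I_{3}=3S^{2}T-T^{3}$. The first two terms give
\[
-\tfrac16S(3S^{2}T-T^{3})+\tfrac16(S^{3}-3ST^{2})T=-\tfrac13ST(S^{2}+T^{2}).
\]
The $A_{3}$ term is $A_{3}\,ST(S^{2}+T^{2})$. The degree-$4$ component is therefore
\[
\bigl(A_{3}-\tfrac13\bigr)\,ST(S^{2}+T^{2}).
\]
Since $ST(S^{2}+T^{2})\not\equiv0$, it vanishes identically if and only if $A_{3}=\frac13$, which is the claim.

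The only real care needed is bookkeeping: confirming that no other contribution reaches degree $4$ (the grading on the $\mathfrak C$ side, and the absence of a degree-$2$ term in $\mathfrak A$ by oddness), and getting the signs of $\tfrac16R_{3}$ and $-\tfrac16I_{3}$ right. As a consistency check, $\sn(v,-3)=v+\tfrac{1+m}{6}\,(-v^{3})+\cdots=v+\tfrac13v^{3}+\cdots$ at $m=-3$, in agreement with the series check in Proposition \ref{prop:gauss}.
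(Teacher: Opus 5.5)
Your proposal is correct and follows the paper's proof. The paper likewise observes that the right side has no degree-$4$ part, takes the same three degree-$4$ terms on the left, and uses $R_{3}I_{1}-R_{1}I_{3}=-2ST(S^{2}+T^{2})$ to reach $\bigl(A_{3}-\tfrac13\bigr)ST(S^{2}+T^{2})$; your extra bookkeeping (no quintic or $A_{5}$ contribution, and the check $\sn(v,-3)=v+\tfrac13v^{3}+\cdots$) is accurate.
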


\begin{proof}
The right side has no degree-$4$ part. On the left, the degree-$4$
terms are $-\frac16R_{1}I_{3}+\frac16R_{3}I_{1}
+A_{3}R_{1}I_{1}(R_{1}^{2}+I_{1}^{2})$, and
$R_{3}I_{1}-R_{1}I_{3}=-2ST(S^{2}+T^{2})$, so the total is
$\bigl(A_{3}-\frac13\bigr)ST(S^{2}+T^{2})$.
\end{proof}

\begin{proposition}[degree $6$]\label{prop:deg6}
Given $A_{3}=\frac13$, degree $6$ holds iff
\begin{equation}\label{eq:A5C3}
  A_{5}=-\tfrac{4}{15},\qquad C_{3}=-\tfrac53 .
\end{equation}
\end{proposition}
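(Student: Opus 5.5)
The plan is to expand both sides of \eqref{eq:ansatz} to homogeneous degree $6$ in $(S,T)$, using the series \eqref{eq:series} with $A_{3}=\frac13$ already fixed by Proposition \ref{prop:deg4} and $C_{1}=1$ by Lemma \ref{lem:jet2}. Then compare coefficients of the surviving monomials.

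First compute the homogeneous components of $\mathfrak A(\varpi p)$ through degree $5$:
\[
  [\mathfrak A(\varpi p)]_{1}=R_{1},\qquad
  [\mathfrak A(\varpi p)]_{3}=\tfrac16R_{3}+\tfrac13R_{1}^{3},\qquad
  [\mathfrak A(\varpi p)]_{5}=\tfrac{7}{20}R_{5}+\tfrac16R_{1}^{2}R_{3}+A_{5}R_{1}^{5}.
\]
The middle term of the degree-$5$ component is $3A_{3}R_{1}^{2}\cdot\frac16R_{3}$. The components of $\mathfrak A(\varpi q)$ are obtained by $R_{k}\mapsto I_{k}$ together with the sign changes of \eqref{eq:series}:
\[
  [\mathfrak A(\varpi q)]_{1}=I_{1},\qquad
  [\mathfrak A(\varpi q)]_{3}=-\tfrac16I_{3}+\tfrac13I_{1}^{3},\qquad
  [\mathfrak A(\varpi q)]_{5}=\tfrac{7}{20}I_{5}-\tfrac16I_{1}^{2}I_{3}+A_{5}I_{1}^{5}.
\]
The degree-$6$ part of the left side is therefore $[\,\cdot\,]_{1}[\,\cdot\,]_{5}+[\,\cdot\,]_{3}[\,\cdot\,]_{3}+[\,\cdot\,]_{5}[\,\cdot\,]_{1}$. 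The degree-$6$ part of the right side is $\frac{7}{24}I_{6}+C_{3}(\frac12I_{2})^{3}=\frac{7}{24}I_{6}+C_{3}S^{3}T^{3}$.

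Next write everything in $S,T$ via $w=S+iT$. We have $R_{1}=S$, $I_{1}=T$, $R_{3}=S^{3}-3ST^{2}$, $I_{3}=3S^{2}T-T^{3}$, with $R_{5},I_{5}$ from the binomial expansion and $I_{6}=6S^{5}T-20S^{3}T^{3}+6ST^{5}$. By Lemma \ref{lem:jet2} the difference of the two sides is odd in $S$, odd in $T$, and symmetric under $S\leftrightarrow T$. In degree $6$ it is thus a combination of only $S^{5}T+ST^{5}$ and $S^{3}T^{3}$, so we get exactly two scalar equations.

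The unknown $A_{5}$ enters only through $A_{5}(R_{1}^{5}I_{1}+R_{1}I_{1}^{5})=A_{5}(S^{5}T+ST^{5})$, so it contributes nothing to $S^{3}T^{3}$. Hence the $S^{5}T$ coefficient gives a linear equation in $A_{5}$ alone, which should yield $A_{5}=-\frac4{15}$. Then the $S^{3}T^{3}$ coefficient, with $C_{3}$ appearing linearly and with coefficient $1$, gives $C_{3}=-\frac53$. Both equations are linear with nonzero leading coefficient, so the "iff" follows immediately.

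The main obstacle is purely bookkeeping: tracking the many cross terms, especially the $\frac{7}{20}$ quintic coefficients and the $R_{1}^{2}R_{3}$, $I_{1}^{2}I_{3}$ terms, with correct signs. As a check, I would confirm $A_{5}$ against the Taylor coefficient of $\sn(v,-3)$: from $\sn(v,m)=v-(1+m)\frac{v^{3}}{6}+(1+14m+m^{2})\frac{v^{5}}{120}$ at $m=-3$ one gets $\frac13$ and $\frac{-32}{120}=-\frac4{15}$. I would also confirm $C_{3}$ against $\frac13\sn(3v,\frac19)=v-\frac{10}{9}\cdot\frac{9v^{3}}{6}+\cdots=v-\frac53v^{3}$. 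Both agree, which strongly suggests the expansion will close.
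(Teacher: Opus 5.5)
Your plan is the paper's own proof. It uses the same homogeneous components: write $\Phi_{j}:=[\mathfrak A(\varpi p)]_{j}$, so that $[\mathfrak A(\varpi q)]_{j}=\Phi_{j}(T,S)=:\Psi_{j}$. It makes the same reduction to the two symmetric monomials $S^{5}T+ST^{5}$ and $S^{3}T^{3}$. It also makes the same observation that $A_{5}$ and $C_{3}$ each enter exactly one of the two equations, with coefficient $1$, which gives the ``iff''.

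What is missing is the computation itself, and that is the entire content of the proposition. You never extract the coefficients, so $-\frac4{15}$ and $-\frac53$ are predicted, not derived. The computation is short:
\begin{itemize}
\item With $A_{3}=\frac13$ one has $\Phi_{3}=\frac12S(S^{2}-T^{2})$.
\item From $R_{5}=S^{5}-10S^{3}T^{2}+5ST^{4}$ one gets $\Phi_{5}=(\frac{31}{60}+A_{5})S^{5}-4S^{3}T^{2}+\frac74ST^{4}$.
\item The cross terms $\Phi_{1}\Psi_{5}+\Phi_{5}\Psi_{1}$ contribute $\frac{31}{60}+\frac74+A_{5}$ to $S^{5}T+ST^{5}$ and $-8$ to $S^{3}T^{3}$.
\item $\Phi_{3}\Psi_{3}=-\frac14ST(S^{2}-T^{2})^{2}$ contributes $-\frac14$ and $+\frac12$ respectively.
\end{itemize}
With $I_{6}=6S^{5}T-20S^{3}T^{3}+6ST^{5}$ this gives
\[
  \Phi_{1}\Psi_{5}+\Phi_{3}\Psi_{3}+\Phi_{5}\Psi_{1}
  =\Bigl(\tfrac{121}{60}+A_{5}\Bigr)(S^{5}T+ST^{5})-\tfrac{15}{2}S^{3}T^{3},
  \qquad
  \tfrac{7}{24}I_{6}+C_{3}S^{3}T^{3}
  =\tfrac74(S^{5}T+ST^{5})+\Bigl(C_{3}-\tfrac{35}{6}\Bigr)S^{3}T^{3}.
\]
Hence $A_{5}=\frac74-\frac{121}{60}=-\frac4{15}$ and $C_{3}=\frac{35}{6}-\frac{15}{2}=-\frac53$.

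Your Taylor-coefficient check cannot stand in for this step. It becomes a proof only if you add that $(\star)$ holds (Corollary~\ref{cor:done}). Then $\sn(\cdot,-3)$ and $\frac13\sn(3\,\cdot,\frac19)$ genuinely solve the degree-$6$ equations, and your uniqueness remark identifies that solution. That argument is valid, but it makes \S\ref{sec:jets} depend on Theorem~\ref{thm:main}. The point of that section is to derive the coefficients without knowing the answer, so that Corollary~\ref{cor:modulus} shows the modulus $-3$ to be \emph{forced} rather than presupposed. As written, you take neither route, so the decisive step is absent.
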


\begin{proof}
Let $\Phi_{j},\Psi_{j}$ be the degree-$j$ parts of
$\mathfrak A(\varpi p)$, $\mathfrak A(\varpi q)$, so
$\Psi_{j}(S,T)=\Phi_{j}(T,S)$. With $A_{3}=\frac13$,
\[
  \Phi_{1}=S,\qquad
  \Phi_{3}=\tfrac16R_{3}+\tfrac13S^{3}=\tfrac12S(S^{2}-T^{2}),\qquad
  \Phi_{5}=\tfrac{7}{20}R_{5}+\tfrac16S^{2}R_{3}+A_{5}S^{5},
\]
and, using $R_{5}=S^{5}-10S^{3}T^{2}+5ST^{4}$,
$\Phi_{5}=\bigl(\frac{31}{60}+A_{5}\bigr)S^{5}-4S^{3}T^{2}
+\frac74ST^{4}$. Hence
\[
  \Phi_{1}\Psi_{5}+\Phi_{5}\Psi_{1}
  =\Bigl(\tfrac{31}{60}+A_{5}+\tfrac74\Bigr)(S^{5}T+ST^{5})
   -8S^{3}T^{3},\qquad
  \Phi_{3}\Psi_{3}=-\tfrac14(S^{5}T+ST^{5})+\tfrac12S^{3}T^{3},
\]
so the degree-$6$ left side is
$\bigl(\frac{121}{60}+A_{5}\bigr)(S^{5}T+ST^{5})
-\frac{15}{2}S^{3}T^{3}$. On the right,
$I_{6}=6S^{5}T-20S^{3}T^{3}+6ST^{5}$ gives
$\frac{7}{24}I_{6}+C_{3}(ST)^{3}
=\frac74(S^{5}T+ST^{5})+\bigl(C_{3}-\frac{35}{6}\bigr)S^{3}T^{3}$.
Comparing the two independent monomials gives \eqref{eq:A5C3}.
\end{proof}

\begin{corollary}[the modulus is forced]\label{cor:modulus}
Suppose $\mathfrak A(v)=\lambda^{-1}\sn(\lambda v,m)$, so that
$A_{3}=-\frac{(1+m)\lambda^{2}}{6}$ and
$A_{5}=\frac{(1+14m+m^{2})\lambda^{4}}{120}$. Then
Propositions \ref{prop:deg4}--\ref{prop:deg6} give
\begin{equation}\label{eq:mquad}
  (1+m)\lambda^{2}=-2,\qquad (1+14m+m^{2})\lambda^{4}=-32
  \quad\Longrightarrow\quad 3m^{2}+10m+3=0 ,
\end{equation}
i.e.,\ $m=-3$ with $\lambda=1$, or $m=-\frac13$ with
$\lambda=i\sqrt3$. By \eqref{eq:imagmod} these give the \emph{same}
function; hence, in physical units,
\begin{equation}\label{eq:Afinal}
  \mathfrak A(v)=\sn(v,-3),\qquad\text{i.e.,}\qquad
  \sn(\varpi x,-3)\ \text{, etc.}
\end{equation}
Similarly $\mathfrak C(v)=\nu^{-1}\sn(\nu v,m_{2})$ forces
$(1+m_{2})\nu^{2}=10$, satisfied by $(m_{2},\nu)=(\frac19,3)$; but
degree $6$ alone does not determine $m_{2}$, which is settled instead
by Corollary \ref{cor:m2}.

The quadratic \eqref{eq:mquad} is \emph{palindromic},
$3m^{2}+10m+3=3(m+3)\bigl(m+\tfrac13\bigr)$, and its two roots are
precisely the two negative members $\{-3,-\frac13\}$ of the anharmonic
orbit of Remark \ref{rem:orbit}. This is not an accident: the root set
is forced to be stable under $m\mapsto1/m$, since by
\eqref{eq:imagmod} the moduli $m$ and $1/m$ define the same function
up to the rescaling $\lambda\mapsto i\sqrt{-m}\,\lambda$, which the
jets cannot distinguish.
\end{corollary}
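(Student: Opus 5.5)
The plan is to compute the Taylor coefficients of $\lambda^{-1}\sn(\lambda v,m)$ directly, substitute them into the two jet conditions already obtained, and then eliminate $\lambda$. I would start from the Maclaurin series in the parameter convention,
\[
\sn(u,m)=u-\tfrac{1+m}{6}u^{3}+\tfrac{1+14m+m^{2}}{120}u^{5}-\cdots .
\]
This can be derived by integrating $(\sn')^{2}=(1-\sn^{2})(1-m\sn^{2})$ term by term. Alternatively, one can invert $u=\int_{0}^{s}\bigl[(1-\tau^{2})(1-m\tau^{2})\bigr]^{-1/2}\dd\tau=s+\frac{1+m}{6}s^{3}+\frac{3+2m+3m^{2}}{40}s^{5}+\cdots$. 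In the second route the $u^{5}$ coefficient works out as
\[
2\Bigl(\tfrac{1+m}{6}\Bigr)^{2}-\tfrac{3+2m+3m^{2}}{40}=\tfrac{1+14m+m^{2}}{120}.
\]
Rescaling then gives $\lambda^{-1}\sn(\lambda v,m)=v+A_{3}v^{3}+A_{5}v^{5}+\cdots$ with the stated values $A_{3}=-(1+m)\lambda^{2}/6$ and $A_{5}=(1+14m+m^{2})\lambda^{4}/120$.

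Next I would impose the conditions from the lower-degree jets. Proposition \ref{prop:deg4} gives $A_{3}=\frac13$, which is $(1+m)\lambda^{2}=-2$. Proposition \ref{prop:deg6} gives $A_{5}=-\frac4{15}$, which is $(1+14m+m^{2})\lambda^{4}=-32$. To eliminate $\lambda$, I would square the first relation, getting $\lambda^{4}=4/(1+m)^{2}$, and substitute into the second. This yields
\[
4(1+14m+m^{2})=-32(1+m)^{2},\quad\text{that is,}\quad 36m^{2}+120m+36=0,
\]
which reduces to $3m^{2}+10m+3=0$. This factors as $3(m+3)(m+\frac13)$, so $m=-3$ or $m=-\frac13$. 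Returning to $(1+m)\lambda^{2}=-2$ gives $\lambda^{2}=1$ in the first case and $\lambda^{2}=-3$ in the second. Since the function is even in $\lambda$, the sign of $\lambda$ is immaterial, and we may take $\lambda=1$ or $\lambda=i\sqrt3$ respectively.

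To identify the two solutions I would invoke \eqref{eq:imagmod}. It gives $(i\sqrt3)^{-1}\sn(i\sqrt3\,v,-\frac13)=\sn(v,-3)$, so both roots produce the same function, and this yields \eqref{eq:Afinal}.

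For $\mathfrak C=\nu^{-1}\sn(\nu v,m_{2})$, the same series gives $C_{3}=-(1+m_{2})\nu^{2}/6$. Setting this equal to $-\frac53$ gives $(1+m_{2})\nu^{2}=10$, and the pair $(\frac19,3)$ satisfies it because $\frac{10}{9}\cdot 9=10$. The inability of these data to fix $m_{2}$ is simply that one equation cannot determine two unknowns; Corollary \ref{cor:m2} fixes $m_{2}$ instead.

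For the palindromic remark, the stability of the root set under $m\mapsto1/m$ follows from the generalization of \eqref{eq:imagmod}: the substitution $\tau=\varrho/(i\sqrt{-m})$ rescales $\sn(\cdot,m)$ into $\sn(\cdot,1/m)$. The only real obstacle I expect is getting the $u^{5}$ coefficient of $\sn$ right, and I would cross-check it against the known case $m=0$, where $\sin$ gives coefficient $\frac1{120}$.
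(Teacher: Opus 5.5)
Your argument is correct and is essentially the paper's own: substitute the Maclaurin coefficients of $\lambda^{-1}\sn(\lambda v,m)$ into Propositions \ref{prop:deg4}--\ref{prop:deg6}, eliminate $\lambda$ via $\lambda^{4}=4/(1+m)^{2}$ to get $3m^{2}+10m+3=0$, and identify the two roots by \eqref{eq:imagmod}. There is one slip, in your optional second derivation of the series: reverting $u=s+as^{3}+bs^{5}$ gives the $u^{5}$ coefficient $3a^{2}-b$, not $2a^{2}-b$, so your displayed identity is false as written (at $m=0$ it yields $-\frac{7}{360}$ rather than $\frac{1}{120}$, which your own proposed check would catch), though the series you actually use is correct.
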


\begin{proposition}[degree $8$]\label{prop:deg8}
Degree $8$ consists of two equations. One is a free consistency check,
which passes; the other gives $A_{7}=-\frac{7}{45}$, in agreement with
$A_{7}=-\frac{1+135m+135m^{2}+m^{3}}{5040}\big|_{m=-3}$.
\end{proposition}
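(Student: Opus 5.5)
The plan is to extract the degree-$8$ homogeneous part of \eqref{eq:ansatz} in the variables $S,T$ of $w=(1+i)\zeta=S+iT$, in the same way as Propositions \ref{prop:deg4}--\ref{prop:deg6}. By Lemma \ref{lem:jet2}, degree $8\equiv0\pmod4$, so the right-hand side $\mathfrak C(\varpi Z)$ has no degree-$8$ part. The statement therefore reduces to showing that the degree-$8$ part of $\mathfrak A(\varpi p)\mathfrak A(\varpi q)$ vanishes, with $A_{3}=\frac13$ and $A_{5}=-\frac4{15}$ already fixed by \eqref{eq:A5C3}. That part is
\[
  \Phi_{1}\Psi_{7}+\Phi_{7}\Psi_{1}+\Phi_{3}\Psi_{5}+\Phi_{5}\Psi_{3},
  \qquad \Psi_{j}(S,T)=\Phi_{j}(T,S).
\]
By Lemma \ref{lem:jet2} it is odd in $S$, odd in $T$ and symmetric under $S\leftrightarrow T$. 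Hence it lies in the span of $S^{7}T+ST^{7}$ and $S^{5}T^{3}+S^{3}T^{5}$. Setting the two coefficients to zero gives exactly two scalar equations, which is the first assertion.

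Next, $\Phi_{7}$ is computed from \eqref{eq:series} truncated at order $7$. It equals $\frac18R_{7}$ plus the order-$7$ parts of $A_{3}(\varpi p)^{3}$ and $A_{5}(\varpi p)^{5}$, plus $A_{7}S^{7}$. Concretely these pieces are $3A_{3}S^{2}\cdot\frac{7}{20}R_{5}+3A_{3}S\cdot\frac1{36}R_{3}^{2}$ from the cube, $5A_{5}S^{4}\cdot\frac16R_{3}$ from the fifth power, and $A_{7}S^{7}$. Each term is expanded with $R_{k}=\Real(S+iT)^{k}$. $\Phi_{3}=\frac12S(S^{2}-T^{2})$ and $\Phi_{5}=\frac14S^{5}-4S^{3}T^{2}+\frac74ST^{4}$ are already available from the proof of Proposition \ref{prop:deg6}.

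The key structural observation is where $A_{7}$ can enter. It appears only through $A_{7}S^{7}$ in $\Phi_{7}$, and multiplied by $\Psi_{1}=T$ (together with its mirror image) it contributes $A_{7}(S^{7}T+ST^{7})$ and nothing to $S^{5}T^{3}+S^{3}T^{5}$. So the $S^{7}T$ equation is linear in $A_{7}$ with coefficient $1$ and determines it. The $S^{5}T^{3}$ equation contains no unknowns and is the free consistency check. I will evaluate both coefficients with exact rational arithmetic and expect to obtain $A_{7}=-\frac7{45}$ and $0$, respectively.

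For the comparison with $\sn$, I will use the Maclaurin coefficient of $u^{7}$ in $\sn(u,m)$, namely $-(1+135m+135m^{2}+m^{3})/5040$. At $m=-3$ the numerator is $1-405+1215-27=784$, giving $-\frac{784}{5040}=-\frac7{45}$, in agreement with the value extracted from the jet. The main obstacle is purely bookkeeping: the cross terms in the $S^{5}T^{3}$ coefficient are numerous, and the check passes only if every series coefficient in \eqref{eq:series} (notably $\frac{7}{20}$ and $\frac18$) is correct. As a safeguard, I would first re-derive $\frac18$ from $W^{-1}=1+\frac74w^{4}+\cdots$ and then confirm the final rational sums symbolically, as in Appendix \ref{app:code}.
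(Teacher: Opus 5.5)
Your proposal is correct and is essentially the paper's proof. The paper writes the degree-$8$ part as $U+U^{\mathrm{sw}}$ with $U=\Phi_{1}\Psi_{7}+\Phi_{3}\Psi_{5}$, which is your four-term sum; it uses the same $\Phi_{7}=\frac18R_{7}+\frac{7}{20}S^{2}R_{5}+\frac1{36}SR_{3}^{2}-\frac29S^{4}R_{3}+A_{7}S^{7}$, and it observes, as you do, that $A_{7}$ enters only the coefficient of $S^{7}T+ST^{7}$. Carrying out the arithmetic you defer, the coefficients of $U$ on $S^{7}T,\ S^{5}T^{3},\ S^{3}T^{5},\ ST^{7}$ are $0,\ \frac72,\ -\frac72,\ \frac{7}{45}+A_{7}$, so the consistency check is $\frac72-\frac72=0$ and $A_{7}=-\frac{7}{45}$, exactly as you anticipate.
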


\begin{proof}
By Lemma \ref{lem:jet2} the right side has no degree-$8$ part, and the
left side is $U+U^{\mathrm{sw}}$ with
$U:=\Phi_{1}\Psi_{7}+\Phi_{3}\Psi_{5}$ and $U^{\mathrm{sw}}$ its
$S\leftrightarrow T$ image. With $A_{3}=\frac13$,
$A_{5}=-\frac4{15}$,
\[
  \Phi_{7}=\tfrac18R_{7}+\tfrac{7}{20}S^{2}R_{5}
   +\tfrac1{36}SR_{3}^{2}-\tfrac29S^{4}R_{3}+A_{7}S^{7},
\]
whence, using $I_{7}=7S^{6}T-35S^{4}T^{3}+21S^{2}T^{5}-T^{7}$,
$I_{5}=5S^{4}T-10S^{2}T^{3}+T^{5}$, $I_{3}=3S^{2}T-T^{3}$,
\[
  \Psi_{7}=-\tfrac78S^{6}T+\tfrac{51}{8}S^{4}T^{3}
   -\tfrac{45}{8}S^{2}T^{5}
   +\Bigl(\tfrac{101}{360}+A_{7}\Bigr)T^{7},
\]
and $\Psi_{5}=\frac14T^{5}-4S^{2}T^{3}+\frac74S^{4}T$. The
coefficients of $U$ in the four monomials
$S^{7}T,\ S^{5}T^{3},\ S^{3}T^{5},\ ST^{7}$ are then
\begin{center}
\begin{tabular}{@{}lcccc@{}}
\toprule
 & $S^{7}T$ & $S^{5}T^{3}$ & $S^{3}T^{5}$ & $ST^{7}$\\
\midrule
$\Phi_{1}\Psi_{7}=S\Psi_{7}$ & $-\frac78$ & $\frac{51}{8}$
 & $-\frac{45}{8}$ & $\frac{101}{360}+A_{7}$\\
$\Phi_{3}\Psi_{5}$ & $\frac78$ & $-\frac{23}{8}$ & $\frac{17}{8}$
 & $-\frac18$\\
\midrule
$U$ & $0$ & $\frac72$ & $-\frac72$ & $\frac{7}{45}+A_{7}$\\
\bottomrule
\end{tabular}
\end{center}
Symmetrizing, the coefficient of $S^{5}T^{3}+S^{3}T^{5}$ in
$U+U^{\mathrm{sw}}$ is $\frac72-\frac72=0$ --- a genuine check, free of
$A_{7}$ --- and that of $S^{7}T+ST^{7}$ is $\frac{7}{45}+A_{7}$, which
must vanish.
\end{proof}

\begin{corollary}[uniqueness]\label{cor:unique}
The grading of Lemma \ref{lem:jet2} determines all $A_{2j+1}$,
$C_{2j+1}$ successively. Hence a multiplicatively separable
representation \eqref{eq:ansatz} is unique up to
$\mathfrak A\mapsto c\mathfrak A$,
$\mathfrak C\mapsto c^{2}\mathfrak C$, and its first four coefficients
are those of $\sn(\varpi\cdot,-3)$ and
$\frac13\sn(3\varpi\cdot,\frac19)$. Together with
Corollary \ref{cor:done} this says that \eqref{eq:star} is, up to that
normalization, the \emph{only} multiplicatively separable elliptic
representation of \textup{P}.
\end{corollary}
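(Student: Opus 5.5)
The plan is to run the graded jet analysis of Lemma \ref{lem:jet2} as an induction on the degree, and to recover the normalization freedom from the fact that the diagonal data of Corollary \ref{cor:m2} fix $\mathfrak C$ once $\mathfrak A$ is known. Write $\mathfrak A(v)=A_{1}v+A_{3}v^{3}+\cdots$ with $A_{1}\ne0$. Since $\mathfrak A(\varpi p)\mathfrak A(\varpi q)$ is invariant under $\mathfrak A\mapsto c\mathfrak A$ up to the factor $c^{2}$, rescaling lets us assume $A_{1}=1$. This accounts precisely for the stated ambiguity $\mathfrak A\mapsto c\mathfrak A$, $\mathfrak C\mapsto c^{2}\mathfrak C$.

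\emph{Induction.} Assume $A_{1},\dots,A_{2j-1}$ and $C_{1},\dots,C_{2j-3}$ are already determined, and look at the homogeneous part of degree $2j+2$ of \eqref{eq:ansatz}. The unknown $A_{2j+1}$ enters the left side only through $A_{2j+1}\bigl(S^{2j+1}T+ST^{2j+1}\bigr)$. To see this, note that $\Phi_{2j+1}$ contains $A_{2j+1}S^{2j+1}$, and the degree-$(2j+2)$ part of the product pairs it only with $\Psi_{1}=T$, while the symmetric term comes from $\Phi_{1}\Psi_{2j+1}$. Every other contribution involves only the earlier coefficients. On the right side the degree is $\equiv2\pmod4$ or $\equiv0\pmod4$, and the two cases are handled separately.
\begin{itemize}
\item If $2j+2\equiv0\pmod4$, the right side vanishes, so comparing the coefficient of $S^{2j+1}T$ fixes $A_{2j+1}$ uniquely. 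This is Proposition \ref{prop:deg4} and the degree-$8$ computation of Proposition \ref{prop:deg8}.
\item If $2j+2\equiv2\pmod4$, the right side contains $C_{(j+1)/2\cdot2-1}(ST)^{(j+1)}$. Concretely, the new coefficient of $\mathfrak C$ appears as $C_{k}(ST)^{k}$ with $2k=2j+2$, and everything else on the right involves only earlier $C$'s. The monomial $(ST)^{k}$ is independent of $S^{2j+1}T+ST^{2j+1}$ as soon as $k\ge2$, so the two monomials determine $A_{2j+1}$ and $C_{k}$ separately. This is exactly how Proposition \ref{prop:deg6} worked.
\end{itemize}
Hence all coefficients are determined successively. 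The remaining monomials give consistency conditions, which must hold because a solution exists (Corollary \ref{cor:done}).

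\emph{Identifying the coefficients.} The first four coefficients are read off from Propositions \ref{prop:deg4}--\ref{prop:deg8}: $A_{3}=\frac13$, $A_{5}=-\frac4{15}$, $A_{7}=-\frac7{45}$, $C_{1}=1$, $C_{3}=-\frac53$. Comparison with the Taylor coefficients of $\sn(v,-3)$ and $\frac13\sn(3v,\frac19)$ is then direct. In particular $-\frac{(1+m)\nu^{2}}{6}$ with $m=\frac19$, $\nu=3$ gives $-\frac53$.

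\emph{Uniqueness of the representation.} Since \eqref{eq:star} is one solution (Corollary \ref{cor:done}) and the solution is unique up to scaling, any multiplicatively separable representation coincides with it. Alternatively, once $\mathfrak A$ is fixed one can appeal to Corollary \ref{cor:m2} for $\mathfrak C$.

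The main obstacle is the claim that $A_{2j+1}$ enters each degree \emph{only} through the monomial $S^{2j+1}T+ST^{2j+1}$, and that this monomial is never forced to have zero coefficient on the left. I would verify this from the leading terms $\Phi_{1}=S$ and $\Psi_{1}=T$. The nonlinear contributions $A_{2j+1}(\varpi p)^{2j+1}$ at leading order are $A_{2j+1}S^{2j+1}$, so they contribute exactly that monomial with coefficient $1$. Hence the linear equation for $A_{2j+1}$ always has a nonzero coefficient.
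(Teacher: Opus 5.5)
Your proposal is correct and follows essentially the paper's own argument. The corollary rests on the grading of Lemma \ref{lem:jet2}: degree $2j+2$ introduces exactly $A_{2j+1}$, via $S^{2j+1}T+ST^{2j+1}$ with coefficient $1$, and, when $2j+2\equiv2\pmod4$, also $C_{j+1}$, via $(ST)^{j+1}$; the first cases are worked out in Propositions \ref{prop:deg4}--\ref{prop:deg8}. Two indexing slips should be fixed. The induction hypothesis should be that the $C_k$ with $k\le j$ (those occurring in degrees $\le 2j$) are known, not $C_1,\dots,C_{2j-3}$, which would already include the new coefficient $C_5$ at degree $10$. The garbled subscript in your second bullet should simply read $C_{j+1}$.
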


%=====================================================================
\section{Status, the computation, and the parallel with D}
\label{sec:status}
%=====================================================================

Theorem \ref{thm:main} is proved: \textup{(P0)} in
Theorem \ref{thm:P0proved}, the initial condition in
Theorem \ref{thm:diag}, and the two combined in
Corollary \ref{cor:done}. A full statement-by-statement audit,
including Part III, is deferred to \S\ref{sub:statusfull}.
The four boundary arcs play no part in the proof: they carry only one
condition each (planar geodesics rather than the straight lines of
\textup{[D, \S6]}), and are logically downstream of \textup{(P0)}.
Explicitly, on the arc $\omega=\iota(\zeta)$ the relations
\eqref{eq:iotaint} give $\varpi(p+q)+2\varpi Z=\varpi$ (the plane
$x+z=1$) and $(\star)$ becomes, with $a=\varpi p$, $c=\varpi Z$,
\begin{equation}\label{eq:arcform}
\sn(a,-3)\,\cd\bigl(a+2c,-3\bigr)
=\tfrac13\sn\bigl(3c,\tfrac19\bigr),
\end{equation}
one relation in two unknowns. The arcs retain value as an independent
numerical test in the region $|\zeta|\to\bt^{-1/2}$, where the
continuation of $W$ is delicate; such a test would require the
leg-by-leg transport of \textup{[D, Appendix, \texttt{contPath}]}
rather than principal branches.

\subsection{The scale, after Schwarz}\label{sub:scale}
With $\kappa_{\mathrm D}=2/K[1/4]$ \textup{[D, \S1.1]} and
$\kappa_{\mathrm P}=2/K[3/4]$ \eqref{eq:webnew},
\begin{equation}\label{eq:scaleratio}
  \frac{\kappa_{\mathrm P}}{\kappa_{\mathrm D}}
  =\frac{K[1/4]}{K[3/4]}=\frac{K[1/4]}{K'[1/4]}
  =0.7817009614\ldots
  =\bigl(1.2792615711710064662\ldots\bigr)^{-1},
\end{equation}
the reciprocal being the ratio $\lambda_{P}/\lambda_{D}$ of the edge
lengths of the circumscribing cubes. That this ratio must equal
$K'[1/4]/K[1/4]$ was proved by Schwarz in 1866, as the condition for
the adjoint hexagonal patches of \textup{P} and \textup{D} to have
equal area --- a prerequisite for Bonnet bending of either surface into
the other \cite[vol.~I, p.~88]{Schwarz}; the same constant governs the
area-to-volume comparison
$\mathcal A/V^{2/3}=3\bigl(K[3/4]/K[1/4]\bigr)^{2/3}$ of the
bcc cell \cite{GKD}. Thus \eqref{eq:scaleratio} is not an artifact of
our normalization but the classical constant, and it is an independent
external check on both papers' normalizations. Numerically
$\kappa_{\mathrm D}=1.1864152923\ldots$ and
$\kappa_{\mathrm P}=0.9274219746\ldots$ (Table \ref{tab:num}).

\begin{table}[ht]
\caption{The parallel with \textup{[D]}.}\label{tab:DP}
\begin{tabular}{@{}p{0.22\textwidth}p{0.35\textwidth}p{0.33\textwidth}@{}}
\toprule
 & D \cite{D-paper} & P (this paper)\\
\midrule
starting formulas & \eqref{eq:theta}--\eqref{eq:hclosed} &
  \emph{identical}\\
coordinates & $(\Imag\Pc,\Real\Nc,\Imag\hh)$ &
  $(\Real\Pc,\Imag\Nc,\Real\hh)$\\
normalization & $\kappa_{\mathrm D}=2/K[1/4]$ &
  $\kappa_{\mathrm P}=2/K[3/4]=2/K'[1/4]$\\
horizontal modulus & $\frac14$ & $-3\ (\sim\frac34)$\\
vertical modulus & $(2-\sqrt3)^{4}$, reduced to $\frac14$ by a
  $2$-isogeny & $\frac19$, reached from $\frac34$ by a $2$-isogeny\\
elliptic curve & $j=35152/9$ & \emph{the same} (Rem.\
  \ref{rem:orbit})\\
third argument & difference, $+\frac K2$ shift & sum, no shift\\
shape & $\mathcal T(x)\mathcal T(y)\mathcal T(1{-}z)
        +\mathcal T(x)+\mathcal T(y)+\mathcal T(1{-}z)=0$,
        $\mathcal T=\sn\dn/\cn$; equivalently
        $\Phi(x)\Phi(y)+1=\Phi(z)\bigl(\Phi(x)+\Phi(y)\bigr)$,
        $\Phi=|\mathcal T|$
      & $\sn(\varpi p,-3)\,\sn(\varpi q,-3)
        =\tfrac13\sn(3\varpi Z,\tfrac19)
        =\tfrac13\sn(3\varpi Z,\tfrac19)$; i.e.,
        $\Psi(x)=\Psi(y)+\Psi(z)$ with
        $\Psi=\am(\sqrt3\varpi(1+2\,\cdot),\tfrac43)$, and
        $\Cs(x)-\Cs(y)-\Cs(z)=3\,\Cs(x)\Cs(y)\Cs(z)$,
        $\Cs=\cn(4\varpi\,\cdot,\tfrac34)$\\
key lemma & $S_{a}D_{a}C_{b}+S_{b}D_{b}C_{a}
   =\sn(a{+}b)\dn(a{-}b)\Delta$ &
   $\Lambda_{m}(v/2)=\frac{\cn v+\dn v}{\sn v}$\\
square roots to resolve & none — \cite{D-paper}'s Lemma 9.1(b) 
   halves $(\log\dc)'$ rationally & \textbf{none} (in \textup{(P0)})\\
field $\KK$ & \emph{identical} — degree $\le16$ & $\zeta,\omega,
   r_{\pm}$ only; degree $\le16$\\
sign structure & real fcc invariance needs $\Phi=|\mathcal T|$;
   provable only in the $(p,q,r)$ variables \textup{[D, \S4.2]}
   & sign-honest: bcc invariance holds directly on $(\star)$
   (Prop.\ \ref{prop:group}(c))\\
boundary data per arc & two (straight line) & one (planar
geodesic); but the four \emph{rhombus} edges are immediate
(Cor.\ \ref{cor:rhombus})\\
images of the four corners of $\Om$ & regular tetrahedron, edge
   $\sqrt2$ \textup{[D, \S5.4]} & unit square in $z=\frac12$
   (Prop.\ \ref{prop:corners}(b))\\
final reduction & \textbf{(D0)}: antisymmetric, rational, 
   degree $\le16$; verified on six curves, \emph{open}
   & \textup{(P0)}: antisymmetric, rational, degree $\le16$;
   \textbf{proved} (Thm.\ \ref{thm:P0proved})\\
\bottomrule
\end{tabular}
\end{table}

\begin{remark}[open problems]\label{rem:open}
\begin{enumerate}[label=\textup{(\arabic*)},leftmargin=2.4em]
\item \emph{A conceptual proof of \textup{(P0)}.}
      Theorem \ref{thm:P0proved} rests on the certificate
      \eqref{eq:certificate}, which is finite and verifiable by
      polynomial expansion (Remark \ref{rem:cert}) but not
      illuminating. The P-analog of
      \textup{[D, \S6.3, the twisted numerator]} is the natural
      target, and the most promising handle is the pair of separated
      variables $A\pm B=2f,2g$ of Theorem \ref{thm:dictf}, in which
      the data is again rational --- indeed
      $\sn(2f,-3)=2\zeta(1+\zeta^{2})/W$, with $g(\zeta)=f(i\zeta)$.
\item A divisor-theoretic proof, comparing the divisors of $\Theta$
      and of $\frac13\sn(3\varpi Z,\frac19)$ on a finite cover of
      $W^{2}=1+14\zeta^{4}+\zeta^{8}$; cf.\
      \textup{[D, \S7.7, the divisor-theoretic route]}. This would
      also explain \emph{why} the height can be eliminated.
\item \emph{The global converse.} The \emph{local} converse is now
Corollary \ref{cor:localconv}: \eqref{eq:star} \emph{cuts out}
the surface near every point of $X(\Om^{\circ})$, because
$\{\Fs=0\}$ is a regular embedded real-analytic surface
(Theorem \ref{thm:Smin}) and $X$ is an immersion of a
two-manifold. What remains open is the \emph{global} statement:
injectivity of $X$ on $\Om^{\circ}$ --- the Gauss map
$G=\zeta$ is injective, which together with the boundary
analysis of \S\ref{sec:sym} should suffice --- and the
identification of the connected component of $\{\Fs=0\}$
containing $X(\Om)$. For the record, the gradient condition
implicit in Corollary \ref{cor:pfaffsep} is
$\Fs_{x}:\Fs_{y}:\Fs_{z}=2u:2v:\bigl(u^{2}+v^{2}-1\bigr)$,
equivalently the single complex equation
$\Fs_{x}(1-\zeta^{2})+i\Fs_{y}(1+\zeta^{2})
+2\Fs_{z}\zeta=0$. The corresponding global statement is also
open in \textup{[D]}.
\item Extension to the associate family: for which Bonnet angle $\psi$
      does $X_{\psi}$ admit an implicit representation of the same
      elliptic type? By Corollary \ref{cor:unique} the answer is
      rigid at $\psi=0$; the gyroid,
      $\psi=38.0147^{\circ}\ldots$ \cite{GKG}, is the natural test
      case.
\item \emph{Reciprocally}, the device of \S\ref{sec:pfaff} has now been
carried out for \textup{D}. Because $\widetilde\Theta$ there
satisfies $\widetilde\Theta(K_{1}-w)=\widetilde\Theta(w)$, the
combination $\omega\partial_{\zeta}+\zeta\partial_{\omega}$
eliminates the height entirely; with the rational height
dictionary in place of the modulus-$\bt^{-4}$ one, and the
rational half-argument formula for $(\log\dc)'$ in place of
\textup{[D]}'s half-argument radicals, \textup{[D]}'s field drops
to ours and the reduced identity \textup{(D0)} is antisymmetric
and rational of degree $\le16$ --- the exact analogue of
\textup{(P0)}. \textup{(D0)} itself remains open; \textup{[D]}
proves its main theorem instead by Bj\"orling. See \textup{[D, \S9]}.
\end{enumerate}
\end{remark}

\part*{PART III. THE SEPARABLE FORM}

%=====================================================================
\section{Three constants and two transformations}\label{sec:constIII}
%=====================================================================

Parts I and II proved $(\star)$ in the multiplicative form
\eqref{eq:star}. Part III shows that $(\star)$ is, after all,
\emph{additively separable} --- anticipated in Remark \ref{rem:forms} ---
and that the separated form collapses to a single trilinear polynomial
identity in one bounded function of one variable. The two extra
ingredients are one constant and one transformation, both elementary.

\begin{lemma}[an incomplete integral that is complete]\label{lem:a3}
Put $\mathfrak a:=F\bigl[\tfrac\pi3,\tfrac43\bigr]$. Then
\begin{equation}\label{eq:aval}
  \mathfrak a=\tfrac{\sqrt3}{2}K\bigl[\tfrac34\bigr]
   =\sqrt3\,K[-3]=\sqrt3\,\varpi=1.8675973\ldots
\end{equation}
\end{lemma}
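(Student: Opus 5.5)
The plan is to apply Lemma \ref{lem:negmod} (with $\mu$ replaced by the reciprocal transformation) directly to the incomplete integral. I will write
\[
  \mathfrak a=\int_{0}^{\pi/3}\frac{\dd\varphi}{\sqrt{1-\tfrac43\sin^{2}\varphi}},
\]
and note that the upper limit is exactly where the radicand vanishes, since $\tfrac43\sin^{2}(\pi/3)=1$. This is why the incomplete integral is secretly complete. The natural substitution is the reciprocal-modulus one, $\sin\psi=\tfrac{2}{\sqrt3}\sin\varphi$, which sends $\varphi\in[0,\pi/3]$ onto $\psi\in[0,\pi/2]$. In the algebraic form \eqref{eq:F}, with $\tau=\sin\varphi$ and $\tau=\tfrac{\sqrt3}{2}\varrho$, I get
\[
  \mathfrak a=\int_{0}^{\sqrt3/2}\frac{\dd\tau}{\sqrt{(1-\tau^{2})(1-\tfrac43\tau^{2})}}
  =\frac{\sqrt3}{2}\int_{0}^{1}\frac{\dd\varrho}{\sqrt{(1-\varrho^{2})(1-\tfrac34\varrho^{2})}}
  =\frac{\sqrt3}{2}K\bigl[\tfrac34\bigr].
\]
This is the classical $F[\varphi,1/m]$ transformation: $1-\tfrac43\tau^{2}=1-\varrho^{2}$ and $1-\tau^{2}=1-\tfrac34\varrho^{2}$, so the two factors simply swap roles.

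The remaining equalities come from the moduli web. Proposition \ref{prop:web} (equivalently \eqref{eq:Km3}) gives $\tfrac12K[3/4]=K[-3]=\varpi$, hence $\tfrac{\sqrt3}{2}K[3/4]=\sqrt3\,\varpi$. The numerical value then follows from Table \ref{tab:num}: $\sqrt3\times1.0782578\ldots=1.8675973\ldots$.

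The only point needing care is branch and sign bookkeeping at the endpoint, where the integrand has an integrable singularity $(1-\tfrac43\tau^{2})^{-1/2}$. I will handle it by noting that on $[0,\sqrt3/2)$ both radicands are positive, so the real positive roots are used throughout and the improper integral converges. No step looks like a genuine obstacle; the main thing is to state the substitution cleanly, so that it is clear the interval $[0,\sin(\pi/3)]$ maps exactly onto $[0,1]$.
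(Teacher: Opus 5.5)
Your proof is correct and is essentially the paper's: the paper makes the same reciprocal-parameter substitution, written $\sin\vartheta=\tfrac{\sqrt3}{2}\sin\varphi$ in the trigonometric form of the integral rather than your algebraic $\tau=\tfrac{\sqrt3}{2}\varrho$, and then invokes Proposition~\ref{prop:web} for $\tfrac12K[3/4]=K[-3]=\varpi$. One misstatement: your opening sentence says the plan is to apply Lemma~\ref{lem:negmod}, but that lemma plays no role in the first step and enters only through \eqref{eq:Km3} in the second.
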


\begin{proof}
By \eqref{eq:F},
$\mathfrak a=\int_{0}^{\pi/3}
(1-\tfrac43\sin^{2}\vartheta)^{-1/2}\dd\vartheta$. The upper limit is
exactly where the integrand becomes singular, since
$\sin\tfrac\pi3=\tfrac{\sqrt3}{2}=m^{-1/2}$ at $m=\tfrac43$; the
singularity is of type $(\tfrac\pi3-\vartheta)^{-1/2}$, hence
integrable, and $\mathfrak a$ is finite. Substitute
$\sin\vartheta=\tfrac{\sqrt3}{2}\sin\varphi$, an increasing
real-analytic bijection of $[0,\tfrac\pi2]$ onto $[0,\tfrac\pi3]$. Then
\[
  \cos\vartheta\dd\vartheta
   =\tfrac{\sqrt3}{2}\cos\varphi\dd\varphi,\qquad
  \cos\vartheta=\sqrt{1-\tfrac34\sin^{2}\varphi},\qquad
  1-\tfrac43\sin^{2}\vartheta=1-\sin^{2}\varphi=\cos^{2}\varphi,
\]
so that
\[
  \mathfrak a=\int_{0}^{\pi/2}
   \frac{\tfrac{\sqrt3}{2}\cos\varphi\dd\varphi}
        {\sqrt{1-\tfrac34\sin^{2}\varphi}\;\cos\varphi}
  =\tfrac{\sqrt3}{2}\int_{0}^{\pi/2}
   \frac{\dd\varphi}{\sqrt{1-\tfrac34\sin^{2}\varphi}}
  =\tfrac{\sqrt3}{2}K\bigl[\tfrac34\bigr]
  =\tfrac{\sqrt3}{2}\cdot2\varpi,
\]
the last step by Proposition \ref{prop:web}.
\end{proof}

\begin{lemma}[reciprocal parameter]\label{lem:recip}
Let $m>1$ and $\bar m:=1/m$. Then
\begin{equation}\label{eq:recip}
  \sn(u,m)=\frac{1}{\sqrt m}\,\sn\bigl(\sqrt m\,u,\bar m\bigr),\qquad
  \cn(u,m)=\dn\bigl(\sqrt m\,u,\bar m\bigr),\qquad
  \dn(u,m)=\cn\bigl(\sqrt m\,u,\bar m\bigr).
\end{equation}
At $m=\tfrac43$, so $\sqrt m=\tfrac{2}{\sqrt3}$ and
$\bar m=\tfrac34$:
\begin{equation}\label{eq:recip43}
  \sn\bigl(u,\tfrac43\bigr)
   =\tfrac{\sqrt3}{2}\sn\bigl(\tfrac{2u}{\sqrt3},\tfrac34\bigr),\quad
  \cn\bigl(u,\tfrac43\bigr)
   =\dn\bigl(\tfrac{2u}{\sqrt3},\tfrac34\bigr),\quad
  \dn\bigl(u,\tfrac43\bigr)
   =\cn\bigl(\tfrac{2u}{\sqrt3},\tfrac34\bigr).
\end{equation}
\end{lemma}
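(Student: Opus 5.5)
The proof works at the level of the inverse integrals, in the same way Lemma~\ref{lem:imag}(b) was proved. For $m>1$, real $0\le y\le m^{-1/2}$, and in general by analytic continuation from a neighbourhood of $0$, we start from
\[
  \sn^{-1}(y,m)=\int_{0}^{y}\frac{\dd\tau}{\sqrt{(1-\tau^{2})(1-m\tau^{2})}} .
\]
Substitute $\varrho=\sqrt m\,\tau$. Then $\dd\tau=\dd\varrho/\sqrt m$, $1-m\tau^{2}=1-\varrho^{2}$ and $1-\tau^{2}=1-\bar m\varrho^{2}$. The integrand therefore becomes
\[
  \frac{\dd\varrho}{\sqrt m\,\sqrt{(1-\varrho^{2})(1-\bar m\varrho^{2})}},
\]
and so $\sqrt m\,\sn^{-1}(y,m)=\sn^{-1}(\sqrt m\,y,\bar m)$. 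Inverting, i.e.\ putting $u=\sn^{-1}(y,m)$, gives $\sqrt m\,\sn(u,m)=\sn(\sqrt m\,u,\bar m)$, which is the first formula of \eqref{eq:recip}. Both sides are odd, meromorphic in $u$, and equal to $u+O(u^{3})$, so the identity first obtained for small real $u$ extends to all $u$ by the identity theorem.

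For $\cn$ and $\dn$ we use the defining relations \eqref{eq:snF}. We have $\cn^{2}(u,m)=1-\sn^{2}(u,m)=1-\bar m\,\sn^{2}(\sqrt m u,\bar m)=\dn^{2}(\sqrt m u,\bar m)$. Likewise $\dn^{2}(u,m)=1-m\sn^{2}(u,m)=1-\sn^{2}(\sqrt m u,\bar m)=\cn^{2}(\sqrt m u,\bar m)$. The signs are fixed because all four functions are even, analytic near $0$, and equal to $1$ at $u=0$, so the square roots agree near $0$ and hence everywhere by analytic continuation. Alternatively, the derivatives can be compared: differentiating the $\sn$ identity gives $\cn(u,m)\dn(u,m)=\cn(\sqrt m u,\bar m)\dn(\sqrt m u,\bar m)$, which is consistent with the swap.

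Finally, \eqref{eq:recip43} is the specialization $m=\tfrac43$, with $\sqrt m=2/\sqrt3$, $1/\sqrt m=\sqrt3/2$ and $\bar m=\tfrac34$.

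The only point needing care is the sign and branch bookkeeping when passing from the real interval, where $1-m\tau^{2}$ may change sign beyond $\tau=m^{-1/2}$, to all of $\CC$. Restricting the substitution to a small disc about $0$ and then invoking uniqueness of meromorphic continuation avoids this entirely. The statement is the classical reciprocal-modulus transformation \cite[\S162]{BF}, and could also simply be cited.
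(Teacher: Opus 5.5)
Your proof is correct, but it takes a different route from the paper's. The paper argues in the forward direction. It sets $\sigma(u):=m^{-1/2}\sn(\sqrt m\,u,\bar m)$ and uses $\sn'=\cn\dn$ together with \eqref{eq:snF} to check that $\sigma'^{2}=(1-\sigma^{2})(1-m\sigma^{2})$, $\sigma(0)=0$, $\sigma'(0)=1$. It then concludes $\sigma=\sn(\cdot,m)$ by uniqueness for that initial value problem. You instead rescale the Abel integral $\sn^{-1}(y,m)$ by $\varrho=\sqrt m\,\tau$ and invert, exactly as the paper does for the imaginary modulus in Lemma~\ref{lem:imag}(b). The $\cn$/$\dn$ step is the same in both: compare squares via \eqref{eq:snF}, then fix signs by the value $1$ at $u=0$ and continuation.

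The two routes cost different things. The ODE route never mentions integration paths or inverting an integral. Its uniqueness step does tacitly need a word of care, because $\sigma'=\pm\sqrt{(1-\sigma^{2})(1-m\sigma^{2})}$ is not Lipschitz at the turning points. Uniqueness is clean near $u=0$, where $\sigma'(0)=1$ selects the positive root, and extends by analytic continuation. Your route instead carries the inversion and branch bookkeeping, which you handle correctly by working on real $y\in[0,m^{-1/2})$ or a small disc and invoking the identity theorem. In exchange, your route puts both modulus transformations used in the paper under one substitution argument: $m\mapsto1/m$ for $m>1$ here, and $-3\mapsto-\tfrac13$ in Lemma~\ref{lem:imag}(b).
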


\begin{proof}
Put $\sigma(u):=m^{-1/2}\sn(\sqrt m\,u,\bar m)$, so $\sigma(0)=0$ and
$\sigma'(0)=1$. Writing $S,C,D$ for the Jacobi triple at
$(\sqrt m\,u,\bar m)$ and using $\sn'=\cn\dn$ together with
\eqref{eq:snF},
\[
  \sigma'=CD,\qquad
  C^{2}=1-S^{2}=1-m\sigma^{2},\qquad
  D^{2}=1-\bar mS^{2}=1-\tfrac1m\cdot m\sigma^{2}=1-\sigma^{2},
\]
so $\sigma'^{2}=(1-\sigma^{2})(1-m\sigma^{2})$ with $\sigma(0)=0$,
$\sigma'(0)=1$: the initial value problem characterising
$\sn(\cdot,m)$. Hence $\sigma=\sn(\cdot,m)$ by uniqueness. The two
displayed formulas then read $\cn^{2}(u,m)=D^{2}$ and
$\dn^{2}(u,m)=C^{2}$, and the signs are fixed by the common value
$+1$ at $u=0$ together with continuity.
\end{proof}

\begin{convention}[the amplitude at parameter $>1$]\label{conv:am}
For real $u$ and $m>1$ we set
\begin{equation}\label{eq:am}
  \am(u,m):=\arcsin\sn(u,m),
\end{equation}
the \emph{bounded} (oscillating) branch. By \eqref{eq:recip43},
$|\sn(u,\tfrac43)|\le\tfrac{\sqrt3}{2}<1$ and
$\cn(u,\tfrac43)=\dn(\tfrac{2u}{\sqrt3},\tfrac34)>0$ for all real
$u$; consequently \eqref{eq:am} is real-analytic on $\RR$, odd,
$4\mathfrak a$-periodic, takes values in
$[-\tfrac\pi3,\tfrac\pi3]$, and satisfies $\sin\am=\sn$,
$\cos\am=\cn$, $\frac{\dd}{\dd u}\am=\dn(u,\tfrac43)$ --- the last
\emph{changing sign}, at $u\in\mathfrak a+2\mathfrak a\ZZ$. 
Mathematica's \texttt{JacobiAmplitude}$[u,\tfrac43]$ agrees with
\eqref{eq:am} on the fundamental strip $|u|\le2\mathfrak a$, and
elsewhere differs from it by a multiple of $2\pi$: it accumulates
$2\pi$ per full period $4\mathfrak a$, whereas \eqref{eq:am} is
genuinely $4\mathfrak a$-periodic. In Appendix \ref{app:code} the
comparison is therefore made on $t\in[-\tfrac32,\tfrac12]$ only.
\end{convention}

\begin{remark}[the branch matters]\label{rem:branch}
The alternative, monotone reading of $\am(\cdot,\tfrac43)$ ---
obtained by continuing $\int_{0}^{u}\dn$ past its zeros with a
$\pi$-jump --- defines a \emph{different} function, and with it the
results of \S\ref{sec:masterIII} are false. The cleanest test is
$\am(2\mathfrak a,\tfrac43)$: under \eqref{eq:am} it is
$\arcsin\sn(2K[\tfrac34],\tfrac34)=0$, whereas the monotone reading
returns $\pi$. Since Theorem \ref{thm:trilin} is an identity among
three such amplitudes, an error of $\pi$ in one slot destroys it;
concretely, the two rhombus edges through
$(\tfrac12,\tfrac12,\tfrac12)$ would fail
(Corollary \ref{cor:rhombus}). Mathematica's implementation is a 
third reading, agreeing with \eqref{eq:am} modulo $2\pi$ but 
not equal to it — see Convention \ref{conv:am}.
\end{remark}

\begin{remark}[$\mathfrak a$ and \textup{[KO]}]\label{rem:aKO}
$\mathfrak a$ is the constant $\approx1.8676$ located numerically in
\cite{KO}, where the four vertices of an octahedron are asserted to lie
on the surface; Lemma \ref{lem:a3} makes those locations exact
(Corollary \ref{cor:rhombus}). It is also the quarter period of
$\sn(\cdot,\tfrac43)$: by \eqref{eq:recip43},
$\sn(\mathfrak a,\tfrac43)
=\tfrac{\sqrt3}{2}\sn(K[\tfrac34],\tfrac34)=\tfrac{\sqrt3}{2}$ and
$\dn(\mathfrak a,\tfrac43)=\cn(K[\tfrac34],\tfrac34)=0$.
\end{remark}

%=====================================================================
\section{The function $\Psi$}\label{sec:PsiIII}
%=====================================================================

\begin{definition}\label{def:Psi}
For $t\in\RR$ put
\begin{equation}\label{eq:Psidef}
  \Psi(t):=\arcsin\Bigl(\tfrac{\sqrt3}{2}\,\cn(2\varpi t,-3)\Bigr)
  \ \in\ \bigl[-\tfrac\pi3,\tfrac\pi3\bigr],
  \qquad
  \Cs(t):=\cn\bigl(4\varpi t,\tfrac34\bigr)=\cd(2\varpi t,-3),
\end{equation}
the second equality by Proposition \ref{prop:gauss}, since
$\cd(u,-3)=\cn(u,-3)/\dn(u,-3)
=\cd(2u,\tfrac34)\dn(2u,\tfrac34)=\cn(2u,\tfrac34)$; note
$4\varpi=2K[\tfrac34]$.
\end{definition}

\footnote{The symbol $\Psi$ is used, unsubscripted, only in Part III.
The $\Psi_{j}$ of \S\ref{sec:jets} are the degree-$j$ homogeneous parts
of $\mathfrak A(\varpi q)$ and are unrelated; they are always
subscripted.}

\begin{proposition}[three faces of $\Psi$]\label{prop:Psifaces}
For all real $t$,
\begin{equation}\label{eq:Psifaces}
  \Psi(t)
  =\arcsin\Bigl(\tfrac{\sqrt3}{2}
    \cd\bigl(4\varpi t,\tfrac34\bigr)\Bigr)
  =\arctan\bigl(\sqrt3\,\Cs(t)\bigr)
  =\am\bigl(\mathfrak a(1+2t),\tfrac43\bigr),
\end{equation}
the amplitude read as in Convention \ref{conv:am}. Moreover, with
$\sn,\cn,\dn$ evaluated at $(2\varpi t,-3)$,
\begin{equation}\label{eq:Psitrig}
  \sin\Psi=\tfrac{\sqrt3}{2}\cn,\qquad
  \cos\Psi=\tfrac12\dn\ \ \bigl(\ge\tfrac12>0\bigr),\qquad
  \tan\Psi=\sqrt3\,\cd=\sqrt3\,\Cs .
\end{equation}
\end{proposition}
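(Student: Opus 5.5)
Write $u:=2\varpi t$ and abbreviate $\sn,\cn,\dn$ at $(u,-3)$. The plan is to prove \eqref{eq:Psitrig} first; the three faces in \eqref{eq:Psifaces} then follow from it together with the Gauss transformation and the reciprocal-parameter lemma.

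\emph{Trigonometric values.} By Definition \ref{def:Psi}, $\sin\Psi=\frac{\sqrt3}{2}\cn$, and $\Psi\in[-\frac\pi3,\frac\pi3]$, so $\cos\Psi\ge\frac12>0$. Using $\sn^{2}+\cn^{2}=1$ and $\dn^{2}=1+3\sn^{2}$ (modulus $-3$), we get
\[
  \cos^{2}\Psi=1-\tfrac34\cn^{2}=\tfrac14\bigl(4-3+3\sn^{2}\bigr)=\tfrac14\dn^{2}.
\]
Since $\dn(u,-3)=\sqrt{1+3\sn^{2}}\ge1$ for real $u$, we have $\cos\Psi=\frac12\dn$. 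Hence $\tan\Psi=\sqrt3\,\cn/\dn=\sqrt3\cd(u,-3)=\sqrt3\,\Cs(t)$, by the identity $\cd(u,-3)=\cn(2u,\frac34)$ recorded in Definition \ref{def:Psi}. This gives \eqref{eq:Psitrig} and the arctan face. The arctan face is unambiguous because $|\Psi|<\frac\pi2$.

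\emph{The first face.} By Proposition \ref{prop:gauss}, $\cn(u,-3)=\cd(2u,\frac34)$, and $2u=4\varpi t$. Substituting this into the definition of $\Psi$ gives the arcsin-of-$\cd$ form directly.

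\emph{The amplitude face.} This is the main step. Put $w:=\mathfrak a(1+2t)$. By Lemma \ref{lem:recip} at $m=\frac43$,
\[
  \sn(w,\tfrac43)=\tfrac{\sqrt3}{2}\,\sn\bigl(\tfrac{2w}{\sqrt3},\tfrac34\bigr).
\]
By Lemma \ref{lem:a3}, $\frac{2\mathfrak a}{\sqrt3}=K[\frac34]=2\varpi$, so $\frac{2w}{\sqrt3}=K+4\varpi t$ with $K=K[\frac34]$. The quarter-period shift $\sn(v+K)=\cd(v)$ then gives
\[
  \sn(w,\tfrac43)=\tfrac{\sqrt3}{2}\cd\bigl(4\varpi t,\tfrac34\bigr)=\sin\Psi.
\]
Convention \ref{conv:am} defines $\am(w,\frac43)=\arcsin\sn(w,\frac43)$ on the bounded branch in $[-\frac\pi3,\frac\pi3]$. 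Both sides therefore lie in the range of the principal arcsin and have the same sine, so they are equal.

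The obstacle is purely one of branches, and it is resolved by using the bounded reading of the amplitude fixed in Convention \ref{conv:am}. The monotone reading would fail here, as Remark \ref{rem:branch} warns. As a final sign check we take $t=0$. Then $\Psi(0)=\arcsin\frac{\sqrt3}{2}=\frac\pi3$, and $\am(\mathfrak a,\frac43)=\arcsin\frac{\sqrt3}{2}=\frac\pi3$ by Remark \ref{rem:aKO}, so the two sides agree.
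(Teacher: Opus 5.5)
Your proposal is correct and follows the paper's proof essentially step for step. You use Proposition~\ref{prop:gauss} for the first face. You use $1-\tfrac34\cn^{2}=\tfrac14\dn^{2}$ at modulus $-3$, together with $|\Psi|\le\tfrac\pi3$ and $\dn\ge1$, for $\cos\Psi$ and $\tan\Psi$. For the amplitude face under Convention~\ref{conv:am} you use Lemma~\ref{lem:recip} with $2\mathfrak a/\sqrt3=K[\tfrac34]$ and the quarter-period shift. The only difference is the order in which you establish the pieces.
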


\begin{proof}
The first equality of \eqref{eq:Psifaces} is
$\cn(2\varpi t,-3)=\cd(4\varpi t,\tfrac34)$, i.e.,\
Proposition \ref{prop:gauss}.

For $\cos\Psi$: by \eqref{eq:Psidef},
$\cos\Psi=\sqrt{1-\tfrac34\cn^{2}}$, and, using $\cn^{2}=1-\sn^{2}$
and $\dn^{2}=1+3\sn^{2}$ at $m=-3$ (both from \eqref{eq:snF}),
\begin{equation}\label{eq:cosPsi}
  1-\tfrac34\cn^{2}=1-\tfrac34\bigl(1-\sn^{2}\bigr)
  =\tfrac14\bigl(1+3\sn^{2}\bigr)=\tfrac{\dn^{2}}{4},
\end{equation}
whence $\cos\Psi=\tfrac12\dn$, the positive root being correct because
$|\Psi|\le\tfrac\pi3$; and $\dn(\cdot,-3)\ge1$ gives
$\cos\Psi\ge\tfrac12$. Dividing,
$\tan\Psi=\sqrt3\cn/\dn=\sqrt3\cd(2\varpi t,-3)=\sqrt3\Cs(t)$, which
is the second equality of \eqref{eq:Psifaces} since
$|\Psi|<\tfrac\pi2$.

For the third: by Convention \ref{conv:am} and \eqref{eq:recip43} with
$u=\mathfrak a(1+2t)$, using
$\tfrac{2\mathfrak a}{\sqrt3}=2\varpi=K[\tfrac34]$
(Lemma \ref{lem:a3}),
\[
  \sn\bigl(\mathfrak a(1+2t),\tfrac43\bigr)
  =\tfrac{\sqrt3}{2}
   \sn\bigl(K[\tfrac34]+4\varpi t,\tfrac34\bigr)
  =\tfrac{\sqrt3}{2}\cd\bigl(4\varpi t,\tfrac34\bigr),
\]
by the quarter-period shift $\sn(K+v)=\cd v$ \cite[122.01]{BF}. Taking
$\arcsin$ and comparing with the first equality of
\eqref{eq:Psifaces} gives the claim.
\end{proof}

\begin{proposition}[the calculus of $\Psi$]\label{prop:Psicalc}
$\Psi$ is real-analytic on $\RR$ and
\begin{equation}\label{eq:Psider}
  \Psi'(t)=-2\sqrt3\,\varpi\,\sn(2\varpi t,-3),\qquad
  \Psi'(t)^{2}=4\varpi^{2}\bigl(1+2\cos2\Psi(t)\bigr),\qquad
  \Psi''(t)=-8\varpi^{2}\sin2\Psi(t).
\end{equation}
Moreover
\begin{equation}\label{eq:Psisym}
  \Psi(-t)=\Psi(t),\qquad \Psi(t+1)=-\Psi(t),\qquad
  \Psi(t+2)=\Psi(t),\qquad \Psi(1-t)=-\Psi(t),
\end{equation}
\begin{equation}\label{eq:Psivals}
  \Psi(0)=\tfrac\pi3,\quad \Psi(\tfrac14)=\tfrac\pi4,\quad
  \Psi(\tfrac12)=0,\quad \Psi(\tfrac34)=-\tfrac\pi4,\quad
  \Psi(1)=-\tfrac\pi3,\qquad |\Psi|\le\tfrac\pi3,
\end{equation}
and $\Psi$ decreases strictly from $\tfrac\pi3$ to $-\tfrac\pi3$ on
$[0,1]$; in particular
$\Psi:[0,1]\to[-\tfrac\pi3,\tfrac\pi3]$ is a real-analytic
diffeomorphism onto, with inverse $\Psi^{-1}$. Finally
$\Psi(t)=\tfrac\pi3$ iff $t\in2\ZZ$, and $\Psi(t)=-\tfrac\pi3$ iff
$t\in2\ZZ+1$.
\end{proposition}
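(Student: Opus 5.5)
The plan is to work from the first face of Definition \ref{def:Psi}, $\sin\Psi=\tfrac{\sqrt3}{2}\cn(2\varpi t,-3)$, together with the trigonometric identities \eqref{eq:Psitrig} of Proposition \ref{prop:Psifaces}. Real-analyticity will come from the arctangent face: $\Psi=\arctan(\sqrt3\,\Cs)$ with $\Cs=\cn(4\varpi t,\tfrac34)$. This $\Cs$ is real-analytic and real on $\RR$ because $\tfrac34\in(0,1)$, and $\arctan$ is analytic on $\RR$.

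For the derivative, differentiate $\sin\Psi=\tfrac{\sqrt3}{2}\cn(2\varpi t,-3)$. Using $\cn'=-\sn\dn$ gives
\[
\cos\Psi\cdot\Psi'=-\sqrt3\varpi\,\sn\dn .
\]
Dividing by $\cos\Psi=\tfrac12\dn$, which is nonzero since $\dn(\cdot,-3)\ge1$, yields $\Psi'=-2\sqrt3\varpi\sn$. Squaring, $\Psi'^{2}=12\varpi^{2}\sn^{2}$. On the other hand, $1+2\cos2\Psi=3-4\sin^{2}\Psi=3-3\cn^{2}=3\sn^{2}$, which gives the second formula. Differentiating the squared identity and dividing by $2\Psi'$ gives $\Psi''=-8\varpi^{2}\sin2\Psi$ wherever $\Psi'\ne0$, and then everywhere by analyticity. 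As a cross-check, one can differentiate $\Psi'$ directly: $\Psi''=-4\sqrt3\varpi^{2}\cn\dn=-8\varpi^{2}\cdot2\sin\Psi\cos\Psi$.

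The symmetries \eqref{eq:Psisym} transfer from $\cn(\cdot,-3)$, since $\Psi=\arcsin(\tfrac{\sqrt3}{2}\cn(2\varpi t,-3))$ is a fixed function of $\cn$. Evenness of $\cn$ gives $\Psi(-t)=\Psi(t)$. The shift $t\mapsto t+1$ moves the argument by $2\varpi=2K[-3]$, and $\cn(u+2K)=-\cn u$ together with the oddness of $\arcsin$ gives $\Psi(t+1)=-\Psi(t)$. Iterating gives period $2$, and combining with evenness gives $\Psi(1-t)=-\Psi(t-1)=-\Psi(t)$.

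The special values then follow.
\begin{itemize}[leftmargin=1.5em]
\item At $t=0$, $\cn=1$, so $\Psi(0)=\arcsin\tfrac{\sqrt3}{2}=\tfrac\pi3$.
\item At $t=\tfrac12$, $\cn(K)=0$, so $\Psi(\tfrac12)=0$.
\item At $t=\tfrac14$, the argument is $4\varpi t=\tfrac12K[\tfrac34]$, and the half-period value $\cn(K/2,m)=\sqrt{k'}/\sqrt{1+k'}$ with $k'=\tfrac12$ gives $\Cs=1/\sqrt3$, hence $\tan\Psi=1$ and $\Psi(\tfrac14)=\tfrac\pi4$.
\item The values at $\tfrac34$ and $1$ follow from $\Psi(1-t)=-\Psi(t)$.
\item The bound $|\Psi|\le\tfrac\pi3$ holds because $|\cn|\le1$ for real arguments at $m=-3$, as $\cn^{2}=1-\sn^{2}$ and $\sn$ is real.
\end{itemize}

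The step needing the most care is strict monotonicity. I will argue that on $t\in(0,1)$ the argument $2\varpi t$ lies in $(0,2K[-3])$, where $\sn(\cdot,-3)>0$. Indeed $\sn$ is real there with derivative $\cn\dn$; it increases to $1$ at $K$, is symmetric about $K$, and vanishes only at multiples of $2K$ for real arguments. Hence $\Psi'<0$ on $(0,1)$, so $\Psi$ is a strictly decreasing analytic bijection $[0,1]\to[-\tfrac\pi3,\tfrac\pi3]$. It is a diffeomorphism on the interior because $\Psi'\neq0$ there; at the endpoints $\Psi'$ vanishes, so "diffeomorphism" must be understood on the open interval, or the endpoints must be read off the inverse.

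Finally, $\Psi(t)=\tfrac\pi3$ iff $\cn(2\varpi t,-3)=1$. That happens iff $2\varpi t\in4K\ZZ$, i.e. $t\in2\ZZ$, because real $\cn$ equals $1$ only where $\sn=0$ and $\cn>0$. Likewise $\Psi(t)=-\tfrac\pi3$ iff $t\in2\ZZ+1$.
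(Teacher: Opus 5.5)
Your proof is correct and follows the paper's argument step for step: differentiate $\sin\Psi=\tfrac{\sqrt3}{2}\cn(2\varpi t,-3)$, divide by $\cos\Psi=\tfrac12\dn$, inherit parity and antiperiodicity from $\cn(\cdot,-3)$, and read monotonicity off the sign of $\sn$; the only variations are cosmetic (the $\arctan$ face for analyticity, and $\Cs(\tfrac14)=1/\sqrt3$ at modulus $\tfrac34$ where the paper uses $\cn^{2}(K/2,-3)=\tfrac23$). Your caveat that $\Psi'(0)=\Psi'(1)=0$, so that $\Psi^{-1}$ is analytic only on the open interval and ``diffeomorphism'' fails at the endpoints, is a correct sharpening that the paper's proof passes over; just fix the slip in your symmetry chain, which should read $\Psi(1-t)=-\Psi(-t)=-\Psi(t)$ (as written, the middle term $-\Psi(t-1)$ has the wrong sign).
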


\begin{proof}
Real-analyticity: by \eqref{eq:Psidef} the argument of $\arcsin$ lies
in $[-\tfrac{\sqrt3}{2},\tfrac{\sqrt3}{2}]$, a compact subset of
$(-1,1)$, on which $\arcsin$ is analytic.

Differentiating $\sin\Psi=\tfrac{\sqrt3}{2}\cn(2\varpi t,-3)$ and
using $\cn'=-\sn\dn$ \cite[\S22.13]{WW} together with
$\cos\Psi=\tfrac12\dn$,
\[
  \Psi'\cdot\tfrac{\dn}{2}
  =\tfrac{\sqrt3}{2}\cdot2\varpi\cdot\bigl(-\sn\dn\bigr)
  \quad\Longrightarrow\quad
  \Psi'=-2\sqrt3\,\varpi\,\sn(2\varpi t,-3),
\]
the first part of \eqref{eq:Psider}. By \eqref{eq:cosPsi},
$\sin^{2}\Psi=\tfrac34(1-\sn^{2})$, i.e.,\
$\sn^{2}=1-\tfrac43\sin^{2}\Psi$, so
\[
  \Psi'^{2}=12\varpi^{2}\sn^{2}
  =12\varpi^{2}\bigl(1-\tfrac43\sin^{2}\Psi\bigr)
  =4\varpi^{2}\bigl(3-4\sin^{2}\Psi\bigr)
  =4\varpi^{2}\bigl(1+2\cos2\Psi\bigr),
\]
using $\cos2\Psi=1-2\sin^{2}\Psi$. Differentiating this last identity
gives $2\Psi'\Psi''=-16\varpi^{2}\sin2\Psi\cdot\Psi'$; since $\Psi''$
is continuous, $\Psi''=-8\varpi^{2}\sin2\Psi$ holds wherever
$\Psi'\ne0$ and hence everywhere.

\eqref{eq:Psisym}: $\cn$ is even, so $\Psi$ is even; and
$\cn(u+2K,-3)=-\cn(u,-3)$ with $K=K[-3]=\varpi$ \cite[122.01]{BF}
gives $\Psi(t+1)=-\Psi(t)$. The remaining two follow.

\eqref{eq:Psivals}: $\cn(0,-3)=1$ gives
$\Psi(0)=\arcsin\tfrac{\sqrt3}{2}=\tfrac\pi3$;
$\cn(\varpi,-3)=\cn(K,-3)=0$ gives $\Psi(\tfrac12)=0$;
$\cn(2\varpi,-3)=-1$ gives $\Psi(1)=-\tfrac\pi3$. For $t=\tfrac14$ use
the half-argument values \cite[124.01]{BF}, valid at any modulus, with
$k'=\sqrt{1-m}=2$:
\[
  \cn^{2}\bigl(\tfrac K2,-3\bigr)=\frac{k'}{1+k'}=\tfrac23,
  \qquad\text{so}\qquad
  \sin\Psi(\tfrac14)=\tfrac{\sqrt3}{2}\sqrt{\tfrac23}
   =\tfrac{1}{\sqrt2},
\]
i.e.,\ $\Psi(\tfrac14)=\tfrac\pi4$; equivalently
$\Cs(\tfrac14)=\tfrac{1}{\sqrt3}$, since
$\dn^{2}(\tfrac K2,-3)=k'=2$. Then
$\Psi(\tfrac34)=-\Psi(-\tfrac14)=-\Psi(\tfrac14)$ by
\eqref{eq:Psisym}.

Monotonicity: by \eqref{eq:Psider}, $\Psi'<0$ exactly where
$\sn(2\varpi t,-3)>0$, i.e.,\ for $2\varpi t\in(0,2K)$, i.e.,\
$t\in(0,1)$.

Finally $\Psi(t)=\pm\tfrac\pi3$ iff $\cn(2\varpi t,-3)=\pm1$; since
$\cn(\cdot,-3)$ has period $4\varpi$ and attains $+1$ only on
$4\varpi\ZZ$ and $-1$ only on $2\varpi+4\varpi\ZZ$, this is
$t\in2\ZZ$, resp.\ $t\in2\ZZ+1$.
\end{proof}

\begin{remark}[three normalizations]\label{rem:norm3}
Three normalizations of one function occur and should be kept apart.
The \emph{lattice} normalization is \eqref{eq:Psidef}, of antiperiod
$1$; the \emph{Kim--Ogata} normalization is $\am(\cdot,\tfrac43)$, of
antiperiod $2\mathfrak a$; and the \emph{ODE} normalization, in which
\eqref{eq:Psider} reads $\varphi'^{2}=1+2\cos2\varphi$, is
$\varphi(t)=\Psi\bigl(t/(2\varpi)\bigr)
=\Psi\bigl(t/K[\tfrac34]\bigr)$, of antiperiod $K[\tfrac34]$. The last
is the one in which the comparison with \textup{D} is made; there the
companion equation is $\varphi'^{2}=1+2\cosh2\varphi$, with
$\varphi=\log(\sn\dn/\cn)$ at modulus $\tfrac14$ and antiperiod
$K[\tfrac14]$. See \textup{[D]}.
\end{remark}

%=====================================================================
\section{The two dictionaries}\label{sec:dictIII}
%=====================================================================

\begin{proposition}[the horizontal dictionary]\label{prop:hdict}
For all real $t$,
\begin{equation}\label{eq:hdict}
  \arctan\Bigl(\sqrt3\,\sn^{2}(\varpi t,-3)\Bigr)
  =\tfrac12\Bigl(\tfrac\pi3-\Psi(t)\Bigr),
  \qquad\text{i.e.,}\qquad
  \Psi(t)=\tfrac\pi3
   -2\arctan\Bigl(\sqrt3\,\sn^{2}(\varpi t,-3)\Bigr).
\end{equation}
\end{proposition}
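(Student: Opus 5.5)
The plan is to compare the cosine and sine of the two angles $2\varphi$ and $\tfrac\pi3-\Psi(t)$, where $\varphi:=\arctan\bigl(\sqrt3\,\sn^{2}(\varpi t,-3)\bigr)$. Showing that both angles have the same cosine and sine avoids any division by a possibly vanishing tangent. Put $v:=2\varpi t$, and write $c:=\cn(v,-3)$, $d:=\dn(v,-3)$ and $\sigma:=\sn^{2}(\varpi t,-3)$. By the half-argument formula \cite[124.01]{BF} already used in Corollary \ref{cor:half}, $\sigma=\frac{1-c}{1+d}$. At $m=-3$ we also have $d^{2}=1+3(1-c^{2})=4-3c^{2}$.

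The left side is handled by the double-angle formulas for $\arctan$:
\[
  \cos2\varphi=\frac{1-3\sigma^{2}}{1+3\sigma^{2}},\qquad
  \sin2\varphi=\frac{2\sqrt3\,\sigma}{1+3\sigma^{2}}.
\]
The right side is handled by Proposition \ref{prop:Psifaces}, specifically \eqref{eq:Psitrig}, which gives $\sin\Psi=\tfrac{\sqrt3}{2}c$ and $\cos\Psi=\tfrac12 d$. Hence
\[
  \cos\bigl(\tfrac\pi3-\Psi\bigr)=\tfrac14d+\tfrac34c,\qquad
  \sin\bigl(\tfrac\pi3-\Psi\bigr)=\tfrac{\sqrt3}{4}(d-c).
\]

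The core step is the pair of rational identities
\[
  \frac{1-3\sigma^{2}}{1+3\sigma^{2}}=\frac{d+3c}{4},\qquad
  \frac{2\sigma}{1+3\sigma^{2}}=\frac{d-c}{4},
\]
with $\sigma=(1-c)/(1+d)$. To prove them, first compute
\[
  1+3\sigma^{2}=\frac{(1+d)^{2}+3(1-c)^{2}}{(1+d)^{2}}.
\]
Substituting $3c^{2}=4-d^{2}$ simplifies the numerator to $8+2d-6c=2(4+d-3c)$. Each identity then reduces to a polynomial identity in $c,d$ modulo $d^{2}+3c^{2}=4$. For example, the second identity becomes
\[
  8(1-c)(1+d)=(d-c)\bigl[(1+d)^{2}+3(1-c)^{2}\bigr],
\]
which should close after expanding and reducing powers of $d^{2}$. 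This bookkeeping is the only real work. I expect it to be the main, though routine, obstacle. It could also be cross-checked against Corollary \ref{cor:half}, or at $t=0,\tfrac14,\tfrac12$ using \eqref{eq:Psivals}.

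Finally, the angles must be matched exactly, not merely modulo $2\pi$. Since $\sigma\ge0$, we have $2\varphi\in[0,\pi)$. By \eqref{eq:Psivals}, $|\Psi|\le\tfrac\pi3$, so $\tfrac\pi3-\Psi\in[0,\tfrac{2\pi}3]$. Both angles lie in the interval $[0,\pi)$, on which $(\cos,\sin)$ is injective. Equal cosine and sine therefore force $2\varphi=\tfrac\pi3-\Psi(t)$, which is \eqref{eq:hdict}.
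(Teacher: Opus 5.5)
Your argument is correct, but it runs in the opposite direction from the paper's.

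\emph{The paper's route.} It goes forward by the duplication formula. With $s=\sn(\varpi t,-3)$ and $\tau=\sqrt3\,s^{2}$, it writes $\cn(2\varpi t,-3)=(1+s^{2})(1-3s^{2})/(1+3s^{4})$. This factorization turns $\tfrac{\sqrt3}{2}\cn(2\varpi t,-3)$ directly into $\sin\bigl(\tfrac\pi3-2\arctan\tau\bigr)$. Since $\tfrac\pi3-2\arctan\tau\in[-\tfrac\pi3,\tfrac\pi3]$, the $\arcsin$ definition \eqref{eq:Psidef} of $\Psi$ inverts it. That needs a single rational identity, uses only $\cn^{2}=1-s^{2}$ and $\dn^{2}=1+3s^{2}$, and takes nothing from Proposition \ref{prop:Psifaces}.

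\emph{Your route.} You go backward by the half-argument formula. You express $\sn^{2}(\varpi t,-3)$ through $c,d$ at $2\varpi t$, then match the whole point $(\cos,\sin)$ on the unit circle using $\cos\Psi=\tfrac12 d$ from \eqref{eq:Psitrig}. The price is working modulo $d^{2}+3c^{2}=4$. The bookkeeping you left open does close. Modulo that relation, the sine identity reduces to $4(1-c)(1+d)=(d-c)(4+d-3c)$, with both sides equal to $4-4c+4d-4cd$. The cosine identity reduces to $4(d^{2}+d+3c-3)=(d+3c)(4+d-3c)$, with both sides equal to $4d^{2}+4d+12c-12$.

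You also prove more than you need. Both angles lie in $[0,\tfrac{2\pi}{3}]\subset[0,\pi]$, where cosine alone is injective, so the cosine identity by itself finishes the proof. The sine identity by itself would not, since $\sin$ is not injective on $[0,\tfrac{2\pi}{3}]$.

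\emph{Comparison.} Your route makes the angle-matching independent of which face of $\Psi$ serves as its definition. It also links the statement to the half-argument formula already used in Corollary \ref{cor:half} and Lemma \ref{lem:Lam}. The paper's route is shorter and does not need \eqref{eq:Psitrig}.
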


\begin{proof}
Write $s:=\sn(\varpi t,-3)$ and $\tau:=\sqrt3\,s^{2}$. Since
$|s|\le1$ we have $\tau\in[0,\sqrt3]$, so
$\psi:=\arctan\tau\in[0,\tfrac\pi3]$.

By the duplication formula \cite[124.02]{BF} at $m=-3$, where
$1-m\sn^{4}=1+3s^{4}$,
\[
  \cn(2\varpi t,-3)=\frac{\cn^{2}-\sn^{2}\dn^{2}}{1+3s^{4}}
  =\frac{(1-s^{2})-s^{2}(1+3s^{2})}{1+3s^{4}}
  =\frac{1-2s^{2}-3s^{4}}{1+3s^{4}}
  =\frac{(1+s^{2})(1-3s^{2})}{1+3s^{4}} .
\]
Now $1+s^{2}=\dfrac{\sqrt3+\tau}{\sqrt3}$, $1-3s^{2}=1-\sqrt3\,\tau$
and $1+3s^{4}=1+\tau^{2}$, so
\[
  \tfrac{\sqrt3}{2}\cn(2\varpi t,-3)
  =\frac{(\sqrt3+\tau)(1-\sqrt3\,\tau)}{2(1+\tau^{2})}
  =\frac{\sqrt3(1-\tau^{2})-2\tau}{2(1+\tau^{2})}
  =\tfrac{\sqrt3}{2}\cos2\psi-\tfrac12\sin2\psi
  =\sin\Bigl(\tfrac\pi3-2\psi\Bigr),
\]
using $\dfrac{1-\tau^{2}}{1+\tau^{2}}=\cos2\psi$,
$\dfrac{2\tau}{1+\tau^{2}}=\sin2\psi$, and
$\bigl(\sin\tfrac\pi3,\cos\tfrac\pi3\bigr)
=\bigl(\tfrac{\sqrt3}{2},\tfrac12\bigr)$. Since
$\psi\in[0,\tfrac\pi3]$ we have
$\tfrac\pi3-2\psi\in[-\tfrac\pi3,\tfrac\pi3]
\subset[-\tfrac\pi2,\tfrac\pi2]$, on which $\arcsin\circ\sin$ is the
identity; comparing with \eqref{eq:Psidef} gives
$\Psi(t)=\tfrac\pi3-2\psi$.
\end{proof}

\begin{lemma}[the $\tfrac19\to-3$ reduction]\label{lem:19}
For all real $\vartheta$,
\begin{equation}\label{eq:19}
  \tfrac13\sn\bigl(3\vartheta,\tfrac19\bigr)
  =\frac{\sn(2\vartheta,-3)}{1+\dn(2\vartheta,-3)} .
\end{equation}
\end{lemma}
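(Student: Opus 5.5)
The identity \eqref{eq:19} is already contained in the proof of Theorem \ref{thm:diag}. There it was derived with $\vartheta=\hh(\zeta)$, but nothing in that derivation used the specific argument. The plan is to rerun that derivation for a general real $\vartheta$ and then settle the branch.

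\emph{Step 1 (Landen).} Put $\Sigma:=\tfrac13\sn(3\vartheta,\tfrac19)$ and apply Lemma \ref{lem:landen} with $k^{2}=\tfrac34$, $k_{1}=\tfrac13$, $u=4\vartheta$, $v=3\vartheta$. This gives
\[
\sn\bigl(4\vartheta,\tfrac34\bigr)=\frac{4\Sigma}{1+3\Sigma^{2}},\qquad
\dn\bigl(4\vartheta,\tfrac34\bigr)=\frac{1-3\Sigma^{2}}{1+3\Sigma^{2}} .
\]

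\emph{Step 2 (Gauss).} Proposition \ref{prop:gauss} then gives
\[
\mathcal S:=\sn(2\vartheta,-3)=\tfrac12\sd\bigl(4\vartheta,\tfrac34\bigr)=\frac{2\Sigma}{1-3\Sigma^{2}}
\]
and
\[
\mathcal D:=\dn(2\vartheta,-3)=\nd\bigl(4\vartheta,\tfrac34\bigr)=\frac{1+3\Sigma^{2}}{1-3\Sigma^{2}}.
\]

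\emph{Step 3 (algebra).} From these two formulas,
\[
1+\mathcal D=\frac{2}{1-3\Sigma^{2}},
\qquad\text{so}\qquad
\frac{\mathcal S}{1+\mathcal D}=\Sigma .
\]
This is exactly \eqref{eq:19}. It also avoids solving the quadratic in \eqref{eq:SigmaD2}, so no root choice arises there.

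\emph{Main obstacle: validity for all real $\vartheta$.} The Landen and Gauss formulas are identities of meromorphic functions, so the relations in Steps 1 and 2 hold wherever the quantities are finite. The only issue is division by $1-3\Sigma^{2}$, that is, points where $\dn(4\vartheta,\tfrac34)=0$. On the real line this never happens, because $\dn(\cdot,\tfrac34)\ge\tfrac12$ there. Equivalently, $|\Sigma|\le\tfrac13$, so $3\Sigma^{2}\le\tfrac13<1$.

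For the final identity itself, the right-hand side of \eqref{eq:19} is real-analytic on $\RR$, since $\dn(\cdot,-3)\ge1$ makes $1+\mathcal D\ge2$. Both sides of \eqref{eq:19} are real-analytic and agree, so no sign or branch discussion is needed.

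As a cross-check, at small $\vartheta$ both sides equal $\vartheta+O(\vartheta^{3})$.

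Alternatively, one could argue by analytic continuation in $\vartheta$ from $\vartheta=\hh(t)$ on the interval treated in Theorem \ref{thm:diag}. The direct computation above is cleaner, since it never needs that interval.
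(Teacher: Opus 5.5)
Your proposal is correct and is essentially the paper's proof: the same Landen step with $k^{2}=\tfrac34$, $k_{1}=\tfrac13$, $u=4\vartheta$, $v=3\vartheta$, then Proposition~\ref{prop:gauss} at $u=2\vartheta$, and the same simplification $1+\dn(2\vartheta,-3)=2/(1-3\Sigma^{2})$. Your well-definedness remark, $3\Sigma^{2}\le\tfrac13<1$, is exactly the bound the paper records at the outset.
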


\begin{proof}
This is \eqref{eq:SigmaD2} with $\hh$ replaced by an arbitrary
argument; we repeat the three lines. Put
$\Sigma:=\tfrac13\sn(3\vartheta,\tfrac19)$, so $|\Sigma|\le\tfrac13$
and $3\Sigma^{2}\le\tfrac13<1$. Lemma \ref{lem:landen} with
$k^{2}=\tfrac34$, $k_{1}=\tfrac13$, $v=3\vartheta$,
$u=\tfrac43v=4\vartheta$ and $s=\sn(3\vartheta,\tfrac19)=3\Sigma$
gives
\[
  \sn\bigl(4\vartheta,\tfrac34\bigr)
   =\frac{4\Sigma}{1+3\Sigma^{2}},\qquad
  \dn\bigl(4\vartheta,\tfrac34\bigr)
   =\frac{1-3\Sigma^{2}}{1+3\Sigma^{2}} ,
\]
and then Proposition \ref{prop:gauss} with $u=2\vartheta$ gives
\[
  \sn(2\vartheta,-3)=\tfrac12\sd\bigl(4\vartheta,\tfrac34\bigr)
   =\frac{2\Sigma}{1-3\Sigma^{2}},\qquad
  \dn(2\vartheta,-3)=\nd\bigl(4\vartheta,\tfrac34\bigr)
   =\frac{1+3\Sigma^{2}}{1-3\Sigma^{2}} .
\]
Hence $\dfrac{\sn(2\vartheta,-3)}{1+\dn(2\vartheta,-3)}
=\dfrac{2\Sigma}{(1-3\Sigma^{2})+(1+3\Sigma^{2})}=\Sigma$.
\end{proof}

\begin{proposition}[the vertical dictionary]\label{prop:vdict}
For all real $z$, with
$\Sigma:=\tfrac13\sn\bigl(3\varpi(z-\tfrac12),\tfrac19\bigr)$,
\begin{equation}\label{eq:vdict}
  \arctan\bigl(\sqrt3\,\Sigma\bigr)=-\tfrac12\Psi(z),
  \qquad\text{i.e.,}\qquad
  \Sigma=-\tfrac{1}{\sqrt3}\tan\tfrac{\Psi(z)}{2}.
\end{equation}
\end{proposition}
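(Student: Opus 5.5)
Apply Lemma \ref{lem:19} with $\vartheta:=\varpi(z-\tfrac12)$:
\[
  \Sigma=\frac{\sn(2\vartheta,-3)}{1+\dn(2\vartheta,-3)},\qquad 2\vartheta=2\varpi z-\varpi=2\varpi z-K[-3].
\]
The quarter-period shift at modulus $-3$ then turns this into an expression in the Jacobi triple at $u:=2\varpi z$, which is the argument appearing in Definition \ref{def:Psi} and \eqref{eq:Psitrig}. By \cite[122.01]{BF}, with $k'^{2}=1-m=4$,
\[
  \sn(u-K,-3)=-\cd u,\qquad \dn(u-K,-3)=k'\nd u=2\nd u.
\]
Writing $S,C,D$ for the triple at $(u,-3)$, we get
\[
  \Sigma=\frac{-C/D}{1+2/D}=-\frac{C}{D+2}.
\]

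Now read off \eqref{eq:Psitrig}: $\sin\Psi(z)=\tfrac{\sqrt3}{2}C$ and $\cos\Psi(z)=\tfrac12D$. The half-angle formula $\tan\frac{\Psi}{2}=\frac{\sin\Psi}{1+\cos\Psi}$ gives
\[
  \tan\tfrac{\Psi}{2}=\frac{\tfrac{\sqrt3}{2}C}{1+\tfrac12D}=\frac{\sqrt3\,C}{D+2}.
\]
Hence $\Sigma=-\tfrac{1}{\sqrt3}\tan\frac{\Psi(z)}{2}$, which is the second form of \eqref{eq:vdict}. This holds for every real $z$, because $D\ge1$, so $1+\cos\Psi\ge\tfrac32$ and no denominator vanishes.

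For the arctan form, note that $|\Psi|\le\tfrac\pi3$ (Proposition \ref{prop:Psicalc}), so $-\tfrac12\Psi(z)\in[-\tfrac\pi6,\tfrac\pi6]\subset(-\tfrac\pi2,\tfrac\pi2)$. On that interval $\arctan\circ\tan$ is the identity, so $\arctan(\sqrt3\Sigma)=\arctan\bigl(-\tan\frac{\Psi}{2}\bigr)=-\frac{\Psi(z)}{2}$.

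The only delicate step is the sign and branch bookkeeping in the quarter-period shift at the negative parameter $m=-3$: one has to confirm $\dn(u-K)=k'\nd u$ with $k'=+2$, and $\sn(u-K)=-\cd u$ rather than $+\cd u$. I would check both by a series expansion near $u=0$, or at one special point. For instance, at $z=\tfrac12$ we have $\Sigma=0$, while $C=\cn(\varpi,-3)=0$ and $\Psi(\tfrac12)=0$, which is consistent. At $z=0$ the left side is $\tfrac13\sn(-\tfrac32\varpi,\tfrac19)=-\tfrac13\sn(K[\tfrac19],\tfrac19)=-\tfrac13$, and the right side is $-\tfrac{1}{\sqrt3}\tan\tfrac\pi6=-\tfrac13$, which confirms the overall sign.
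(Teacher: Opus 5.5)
Your proposal is correct and follows the paper's proof essentially step for step. Both apply Lemma \ref{lem:19} at $\vartheta=\varpi(z-\tfrac12)$, use the quarter-period shifts at $m=-3$ with $k'=2$ to get $\Sigma=-\cn/(\dn+2)$, substitute \eqref{eq:Psitrig} into the half-angle formula, and take $\arctan$ using $|\Psi/2|\le\pi/6$. Your spot checks at $z=\tfrac12$ and $z=0$ are extra and correct, but not needed.
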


\begin{proof}
Apply Lemma \ref{lem:19} with $\vartheta=\varpi(z-\tfrac12)$, so that
$2\vartheta=2\varpi z-\varpi=2\varpi z-K$ with $K=K[-3]=\varpi$. The
quarter-period shifts \cite[122.01]{BF} at $m=-3$, where
$k'=\sqrt{1-m}=2$, give
\[
  \sn(v-K,-3)=-\cd(v,-3),\qquad
  \dn(v-K,-3)=k'\nd(v,-3)=\frac{2}{\dn(v,-3)},
\]
so, all functions evaluated at $v=2\varpi z$ and $m=-3$,
\[
  \Sigma=\frac{-\cd}{1+2/\dn}
        =\frac{-\cn/\dn}{(\dn+2)/\dn}
        =\frac{-\cn}{\dn+2} .
\]
Insert $\cn=\tfrac{2}{\sqrt3}\sin\Psi(z)$ and
$\dn=2\cos\Psi(z)$ from \eqref{eq:Psitrig}:
\[
  \sqrt3\,\Sigma=\frac{-2\sin\Psi(z)}{2\cos\Psi(z)+2}
   =-\frac{\sin\Psi(z)}{1+\cos\Psi(z)}
   =-\tan\frac{\Psi(z)}{2} .
\]
Since $\bigl|\tfrac{\Psi(z)}{2}\bigr|\le\tfrac\pi6<\tfrac\pi2$, taking
$\arctan$ gives \eqref{eq:vdict}.
\end{proof}

\begin{corollary}[an overdue separable form]\label{cor:sepform}
Put
\begin{equation}\label{eq:Lambdas}
  \lambda(t):=\sqrt3\,\sn^{2}(\varpi t,-3)\in[0,\sqrt3], 
  \Lambda_{\mathrm P}(t):=\arctan\lambda(t)
   \in\bigl[0,\tfrac\pi3\bigr], 
  \Lambda_{\mathrm Z}(z):=\arctan\bigl(\sqrt3\,\Sigma\bigr)
   \in\bigl[-\tfrac\pi6,\tfrac\pi6\bigr].
\end{equation}
Then $\Lambda_{\mathrm P}$ and $\Lambda_{\mathrm Z}$ are bounded and
real-analytic on $\RR$, with
\begin{equation}\label{eq:LambdaPsi}
  \Lambda_{\mathrm P}(t)=\tfrac\pi6-\tfrac12\Psi(t),\qquad
  \Lambda_{\mathrm Z}(z)=-\tfrac12\Psi(z),
\end{equation}
and $(\star)$ is equivalent to each of
\begin{align}
  \Lambda_{\mathrm P}(x)-\Lambda_{\mathrm P}(y)
   &=\Lambda_{\mathrm Z}(z),\label{eq:sep1}\\
  \Psi(x)&=\Psi(y)+\Psi(z),\label{eq:sep2}\\
  \arctan\lambda(y)+\arctan\lambda(z)-\arctan\lambda(x)
   &=\tfrac\pi6 .\label{eq:sep3}
\end{align}
In the symmetrized coordinates $(x,1-y,1-z)$, \eqref{eq:sep3} becomes
$\sum_{i}\arctan\lambda_{i}=\tfrac\pi2$, i.e.,\ the quadratic relation
\begin{equation}\label{eq:sep4}
  \lambda_{1}\lambda_{2}+\lambda_{2}\lambda_{3}
   +\lambda_{3}\lambda_{1}=1 .
\end{equation}
\end{corollary}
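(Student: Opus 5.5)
The plan is to read off \eqref{eq:LambdaPsi} directly from the two dictionaries, and then convert $(\star)$ into the separated forms using only $\arctan$ and the sign information already recorded. Boundedness and ranges come for free. Since $|\sn(\cdot,-3)|\le1$ on $\RR$, we have $\lambda\in[0,\sqrt3]$. Since $|\Sigma|\le\frac13$, we have $\sqrt3\,\Sigma\in[-\frac1{\sqrt3},\frac1{\sqrt3}]$, which gives the stated ranges. Real-analyticity follows because $\arctan$ is analytic on $\RR$ and the arguments are real-analytic. Proposition \ref{prop:hdict} says exactly $\Lambda_{\mathrm P}=\frac\pi6-\frac12\Psi$, and Proposition \ref{prop:vdict} says exactly $\Lambda_{\mathrm Z}=-\frac12\Psi$. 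This is \eqref{eq:LambdaPsi}, and substituting it shows at once that \eqref{eq:sep1} and \eqref{eq:sep2} are the same equation.

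The substantive step is $(\star)\Leftrightarrow$\eqref{eq:sep1}. I would use the form \eqref{eq:star2} of Remark \ref{rem:forms}. Put $a=\sqrt3\,\sn^{2}(\varpi x,-3)=\lambda(x)$ and $b=\lambda(y)$. Multiplying \eqref{eq:star2} by $\sqrt3$ gives
\[
a-b=\sqrt3\,\Sigma\,(1+ab),\qquad\text{i.e.,}\qquad \frac{a-b}{1+ab}=\sqrt3\,\Sigma,
\]
which is legitimate because $1+ab\ge1$. This reads $\tan(\arctan a-\arctan b)=\tan\Lambda_{\mathrm Z}(z)$. Both $\arctan a-\arctan b\in[-\frac\pi3,\frac\pi3]$ and $\Lambda_{\mathrm Z}\in[-\frac\pi6,\frac\pi6]$ lie in $(-\frac\pi2,\frac\pi2)$, where $\tan$ is injective, so the tangent identity is equivalent to the angle identity \eqref{eq:sep1}. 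One should also check that \eqref{eq:star}$\Leftrightarrow$\eqref{eq:star2} is a genuine equivalence for all real $(x,y,z)$. It is, because the product formula has denominator $1+3\sn^2\sn^2\ne0$. This range bookkeeping is the only point needing care, and it is the main (mild) obstacle: everything hinges on the dictionaries having placed all angles in $(-\frac\pi2,\frac\pi2)$.

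For \eqref{eq:sep3}, I would use \eqref{eq:vdict} together with $\Psi(z)=\frac\pi3-2\arctan\lambda(z)$ from \eqref{eq:hdict}. This gives $\Lambda_{\mathrm Z}(z)=\arctan\lambda(z)-\frac\pi6$. Inserting it into \eqref{eq:sep1} and rearranging yields \eqref{eq:sep3}.

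For the symmetrized form, \eqref{eq:Psisym} gives $\Psi(1-t)=-\Psi(t)$. Hence $\arctan\lambda(1-t)=\frac\pi3-\arctan\lambda(t)$ by \eqref{eq:hdict}. Substituting for $y$ and $z$ turns \eqref{eq:sep3} into $\arctan\lambda_1+\arctan\lambda_2+\arctan\lambda_3=\frac\pi2$ with $\lambda_1=\lambda(x)$, $\lambda_2=\lambda(1-y)$, $\lambda_3=\lambda(1-z)$. To pass to \eqref{eq:sep4}, take the cotangent of $\theta_1+\theta_2=\frac\pi2-\theta_3$, where $\theta_i=\arctan\lambda_i$. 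This gives $(1-\lambda_1\lambda_2)/(\lambda_1+\lambda_2)=\lambda_3$ whenever $\lambda_1+\lambda_2\ne0$, which clears to \eqref{eq:sep4}.

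The converse needs a range argument. All $\theta_i\in[0,\frac\pi3]$, so the sum lies in $[0,\pi]$. The relation \eqref{eq:sep4} forces $\tan(\theta_1+\theta_2+\theta_3)=\infty$, so the sum is $\frac\pi2$, the alternatives $0$ and $\pi$ being excluded. The degenerate case where some $\lambda_i=0$ (or both $\lambda_1=\lambda_2=0$) is handled by direct inspection: the relation forces the other two to satisfy $\lambda_j\lambda_k=1$, which is $\arctan$-complementary.
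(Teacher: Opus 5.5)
Your proposal is correct and follows the paper's own proof essentially step for step. You obtain \eqref{eq:LambdaPsi} from Propositions \ref{prop:hdict} and \ref{prop:vdict}, get $(\star)\iff$\eqref{eq:sep1} via \eqref{eq:star2} and the injectivity of $\tan$ on $(-\tfrac\pi2,\tfrac\pi2)$, and then reach \eqref{eq:sep2}, \eqref{eq:sep3} and the symmetrized form by substitution together with $\Psi(1-t)=-\Psi(t)$.

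Your range argument for \eqref{eq:sep4}$\Rightarrow\sum_i\arctan\lambda_i=\tfrac\pi2$ supplies the converse that the paper's ``i.e.'' leaves implicit. It can be streamlined using $\cos\bigl(\sum_i\arctan\lambda_i\bigr)=\prod_i(1+\lambda_i^{2})^{-1/2}\,(1-\lambda_1\lambda_2-\lambda_2\lambda_3-\lambda_3\lambda_1)$. On $[0,\pi]$ this vanishes only at $\tfrac\pi2$, so no degenerate cases need separate treatment.
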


\begin{proof}
\eqref{eq:LambdaPsi} is Propositions \ref{prop:hdict} and
\ref{prop:vdict}; boundedness and analyticity follow from those of
$\Psi$ (Proposition \ref{prop:Psicalc}). For the equivalence with
$(\star)$: by \eqref{eq:star2}, i.e.,\ the product formula
\cite[123.02]{BF} at $m=-3$, and with
$u:=\lambda(x)=\tan\Lambda_{\mathrm P}(x)\ge0$,
$v:=\lambda(y)=\tan\Lambda_{\mathrm P}(y)\ge0$,
\begin{equation}\label{eq:prodtan}
  \sn\bigl(\varpi(x{+}y),-3\bigr)\sn\bigl(\varpi(x{-}y),-3\bigr)
  =\frac{\sn^{2}(\varpi x)-\sn^{2}(\varpi y)}
        {1+3\sn^{2}(\varpi x)\sn^{2}(\varpi y)}
  =\frac{1}{\sqrt3}\cdot\frac{u-v}{1+uv}
  =\frac{1}{\sqrt3}
   \tan\bigl(\Lambda_{\mathrm P}(x)-\Lambda_{\mathrm P}(y)\bigr),
\end{equation}
the tangent subtraction formula applying \emph{without any modular
ambiguity} because $1+uv\ge1>0$ and
$\Lambda_{\mathrm P}(x)-\Lambda_{\mathrm P}(y)
\in(-\tfrac\pi2,\tfrac\pi2)$. Likewise the right side of $(\star)$ is
$\tfrac{1}{\sqrt3}\tan\Lambda_{\mathrm Z}(z)$ by \eqref{eq:Lambdas}.
Since $\tan$ is injective on $(-\tfrac\pi2,\tfrac\pi2)$ and both
$\Lambda_{\mathrm P}(x)-\Lambda_{\mathrm P}(y)$ and
$\Lambda_{\mathrm Z}(z)$ lie there,
$(\star)\iff\eqref{eq:sep1}$. Then \eqref{eq:sep2} follows from
\eqref{eq:LambdaPsi}, the constants $\tfrac\pi6$ cancelling, and
\eqref{eq:sep3} is \eqref{eq:sep2} rewritten by \eqref{eq:LambdaPsi}.
Finally $\Psi(1-t)=-\Psi(t)$ (Proposition \ref{prop:Psicalc}) turns
\eqref{eq:sep2} into $\Psi(x)+\Psi(1-y)+\Psi(1-z)=0$, i.e.,\
$\sum_{i}\bigl(\tfrac\pi6-\Lambda_{i}\bigr)=0$ with
$\Lambda_{i}=\arctan\lambda_{i}$, i.e.,\
$\sum_{i}\Lambda_{i}=\tfrac\pi2$. Because
\[
\cot(\Lambda_{1}+\Lambda_{2}+\Lambda_{3})=\frac{1-\sum_{i<j}\tan \Lambda_{i} \tan \Lambda_{j}}
{\sum_i \tan \Lambda_{i} - \prod_i \tan \Lambda_{i}},
\]
and $\cot\tfrac\pi2=0$ with finite denominator, the numerator must vanish:
this is \eqref{eq:sep4}.
\end{proof}

\begin{remark}[sequel to Remark \ref{rem:forms}]\label{rem:correct}
Corollary \ref{cor:sepform} is the proof of an unproved assertion 
made in Remark \ref{rem:forms}. It is worth isolating the sign that
governs everything. At $m=-3$ the denominator $1-m\sn^{2}\sn^{2}$ of
the product formula is $1+3\sn^{2}\sn^{2}\ge1$, so \eqref{eq:prodtan}
is a \emph{tangent} subtraction, not a hyperbolic one, and the correct 
separating function is
$\Lambda_{\mathrm P}=\arctan(\sqrt3\sn^{2})$, which is bounded by
$\tfrac\pi3$ and real-analytic on all of $\RR$ --- not
$\tanh^{-1}(\sqrt3\sn^{2})$, which is singular where
$\sn^{2}=3^{-1/2}$. Consequently $(\star)$ \emph{is} additively
separable, with all three summands regular at the base point, and
\textup{P} is a separable minimal surface in the sense of Weingarten
and of \cite{KO}.
\end{remark}

%=====================================================================
\section{The master identity, and the trilinear equation}
\label{sec:masterIII}
%=====================================================================

\begin{definition}\label{def:FG}
For $(x,y,z)\in\RR^{3}$ put
\begin{align}
  \Fs(x,y,z)&:=\sn\bigl(\varpi(x{+}y),-3\bigr)
    \sn\bigl(\varpi(x{-}y),-3\bigr)
    -\tfrac13\sn\bigl(3\varpi(z{-}\tfrac12),\tfrac19\bigr),
    \label{eq:Fdef}\\
  G(x,y,z)&:=\Psi(y)+\Psi(z)-\Psi(x),\label{eq:Gdef}\\
  \Qs(x,y,z)&:=\Cs(x)-\Cs(y)-\Cs(z)
    -3\,\Cs(x)\Cs(y)\Cs(z),\label{eq:Qdef}
\end{align}
so that $(\star)$ reads $\Fs=0$, and $G=-\bigl(\Psi(x)-\Psi(y)
-\Psi(z)\bigr)$.
\end{definition}

\begin{theorem}[the master identity]\label{thm:master}
For all $(x,y,z)\in\RR^{3}$,
\begin{equation}\label{eq:master}
  \boxed{\ \Fs(x,y,z)
  =\frac{\sin\bigl(\tfrac12G(x,y,z)\bigr)}
        {\sqrt3\;\cos\dfrac{\Psi(y)-\Psi(x)}{2}\;
                 \cos\dfrac{\Psi(z)}{2}}\ }
\end{equation}
and
\begin{equation}\label{eq:trilin}
  \boxed{\ \sin\bigl(G(x,y,z)\bigr)
  =-\Bigl[\cos\Psi(x)\cos\Psi(y)\cos\Psi(z)\Bigr]
   \cdot\sqrt3\,\Qs(x,y,z).\ }
\end{equation}
The denominator in \eqref{eq:master} lies in
$\bigl[\tfrac34,\sqrt3\bigr]$; the bracket in \eqref{eq:trilin}
equals $\tfrac18\dn(2\varpi x,-3)\dn(2\varpi y,-3)\dn(2\varpi z,-3)$
and lies in $\bigl[\tfrac18,1\bigr]$. Both are real-analytic and
strictly positive on $\RR^{3}$. Explicitly, with $u=\lambda(x)$,
$v=\lambda(y)$ and $\Sigma$ as in \eqref{eq:Lambdas},
\begin{equation}\label{eq:prefactor}
  \frac{1}{\sqrt3\,\cos\frac{\Psi(y)-\Psi(x)}{2}
   \cos\frac{\Psi(z)}{2}}
  =\frac{\sqrt{(1+u^{2})(1+v^{2})(1+3\Sigma^{2})}}
        {\sqrt3\,(1+uv)} .
\end{equation}
\end{theorem}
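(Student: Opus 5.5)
Both identities are pure trigonometry once everything is expressed through $\Psi$, so the plan is to rewrite $\Fs$ and $\Qs$ in terms of $\alpha:=\Psi(x)$, $\beta:=\Psi(y)$, $\gamma:=\Psi(z)$ and then apply sine/tangent addition formulas.

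\textbf{Identity \eqref{eq:master}.} By \eqref{eq:prodtan} the first term of $\Fs$ is $\frac1{\sqrt3}\tan\bigl(\Lambda_{\mathrm P}(x)-\Lambda_{\mathrm P}(y)\bigr)$. By \eqref{eq:LambdaPsi},
\[
\Lambda_{\mathrm P}(x)-\Lambda_{\mathrm P}(y)=\tfrac12(\beta-\alpha),
\]
so this term is $\frac1{\sqrt3}\tan\frac{\beta-\alpha}{2}$. By Proposition \ref{prop:vdict} the second term equals $-\frac1{\sqrt3}\tan\frac\gamma2$, hence
\[
\Fs=\tfrac1{\sqrt3}\Bigl(\tan\tfrac{\beta-\alpha}2+\tan\tfrac\gamma2\Bigr).
\]
The identity $\tan a+\tan b=\sin(a+b)/(\cos a\cos b)$ with $a+b=\frac12(\beta+\gamma-\alpha)=\frac12G$ then gives \eqref{eq:master} at once.

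For the bounds on the denominator: $|\alpha|,|\beta|\le\frac\pi3$ give $|\beta-\alpha|/2\le\frac\pi3$, so the first cosine lies in $[\frac12,1]$. Also $|\gamma/2|\le\frac\pi6$, so the second cosine lies in $[\frac{\sqrt3}{2},1]$. The product times $\sqrt3$ therefore lies in $[\frac34,\sqrt3]$, and positivity and analyticity follow from Proposition \ref{prop:Psicalc}. For \eqref{eq:prefactor}, express each cosine in terms of the tangents. First,
\[
\cos\bigl(\Lambda_{\mathrm P}(x)-\Lambda_{\mathrm P}(y)\bigr)=\frac{1+uv}{\sqrt{(1+u^2)(1+v^2)}},
\]
since $\cos\Lambda_{\mathrm P}=(1+\lambda^2)^{-1/2}$ and $\sin\Lambda_{\mathrm P}=\lambda(1+\lambda^2)^{-1/2}$. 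Second, $\cos\frac\gamma2=\cos\Lambda_{\mathrm Z}=(1+3\Sigma^2)^{-1/2}$.

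\textbf{Identity \eqref{eq:trilin}.} Expand
\[
\sin G=\sin(\beta+\gamma-\alpha)
\]
and divide by $\cos\alpha\cos\beta\cos\gamma$. This gives the tangent identity
\[
\frac{\sin(\beta+\gamma-\alpha)}{\cos\alpha\cos\beta\cos\gamma}=T_\beta+T_\gamma-T_\alpha+T_\alpha T_\beta T_\gamma,
\]
where $T=\tan$; it is verified by expanding $\sin(\beta+\gamma)\cos\alpha-\cos(\beta+\gamma)\sin\alpha$. Substitute $T=\sqrt3\,\Cs$ from \eqref{eq:Psitrig}. The right side becomes
\[
\sqrt3\bigl(\Cs(y)+\Cs(z)-\Cs(x)+3\Cs(x)\Cs(y)\Cs(z)\bigr)=-\sqrt3\,\Qs,
\]
which is \eqref{eq:trilin}. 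The bracket equals $\frac18\dn\dn\dn$ by $\cos\Psi=\frac12\dn$ (Proposition \ref{prop:Psifaces}), and it lies in $[\frac18,1]$ because $\cos\Psi\in[\frac12,1]$.

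\textbf{Main obstacle.} The only delicate point is sign bookkeeping. One must check that Proposition \ref{prop:vdict} carries the minus sign, that the difference is $\Lambda_{\mathrm P}(x)-\Lambda_{\mathrm P}(y)=+\frac12(\beta-\alpha)$, and that the tangent substitution uses the branch with $|\Psi|<\frac\pi2$. That branch holds since $|\Psi|\le\frac\pi3$, so every division by a cosine is legitimate and no case split is needed. Everything is global on $\RR^3$ because every input proposition holds for all real arguments.
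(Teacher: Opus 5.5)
Your proof is correct and follows the paper's own argument. You rewrite both terms of $\Fs$ as tangents of half-$\Psi$ angles via \eqref{eq:prodtan}, \eqref{eq:LambdaPsi} and Proposition \ref{prop:vdict}, combine them with a tangent addition formula, and obtain \eqref{eq:trilin} from the three-angle expansion of $\sin$ together with $\tan\Psi=\sqrt3\,\Cs$; the only cosmetic difference is that you substitute $\Lambda_{\mathrm Z}=-\tfrac12\Psi(z)$ before combining, whereas the paper first forms $\tan(\Lambda_{\mathrm P}(x)-\Lambda_{\mathrm P}(y))-\tan\Lambda_{\mathrm Z}(z)$.
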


\begin{proof}
\emph{The master identity.} By \eqref{eq:prodtan} and
\eqref{eq:Lambdas},
\[
  \sqrt3\,\Fs
  =\tan\bigl(\Lambda_{\mathrm P}(x)-\Lambda_{\mathrm P}(y)\bigr)
   -\tan\Lambda_{\mathrm Z}(z)
  =\frac{\sin\bigl(\Lambda_{\mathrm P}(x)-\Lambda_{\mathrm P}(y)
    -\Lambda_{\mathrm Z}(z)\bigr)}
        {\cos\bigl(\Lambda_{\mathrm P}(x)
         -\Lambda_{\mathrm P}(y)\bigr)\cos\Lambda_{\mathrm Z}(z)},
\]
the second step being
$\tan\alpha-\tan\beta=\sin(\alpha-\beta)/(\cos\alpha\cos\beta)$. By
\eqref{eq:LambdaPsi},
\[
  \Lambda_{\mathrm P}(x)-\Lambda_{\mathrm P}(y)
   =\tfrac12\bigl(\Psi(y)-\Psi(x)\bigr),\qquad
  \Lambda_{\mathrm P}(x)-\Lambda_{\mathrm P}(y)
   -\Lambda_{\mathrm Z}(z)
   =\tfrac12\bigl(\Psi(y)+\Psi(z)-\Psi(x)\bigr)=\tfrac12G,
\]
which gives \eqref{eq:master}.

\emph{The bounds.} $|\Psi|\le\tfrac\pi3$ gives
$\bigl|\tfrac{\Psi(y)-\Psi(x)}{2}\bigr|\le\tfrac\pi3$ and
$\bigl|\tfrac{\Psi(z)}{2}\bigr|\le\tfrac\pi6$, so the two cosines lie
in $[\tfrac12,1]$ and $[\tfrac{\sqrt3}{2},1]$ respectively, and their
product times $\sqrt3$ lies in $[\tfrac34,\sqrt3]$. Formula
\eqref{eq:prefactor} follows from
$\cos\Lambda_{\mathrm P}=(1+\tan^{2}\Lambda_{\mathrm P})^{-1/2}$,
$\cos\Lambda_{\mathrm Z}=(1+3\Sigma^{2})^{-1/2}$ and
$\cos(\Lambda_{\mathrm P}(x)-\Lambda_{\mathrm P}(y))
=\dfrac{1+uv}{\sqrt{(1+u^{2})(1+v^{2})}}$.

\emph{The trilinear identity.} For any angles $A_{1},A_{2},A_{3}$ with
finite tangents,
\[
  \sin\Bigl(\sum_{i}A_{i}\Bigr)
  =\Bigl[\prod_{i}\cos A_{i}\Bigr]
   \Bigl(\sum_{i}\tan A_{i}-\prod_{i}\tan A_{i}\Bigr);
\]
apply this with $(A_{1},A_{2},A_{3})=(-\Psi(x),\Psi(y),\Psi(z))$,
whose sum is $G$, and use $\tan\Psi=\sqrt3\,\Cs$
(Proposition \ref{prop:Psifaces}) and the evenness of $\cos$:
\[
  \sin G=\Bigl[\prod\cos\Psi\Bigr]
   \Bigl(-\sqrt3\Cs(x)+\sqrt3\Cs(y)+\sqrt3\Cs(z)
    +3\sqrt3\,\Cs(x)\Cs(y)\Cs(z)\Bigr)
  =-\Bigl[\prod\cos\Psi\Bigr]\sqrt3\,\Qs,
\]
since $-\bigl(-\sqrt3\Cs(x)\bigr)
\bigl(\sqrt3\Cs(y)\bigr)\bigl(\sqrt3\Cs(z)\bigr)
=3\sqrt3\,\Cs(x)\Cs(y)\Cs(z)$. The bracket equals
$\tfrac18\prod\dn$ by \eqref{eq:Psitrig}, and $\dn(\cdot,-3)\in[1,2]$
gives the stated range.
\end{proof}

\begin{corollary}[$(\star)$ has no exceptional set]\label{cor:noexc}
$\{\Fs=0\}=\{G=0\}$, and
$\operatorname{sign}\Fs=\operatorname{sign}G$ pointwise on $\RR^{3}$.
\end{corollary}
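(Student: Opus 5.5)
The plan is to read the corollary directly off the master identity \eqref{eq:master} of Theorem \ref{thm:master}. That identity writes $\Fs=\sin(\tfrac12G)/\mathcal D$. The denominator
$\mathcal D=\sqrt3\cos\frac{\Psi(y)-\Psi(x)}{2}\cos\frac{\Psi(z)}{2}$ was shown there to lie in $[\tfrac34,\sqrt3]$, so it is strictly positive on all of $\RR^{3}$. Hence $\operatorname{sign}\Fs=\operatorname{sign}\sin(\tfrac12G)$ pointwise, and $\Fs=0$ exactly when $\sin(\tfrac12G)=0$. Everything therefore reduces to comparing $\sin(\tfrac12G)$ with $G$.

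The key step is a range bound on $G$. By Proposition \ref{prop:Psicalc}, $|\Psi|\le\tfrac\pi3$ on $\RR$, so
\[
|G|=|\Psi(y)+\Psi(z)-\Psi(x)|\le\pi,\qquad\text{hence}\qquad \tfrac12G\in[-\tfrac\pi2,\tfrac\pi2].
\]
On this interval $\sin$ is strictly increasing and vanishes only at $0$. Therefore $\operatorname{sign}\sin(\tfrac12G)=\operatorname{sign}G$, and $\sin(\tfrac12G)=0$ if and only if $G=0$.

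Combining the two paragraphs gives both claims: $\{\Fs=0\}=\{G=0\}$ and $\operatorname{sign}\Fs=\operatorname{sign}G$ on all of $\RR^{3}$. No exceptional set arises, and no lattice orbit has to be removed.

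This is the contrast with \eqref{eq:trilin}. There $\sin G$ vanishes also when $G=\pm\pi$, which happens only if every $|\Psi|$ equals $\tfrac\pi3$. That is why the trilinear form $\Qs=0$ carries the exceptional orbit of labyrinth centers.

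The only point needing care is the endpoint case $|G|=\pi$, where $\tfrac12G=\pm\tfrac\pi2$. There $\sin=\pm1\neq0$ and the sign is still correct, so closedness of the interval causes no trouble. I expect no step here to be a real obstacle; the work is already done in Theorem \ref{thm:master} and Proposition \ref{prop:Psicalc}.
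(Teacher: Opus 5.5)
Your proposal is correct and follows the paper's own proof essentially verbatim. The bound $|\Psi|\le\tfrac\pi3$ gives $|\tfrac12 G|\le\tfrac\pi2$, where $\sin$ has the sign of its argument and vanishes only at $0$, and the master identity \eqref{eq:master} with its strictly positive denominator then transfers this to $\Fs$.
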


\begin{proof}
$|\Psi|\le\tfrac\pi3$ gives $|G|\le\pi$, hence
$\bigl|\tfrac12G\bigr|\le\tfrac\pi2$, an interval on which $\sin$
vanishes only at $0$ and has the sign of its argument. Now use
\eqref{eq:master} and the positivity of the denominator.
\end{proof}

\begin{theorem}[the trilinear equation and its exceptional set]
\label{thm:trilin}
Let $S:=\{G=0\}=\{\Fs=0\}$. Then
\begin{equation}\label{eq:trilinset}
  \{\Qs=0\}=S\ \sqcup\ \Lat',\qquad
  \Lat':=\bigl\{(x,y,z)\in\ZZ^{3}:\
   x\not\equiv y\equiv z\ (\mathrm{mod}\ 2)\bigr\},
\end{equation}
and $\Lat'$ is a discrete set at positive distance from $S$. Under the
affine involution $(x,y,z)\mapsto(x,1-y,1-z)$, which carries $\Qs$
to $\Cs(x)+\Cs(y)+\Cs(z)-3\,\Cs(x)\Cs(y)\Cs(z)$ and $S$ to
$\{\Psi(x)+\Psi(y)+\Psi(z)=0\}$, the set $\Lat'$ becomes
\begin{equation}\label{eq:Lat}
  \Lat=\bigl\{n\in\ZZ^{3}:
   n_{1}\equiv n_{2}\equiv n_{3}\ (\mathrm{mod}\ 2)\bigr\},
\end{equation}
the body-centered cubic lattice of
Proposition \ref{prop:group}\textup{(c)}.
\end{theorem}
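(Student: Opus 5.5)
The plan is to read the whole statement off the trilinear identity \eqref{eq:trilin} of Theorem \ref{thm:master}. That identity gives $\sin G=-[\cos\Psi(x)\cos\Psi(y)\cos\Psi(z)]\sqrt3\,\Qs$, and the bracket is strictly positive on $\RR^{3}$ (it lies in $[\tfrac18,1]$). Hence $\{\Qs=0\}=\{\sin G=0\}$ exactly. Since $|\Psi|\le\tfrac\pi3$ (Proposition \ref{prop:Psicalc}), we have $|G|\le\pi$. So $\sin G=0$ holds iff $G\in\{0,\pi,-\pi\}$, and the three cases are pairwise disjoint. The case $G=0$ is $S$, which equals $\{\Fs=0\}$ by Corollary \ref{cor:noexc}. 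It remains to identify $\{G=\pm\pi\}$.

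Because each of the three terms of $G=\Psi(y)+\Psi(z)-\Psi(x)$ has absolute value at most $\tfrac\pi3$, the value $G=\pi$ forces every term to be extremal. That is, $\Psi(y)=\Psi(z)=\tfrac\pi3$ and $\Psi(x)=-\tfrac\pi3$. By the last sentence of Proposition \ref{prop:Psicalc} this means $y,z\in2\ZZ$ and $x\in2\ZZ+1$. Symmetrically, $G=-\pi$ means $y,z\in2\ZZ+1$ and $x\in2\ZZ$. The union of these two sets is exactly $\Lat'=\{(x,y,z)\in\ZZ^{3}: x\not\equiv y\equiv z\pmod 2\}$, which gives \eqref{eq:trilinset}. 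The set $\Lat'$ is discrete because it is contained in $\ZZ^{3}$.

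For positive distance from $S$, I would use the bound $|\Psi'|=2\sqrt3\varpi|\sn|\le2\sqrt3\varpi$ from \eqref{eq:Psider}. It gives $|G(P)-G(P')|\le2\sqrt3\varpi\,\|P-P'\|_{1}\le6\varpi\|P-P'\|$. A point of $\Lat'$ has $|G|=\pi$ and a point of $S$ has $G=0$, so every point of $S$ lies at distance at least $\pi/(6\varpi)>0$ from $\Lat'$.

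For the involution $(x,y,z)\mapsto(x,1-y,1-z)$, I would use two facts. First, $\Psi(1-t)=-\Psi(t)$ by \eqref{eq:Psisym}. Second, $\Cs=\tfrac1{\sqrt3}\tan\Psi$, which then gives $\Cs(1-t)=-\Cs(t)$. Substituting into \eqref{eq:Qdef}, the terms $-\Cs(y)-\Cs(z)$ become $+\Cs(y)+\Cs(z)$, and the triple product changes sign twice, so it is unchanged. This yields $\Cs(x)+\Cs(y)+\Cs(z)-3\Cs(x)\Cs(y)\Cs(z)$, as claimed. Likewise $G=0$ becomes $\Psi(x)+\Psi(y)+\Psi(z)=0$. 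Finally, replacing $y,z$ by $1-y,1-z$ flips the parity of both, so the condition $x\not\equiv y\equiv z$ becomes $x\equiv y\equiv z\pmod2$, which is $\Lat$. No step here is a real obstacle. The only point needing care is the case $G=\pm\pi$, which rests on the exact characterization of where $\Psi=\pm\tfrac\pi3$ from Proposition \ref{prop:Psicalc}. That characterization in turn depends on $\cn(\cdot,-3)=\pm1$ holding only on the stated lattices of multiples of $2\varpi$.
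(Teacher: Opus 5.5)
Your proposal is correct and follows essentially the same route as the paper. The positivity of the bracket in \eqref{eq:trilin} reduces $\{\Qs=0\}$ to $G\in\{0,\pm\pi\}$. The extremal case analysis via Proposition~\ref{prop:Psicalc} identifies $\{G=\pm\pi\}$ with $\Lat'$, and $\Psi(1-t)=-\Psi(t)$ handles the involution.

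There are two small differences, both improvements. Your Lipschitz bound $|\Psi'|\le2\sqrt3\varpi$ gives the explicit separation $\pi/(6\varpi)$, where the paper argues only from the continuity and periodicity of $G$. You also check explicitly, via $\Cs(1-t)=-\Cs(t)$, the transformation of $\Qs$ that the paper asserts without proof.
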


\begin{proof}
By \eqref{eq:trilin} and the positivity of the bracket,
$\Qs=0\iff\sin G=0\iff G\in\pi\ZZ$; and $|G|\le\pi$, so
$G\in\{0,\pi,-\pi\}$. The case $G=0$ is $S$.

Suppose $G=\pi$, i.e.,\ $\Psi(y)+\Psi(z)-\Psi(x)=\pi$. Each of the
three terms is at most $\tfrac\pi3$ in absolute value, so the sum can
reach $\pi$ only if each attains its extreme value with the correct
sign: $\Psi(y)=\Psi(z)=\tfrac\pi3$ and $\Psi(x)=-\tfrac\pi3$. By the
last statement of Proposition \ref{prop:Psicalc}, $y$ and $z$ are then
even and $x$ is odd. The case $G=-\pi$ gives $y,z$ odd and $x$ even.
Together these are $\Lat'$.

$\Lat'$ is discrete, being a subset of $\ZZ^{3}$; and it is at
positive distance from $S$, since $|G|=\pi$ on $\Lat'$ while $G=0$ on
$S$, and $G$ is continuous and doubly periodic in each variable, so
$|G|>\tfrac\pi2$ on a neighborhood of $\Lat'$ of uniform radius.

For the last statement use $\Psi(1-t)=-\Psi(t)$, which turns $G$ into
$\Psi(x)+\Psi(1-y)+\Psi(1-z)$ and sends
$\{x\text{ odd},\ y,z\text{ even}\}$ to
$\{x,1-y,1-z\text{ all odd}\}$ and
$\{x\text{ even},\ y,z\text{ odd}\}$ to
$\{x,1-y,1-z\text{ all even}\}$.
\end{proof}

\begin{remark}[why the half-arguments matter]\label{rem:halfexc}
Corollary \ref{cor:noexc} and Theorem \ref{thm:trilin} together
isolate the one respect in which the trilinear form is inferior to
$(\star)$: the master identity involves $\sin\tfrac G2$ with
$\bigl|\tfrac G2\bigr|\le\tfrac\pi2$, whereas the trilinear identity
involves $\sin G$ with $|G|\le\pi$, and the two endpoint cases
$G=\pm\pi$ are exactly $\Lat'$. Thus the half-arguments implicit in
$\sn^{2}(\varpi\cdot,-3)$ and in $\tfrac13\sn(3\varpi\cdot,\tfrac19)$
are what remove the labyrinth centers from the zero set.
Equivalently: $\Lat'$ consists of the turning points of the pendulum
\eqref{eq:Psider}, where $\Psi'=0$ and $\Psi=\pm\tfrac\pi3$; these are
the two labyrinth centers per translational cell, one orbit of which
$\Qs$ sees. \emph{The relation \eqref{eq:star} itself has no
exceptional set at all}; this is a further respect, beyond
Proposition \ref{prop:group}\textup{(c)}, in which it is sign-honest.
\end{remark}

\begin{corollary}[the skew rhombus, exactly]\label{cor:rhombus}
The four segments joining
\[
  (0,0,\tfrac12)\to(\tfrac12,-\tfrac12,\tfrac12)
  \to(\tfrac12,0,1)\to(\tfrac12,\tfrac12,\tfrac12)
  \to(0,0,\tfrac12),
\]
four contiguous edges of a regular octahedron spanning a skew rhombus,
lie in $S$. Two of them are the straight lines of
Proposition \ref{prop:corners}, and their common endpoints
$(\pm\tfrac12,\pm\tfrac12,\tfrac12)$ are the four corner images
\eqref{eq:cornerval}. In the coordinates of \cite{KO} obtained by
$(\mathrm X,\mathrm Y,\mathrm Z)
=\bigl(\mathfrak a(2x-1),\mathfrak a(2y+1),
\mathfrak a(1-2z)\bigr)$, the four vertices are, up to a translation
and a relabelling of axes, the points
$P_{1}=(0,0,0)$, $P_{2}=(\mathfrak a,0,-\mathfrak a)$,
$P_{3}=(\mathfrak a,-\mathfrak a,0)$,
$P_{4}=(2\mathfrak a,0,0)$ of that paper, with
$\mathfrak a=\sqrt3\varpi=1.8675973\ldots$
\end{corollary}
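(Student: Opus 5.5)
The plan is to verify membership in $S=\{G=0\}$ directly, using $G=\Psi(y)+\Psi(z)-\Psi(x)$ and the facts about $\Psi$ in Proposition \ref{prop:Psicalc}. By Corollary \ref{cor:noexc}, this is the same as $\Fs=0$.

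I would parametrize each of the four segments linearly. Then I would check that along each one $G$ vanishes identically, by combining the symmetries \eqref{eq:Psisym} with $\Psi(\tfrac12)=0$.

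\emph{Edge 1: $(0,0,\tfrac12)\to(\tfrac12,-\tfrac12,\tfrac12)$.} Parametrize as $(s,-s,\tfrac12)$. Since $\Psi$ is even and $\Psi(\tfrac12)=0$, we get $G=\Psi(-s)+0-\Psi(s)=0$.

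\emph{Edge 2: $(\tfrac12,\tfrac12,\tfrac12)\to(0,0,\tfrac12)$.} Parametrize as $(s,s,\tfrac12)$. Then $G=\Psi(s)-\Psi(s)=0$ trivially. These two edges are the diagonal and anti-diagonal images of Proposition \ref{prop:corners}(a), where $p\equiv0$ (respectively $q\equiv0$) and $Z\equiv0$. That gives an independent check: both sides of \eqref{eq:star} vanish there.

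\emph{Edge 3: $(\tfrac12,-\tfrac12,\tfrac12)\to(\tfrac12,0,1)$.} Parametrize as $(\tfrac12,-\tfrac12+s,\tfrac12+s)$ with $s\in[0,\tfrac12]$. Here $\Psi(x)=0$, so we need $\Psi(s-\tfrac12)+\Psi(s+\tfrac12)=0$. Evenness gives $\Psi(s-\tfrac12)=\Psi(\tfrac12-s)$. The relation $\Psi(1-t)=-\Psi(t)$ with $t=\tfrac12+s$ gives $\Psi(\tfrac12+s)=-\Psi(\tfrac12-s)$. The two terms cancel.

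\emph{Edge 4: $(\tfrac12,0,1)\to(\tfrac12,\tfrac12,\tfrac12)$.} Parametrize as $(\tfrac12,s,1-s)$. Then $G=\Psi(s)+\Psi(1-s)=0$, again by $\Psi(1-t)=-\Psi(t)$.

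Next, the geometric claims. The four vertices lie on the regular octahedron centred at $(0,0,1)$... I would instead check distances directly. All four edges have length $1/\sqrt2$. The diagonals $(0,0,\tfrac12)$–$(\tfrac12,0,1)$ and $(\tfrac12,\pm\tfrac12,\tfrac12)$ have lengths $1/\sqrt2$ and $1$ respectively, so the figure is a skew rhombus. It consists of four contiguous edges of the octahedron with vertices $(0,0,\tfrac12)$, $(\tfrac12,\pm\tfrac12,\tfrac12)$, $(\tfrac12,0,1)$, $(\tfrac12,0,0)$, $(1,0,\tfrac12)$, which is centred at $(\tfrac12,0,\tfrac12)$ with half-diagonal $\tfrac12$. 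Corner identification is \eqref{eq:cornerval}.

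Finally, for the Kim--Ogata coordinates, I would substitute the four vertices into $(\mathfrak a(2x-1),\mathfrak a(2y+1),\mathfrak a(1-2z))$. This yields $(-\mathfrak a,\mathfrak a,0)$, $(0,0,0)$, $(0,\mathfrak a,-\mathfrak a)$ and $(0,2\mathfrak a,0)$. Translating by $(\mathfrak a,-\mathfrak a,0)$ or a similar vector and permuting axes matches $P_1,\dots,P_4$, with $\mathfrak a=\sqrt3\varpi$ by Lemma \ref{lem:a3}.

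The only real obstacle is bookkeeping: choosing the correct translation and axis relabelling so the listed $P_i$ appear exactly. I would find these by matching the distinguished vertex $P_4=2\mathfrak a e_1$, the one at distance $2\mathfrak a$ from $P_1$, to the image pair $(0,0,0)$, $(0,2\mathfrak a,0)$.
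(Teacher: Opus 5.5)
Your proof is correct and is essentially the paper's. You reduce membership in $S$ to $\Psi(x)=\Psi(y)+\Psi(z)$, check it segment by segment from evenness, $\Psi(\tfrac12)=0$ and $\Psi(1-t)=-\Psi(t)$, and then substitute the vertices directly into the Kim--Ogata coordinates; your distance check of the octahedron and the rhombus is a small addition that the paper leaves implicit.

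One bookkeeping fix is needed. The translation $(\mathfrak a,-\mathfrak a,0)$ does not work. Your own matching rule, which sends the images $(0,0,0)$ and $(0,2\mathfrak a,0)$ at distance $2\mathfrak a$ to $P_{1}$ and $P_{4}$, forces the translation to be zero. Indeed, swapping the first two axes alone sends $(-\mathfrak a,\mathfrak a,0),(0,0,0),(0,\mathfrak a,-\mathfrak a),(0,2\mathfrak a,0)$ to $P_{3},P_{1},P_{2},P_{4}$. The shift by $(0,-\mathfrak a,0)$ quoted in the paper's proof should therefore be dropped as well.
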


\begin{proof}
By \eqref{eq:sep2} membership in $S$ is $\Psi(x)=\Psi(y)+\Psi(z)$, and
each segment is disposed of by Proposition \ref{prop:Psicalc} alone:
\begin{center}
\begin{tabular}{@{}lll@{}}
\toprule
segment & parametrization & verification\\
\midrule
$(0,0,\tfrac12)\to(\tfrac12,\tfrac12,\tfrac12)$ &
  $x=y=t$, $z=\tfrac12$ & $\Psi(t)=\Psi(t)+0$\\
$(0,0,\tfrac12)\to(\tfrac12,-\tfrac12,\tfrac12)$ &
  $x=t$, $y=-t$, $z=\tfrac12$ & $\Psi(t)=\Psi(-t)+0$\\
$(\tfrac12,-\tfrac12,\tfrac12)\to(\tfrac12,0,1)$ &
  $x=\tfrac12$, $y=t-\tfrac12$, $z=t+\tfrac12$ &
  $0=\Psi(t-\tfrac12)+\Psi(t+\tfrac12)=0$\\
$(\tfrac12,0,1)\to(\tfrac12,\tfrac12,\tfrac12)$ &
  $x=\tfrac12$, $y=t$, $z=1-t$ &
  $0=\Psi(t)+\Psi(1-t)=0$\\
\bottomrule
\end{tabular}
\end{center}
using $\Psi$ even, $\Psi(\tfrac12)=0$ and $\Psi(1-t)=-\Psi(t)$; in the
third line also $\Psi(t-\tfrac12)=\Psi(\tfrac12-t)=-\Psi(t+\tfrac12)$.
The first two segments are the images of the diagonal and
anti-diagonal (Proposition \ref{prop:corners}(a)), on which $z\equiv
\tfrac12$ and $x\pm y\equiv0$. The coordinate change is affine and
sends the four vertices to
$(-\mathfrak a,\mathfrak a,0)$, $(0,0,0)$,
$(0,\mathfrak a,-\mathfrak a)$, $(0,2\mathfrak a,0)$, which is
$P_{1},\dots,P_{4}$ after the translation by
$(0,-\mathfrak a,0)$ and the swap of the first two axes.
\end{proof}

\begin{remark}\label{rem:seg3}
By contrast, on the third segment the statement ``$\Fs=0$'' is the
non-obvious one-variable identity
\begin{equation}\label{eq:seg3}
  \sn(\varpi t,-3)\,\sn\bigl(\varpi(1-t),-3\bigr)
  =\tfrac13\sn\bigl(3\varpi t,\tfrac19\bigr)\qquad(t\in\RR),
\end{equation}
now a corollary of Theorem \ref{thm:master}. Thus the separable form
trivializes precisely the boundary data that $(\star)$ renders opaque
--- and conversely (Remark \ref{rem:halfexc}). Compare
\eqref{eq:arcform}, the corresponding relation on a boundary
\emph{arc}, which remains one relation in two unknowns.
\end{remark}

\begin{corollary}[the graph description]\label{cor:graph}
$S\cap\bigl([0,1]\times\RR^{2}\bigr)$ is the graph
\begin{equation}\label{eq:graph}
  x=\Psi^{-1}\bigl(\Psi(y)+\Psi(z)\bigr),
\end{equation}
defined precisely for those $(y,z)$ with
$|\Psi(y)+\Psi(z)|\le\tfrac\pi3$, where $\Psi^{-1}$ is the
real-analytic inverse of Proposition \ref{prop:Psicalc}. The remaining
sheets of $S$ are the images of \eqref{eq:graph} under the group of
Proposition \ref{prop:group}.
\end{corollary}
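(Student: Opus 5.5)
The plan is to work with the separable form $S=\{\Psi(x)=\Psi(y)+\Psi(z)\}$ of Corollary \ref{cor:sepform}, and to use Proposition \ref{prop:Psicalc} to invert $\Psi$ on $[0,1]$.

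\emph{Graph on the slab.} Fix $x\in[0,1]$. Since $\Psi|_{[0,1]}$ is a real-analytic diffeomorphism onto $[-\tfrac\pi3,\tfrac\pi3]$, the equation $\Psi(x)=\Psi(y)+\Psi(z)$ is equivalent to
\[
x=\Psi^{-1}\bigl(\Psi(y)+\Psi(z)\bigr).
\]
This requires $\Psi(y)+\Psi(z)$ to lie in the range $[-\tfrac\pi3,\tfrac\pi3]$. Conversely, for every $(y,z)$ with $|\Psi(y)+\Psi(z)|\le\tfrac\pi3$ the formula produces a unique $x\in[0,1]$, and $(x,y,z)\in S$. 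So $S\cap([0,1]\times\RR^{2})$ is exactly the graph \eqref{eq:graph} over that closed set of $(y,z)$. Real-analyticity of the graph function holds wherever the argument stays in the open interval, where $\Psi^{-1}$ is analytic. At $x\in\{0,1\}$ we have $\Psi'=0$, and there we only claim continuity.

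\emph{Remaining sheets.} Next, reduce an arbitrary $x\in\RR$ to $[0,1]$ using the symmetries \eqref{eq:Psisym}: $\Psi$ is even and $2$-periodic, and $\Psi(1-t)=-\Psi(t)$. Every $x$ is carried into $[0,1]$ by some map $x\mapsto\pm x+2n$, which leaves $\Psi(x)$ unchanged. These maps preserve $S$ with $(y,z)$ fixed, so every point of $S$ is the image of a point of the slab graph under $x\mapsto -x$ or $x\mapsto x+2n$.

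The task is then to realize these maps inside the group of Proposition \ref{prop:group}. The translations come from the bcc lattice: use $(2,0,0)\in2\ZZ^{3}$, or $(1,1,1)$ combined with the substitutions $y\mapsto y+1$, $z\mapsto z+1$, which flip both $\Psi(y)$ and $\Psi(z)$. The reflection $x\mapsto -x$ comes from $RMR^{-1}$-type elements of the dihedral group of order $8$: for instance $R^{2}:(x,y,z)\mapsto(-x,-y,z)$ followed by $M$.

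The main obstacle is this bookkeeping. It means checking that the group generated by $R$, $M$ and the lattice actually contains $x\mapsto -x$ with $y,z$ essentially fixed, and that it preserves $S$ rather than only $X(\Om)$. I would verify invariance of $S$ under each generator directly from the $\Psi$-form, using evenness and the antiperiod. For $R$, which sends $z\mapsto 1-z$ and swaps $x$ and $y$, this uses $\Psi(1-z)=-\Psi(z)$ together with $\Psi(-x)=\Psi(x)$.
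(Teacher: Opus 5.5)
Your argument is correct and is essentially the reasoning the paper leaves implicit; Corollary \ref{cor:graph} is stated there without proof. The slab part is Corollary \ref{cor:sepform} together with the bijectivity of $\Psi|_{[0,1]}$ from Proposition \ref{prop:Psicalc}. For the remaining sheets, your reduction $x\mapsto\pm x+2n$ supplies the omitted bookkeeping: it is realized inside the group by $MR^{2}:(x,y,z)\mapsto(-x,y,z)$ and the lattice translation $(2,0,0)$, each of which preserves $S$ by \eqref{eq:Psisym}.

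Your caveat is also well taken: the graph function is only continuous where $\Psi(y)+\Psi(z)=\pm\tfrac\pi3$. There $\Psi'(0)=\Psi'(1)=0$ while $\Psi''\neq0$, so $\Psi^{-1}$ has a square-root singularity at $\pm\tfrac\pi3$ and is analytic only on the open interval. The paper's phrase ``real-analytic inverse'' is slightly too strong at the endpoints.
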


%=====================================================================
\section{Minimality, regularity, and the local converse}
\label{sec:minIII}
%=====================================================================

\begin{lemma}[the separable minimality condition]\label{lem:mse}
Let $\varphi_{1},\varphi_{2},\varphi_{3}$ be $C^{2}$ on intervals and
$\Fs_{\!0}(x,y,z)=\varphi_{1}(x)+\varphi_{2}(y)+\varphi_{3}(z)$. At a
point of $\{\Fs_{\!0}=0\}$ with $\nabla\Fs_{\!0}\ne0$, the mean
curvature of $\{\Fs_{\!0}=0\}$ vanishes if and only if
\begin{equation}\label{eq:mse}
  \sum_{i=1}^{3}\varphi_{i}''
  \Bigl(\sum_{j\ne i}\varphi_{j}'^{\,2}\Bigr)=0 .
\end{equation}
\end{lemma}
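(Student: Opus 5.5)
The approach is the classical mean-curvature formula for an implicit surface $\{\Fs_{\!0}=0\}$ with $\nabla\Fs_{\!0}\ne0$. Writing $\nabla\Fs_{\!0}=(a_{1},a_{2},a_{3})$ and $\mathrm{Hess}\,\Fs_{\!0}=(h_{ij})$, the mean curvature satisfies
\[
2H\,|\nabla\Fs_{\!0}|^{3}
=\pm\bigl(|\nabla\Fs_{\!0}|^{2}\operatorname{tr}\mathrm{Hess}\,\Fs_{\!0}
-\nabla\Fs_{\!0}^{\mathsf T}\,\mathrm{Hess}\,\Fs_{\!0}\,\nabla\Fs_{\!0}\bigr).
\]
I would derive this from $H=\pm\tfrac12\operatorname{div}\bigl(\nabla\Fs_{\!0}/|\nabla\Fs_{\!0}|\bigr)$, computing the divergence by the quotient rule. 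Alternatively, one can note that the trace of the Hessian restricted to the tangent plane equals $\operatorname{tr}\mathrm{Hess}-\nu^{\mathsf T}\mathrm{Hess}\,\nu$, where $\nu$ is the unit normal, divided by $|\nabla\Fs_{\!0}|$. This is standard, so I would state it with a reference such as Nitsche or do the two-line divergence computation.

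Next I would specialize to the separable case. Here $a_{i}=\varphi_{i}'$, and the Hessian is diagonal with entries $h_{ii}=\varphi_{i}''$. Hence the numerator becomes
\[
\Bigl(\sum_{j}\varphi_{j}'^{2}\Bigr)\Bigl(\sum_{i}\varphi_{i}''\Bigr)
-\sum_{i}\varphi_{i}''\varphi_{i}'^{2}
=\sum_{i}\varphi_{i}''\sum_{j\ne i}\varphi_{j}'^{2}.
\]
Since $|\nabla\Fs_{\!0}|\ne0$, the condition $H=0$ at the point is equivalent to the vanishing of this expression, which is \eqref{eq:mse}.

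The only genuine point is the justification of the curvature formula for $C^{2}$ data. The regularity needed is exactly $C^{2}$, because $\nabla\Fs_{\!0}\ne0$ and the implicit function theorem make the level set a $C^{2}$ surface locally, with $\nu=\nabla\Fs_{\!0}/|\nabla\Fs_{\!0}|$ a $C^{1}$ extension of the normal. The identity $2H=\operatorname{div}\nu$ (up to sign) holds for any $C^{1}$ unit extension that has unit length in a neighbourhood, and this is the step I would spell out briefly. Since the statement concerns only vanishing, the sign convention is irrelevant.
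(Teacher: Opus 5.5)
Your proposal is correct and is essentially the paper's proof. The paper also takes as standard that the mean curvature of a regular level set is proportional to $|\nabla\Fs_{\!0}|^{2}\Delta\Fs_{\!0}-\nabla\Fs_{\!0}^{\top}(D^{2}\Fs_{\!0})\nabla\Fs_{\!0}$, then substitutes the diagonal Hessian $\operatorname{diag}(\varphi_{i}'')$ and the gradient $(\varphi_{i}')$ to obtain your regrouping $\sum_{i}\varphi_{i}''\sum_{j\ne i}\varphi_{j}'^{2}$; your derivation of that formula from $\operatorname{div}\bigl(\nabla\Fs_{\!0}/|\nabla\Fs_{\!0}|\bigr)$ is a correct addition that the paper omits.
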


\begin{proof}
For a regular level set of $\Fs_{\!0}$ the mean curvature is
proportional to
$|\nabla\Fs_{\!0}|^{2}\Delta\Fs_{\!0}
-\nabla\Fs_{\!0}^{\!\top}(D^{2}\Fs_{\!0})\nabla\Fs_{\!0}$. For
separable $\Fs_{\!0}$ we have
$D^{2}\Fs_{\!0}=\operatorname{diag}(\varphi_{1}'',\varphi_{2}'',
\varphi_{3}'')$ and
$\nabla\Fs_{\!0}=(\varphi_{1}',\varphi_{2}',\varphi_{3}')$, so
\[
  |\nabla\Fs_{\!0}|^{2}\Delta\Fs_{\!0}
   -\nabla\Fs_{\!0}^{\!\top}(D^{2}\Fs_{\!0})\nabla\Fs_{\!0}
  =\Bigl(\sum_{j}\varphi_{j}'^{2}\Bigr)
   \Bigl(\sum_{i}\varphi_{i}''\Bigr)
   -\sum_{i}\varphi_{i}''\varphi_{i}'^{2}
  =\sum_{i}\varphi_{i}''\sum_{j\ne i}\varphi_{j}'^{2}. \qedhere
\]
\end{proof}

Lemma \ref{lem:mse} is the reduction of Weingarten \cite{Wein} and
Nitsche \cite[\S82]{Nitsche}; \eqref{eq:mse} is their equation
$\bigl((g')^{2}+(h')^{2}\bigr)f''+\cdots=0$.

\begin{theorem}[$S$ is a regular minimal surface]\label{thm:Smin}
Let $\Fs_{\!0}(x,y,z):=\Psi(x)-\Psi(y)-\Psi(z)=-G(x,y,z)$, so that
$S=\{\Fs_{\!0}=0\}$ by Corollary \ref{cor:noexc}. Then:
\begin{enumerate}[label=\textup{(\alph*)},leftmargin=2.4em]
\item $\nabla\Fs_{\!0}=-2\sqrt3\,\varpi
      \bigl(\sn(2\varpi x,-3),-\sn(2\varpi y,-3),
      -\sn(2\varpi z,-3)\bigr)$, which vanishes only at points of
      $\ZZ^{3}$; and no point of $\ZZ^{3}$ lies on $S$. Hence
      $\nabla\Fs_{\!0}\ne0$ on $S$, and $S$ is a closed, embedded,
      real-analytic surface in $\RR^{3}$;
\item $S$ is minimal;
\item $\nabla\Fs\ne0$ on $S$, and there
      $\nabla\Fs=\tfrac12\Xi\,\nabla\Fs_{\!0}$ with
      $\Xi>0$ the reciprocal of the denominator of
      \eqref{eq:master}.
\end{enumerate}
\end{theorem}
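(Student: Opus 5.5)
Part (a) comes straight from Proposition \ref{prop:Psicalc}. There $\Psi'(t)=-2\sqrt3\,\varpi\,\sn(2\varpi t,-3)$, and differentiating $\Fs_{\!0}=\Psi(x)-\Psi(y)-\Psi(z)$ gives the displayed gradient. The function $\sn(\cdot,-3)$ is real on $\RR$ with real period $4\varpi$ and zeros only at $2\varpi\ZZ$. Hence each component vanishes exactly when the corresponding coordinate is an integer, so $\nabla\Fs_{\!0}=0$ forces $(x,y,z)\in\ZZ^{3}$.

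To see that $S\cap\ZZ^{3}=\emptyset$, note that at an integer $t$ one has $\Psi(t)=\pm\tfrac\pi3$ (the last statement of Proposition \ref{prop:Psicalc}). So $\Psi(y)+\Psi(z)-\Psi(x)$ is a sum of three terms $\pm\tfrac\pi3$, which lies in $\{\pm\tfrac\pi3,\pm\pi\}$ and is never $0$. Then $S$ is a regular level set of a real-analytic function on $\RR^{3}$, so it is closed and embedded by the implicit function theorem.

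For (b) I apply Lemma \ref{lem:mse} with $\varphi_{1}=\Psi$ and $\varphi_{2}(y)=-\Psi(y)$, $\varphi_{3}(z)=-\Psi(z)$. The squares $\varphi_{i}'^{2}$ are unchanged by the signs. Write $a=\Psi(x)$, $b=\Psi(y)$, $c=\Psi(z)$. The pendulum relations \eqref{eq:Psider} give $\varphi_{i}'^{2}=4\varpi^{2}(1+2\cos2\cdot)$ and $\Psi''=-8\varpi^{2}\sin2\Psi$. Dividing out the constants, \eqref{eq:mse} becomes
\[
-\sin2a\,(2+2\cos2b+2\cos2c)+\sin2b\,(2+2\cos2a+2\cos2c)+\sin2c\,(2+2\cos2a+2\cos2b)=0,
\]
to be verified on $a=b+c$. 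The next step is to collect terms: the constant terms give $\sin2b+\sin2c-\sin2a$, and the cross terms give $\sin(2b-2a)+\sin(2c-2a)+\sin(2b+2c)$. The sign on $\sin(2b-2a)$ has to be checked carefully. Substituting $a=b+c$, the cross terms become $-\sin2c-\sin2b+\sin2a$ and cancel the constant terms exactly. This trigonometric bookkeeping is the only real computation. I expect it to be the main place to make an error, so I would double-check it, for instance by expanding at small $b,c$.

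For (c), the master identity \eqref{eq:master} says $\Fs=\Xi\sin(\tfrac12G)$ with $\Xi>0$ real-analytic and $G=-\Fs_{\!0}$. I differentiate on $S$, where $G=0$, so the $\nabla\Xi$ term drops out. This gives $\nabla\Fs=\tfrac12\Xi\cos(0)\nabla G$, which is $\tfrac12\Xi\,\nabla\Fs_{\!0}$ up to the overall sign convention relating $G$ and $\Fs_{\!0}$; I would reconcile that sign against Definition \ref{def:FG} as stated. Since $\Xi>0$, part (a) then gives $\nabla\Fs\ne0$ on $S$.
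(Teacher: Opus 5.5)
Your proposal is correct and follows essentially the paper's proof: the same gradient computation and integer-point argument in (a), the same Weingarten reduction via the pendulum relations in (b), and the same differentiation of \eqref{eq:master} in (c). In (b) the paper merely symmetrizes with $\alpha_1=2\Psi(x)$, $\alpha_2=-2\Psi(y)$, $\alpha_3=-2\Psi(z)$, $\sum\alpha_i=0$, which is your $a=b+c$ bookkeeping.

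The sign you flagged in (c) is genuine. On $S$ one gets $\nabla\Fs=\tfrac12\Xi\nabla G=-\tfrac12\Xi\nabla\Fs_{\!0}$, which is exactly what the paper's own proof concludes. So the $+$ in the statement is a typo, and it does not affect $\nabla\Fs\ne0$.
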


\begin{proof}
(a) The formula for $\nabla\Fs_{\!0}$ is \eqref{eq:Psider} applied in
each slot. Since $\sn(2\varpi t,-3)=0$ iff $2\varpi t\in2K[-3]\ZZ$,
i.e.,\ iff $t\in\ZZ$, the gradient vanishes exactly on $\ZZ^{3}$. At a
point of $\ZZ^{3}$ each $\Psi$ equals $\pm\tfrac\pi3$ by
Proposition \ref{prop:Psicalc}, so
$\Fs_{\!0}\in\{\pm\tfrac\pi3,\pm\pi\}$; in particular
$\Fs_{\!0}\ne0$, and no such point lies on $S$. The remaining
assertions are the regular value theorem, $\Fs_{\!0}$ being
real-analytic.

(b) Apply Lemma \ref{lem:mse} with
$(\varphi_{1},\varphi_{2},\varphi_{3})=(\Psi,-\Psi,-\Psi)$ evaluated
at $(x,y,z)$, and put
\[
  \alpha_{1}:=2\Psi(x),\qquad \alpha_{2}:=-2\Psi(y),\qquad
  \alpha_{3}:=-2\Psi(z),
  \qquad\text{so that on }S:\quad
  \alpha_{1}+\alpha_{2}+\alpha_{3}=0 .
\]
By \eqref{eq:Psider},
$\varphi_{i}'^{2}=\Psi'^{2}=4\varpi^{2}(1+2\cos\alpha_{i})$ in each
slot, since $\cos$ is even; and
\[
  \varphi_{1}''=\Psi''(x)=-8\varpi^{2}\sin2\Psi(x)
   =-8\varpi^{2}\sin\alpha_{1},\qquad
  \varphi_{2}''=-\Psi''(y)=+8\varpi^{2}\sin2\Psi(y)
   =-8\varpi^{2}\sin\alpha_{2},
\]
and likewise $\varphi_{3}''=-8\varpi^{2}\sin\alpha_{3}$: the sign is
absorbed because $\Psi''$ is odd in $\Psi$. Hence, dividing
\eqref{eq:mse} by the nonzero constant
$-8\varpi^{2}\cdot4\varpi^{2}$, minimality on $S$ is equivalent to the
vanishing of
\[
  E:=\sum_{i}\sin\alpha_{i}
   \bigl(2+2\cos\alpha_{j}+2\cos\alpha_{k}\bigr),
   \qquad\{i,j,k\}=\{1,2,3\},
\]
i.e.,\ of $\tfrac12E
=\sum_{i}\sin\alpha_{i}\bigl(1+\cos\alpha_{j}+\cos\alpha_{k}\bigr)$.
Now
\[
  \tfrac12E=\sum_{i}\sin\alpha_{i}
   +\sum_{i}\sin\alpha_{i}\bigl(\cos\alpha_{j}+\cos\alpha_{k}\bigr)
  =\sum_{i}\sin\alpha_{i}
   +\sum_{i<j}\bigl(\sin\alpha_{i}\cos\alpha_{j}
    +\sin\alpha_{j}\cos\alpha_{i}\bigr),
\]
each unordered pair occurring exactly once in the regrouping; and
$\sin\alpha_{i}\cos\alpha_{j}+\sin\alpha_{j}\cos\alpha_{i}
=\sin(\alpha_{i}+\alpha_{j})=\sin(-\alpha_{k})=-\sin\alpha_{k}$,
using $\sum_{i}\alpha_{i}=0$. Therefore
\[
  \tfrac12E=\sum_{i}\sin\alpha_{i}-\sum_{k}\sin\alpha_{k}=0 .
\]

(c) By \eqref{eq:master}, $\Fs=\Xi\sin\bigl(\tfrac12G\bigr)
=\Xi\sin\bigl(-\tfrac12\Fs_{\!0}\bigr)$ with $\Xi>0$ real-analytic.
Hence $\nabla\Fs=-\tfrac12\Xi\cos\bigl(\tfrac12\Fs_{\!0}\bigr)
\nabla\Fs_{\!0}+\sin\bigl(-\tfrac12\Fs_{\!0}\bigr)\nabla\Xi$, and on
$S$ the second term vanishes while
$\cos\bigl(\tfrac12\Fs_{\!0}\bigr)=1$; so
$\nabla\Fs=-\tfrac12\Xi\nabla\Fs_{\!0}
=\tfrac12\Xi\nabla G\ne0$.
\end{proof}

\begin{corollary}[the local converse]\label{cor:localconv}
Let $\zeta_{0}\in\Om^{\circ}$. There is an open neighborhood
$U\subset\RR^{3}$ of $X(\zeta_{0})$ with
\begin{equation}\label{eq:cutsout}
  \bigl\{(x,y,z)\in U:\ \eqref{eq:star}\ \text{holds}\bigr\}
  =X(\Om^{\circ})\cap U .
\end{equation}
That is, \eqref{eq:star} \emph{cuts out} the surface, and does not
merely contain it.
\end{corollary}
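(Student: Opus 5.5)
The plan is to compare two embedded real-analytic surfaces through the point $P_{0}:=X(\zeta_{0})$: the image of a small disk around $\zeta_{0}$ under $X$, and the zero set $S=\{\Fs=0\}$. Both are two-dimensional, one contains the other, and an open subset of a connected surface is the whole surface near any of its points.

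First, $X$ is an immersion on $\Om^{\circ}$. By Proposition \ref{prop:conf}, $|\phi|^{2}=2(1+|\zeta|^{2})^{2}/|W|^{2}>0$, and $W\ne0$ on $\Om^{\circ}$ by Lemma \ref{lem:factorW} and Proposition \ref{prop:Om}(a). Hence for a small disk $D\ni\zeta_{0}$ the image $X(D)$ is an embedded real-analytic surface patch through $P_{0}$. By Corollary \ref{cor:done}, $X(D)\subset S$.

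Second, Theorem \ref{thm:Smin}(c) gives $\nabla\Fs(P_{0})\ne0$. By the implicit function theorem there is an open box $U_{1}$ around $P_{0}$ in which $S\cap U_{1}$ is a connected embedded analytic surface, namely a graph over a disk in the tangent plane. The map $X|_{D}$ takes values in $S$ and is an immersion between two-manifolds, so it is a local diffeomorphism onto its image in $S$. By invariance of domain, $X(D)$ is an open subset of $S$ containing $P_{0}$. So we can choose an open $U\subset U_{1}$ containing $P_{0}$ with $S\cap U\subset X(D)$. This gives the inclusion $\{\eqref{eq:star}\}\cap U\subset X(\Om^{\circ})\cap U$. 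The reverse inclusion is Corollary \ref{cor:done} again.

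The main obstacle is making the reverse inclusion hold in the stated form, with $X(\Om^{\circ})\cap U$ rather than $X(D)\cap U$. Other parts of $X(\Om^{\circ})$, away from $D$, might re-enter $U$; since global injectivity is left open in Remark \ref{rem:open}(3), this cannot be excluded outright. It is harmless, however: every point of $X(\Om^{\circ})$ satisfies \eqref{eq:star} by Corollary \ref{cor:done}. Hence $X(\Om^{\circ})\cap U\subset S\cap U\subset X(D)\cap U\subset X(\Om^{\circ})\cap U$, all inclusions are equalities, and \eqref{eq:cutsout} follows. The only genuinely delicate step is the shrinking of $U$ so that $S\cap U$ lies inside the open set $X(D)$ of $S$, and that is the standard local-graph argument above.
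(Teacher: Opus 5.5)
Your proposal is correct and is essentially the paper's proof. Theorem \ref{thm:Smin}(a),(c) makes $S=\{\Fs=0\}$ an embedded surface, Corollary \ref{cor:done} gives $X(\Om^{\circ})\subset S$, and an immersion between two-manifolds has open image, so $S\cap U=X(\Om^{\circ})\cap U$ for a suitable open $U$. Your worry about non-injectivity is not a real obstacle: the paper notes directly that $X(\Om^{\circ})$, being a union of the open sets $X(D)$, is itself open in $S$, and your sandwich argument reaches the same conclusion.
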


\begin{proof}
By Theorem \ref{thm:Smin}(a),(c), $S=\{\Fs=0\}$ is a two-dimensional
embedded real-analytic submanifold of $\RR^{3}$. By
Corollary \ref{cor:done}, $X(\Om^{\circ})\subset S$; and by
Proposition \ref{prop:conf} the map $X$ is an immersion of the
two-manifold $\Om^{\circ}$. An immersion between manifolds of equal
dimension is a local diffeomorphism onto an open subset, so
$X(\Om^{\circ})$ is open in $S$; hence there is an open
$U\subset\RR^{3}$ with $S\cap U=X(\Om^{\circ})\cap U$, which is
\eqref{eq:cutsout}.
\end{proof}
\pagebreak
\begin{corollary}[the separated Pfaffian]\label{cor:pfaffsep}
For every $\zeta=u+iv\in\Om^{\circ}$, with $(x,y,z)=X(\zeta)$ and
$Z=z-\tfrac12$,
\begin{equation}\label{eq:pfaffsep}
  \sn(2\varpi x,-3)\ :\ -\sn(2\varpi y,-3)\ :\ -\cd(2\varpi Z,-3)
  \ =\ 2u\ :\ 2v\ :\ \bigl(|\zeta|^{2}-1\bigr);
\end{equation}
equivalently the one-form identity
\begin{equation}\label{eq:oneform}
  \sn(2\varpi x,-3)\dd x=\sn(2\varpi y,-3)\dd y
   +\sn(2\varpi z,-3)\dd z
\end{equation}
holds on the patch, where $\sn(2\varpi z,-3)=\cd(2\varpi Z,-3)$.
\end{corollary}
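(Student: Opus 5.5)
The plan is to combine the fact that $X(\Om^{\circ})\subset S$ (Corollary \ref{cor:done}) with the gradient formula of Theorem \ref{thm:Smin}(a). Since $S=\{\Fs_{\!0}=0\}$ is a regular level set and $X$ is an immersion into it, the tangent plane of the patch at $X(\zeta)$ is the tangent plane of $S$. Hence the unit normal $N$ of \eqref{eq:normal} is parallel to $\nabla\Fs_{\!0}$. By Theorem \ref{thm:Smin}(a), $\nabla\Fs_{\!0}$ is proportional to $\bigl(\sn(2\varpi x,-3),-\sn(2\varpi y,-3),-\sn(2\varpi z,-3)\bigr)$. By \eqref{eq:normal}, $N$ is proportional to $(2u,2v,|\zeta|^{2}-1)$. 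Equating directions gives \eqref{eq:pfaffsep}, once the third slot is rewritten in terms of $Z$.

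For that rewriting I will use the quarter-period shift at $m=-3$ with $K=\varpi$: $\sn(2\varpi z,-3)=\sn(2\varpi Z+K,-3)=\cd(2\varpi Z,-3)$. This is the identity quoted at the end of the statement, and it converts $-\sn(2\varpi z)$ into $-\cd(2\varpi Z)$.

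The one-form identity \eqref{eq:oneform} is simply $\dd\Fs_{\!0}=0$ pulled back to the patch. Since $\Fs_{\!0}\circ X\equiv0$, differentiating gives $\Psi'(x)\dd x-\Psi'(y)\dd y-\Psi'(z)\dd z=0$. Substituting $\Psi'(t)=-2\sqrt3\varpi\sn(2\varpi t,-3)$ from \eqref{eq:Psider} and dividing by the constant yields \eqref{eq:oneform} directly. Conversely, \eqref{eq:oneform} together with the fact that $\dd X$ has rank $2$ is equivalent to $\nabla\Fs_{\!0}\perp\operatorname{span}(X_{u},X_{v})$, which recovers the ratio statement. This establishes the claimed equivalence.

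The main obstacle is justifying the ratio statement at every point, including points where some entries vanish. Proportionality of the two vectors holds as long as $\nabla\Fs_{\!0}\neq0$ on $S$, which is Theorem \ref{thm:Smin}(a), and $N\neq0$, which always holds. I will read the ratio as equality of points in $\mathbb{RP}^{2}$, so zero entries cause no trouble. The sign, meaning which of $\pm N$ is involved, is irrelevant for a projective ratio, although I would check it at $\zeta=0$ as a sanity test. There $(x,y,z)=(0,0,\tfrac12)$, so the left side is $0:0:-\cd(0)=0:0:-1$ and the right side is $0:0:-1$, which agree.
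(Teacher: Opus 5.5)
Your proposal is correct and is essentially the paper's proof. Since $X(\Om^{\circ})\subset S$, the gradient $\nabla\Fs_{\!0}$ of Theorem~\ref{thm:Smin}(a) and the normal $N$ of \eqref{eq:normal} are both normal to $S$ at $X(\zeta)$, hence proportional; \eqref{eq:oneform} is $\dd\Fs_{\!0}=0$ along the patch, with the third slot converted by $\sn(v+K,-3)=\cd(v,-3)$. Your extra care in reading the ratio projectively, and in recovering the ratio from the one-form via the rank of $\dd X$, is sound but not needed.
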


\begin{proof}
$\nabla\Fs_{\!0}$ and the unit normal $N$ of \eqref{eq:normal} are
both normal to $S$ at $X(\zeta)$, hence proportional;
Theorem \ref{thm:Smin}(a) and \eqref{eq:normal} give the two sides of
\eqref{eq:pfaffsep}. Formula \eqref{eq:oneform} is
$\dd\Fs_{\!0}=0$ along $S$, rewritten using
$\sn(2\varpi z,-3)=\sn(2\varpi Z+K,-3)=\cd(2\varpi Z,-3)$
\cite[122.01]{BF}.
\end{proof}

\begin{remark}[isotropy forces the parameter]\label{rem:isotropy}
Corollary \ref{cor:sepform} exhibits \textup{P} as an
\emph{isotropic} separable minimal surface: by \eqref{eq:sep2} and
$\Psi(1-t)=-\Psi(t)$ its equation is
$\Psi(x)+\Psi(1-y)+\Psi(1-z)=0$, the \emph{same} function in all
three slots. It is therefore a member of the family
\cite[Prop.~4.1]{KO}, and one computes which member: in the notation
of \cite[Lem.~4.1]{KO}, isotropy forces $a_{i}=A$ and $|b_{i}|=A$ for
a single $A>0$, whence $E_{1}=3A$ and the amplitude parameter is
$4\sqrt{|BC|}/E_{1}=4A/3A=\tfrac43$ \emph{independently of $A$}: the
parameter is forced, and only the scale is free. Combined with
Theorem \ref{thm:Smin} --- which supplies minimality and regularity of
each member, the converse direction that \cite[Prop.~4.1]{KO} states
but does not prove --- this proves the assertion of
\cite[Thm.~1(3)]{KO} that the family of that paper contains
\textup{P}. The companion statement, that the branch $\kappa>0$ of the
same classification contains exactly one isotropic member and that it
is \textup{D}, and the resulting uniqueness theorem, are proved in
\textup{[D]}.
\end{remark}

%=====================================================================
\section{A second proof of Theorem \ref{thm:main}}
\label{sec:corrIII}
%=====================================================================

\subsection{Bj\"orling along the diagonal}
Theorem \ref{thm:P0proved} rests on the cofactor certificate
\eqref{eq:certificate}, which is finite and verifiable by expansion
but not illuminating (Remark \ref{rem:cert}). The material of Part III
supplies a second, independent proof of Theorem \ref{thm:main} that
uses no computer algebra at all.

\begin{proposition}[normals along the diagonal]\label{prop:bjorling}
	Let $L:=\bigl\{(s,-s,\tfrac12):|s|\le\tfrac12\bigr\}$, the straight
	segment joining $(-\tfrac12,\tfrac12,\tfrac12)$ to
	$(\tfrac12,-\tfrac12,\tfrac12)$. Then:
	\begin{enumerate}[label=\textup{(\alph*)},leftmargin=2.4em]
		\item $L\subset S$, and the normal of $S$ along $L$ is proportional to
		$\bigl(\sn(2\varpi s,-3),\ \sn(2\varpi s,-3),\ -1\bigr)$;
		\item $X$ maps the diagonal
		$\bigl\{\zeta=t(1+i):|t|\le\tfrac{\sqrt3-1}{2}\bigr\}$
		\emph{onto} $L$, with $s=\tfrac12q$ and
		\begin{equation}\label{eq:diagnormal}
		\sn\bigl(2\varpi s,-3\bigr)=\frac{2t}{1-2t^{2}} ;
		\end{equation}
		and the unit normal \eqref{eq:normal} of $X(\Om)$ there is
		proportional to
		$\bigl(\tfrac{2t}{1-2t^{2}},\ \tfrac{2t}{1-2t^{2}},\ -1\bigr)$.
	\end{enumerate}
	Consequently the two normal fields coincide identically along $L$.
\end{proposition}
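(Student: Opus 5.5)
The plan is to treat (a) and (b) separately, using only Part III for (a) and only Proposition~\ref{prop:corners} and \eqref{eq:normal} for (b). This keeps the argument independent of the certificate of Theorem~\ref{thm:P0proved}; the comparison of the two normals is then a one-line substitution.

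For (a), use the separable description $S=\{\Psi(x)=\Psi(y)+\Psi(z)\}$, which is Corollary~\ref{cor:noexc} together with \eqref{eq:sep2}. Those results are pure identities among Jacobi functions and do not presuppose Theorem~\ref{thm:main}.
\begin{itemize}
\item \emph{Membership.} At $(s,-s,\tfrac12)$ the condition reads $\Psi(s)=\Psi(-s)+\Psi(\tfrac12)$. This holds because $\Psi$ is even and $\Psi(\tfrac12)=0$ (Proposition~\ref{prop:Psicalc}); it is the second row of the table in Corollary~\ref{cor:rhombus}.
\item \emph{Normal.} By Theorem~\ref{thm:Smin}(a) the normal is proportional to $\bigl(\sn(2\varpi x),-\sn(2\varpi y),-\sn(2\varpi z)\bigr)$ at modulus $-3$, which is nonzero on $S$.
\item \emph{Evaluation.} At $x=s$, $y=-s$, $z=\tfrac12$, oddness of $\sn$ gives $-\sn(-2\varpi s)=\sn(2\varpi s)$. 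Also $\sn(\varpi,-3)=\sn(K[-3],-3)=1$ by Proposition~\ref{prop:web}. So the normal is proportional to $\bigl(\sn(2\varpi s,-3),\sn(2\varpi s,-3),-1\bigr)$.
\end{itemize}

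For (b), Proposition~\ref{prop:corners}(a) gives $p\equiv0$ and $Z\equiv0$ on $\zeta=t(1+i)$, with $\sn(\varpi q,-3)=2t/(1-2t^{2})$.
\begin{itemize}
\item \emph{Image lies in $L$.} Since $x=\tfrac12(p+q)$ and $y=\tfrac12(p-q)$, we get $x=\tfrac12q$, $y=-\tfrac12q$ and $z=\tfrac12$. So $X(\zeta)=(s,-s,\tfrac12)$ with $s=\tfrac12q$, and $2\varpi s=\varpi q$, which is \eqref{eq:diagnormal}.
\item \emph{Surjectivity.} The function $t\mapsto q(t)$ is continuous on $[0,c]$, where $c=\tfrac{\sqrt3-1}{2}$, with $q(0)=0$ and $q(c)=1$ by Proposition~\ref{prop:corners}(b). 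The intermediate value theorem then gives every $s\in[0,\tfrac12]$. Negative $t$ give the negative half, since $B$ is odd by \eqref{eq:PNder}, so $q$ is odd in $t$.
\item \emph{Range of $\varpi q$.} One should also confirm that $\varpi q$ stays in $[-K,K]$ rather than wrapping past the quarter period. This holds because $2t/(1-2t^{2})$ increases strictly from $0$ to $1$ on $[0,c]$, while $\sn(\cdot,-3)$ first reaches $1$ at $K[-3]$. Alternatively, the harmonic bound $|q|\le1$ of Proposition~\ref{prop:corners}(c) gives the same conclusion.
\item \emph{Normal of the patch.} In \eqref{eq:normal} take $u=v=t$ and $|\zeta|^{2}=2t^{2}$. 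Then $N\propto(2t,2t,2t^{2}-1)$, and dividing by $1-2t^{2}>0$ (positive since $2t^{2}\le2-\sqrt3<1$) gives the stated vector.
\end{itemize}

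Finally, substituting \eqref{eq:diagnormal} into the normal from (a) shows the two normal fields agree at every point of $L$. The only step needing care is the surjectivity and branch bookkeeping in (b); everything else is direct substitution.
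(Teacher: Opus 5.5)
Your proposal is correct and follows essentially the paper's proof. Part (a) uses evenness of $\Psi$, $\Psi(\tfrac12)=0$ and the gradient $\nabla\Fs_{\!0}$ of Theorem \ref{thm:Smin}(a), and part (b) uses Proposition \ref{prop:corners}(a) and \eqref{eq:normal}; the only difference is cosmetic: you get surjectivity from the intermediate value theorem plus a separate range check, whereas the paper gets containment and surjectivity together from the strict increase of $t\mapsto\varpi q$ on $[0,c]$.
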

\pagebreak
\begin{proof}
	(a) At $(s,-s,\tfrac12)$ we have, by Proposition \ref{prop:Psicalc},
	$\Fs_{\!0}=\Psi(s)-\Psi(-s)-\Psi(\tfrac12)=\Psi(s)-\Psi(s)-0=0$, so
	$L\subset S$. By Theorem \ref{thm:Smin}(a) and the oddness of $\sn$,
	\[
	\nabla\Fs_{\!0}=-2\sqrt3\varpi
	\bigl(\sn(2\varpi s,-3),\ \sn(2\varpi s,-3),\
	-\sn(\varpi,-3)\bigr),
	\]
	the middle slot because $-\sn(2\varpi\cdot(-s),-3)=+\sn(2\varpi s,-3)$;
	and $\sn(\varpi,-3)=\sn(K[-3],-3)=1$.

	(b) By Proposition \ref{prop:corners}(a), on $\zeta=t(1+i)$ one has
	$p=x+y=0$ and $Z=0$, so the image lies in
	$\bigl\{(s,-s,\tfrac12)\bigr\}$ with $s=\tfrac12q$; and
	\eqref{eq:diag} gives $\sn(\varpi q,-3)=\dfrac{2t}{1-2t^{2}}$, which
	is \eqref{eq:diagnormal} since $\varpi q=2\varpi s$.
	
	\emph{Surjectivity onto $L$.} Write $c:=\tfrac{\sqrt3-1}{2}$, so that
	$2c^{2}=2-\sqrt3$ and $1-2c^{2}=\sqrt3-1=2c$. The right side of
	\eqref{eq:diagnormal} is odd in $t$, and on $[0,c]$ it increases
	strictly from $0$ to $\dfrac{2c}{1-2c^{2}}=1$; since
	$\sn(\cdot,-3)$ increases strictly from $0$ to $1$ on $[0,K[-3]]$, the
	quantity $\varpi q$ increases strictly from $0$ to $K[-3]=\varpi$,
	i.e.,\ $q$ from $0$ to $1$ and $s$ from $0$ to $\tfrac12$. By oddness
	the full range $|t|\le c$ gives $s\in[-\tfrac12,\tfrac12]$, which is
	$L$. (The endpoint $t=c$ is the corner $\zeta_{0}$, whose image is
	$(\tfrac12,-\tfrac12,\tfrac12)$ by
	Proposition \ref{prop:corners}(b) --- consistently, the diagonal meets
	$\partial\Om$ exactly at $\zeta_{0}$, since $|\zeta|=t\sqrt2$ there and
	$c\sqrt2=\bt^{-1/2}$ by Proposition \ref{prop:Om}(a).)
	
	\emph{The normal.} By \eqref{eq:normal} with $u=v=t$ and
	$|\zeta|^{2}=2t^{2}$, $N\propto(2t,2t,2t^{2}-1)$; dividing by
	$1-2t^{2}>0$ --- valid since $2t^{2}\le2c^{2}=2-\sqrt3<1$ --- gives
	the stated form.
\end{proof}

\begin{theorem}[second proof of Theorem \ref{thm:main}]
	\label{thm:second}
	$(\star)$ holds at every point of $\Om$.
\end{theorem}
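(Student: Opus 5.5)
We prove Theorem \ref{thm:second} by Björling uniqueness, without using Theorem \ref{thm:P0proved}. By Theorem \ref{thm:Smin}, $S=\{\Fs=0\}$ is a regular, embedded, real-analytic minimal surface. By Proposition \ref{prop:conf}, $X(\Om^{\circ})$ is a conformal minimal immersion. Proposition \ref{prop:bjorling} shows that both surfaces contain the real-analytic segment $L$ and have the same unit normal field along it, up to a common sign, which we fix by comparing at $s=0$. The solution of Björling's problem for a real-analytic strip (curve plus normal) is unique \cite[\S3.3]{Nitsche}. Hence $X(\Om)$ and $S$ coincide near the relative interior of $L$.

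To make this precise, parametrize $S$ near $L$ by isothermal coordinates: $S$ is a regular analytic minimal surface, so around $L$ it is the image of a conformal immersion $Y$ of a strip $\{|\Imag w|<\epsilon\}$ with $Y(s)=(s,-s,\tfrac12)$. Do the same for $X$, using the diagonal $\zeta=t(1+i)$ reparametrized by arc length so that $X$ maps it onto $L$. Surjectivity and strict monotonicity are in Proposition \ref{prop:bjorling}(b), so this reparametrization is a real-analytic diffeomorphism $t\mapsto s(t)$ on $|t|<c$. Björling's formula
\[
Y(w)=\Real\Bigl(c(w)-i\int^{w}N(\tau)\times c'(\tau)\,\dd\tau\Bigr)
\]
expresses each surface through the same data $(c,N)$. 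The two local parametrizations therefore agree after a conformal change of variable, and $X(\zeta)\in S$ for all $\zeta$ in a neighborhood $V\subset\Om^{\circ}$ of the open diagonal. Since $S=\{\Fs=0\}$, this gives $\Fs\circ X=0$ on $V$. The sign convention is harmless: both normals are proportional to $(\sigma,\sigma,-1)$ with the same $\sigma$, so they agree once oriented.

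Next we extend from $V$ to all of $\Om$. The function $\Fs\circ X$ is real-analytic on the connected open set $\Om^{\circ}$: $\Fs$ is real-analytic on $\RR^{3}$, being a composite of entire-in-real-argument Jacobi functions with no poles for real arguments at $m=-3$ and $m=\tfrac19$. It vanishes on the open set $V$, so it vanishes identically on $\Om^{\circ}$. By continuity of $X$ on $\Om$ (Corollary \ref{cor:reg}), $(\star)$ then holds on all of $\Om$.

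The main obstacle is the first step: Björling uniqueness is formulated for parametrized surfaces, and we must transfer it to the implicitly defined set $S$ while keeping the orientation and the parametrization of $L$ consistent. The cleanest route is to avoid isothermal charts on $S$ altogether. Instead, take the Björling solution $Y$ built from $X$'s own strip data and observe that $S$, being an embedded analytic minimal surface containing $L$ with normal $N$, is locally a graph over its tangent plane. The minimal surface equation for that graph with Cauchy data ($L$ and the normal along it) is a real-analytic noncharacteristic Cauchy problem. Cauchy--Kovalevskaya uniqueness then forces $S$ to coincide locally with $X(\Om)$. This shows directly that $X(V)\subset S$.
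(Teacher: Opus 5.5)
Your proposal is correct and follows essentially the paper's own route. You use the minimality of $S$ (Theorem \ref{thm:Smin}) and of $X(\Om^{\circ})$ (Proposition \ref{prop:conf}), together with the matching curve and normal along $L$ from Proposition \ref{prop:bjorling}. From Bj\"orling uniqueness, or equivalently Cauchy--Kovalevskaya for the minimal surface equation written as a graph over the tangent plane, you get $\Fs\circ X=0$ on an open set, and you then extend by real-analyticity on the connected $\Om^{\circ}$ and by continuity to $\Om$, exactly as in the paper (your isothermal-chart detour is harmless but unnecessary, and the common parametrization $s\mapsto(s,-s,\tfrac12)$ need not be arc length).
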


\begin{proof}
	$X(\Om^{\circ})$ and $S$ are both minimal surfaces --- the first by
	Proposition \ref{prop:conf}, the second by
	Theorem \ref{thm:Smin}(b) --- and by Proposition \ref{prop:bjorling}
	they contain the same real-analytic curve $L$ with the same normal
	field along it. By the uniqueness half of Bj\"orling's theorem
	\cite[\S3.4]{Nitsche} --- equivalently, by Cauchy--Kovalevskaya
	applied to the minimal surface equation written as a graph over the
	tangent plane at an interior point of $L$, that equation being
	elliptic and hence free of real characteristics --- the two surfaces
	coincide in a neighborhood of each interior point of $L$. Hence
	$\Fs\circ X$ vanishes on a nonempty open subset of $\Om^{\circ}$;
	being real-analytic there (Corollary \ref{cor:reg}) and $\Om^{\circ}$
	being connected (Proposition \ref{prop:Om}), it vanishes identically,
	and then on $\Om$ by continuity.
\end{proof}

\begin{remark}[the two proofs]\label{rem:twoproofs}
	The two proofs are genuinely independent and use disjoint parts of the
	paper. The first (Theorem \ref{thm:P0proved},
	Corollary \ref{cor:done}) is algebraic: it needs the dictionaries of
	\S\ref{sec:dict}, the complexification of \S\ref{sec:pfaff}, and the
	certificate. The second (Theorem \ref{thm:second}) is
	differential-geometric: it needs only \eqref{eq:normal},
	\eqref{eq:diag}, and \S\S\ref{sec:PsiIII}--\ref{sec:minIII}. Notably,
	the entire content of the second proof along $L$ is the coincidence of
	\eqref{eq:diag} with Theorem \ref{thm:Smin}(a) --- one formula,
	computed twice by unrelated routes: from the parametrization, via
	Proposition \ref{prop:corners}(a), and from the implicit equation, via
	$\nabla\Fs_{\!0}$. It is the device by which Nitsche identifies his two
	representations of \textup{D} along the line $y=z=0$
	\cite[\S85]{Nitsche}, and it is the \emph{only} proof available in
	\textup{[D]}, whose reduced identity \textup{(D0)} remains open. Since
	a conceptual proof of \textup{(P0)} is likewise still wanting
	(Remark \ref{rem:open}(1)), Theorem \ref{thm:second} is at present the
	shorter and the more illuminating of the two routes here as well.
\end{remark}

\subsection{Status}\label{sub:statusfull}
The following is a statement-by-statement audit of the whole paper.
Only the last two lines remain open.

\begin{center}
	\begin{tabular}{@{}p{0.62\textwidth}l@{}}
		\toprule
		Statement & Status\\
		\midrule
		\multicolumn{2}{@{}l}{\textsc{Part I. The parametrization and its
				algebraic dictionary}}\\
		\midrule
		Closed forms \eqref{eq:fclosed}--\eqref{eq:hclosed}
		$\Rightarrow$ differentials \eqref{eq:fgh}
		& proved (Prop.\ \ref{prop:diff})\\
		$\Pc=\frac12F[\theta,\frac34]$, $\Nc=\frac12F[\theta,\frac14]$
		& proved (Prop.\ \ref{prop:sep}(a))\\
		Dictionaries \eqref{eq:dict34}--\eqref{eq:dictM} at moduli
		$\tfrac14,\tfrac34,\bt^{-4}$ & proved (Lem.\ \ref{lem:legendre})\\
		Dictionaries \eqref{eq:dictA}, \eqref{eq:dictB}, \eqref{eq:Hnew},
		\eqref{eq:fdict}; $r_{+}r_{-}=W$ & proved (\S\ref{sec:dict})\\
		$\varpi=K[-3]=\frac12K[3/4]=\frac23K[1/9]=\kappa^{-1}$
		& proved (\S\ref{sec:web})\\
		$\iota$-law, four mirror planes, bcc invariance, corners
		$(\pm\frac12,\pm\frac12,\frac12)$ & proved (\S\ref{sec:sym})\\
		\midrule
		\multicolumn{2}{@{}l}{\textsc{Part II. The elimination}}\\
		\midrule
		Half-sum representation \eqref{eq:halfsums} & proved (\S7)\\
		$\frac13\sn(3\hh,\frac19)=\sn^{2}(f,-3)=2\zeta^{2}/(1+\zeta^{4}+W)$
		& proved (\S\ref{sec:diag})\\
		$(\star)$ on the diagonal; $\mathfrak C$ determined; $m_{2}=\frac19$
		& proved (\S\ref{sec:diag})\\
		$\Lambda_{m}(v/2)=(\cn v+\dn v)/\sn v$;
		$\Lambda_{1/9}(3\varpi Z)=\frac23\cs(2\varpi Z,-3)$
		& proved (\S\ref{sec:pfaff})\\
		$L_{p},L_{q}\in\KK$ explicitly; $(\star)\Rightarrow$
		\eqref{eq:Pz}--\eqref{eq:Pw} & proved (\S\ref{sec:pfaff})\\
		\textup{(P0)}$\iff\dd\log\Theta\wedge\dd Z=0$;
		\textup{(P0)}$+$\S\ref{sec:diag}$\Rightarrow(\star)$
		& proved (Thm.\ \ref{thm:P0})\\
		\textbf{\textup{(P0)}} & \textbf{proved} (Thm.\
		\ref{thm:P0proved})\\
		\textbf{Theorem \ref{thm:main}: $(\star)$ on all of $\Om$}
		& \textbf{proved} (Cor.\ \ref{cor:done})\\
		\textup{(P0)} on $\omega=0$; in degrees $0,2$; on the diagonal;
		to $79$ digits at four independent pairs
		& proved independently (\S\ref{sec:verif})\\
		Jets of orders $2,4,6,8$; $3m^{2}+10m+3=0$;
		$\mathfrak A=\sn(\varpi\cdot,-3)$; uniqueness
		& proved (\S\ref{sec:jets})\\
		\midrule
		\multicolumn{2}{@{}l}{\textsc{Part III. The separable form}}\\
		\midrule
		$\mathfrak a=F[\pi/3,\tfrac43]=\tfrac{\sqrt3}{2}K[\tfrac34]
		=\sqrt3\varpi$ & proved (Lem.\ \ref{lem:a3})\\
		Reciprocal parameter; the bounded branch of $\am(\cdot,\tfrac43)$
		& proved (\S\ref{sec:constIII})\\
		$\Psi$: three faces, calculus, values, monotonicity
		& proved (\S\ref{sec:PsiIII})\\
		Horizontal dictionary
		$\arctan(\sqrt3\sn^{2})=\tfrac12(\tfrac\pi3-\Psi)$
		& proved (Prop.\ \ref{prop:hdict})\\
		$\tfrac13\sn(3\vartheta,\tfrac19)
		=\sn(2\vartheta,-3)/(1+\dn(2\vartheta,-3))$
		& proved (Lem.\ \ref{lem:19})\\
		Vertical dictionary
		$\arctan(\tfrac{1}{\sqrt3}\sn(3\varpi Z,\tfrac19))=-\tfrac12\Psi(z)$
		& proved (Prop.\ \ref{prop:vdict})\\
		$(\star)\iff\Lambda_{\mathrm P}(x)-\Lambda_{\mathrm P}(y)
		=\Lambda_{\mathrm Z}(z)\iff\Psi(x)=\Psi(y)+\Psi(z)$;
		the $\lambda$-form \eqref{eq:sep4}
		& proved (Cor.\ \ref{cor:sepform})\\
		Master identity; $\{\Fs=0\}=\{G=0\}$, no exceptional set
		& proved (Thm.\ \ref{thm:master}, Cor.\ \ref{cor:noexc})\\
		Trilinear equation $\Qs=0$; exceptional set $=\Lat'$ exactly
		& proved (Thm.\ \ref{thm:trilin})\\
		The four rhombus edges; the \cite{KO} octahedron, exactly
		& proved (Cor.\ \ref{cor:rhombus})\\
		$S$ minimal, regular, embedded & proved (Thm.\ \ref{thm:Smin})\\
		\textbf{Local converse: $(\star)$ cuts out the surface}
		& \textbf{proved} (Cor.\ \ref{cor:localconv})\\
		Separated Pfaffian \eqref{eq:pfaffsep} & proved
		(Cor.\ \ref{cor:pfaffsep})\\
		The diagonal maps onto $L$, with matching normals; second proof of
		Theorem \ref{thm:main} via Bj\"orling
		& proved (Prop.\ \ref{prop:bjorling}, Thm.\ \ref{thm:second})\\
		Identification of the member of \cite{KO}; \cite[Thm.~1(3)]{KO}
		& proved (Rem.\ \ref{rem:isotropy})\\
		\midrule
		\multicolumn{2}{@{}l}{\textsc{Open}}\\
		\midrule
		A conceptual (non-certificate) proof of \textup{(P0)} & \emph{open}\\
		Global injectivity of $X$ on $\Om^{\circ}$, and the identification of
		the connected component of $S$ containing $X(\Om)$ & \emph{open}\\
		\bottomrule
	\end{tabular}
\end{center}

%=====================================================================
\appendix
\section{Verification cells}\label{app:code}
%=====================================================================

All four cells are written for \textsc{Mathematica} and are
self-contained. All four have been executed; the outcomes are
recorded in the comments below and in \S\ref{sub:statusfull}. Cell 0
checks the foundation of \S\S1--4; Cell 1 is a numerical check of
\textup{(P0)}; Cell 2 \emph{proves} \textup{(P0)}
(Theorem \ref{thm:P0proved}); Cell 3 confirms key formulas in Part III.

\medskip\noindent
\textbf{Cell 0 (the closed forms and their dictionaries).} Test points
are taken near the origin, where the principal branches of
\texttt{ArcSin} and \texttt{EllipticF} agree with
Convention \ref{conv:br}. Note the argument $2\Nc=+2iB$ in
\texttt{chk14}: the opposite sign gives $-2\Nc$ and, $\sn$ being odd,
a discrepancy of exactly $-2\sn$ in the first slot with $\cn$ and
$\dn$ unaffected (Lemma \ref{lem:legendre},
Remark \ref{rem:notation}).
\begin{verbatim}
bet = 2 + Sqrt[3];
W[z_]  := Sqrt[1 + 14 z^4 + z^8];
rp[z_] := Sqrt[1 + 4 I z^2 - z^4];
rm[z_] := Sqrt[1 - 4 I z^2 - z^4];
pp[z_] := 1 + I z^2;   pm[z_] := 1 - I z^2;
sp[z_] := Sqrt[1 + bet^2 z^4];   sm[z_] := Sqrt[1 + bet^-2 z^4];
th[z_] := ArcSin[2 (1 + I) z/rp[z]];

(* the three functions, exactly as in (1.4)-(1.6) *)
fcl[z_] := (-I EllipticF[th[z], 1/4] + EllipticF[th[z], 3/4])/4;
gcl[z_] := ( I EllipticF[th[z], 1/4] + EllipticF[th[z], 3/4])/4;
hfr[z_] := EllipticF[ArcSin[I bet z^2], bet^-4]/bet;   (* frak h *)
hsf[z_] := -I hfr[z];                                  (* sans h  *)

(* the four combinations, named as in (2.13) and (2.16) *)
Pc[z_] := fcl[z] + gcl[z];          (* script P = A     *)
Nc[z_] := I (fcl[z] - gcl[z]);      (* script N = i B   *)
A[z_]  := Pc[z];
B[z_]  := fcl[z] - gcl[z];          (* B = -i script N  *)

pts0 = {0.07 + 0.11 I, -0.2 + 0.05 I, 0.13 - 0.09 I, 0.05 + 0.23 I};

(* Prop. 2.2 : the algebraic differentials *)
Table[N[{(D[fcl[w], w] /. w -> p) - (1 - p^2)/W[p],
         (D[gcl[w], w] /. w -> p) - I (1 + p^2)/W[p],
         (D[hfr[w], w] /. w -> p) - 2 I p/W[p]}, 25], {p, pts0}]
(* -> all ~ 1.e-16 *)

(* Prop. 2.5(a) : 2 Pc = F[th,3/4] and 2 Nc = F[th,1/4] *)
Table[N[{2 Pc[p] - EllipticF[th[p], 3/4],
         2 Nc[p] - EllipticF[th[p], 1/4]}, 25], {p, pts0}]
(* -> {0,0} *)

(* Lemma 4.2 : the dictionaries at moduli 3/4, 1/4, bet^-4 *)
chk34[z_] := {JacobiSN[2 Pc[z], 3/4] - 2 (1 + I) z/rp[z],
              JacobiCN[2 Pc[z], 3/4] - rm[z]/rp[z],
              JacobiDN[2 Pc[z], 3/4] - pm[z]/rp[z]};
chk14[z_] := {JacobiSN[2 Nc[z], 1/4] - 2 (1 + I) z/rp[z],
              JacobiCN[2 Nc[z], 1/4] - rm[z]/rp[z],
              JacobiDN[2 Nc[z], 1/4] - pp[z]/rp[z]};
chkM[z_]  := {JacobiSN[bet hfr[z], bet^-4] - I bet z^2,
              JacobiCN[bet hfr[z], bet^-4] - sp[z],
              JacobiDN[bet hfr[z], bet^-4] - sm[z]};
Table[N[{chk34[p], chk14[p], chkM[p]}, 25], {p, pts0}]
(* -> all nine entries ~ 1.e-16 at every point *)

(* the sign, isolated: -2 I B == -2 Nc, and sn is odd *)
Table[N[{-2 I B[p] + 2 Nc[p],
         JacobiSN[-2 I B[p], 1/4] + JacobiSN[2 Nc[p], 1/4]}, 25],
      {p, pts0}]                                        (* -> {0,0} *)

(* Theorems 4.5-4.6 : the same data at modulus -3 *)
chkA[z_] := {JacobiSN[A[z], -3] - (1 + I) z/pm[z],
             JacobiCN[A[z], -3] - rm[z]/pm[z],
             JacobiDN[A[z], -3] - rp[z]/pm[z]};
chkB[z_] := {JacobiSN[B[z], -3] - (1 - I) z/pp[z],
             JacobiCN[B[z], -3] - rp[z]/pp[z],
             JacobiDN[B[z], -3] - rm[z]/pp[z]};
Table[N[{chkA[p], chkB[p]}, 25], {p, pts0}]              (* -> 0 *)

(* the intermediate step of Thm 4.6's second proof: 2B at mod 3/4 *)
chkB34[z_] := {JacobiSN[2 B[z], 3/4] - 2 (1 - I) z/rm[z],
               JacobiCN[2 B[z], 3/4] - rp[z]/rm[z],
               JacobiDN[2 B[z], 3/4] - pp[z]/rm[z]};
Table[N[chkB34[p], 25], {p, pts0}]                       (* -> 0 *)

(* Theorem 4.8 : the RATIONAL height dictionary at modulus -3 *)
chkH[z_] := {JacobiSN[2 hsf[z], -3] - 2 z^2/(1 + z^4),
             JacobiCN[2 hsf[z], -3] - (1 - z^4)/(1 + z^4),
             JacobiDN[2 hsf[z], -3] - W[z]/(1 + z^4)};
Table[N[chkH[p], 25], {p, pts0}]                         (* -> 0 *)

(* Theorem 4.10 and Corollary 4.11 *)
Table[N[{JacobiSN[2 fcl[p], -3] - 2 p (1 + p^2)/W[p],
         JacobiSN[hsf[p], -3] - p JacobiSN[fcl[p], -3]}, 25],
      {p, pts0}]                                         (* -> {0,0} *)
\end{verbatim}

\medskip\noindent
\textbf{Cell 1 (numerical check of (P0)).} Pure algebra: four square
roots, no elliptic functions. All radicals are principal near the
origin, in accordance with Convention \ref{conv:br}. Note that
\texttt{Table} cannot destructure a pair, \texttt{List} being
\texttt{Protected}; hence the \texttt{With}. The exact rational inputs
make Mathematica work with nested radicals, so
\texttt{\$MaxExtraPrecision} must be raised (or the inputs fed through
\texttt{N[\,\ldots,60]}).
\begin{verbatim}
$MaxExtraPrecision = 300;

W[z_]  := Sqrt[1 + 14 z^4 + z^8];
rp[z_] := Sqrt[1 + 4 I z^2 - z^4];
rm[z_] := Sqrt[1 - 4 I z^2 - z^4];
pp[z_] := 1 + I z^2;   pm[z_] := 1 - I z^2;

Lp[z_, w_] := With[{P1 = pm[z] pp[w]},
   ((P1 + 6 z w) rm[z] rp[w] + (P1 - 2 z w) rp[z] rm[w]) /
   ((1 + I) z pm[z] W[w] + (1 - I) w pp[w] W[z])];
Lq[z_, w_] := Lp[w, z];

P0[z_, w_] := ((1 + I) w pp[z] - (1 - I) z pm[w]) Lp[z, w] +
              ((1 - I) w pm[z] - (1 + I) z pp[w]) Lq[z, w];

pts1 = {{1/5, 1/7}, {1/5 + I/10, 1/4 - I/8},
       {3/10, -1/5}, {2/5 + I/5, 1/10 + I/3}};
Table[With[{z = pair[[1]], w = pair[[2]]}, N[P0[z, w], 30]],
      {pair, pts1}]
(* -> 0.*10^-79 + 0.*10^-79 I  at all four pairs *)

(* the two hand checks of Definition 9.2 *)
Simplify[Lp[z, z] - (1 + z^2)/z]                    (* -> 0 *)
Simplify[{Lp[z, 0] - (rp[z] + rm[z])/((1 + I) z),
          Lq[z, 0] - (rp[z] + rm[z])/((1 - I) z)}]  (* -> {0,0} *)

(* the numerical table of Section 5, regenerated *)
kap = 3/(2 EllipticK[1/9]);  varpi = 1/kap;
N[{EllipticK[1/9], varpi, 2 varpi, kap}, 20]
(* -> {1.6173867356247324266, 1.0782578237498216177,
        2.1565156474996432354, 0.92742197457221597614} *)
\end{verbatim}

\medskip\noindent
\textbf{Cell 2 (proof of (P0)).} The four radicals become
indeterminates \verb|ra,rb| ($=r_{\pm}(\zeta)$) and \verb|rc,rd|
($=r_{\pm}(\omega)$), with $\Wz=$\verb|ra rb|, $\Ww=$\verb|rc rd|;
\verb|expr| is $\mathcal E$ of \eqref{eq:Epoly} and \verb|rels| is
\eqref{eq:rels}. The final \texttt{Expand} verifies the certificate
\eqref{eq:certificate} directly, without appeal to
\texttt{PolynomialReduce}.
\begin{verbatim}
rels = {ra^2 - (1 + 4 I z^2 - z^4), rb^2 - (1 - 4 I z^2 - z^4),
        rc^2 - (1 + 4 I w^2 - w^4), rd^2 - (1 - 4 I w^2 - w^4)};

Wz = ra rb;  Ww = rc rd;
P1 = (1 - I z^2)(1 + I w^2);   P2 = (1 + I z^2)(1 - I w^2);
Np = (P1 + 6 z w) rb rc + (P1 - 2 z w) ra rd;
Dp = (1 + I) z (1 - I z^2) Ww + (1 - I) w (1 + I w^2) Wz;
Nq = (P2 + 6 z w) ra rd + (P2 - 2 z w) rb rc;
Dq = (1 + I) w (1 - I w^2) Wz + (1 - I) z (1 + I z^2) Ww;
al = (1 + I) w (1 + I z^2) - (1 - I) z (1 - I w^2);
be = (1 - I) w (1 - I z^2) - (1 + I) z (1 + I w^2);

expr = Expand[al Np Dq + be Nq Dp];
{qq, rr} = PolynomialReduce[expr, rels, {ra, rb, rc, rd}];
Simplify[rr]                       (* -> 0 : remainder vanishes *)

(* the certificate, verified by expansion alone *)
Expand[expr - qq . rels]           (* -> 0 *)
\end{verbatim}
A zero remainder proves \textup{(P0)}: the cofactors \verb|qq| exhibit
\verb|expr| as a member of the ideal $(\rho_{1},\dots,\rho_{4})$, and
the specialization $\mathsf a\mapsto r_{+}(\zeta)$, \ldots\ is a ring
homomorphism killing each $\rho_{j}$
(Theorem \ref{thm:P0proved}). Hence Theorem \ref{thm:main}, by
Theorem \ref{thm:P0}(d) with Theorem \ref{thm:diag}
(Corollary \ref{cor:done}). Note that
$\{\mathsf a^{2},\mathsf b^{2},\mathsf c^{2},\mathsf d^{2}\}$ are
pairwise coprime leading monomials, so \verb|rels| is a Gröbner basis
and the remainder is canonical; but only the certificate is used in
the proof.

\medskip\noindent
\textbf{Cell 3 (Part III).} 
\begin{verbatim}
K34 = EllipticK[3/4];  vp = EllipticK[-3];   (* vp = varpi = K34/2 *)
aa = Sqrt[3] EllipticK[-3];                  (* frak a *)
N[{aa - Sqrt[3] vp, K34 - 2 vp, vp - (2/3) EllipticK[1/9]}, 25]
                                             (* {0,0,0} : Lemma 13.1 *)

Cs[t_]  := JacobiCN[4 vp t, 3/4];
Psi[t_] := ArcTan[Sqrt[3] Cs[t]];
lam[t_] := Sqrt[3] JacobiSN[vp t, -3]^2;
Fs[x_,y_,z_] := JacobiSN[vp (x+y),-3] JacobiSN[vp (x-y),-3]
              - (1/3) JacobiSN[3 vp (z-1/2), 1/9];
Qs[x_,y_,z_] := Cs[x] - Cs[y] - Cs[z] - 3 Cs[x] Cs[y] Cs[z];
G[x_,y_,z_]  := Psi[y] + Psi[z] - Psi[x];

(* Prop 14.2 : the three faces of Psi *)
Max@Table[Abs[Psi[t] - ArcSin[(Sqrt[3]/2) JacobiCN[2 vp t, -3]]],
          {t, -2, 2, .05}]       (* all 0 *)
Max@Table[Abs[Psi[t] - JacobiAmplitude[aa (1 + 2 t), 4/3]],
          {t, -1.45, 0.45, .05}]
{Psi[0]-Pi/3, Psi[1/4]-Pi/4, Psi[1/2], Psi[3/4]+Pi/4, Psi[1]+Pi/3}                                  

(* Props 15.1, 15.3 : the two dictionaries *)
Max@Table[Abs[ArcTan[lam[t]] - (Pi/6 - Psi[t]/2)], {t,-2,2,.05}]
Max@Table[Abs[ArcTan[JacobiSN[3 vp (z-1/2),1/9]/Sqrt[3]] + Psi[z]/2],
          {z,-2,2,.05}]

(* Thm 16.2 : master identity, trilinear identity, |G| <= Pi *)
pts3 = SetPrecision[RandomReal[{-3,3},{400,3}], 40];
Max@ParallelTable[Abs[Fs @@ p - Sin[G @@ p / 2] /
   (Sqrt[3] Cos[(Psi[p[[2]]]-Psi[p[[1]]])/2] Cos[Psi[p[[3]]]/2])],
   {p, pts3}]
Max@ParallelTable[Abs[Sin[G @@ p] +
   Sqrt[3] Times @@ (Cos[Psi[#]] & /@ p) Qs @@ p], {p, pts3}]
Max@ParallelTable[Abs[G @@ p], {p, pts3}]              (* <= Pi *)

(* Thm 16.4 : the exceptional set is exactly Lat' *)
Select[Flatten[Table[{i,j,k},{i,0,3},{j,0,3},{k,0,3}], 2],
       Chop[N[Qs @@ #]] == 0 &]
   (* -> exactly those with i odd, j == k even, or i even, j == k odd *)

(* Cor 16.5 : the four rhombus edges *)
Simplify[{Qs[t,t,1/2], Qs[t,-t,1/2],
          Qs[1/2,t-1/2,t+1/2], Qs[1/2,t,1-t]}]        (* {0,0,0,0} *)

(* Thm 17.2(b) : minimality, in two free angles *)
Simplify[-Sin[b + c] (1 + Cos[b] + Cos[c])
+ Sin[b] (1 + Cos[b + c] + Cos[c])
+ Sin[c] (1 + Cos[b + c] + Cos[b])]        (* -> 0 *)

(* Cor 17.5 : the separated Pfaffian, on the patch (needs X from Cell 0) *)
X[z_] := {kap Re[fcl[z]], kap Re[gcl[z]], 1/2 + kap Im[hfr[z]]};
Table[With[{P = X[z]}, Chop[N[Cross[
   {JacobiSN[2 vp P[[1]],-3], -JacobiSN[2 vp P[[2]],-3],
    -JacobiSN[2 vp P[[3]],-3]},
   {2 Re[z], 2 Im[z], Abs[z]^2 - 1}], 10^-12]]], {z, pts0}]                                                     
\end{verbatim}

\subsection*{Acknowledgements}
I thank Yuta Ogata for correspondence and for a clarifying note on
\cite{KO}. Anthropic Claude Opus 5.0 played a crucial role in turning 
my conjectures into theorems: to say that the AI model merely ``assisted'' 
would be an understatement.  It was used in an absolutely essential research 
capacity throughout every aspect of the paper.  No AI funding whatsoever for 
this study was received.  My interest in minimal surfaces began years ago 
(see \cite{Finch}).

\end{document}